\numberwithin{section}{chapter}
\newtheorem{theorem}{Theorem}[section]
\newtheorem{corollary}[theorem]{Corollary}
\newtheorem{lemma}[theorem]{Lemma}
\newtheorem{proposition}[theorem]{Proposition}
\newtheorem{definition}[theorem]{Definition}
\newtheorem{defpropositionEng}[theorem]{Definition and Proposition}
\theoremstyle{remark}
\newtheorem{remark}[theorem]{Remark}
\theoremstyle{definition}
\theoremstyle{definition}
\numberwithin{equation}{section}
\string\usetikzlibrary{decorations.markings} to use arrow with markings}{}}{}
\renewcommand{\phi}{\varphi}
\def\N{\mathbb{N}}
\def\Z{\mathbb{Z}}
\def\PP{\mathbb{P}}
\def\Q{\mathbb{Q}}
\def\R{\mathbb{R}}
\def\C{\mathbb{C}}
\def\H{\mathbb{H}}
\def\V{\mathbb{V}}
\newcommand{\cF}{{\mathcal  F}}
\newcommand{\cGK}{{\mathcal  GK}}
\newcommand{\ccH}{{\mathcal  H}}
\newcommand{\cN}{{\mathcal N}}
\newcommand{\cO}{{\mathcal O}}
\newcommand{\cV}{{\mathcal  V}}
\newcommand{{\OL}}{{{\mathcal O}_L}}
\newcommand{\Proj}{{\rm Proj\,}}
\newcommand{\Spec}{{\rm Spec\, }}
\newcommand{\Res}{{\rm Res}}
\newcommand{\CH}{{\mathrm{CH}}}
\newcommand{\dega}{\widehat{\rm deg }\,}
\newcommand{\rk}{{\rm rk }}
\newcommand{\ra}{\rightarrow}
\newcommand{\lrasim}{\stackrel{\sim}{\longrightarrow}}
\newcommand{\lra}{\longrightarrow}
\newcommand{\hlra}{{\lhook\joinrel\longrightarrow}}
\newcommand{\hgt}{{\mathrm{ht}}}
\newcommand{\Cc}{{\mathaccent23 C}}
\newcommand{\Dcirc}{{\mathaccent23 D}}
\newcommand{\Id}{{\mathrm{Id}}}
\renewcommand{\epsilon}{\varepsilon}
\newcommand{\Exterior}{\mathchoice{{\textstyle\bigwedge}}
    {{\bigwedge}}
    {{\textstyle\wedge}}
    {{\scriptstyle\wedge}}}
\newcommand{\disc}{{\rm{disc}}}
\newcommand{\td}{{\rm{td}}}
\newcommand{\ch}{{\rm{ch}}}
\newcommand{\Td}{{\rm{Td}}}
\newcommand{\GK}{\mathcal{GK}}
\title{Griffiths heights and pencils of hypersurfaces}
\author{Thomas Mordant}
\address{Universit\'e Paris-Saclay, Laboratoire de Math\'ematiques d'Orsay, 91405 Orsay Cedex, France}
\email{thomas.mordant@universite-paris-saclay.fr}
\date{}
\begin{document}

\begin{abstract}
The Griffiths height of a variation of Hodge structures over a projective curve is defined as the degree of its canonical line bundle, as defined by Griffiths and generalized by Peters to allow bad reduction points. It may be seen as a geometric analog of the Kato height attached to pure motives over number fields. 
In this paper, we establish various formulas expressing the Griffiths height of the middle-dimensional cohomology of a pencil of projective complex hypersurfaces in terms of characteristic classes. 

Firstly, using Steenbrink's theory and the Grothendieck-Riemann-Roch theorem, we  give an expression in terms of characteristic classes of the alternating sum of the Griffiths heights of the cohomology groups of the fibers of a pencil of projective varieties with non-singular total space, whose singular fibers are divisors with normal crossings. Using this expression, we may compute the same alternating sum of Griffiths heights associated to a pencil of projective varieties with non-singular total space and only 
non-degenerate critical points.
 
By using the weak Lefschetz theorem and the above formulas, we may express the Griffiths height of the middle-dimensional cohomology of an ample pencil of hypersurfaces in a smooth pencil in terms of characteristic classes and of the Griffiths heights associated to the ambient smooth pencil. This leads to closed formulas for the Griffiths height of the middle-dimensional cohomology of pencils of projective varieties in the following special cases: for pencils of hypersurfaces in a projective bundle, and for linear pencils of hypersurfaces in a smooth projective variety, of which Lefschetz pencils are a special instance.
 
\end{abstract}

\maketitle

\tableofcontents

\chapter*{Preface} 

The first chapter of this memoir gathers its most significant results, and discusses their logical dependence. This requires the introduction of diverse technical notions, and involves rather cumbersome formulas. Our aim in this preface is to give a light overview of these results, referring to Chapter 1 for more details and further motivations. 
 \medskip

$\textbf{0.1}$  The central object of study of this memoir is the \emph{Griffiths height} attached to variations of Hodge structures, attached to the cohomology of pencils of smooth projective varieties parametrized by  some  quasi-projective curve. 

Consider $\Cc$ a smooth quasi-projective complex curve, $C$ its compactification, and let $$f: X \lra C$$ be  a projective morphism of smooth complex varieties, that is smooth  over $\Cc$. In other words, its restriction $$f_{\mid X_{\Cc}} : X_{\Cc} := f^{-1}(\Cc) \lra \Cc$$ is a complex analytic submersion.   

The fibers $(X_x)_{x \in \Cc} := (f^{-1}(x))_{x \in \Cc}$  of this submersion are diffeomorphic smooth complex manifolds. For every non-negative integer $n$, we may attach to these manifolds their integral Betti cohomology groups $H^n(X_x, \Z)$ in degree $n$, and the Hodge filtrations $F^\bullet H^n(X_x, \C)$ on their  complexifications $$ H^n(X_x, \Z) \otimes_\Z \C \simeq H^n(X_x, \C).$$

As the point $x$ varies in $\Cc$, the finitely generated $\Z$-modules $H^n(X_x, \Z)$ constitute the fibers of a local system on $\Cc$:
$$V_{\Z} = H^n(X_\Cc/\Cc, \Z),$$
and the Hodge filtrations define  a filtration $\cF^\bullet$ by subbundles of the complex analytic vector bundle on~$\Cc$:
$$\cV = V_\Z \otimes_{\Z} \cO_{\Cc} = \mathcal{H}^n(X_\Cc/\Cc).$$
The data of the  local system $V_\Z$ and the filtration $\cF^\bullet$ on the vector bundle $\cV$ is an instance of a \emph{variation of Hodge structures} of weight $n$ over $\Cc$, denoted by:
$$\V = \H^n(X_\Cc/\Cc).$$

In \cite{Griffiths70}, Griffiths defines the \emph{canonical line bundle} of such a variation of Hodge structures --- which we will call its  \emph{Griffiths line bundle} --- as the following line bundle over $\Cc$, defined in terms of the determinant line bundles $\det \cF^i :=  \Lambda^{\max} \cF^i$:
\begin{equation}\label{def GK det}
\mathcal{\cGK}_\Cc(\V) := \bigotimes_{i=1}^n \det \cF^i \simeq \bigotimes_{r=0}^n (\det \cF^r/\cF^{r+1})^{\otimes r}.
\end{equation} 

When the variation of Hodge structures $\V$ is polarized (which is the case for the primitive middle-dimensional cohomology of the fibration $X_\Cc/\Cc$), then this line bundle inherits a canonical Hermitian metric, and Griffiths shows that the curvature of this metric is non-negative. Actually the introduction by Griffiths of the line bundle defined by \eqref{def GK det} is dictated by this positivity result.

In \cite{Peters84}, using Deligne's construction \cite{Deligne70} of extensions of a vector bundle with connection over $\Cc$ to a vector bundle with logarithmic connection over $C,$ 
Peters  extends the line bundle $\mathcal{\cGK}_\Cc(\V)$ into a line bundle on $C$. There are actually two natural choices for this extension, which are called the \emph{upper extension} $\GK_{C,+}(\V)$ and the \emph{lower extension} $\GK_{C,-}(\V)$.

 The \emph{upper and lower Griffiths heights} of the variation of Hodge structure $\V$ are by definition their degrees:
$$\hgt_{GK, +}(\V) := \deg_C \mathcal{\cGK}_{C, +}  (\V) \quad \mbox{ and } \quad \hgt_{GK, -}(\V) := \deg_C \mathcal{\cGK}_{C, -}  (\V).$$

Peters also extends Griffiths' positivity result in this setting, showing that if the variation of Hodge structures $\V$ is polarized, then its \emph{upper} Griffiths height $\hgt_{GK. +}(\V)$ is \emph{non-negative}.

Another incentive to study these heights comes from Kato's introduction in \cite{Kato14, Kato18} of the \emph{Kato heights}, attached to pure motives over number fields. The Kato heights  may be seen as the  counterparts of the Griffiths heights, in the analogy between function fields and number fields, where the field of rational functions $\C(C)$ is replaced by a number field $K$. 

The Kato heights generalize the Faltings height of abelian varieties over number fields, which was introduced by Faltings in his proof of the Tate conjecture for these abelian varieties.  Kato's definition involves a tensor product of ``determinant lines," similar to the right-hand side of \eqref{def GK det}. 
Kato establishes that these heights are invariant under isogenies, by an argument involving $p$-adic cohomology and the Weil conjectures on the action of the Frobenius on \'etale cohomology groups, as established by Deligne. 

This argument --- inspired by Faltings' proof of the invariance under isogenies of Faltings heights of abelian varieties, which already played a key role in his proof of the Tate conjecture --- dictated Kato's choice of a combination of determinant lines similar to the one in \eqref{def GK det}. Kato's rationale for this specific numerology was  independent of the analogy with Griffiths heights,  as witnessed the fact that Kato himself was not aware of this analogy when he first introduced his heights for motives in \cite{Kato14}.

As emphasized by Kato and Koshikawa \cite{Kato18, Koshikawa15}, a better understanding of the properties of the Kato heights might  have far-reaching consequences for conjectures about motives over number fields --- and accordingly can only be a long-term goal. As geometry over function fields is famously more approachable than over number fields, it is sensible to study  Griffiths heights as a preliminary to the investigation of Kato heights.

\textbf{0.2}. The main result of this memoir is the computation of the Griffiths heights of the middle-dimensional cohomology of a pencil of smooth projective hypersurfaces over a quasi-projective complex curve $\Cc,$ in the ``generic" situation when it is defined by a smooth projective variety $H$ fibered over the  compactification $C$ of $\Cc$ and when the  fibers of bad reduction of $H$  (over points in  $C - \Cc$) admit at worst  ordinary double points (or equivalently when the map $H \ra C$ has only non-degenerate critical points).

 In order to state our result, let us introduce some notation.

Let $E$ be a vector bundle of rank $N+1$ over a connected smooth projective complex curve $C$, and let
$$\pi: \PP(E):= \mathrm{Proj}\,  S^\bullet E^\vee \lra C$$
be the associated projective bundle. We shall denote by $\cO_E(-1)$ the tautological rank one subbundle of $\pi^\ast E$,  and by $\cO_E(1)$ its dual.  An horizontal hypersurface in the projective bundle $\PP(E)$ is an effective Cartier divisor $H$ in $\PP(E)$ such that the morphism
$$\pi_{\mid H} : H \lra C$$
is flat.  
Then its fibers
$$H_x := \pi_{\mid H}^{-1} (x), \quad x \in C$$
are hypersurfaces in the projective spaces $\PP(E_x) \simeq \PP^N(\C)$. Their degree $d$ is independent of $x \in C$, and defines the \emph{relative degree} of the horizontal hypersurface.

We introduce the \emph{intersection-theoretic height} of an horizontal hypersurface $H$. It is defined as the rational number:
\begin{equation}\label{hintdef pref}
\mathrm{ht}_{int}(H/C) := \int_{\PP(E)} c_1(\cO_E(1))^N \cap [H] + d N \mu(E),
\end{equation}
where $$\mu(E) := \deg_C E /\rk E = \deg (c_1(E) \cap [C]) /(N+1)$$
denotes the slope of the vector bundle $E$ over $C$.

\begin{theorem}
\label{intro GK hyp P(E) pref}
Let $C$ be a connected smooth  projective complex curve with generic point $\eta$, $E$ a vector bundle of rank~$N+1$ over $C$, and $H \subset \PP(E) $ an horizontal hypersurface of relative degree $d$, smooth over $\C.$  

If $\pi_{\mid H}$ has only a finite number of critical points, all of which are non-degenerate, then the cardinality of the set $\Sigma$ of critical points satisfies:
\begin{equation}\label{cardSigmahypproj pref}
\vert \Sigma \vert = (N+1) (d-1)^N \, \mathrm{ht}_{int}(H/C).
\end{equation}

Moreover, under the same hypothesis, the following equalities hold:
$$\mathrm{ht}_{GK,+}(\H^{N-1}(H_\eta/C_\eta))
= F_+(d,N) \, \mathrm{ht}_{int}(H/C),$$
and:
$$\mathrm{ht}_{GK,-}(\H^{N-1}(H_\eta/C_\eta))
= F_-(d,N) \,  \mathrm{ht}_{int}(H/C),$$
where $F_+(d,N)$ and $F_-(d,N)$  
are the elements of $(1/12) \Z$ given when  $N$ is odd by:
$$
F_+(d,N) := \frac{N+1}{24 d^2} \left[ (d-1)^N  (7 d^2 N - 7 d^2 - 2 d N - 2 ) + 2 (d^2 - 1) \right] ,
$$
and:
$$F_-(d,N) := \frac{N+1}{24 d^2} \left[ (d-1)^N (- 5 d^2 N + 5 d^2 - 2 d N - 2 ) + 2 (d^2 - 1) \right] ,$$
and when $N$ is even by:
$$F_+(d,N) = F_-(d,N)
:= \frac{N+1}{24 d^2} \left [ (d-1)^N  (d^2 N + 2 d^2 - 2 d N - 2) - 2 (d^2-1) \right ]. $$
\end{theorem}

Observe that, after its dimension $N$ and its relative degree $d$, the intersection-theoretic height $\mathrm{ht}_{int}(H/C)$ is arguably the most basic invariant of the hypersurface $H$ embedded in the projective bundle $\PP:= \PP(E)$ over $C$. 

Actually the second term $d N \mu(E)$ in the right-hand side of its definition \eqref{hintdef pref} makes it depend only on this projective bundle $\PP$, and not on the actual choice of the vector bundle $E$ over $C$ (which may be twisted by a line bundle without changing $\PP$). Indeed $\mathrm{ht}_{int}(H/C)$ is related to the degree of $H$ with respect to the relative dualizing sheaf $\omega_{\PP/C}$ by the following formula:
$$\mathrm{ht}_{int}(H/C) = \frac{(-1)^N}{(N+1)^N} \int_{\PP} c_1(\omega_{\PP/C})^N \cap [H].$$

It is striking that ultimately the Griffiths heights of $\H^{N-1}(H_\eta/C_\eta)$ may be expressed as 
the products of the height $\hgt_{int}(H/C)$ and of some combinatorial coefficients only depending on the degree $d$ and the absolute dimension $N$ of $H$. However we are not  aware of any \emph{a priori} reason for the validity of such a proportionality. 

For instance we could try to derive this from 
a degeneration argument, which would relate the Griffiths heights of pencils of hypersurfaces of fixed dimension, but varying relative degrees and intersection-theoretic heights. However we have not been able to complete such a line of reasoning.

The  complexity of the expressions of the coefficients $F_+(d,N)$ and $F_-(d,N)$ indeed seems to exclude the existence of a significantly simpler path to these expressions than the one we present below.

\textbf{0.3} Let us explain the main points of the proof of Theorem \ref{intro GK hyp P(E) pref}. It relies on 
 several intermediate results, valid in more general situations and of independent interest. 

A first situation we shall consider is that of a pencil of projective varieties
$$g : Y \lra C,$$
with non-singular total space $Y$,  smooth above the complement $\Cc$ of a finite subset $\Delta$ in $C$, and whose singular fibers $$(Y_x)_{x \in \Delta} := (g^{-1}(x))_{x \in \Delta}$$ are \emph{divisors with strict normal crossings}.

Then Steenbrink's theory (see \cite{Steenbrink76, Steenbrink77}) allows us to express the \emph{lower} Deligne extensions of the relative de Rham cohomology in each degree and of its Hodge filtration --- and therefore the \emph{lower} Griffiths heights --- in terms of the logarithmic cohomology, namely of the (higher) direct images by $g$ of the exterior powers of the vector bundle of relative logarithmic differentials on $Y$:
$$\omega^1_{Y/C} := \Omega^1_{Y/C} (\log Y_\Delta).$$

At this stage, the Grothendieck-Riemann-Roch theorem may be used to relate the first Chern classes of  these  direct images to the  pushforwards to $C$ of characteristic classes on $Y$ associated to $\omega^1_{Y/C}$ and to the tangent bundles  $T_Y$ and $T_C$ of $Y$ and $C$. 

It turns out that considering the following alternating sum of Griffiths heights:
$$\sum_{n=0}^{2(N-1)} (-1)^{n-1} \hgt_{GK, -}(\H^n(Y_\Cc / \Cc))$$
leads to significant simplifications in these computations of characteristic classes.
Namely we shall establish the following result:

\begin{theorem} 
\label{intro GK DNC pref} Let $C$ be a  connected smooth projective complex curve with generic point $\eta$,  let $Y$ be a connected smooth projective  complex variety of  dimension $N$, and let
$$
g : Y \lra C
$$
be a surjective morphism of complex varieties. Let us assume that there exists a finite subset $\Delta$ in $C$ such that $g$ is smooth over $C - \Delta$, and such that the divisor $Y_\Delta$ is a divisor with strict normal crossings in $Y$.

Then the following equality holds:
\begin{equation}\label{eq:intro GK DNC pref}
\sum_{n=0}^{2(N-1)} (-1)^{n-1} \hgt_{GK, -}(\H^n(Y_\eta / C_\eta))
=   \int_Y \rho_{N-1}(\omega_{Y/C}^{1\vee})  \frac{\Td([T_g])}{\Td(\omega_{Y/C}^{1\vee})},
\end{equation}
where $\omega^1_{Y/C}$ denotes the vector bundle over $Y$ of relative logarithmic $1$-forms:
$$\omega^1_{Y/C} := \Omega^1_{Y/C} (\log Y_\Delta),$$ 
where $[T_g]$ is the relative tangent class in $K$-theory:
$$[T_g] := [T_{Y/\C}] - g^\ast [T_{C/\C}] \in K^0(Y),$$
and where $\Td$ denotes the Todd class and $\rho_{N-1}$  the characteristic class defined in terms of the Chern classes by:
\begin{equation}\label{rhoChern pref}
\rho_{N-1} := c_{N-2} - \frac{N-1}{2} c_{N-1} + \frac{1}{12} c_1 c_{N-1}.
\end{equation}
\end{theorem}

At first sight cumbersome, the right-hand side of \eqref{eq:intro GK DNC pref} is rather simple, because of the simplicity of the expression \eqref{rhoChern pref} for $\rho_{N-1}$, which involves only terms of codimension at least $N-2$.  

Moreover, since the restrictions to $Y - Y_\Delta$ of the vector bundle $\omega^{1 \vee}_{Y/C}$ and the relative tangent class $[T_g]$ coincide, their characteristic classes differ by cycles localized on the singular fibers $Y_\Delta$. It is therefore possible to rewrite the right-hand term of equality \eqref{eq:intro GK DNC pref} as the sum of a global term involving only $\omega^{1 \vee}_{Y/C}$ and a local term involving the geometry of the divisor $Y_\Delta$
(see Theorem \ref{intro GK DNC with localized terms} below). 

This relies on computations of characteristic classes of vector bundles of logarithmic differential forms, which we believe to be of independent interest. They constitute relative versions of the following proposition (see Proposition \ref{comparison Omega log and Omega easy cases general}):

\begin{proposition}
Let $X$ be a smooth $n$-dimensional complex scheme, and let $E$ be a reduced divisor with strict normal crossings in $X$. Let us denote by $(E_i)_{i \in I}$ the irreducible components of $E$, and for any subset $J \subseteq I$, let us consider the smooth subscheme of codimension $\vert J \vert$ in $X$:
$$E_J := \bigcap_{i \in J} E_i,$$
and the inclusion morphism 
$$i_{E_J}: E_J \hlra X.$$

The total Chern class of the vector bundle $\Omega_X^1(\log E)$ over $X$ is given by the following equality in~$\CH^\ast(X)$:
\begin{equation}
c(\Omega_X^1(\log E)) = \sum_{J \subseteq I} i_{E_J \ast} c(\Omega^1_{E_J}).
\end{equation}

\end{proposition}

\textbf{0.4} Let us emphasize that the cancellations in the computations of characteristic classes leading to Theorem \ref{intro GK DNC pref} rely crucially on the specific numerology of the combination of determinant line bundles attached to the Hodge filtration in the definition \eqref{def GK det} of the Griffiths line bundle.

Recall that, for any $N$-dimensional smooth complex projective variety $X$,     the alternating sum of the Euler characteristics  $\chi(X, \Omega^n_{X})$ of the vector bundles of differential forms of degree $n$ coincides, by Hodge theory, with the 
 topological Euler characteristic $\chi_{\mathrm{top}}(X)$ of $X$, and therefore with the degree $\int_X c_N(T_X)$ of the top Chern class of its tangent bundle:
\begin{equation}\label{Chi Omega Top X}
\sum_{n =0}^N (-1)^n \chi(X, \Omega^n_{X}) = \chi_{\mathrm{top}}(X) = \int_X c_N(T_X).
\end{equation}
The equality of the left- and right-hand side of  \eqref{Chi Omega Top X} --- which remains valid for a smooth algebraic variety over any field --- also follows from the Hirzebruch-Riemann-Roch theorem, applied to each of the vector bundles $\Omega^n_{X}$, and from the following equality of characteristic classes (see for instance \cite[Example 3.2.5]{Fulton98}):
\begin{equation}\label{Alter ch}
\sum_{n = 0}^N (-1)^n \mathrm{ch} (\Omega^n_{X}) \, \Td(T_{X}) = c_N (T_X).
\end{equation}

The cancellations alluded to above, leading to the proof of Theorem \ref{intro GK DNC pref}, may be seen as an avatar, valid in the relative situation of a pencil of varieties over a curve, of the equality~\eqref{Alter ch}. 

As mentioned above, the specific numerology in the definitions of the Griffiths and Kato heights is dictated by considerations of very different nature. We find it striking --- at least it was for us a great surprise --- that it also fits nicely with the formalism of characteristic classes and of the Grothendieck-Riemann-Roch theorem.

\textbf{0.5} Another situation we shall consider is that of a pencil of projective varieties
$$f : H \lra C$$
with non-singular total space $H$, and admitting only non-degenerate critical points.

Blowing up once these critical points in $H$ is enough to reduce to the situation of Theorem \ref{intro GK DNC pref}, and it is straightforward to express the characteristic classes on the blow-up in terms of characteristic classes on $H$. This therefore gives an expression for the alternating sum of the lower Griffiths heights attached to this pencil.

Moreover the computation in \cite[Prop. 3.10]{EFM21} of the so-called ``elementary exponents''  of a degeneration with only non-degenerate critical points  allows us to  compare the upper and lower Griffiths heights.

By this line of reasoning, we shall establish the following result.

\begin{theorem}
\label{intro GK critical points pref} 
Let $C$ be a connected smooth projective complex curve with generic point $\eta$, $H$ be a smooth projective $N$-dimensional complex scheme, and let:
$$
f : H \lra C
$$
be a  morphism of complex schemes. Let us assume that there exists a finite subset $\Sigma$ in $H$ such that $f$ is smooth on $H - \Sigma$ and admits a non-degenerate critical point at every point of $\Sigma$.

Then the following equalities hold:
\begin{equation}
\label{GK- points doubles pref}
  \sum_{n=0}^{2(N-1)} (-1)^{n-1} \hgt_{GK, -}(\H^n(H_\eta / C_\eta))
= \frac{1}{12} \int_H  c_1([\Omega^1_{H/C}]^\vee) c_{N-1}([\Omega^1_{H/C}]^\vee)  + u^-_N \, \vert\Sigma\vert,
\end{equation}
and
\begin{equation}
\sum_{n=0}^{2(N-1)} (-1)^{n-1} \hgt_{GK, +}(\H^n(H_\eta / C_\eta))
= \frac{1}{12} \int_H  c_1([\Omega^1_{H/C}]^\vee) c_{N-1}([\Omega^1_{H/C}]^\vee)  + u^+_N \, \vert\Sigma\vert, \label{GK+ points doubles pref}
\end{equation}
where $u^-_N$ and $u^+_N$ are the rational numbers defined by: 
\begin{equation*}
u^-_N := 
\begin{cases}
 (5N-3)/24 & \text{if $N$ is odd} \\
 N/24 & \text{if $N$ is even},
\end{cases}
\end{equation*}
and:
\begin{equation*}
u^+_N := 
\begin{cases}
 -(7N-9)/24 & \text{if $N$ is odd} \\
 N/24 & \text{if $N$ is even}.
\end{cases}
\end{equation*}
\end{theorem}

In the setting of pencils of projective hypersurfaces considered in $\text{0.2}$, it follows from the weak Lefschetz theorem and Poincar\'e duality that the upper (resp. lower) Griffiths height $\hgt_{GK, \pm}(\H^n(H_\eta/C_\eta))$ vanishes unless the degree $n$ is $N-1$. 

Finally, using some computations of characteristic classes on the hypersurface $H$ in $\PP(E)$, we shall obtain Theorem \ref{intro GK hyp P(E) pref} as a consequence of Theorem \ref{intro GK critical points pref}.

\textbf{0.6} The results described in $\text{0.3}$ and~$\text{0.5}$ are crucial in our strategy of proof of Theorem \ref{intro GK hyp P(E) pref}. It turns out that they also allow us to compute Griffiths heights attached to other natural instances of pencils of projective varieties.

The first of these computations concerns Lefschetz pencils. To state it, let us introduce some notation.

Let $V$ be a connected smooth projective complex scheme of pure dimension $N\geq 1$, embedded into some projective space $\PP^r$, of dimension $r\geq \max(N,2).$
Let $ \Lambda$ be a projective subspace of dimension $r-2$ in $\PP^r$ that intersects $V$ transversally, and let $P \subset \PP^{r\vee}$ be the projective line in the dual projective space $\PP^{r\vee}$ corresponding to $\Lambda$ by projective duality.

Let us denote by: 
$$\nu: \widetilde{\PP}^r_\Lambda \lra \PP^r$$
the blow-up of $\Lambda$ in $\PP^r$. If $\widetilde{V}$ denotes the proper transform of $V$ by $\nu$, the restriction:
$$\nu_{\mid \widetilde{V}} : \widetilde{V} \lra V$$
may be identified with the blow-up in $V$ of $\Lambda \cap V$, which is smooth of dimension $r-2$.
The projection of center $\Lambda$:
$$\PP^r - \Lambda \lra P$$
extends to a smooth morphism of complex schemes:
$$p: \widetilde{\PP}^r_\Lambda \lra P.$$

Recall that the pencil of hyperplanes in $\PP^r$ containing $\Lambda$ --- in other words the pencil of hyperplanes defined by $P$ --- is said to be a Lefschetz pencil with respect to the subvariety $V$ of $\PP^r$ when the morphism:
$$p_{\mid \widetilde{V}} : \widetilde{V} \lra P$$
 has a finite set $\Sigma$ of critical points, all of which are non-degenerate, and when the restriction
$$
p_{| \Sigma} : \Sigma \lra P
$$
is an injective map.

When this holds, as shown by Katz in \cite[Expos\'e XVII, cor. 5.6]{SGA7II}, the cardinality of $\Sigma$ is given in terms of characteristic classes over $V$ by the following formula:
\begin{equation}\label{KatzSGA pref}
\vert\Sigma\vert = \int_V (1 - c_1(\cO_V(1)))^{-2} c(\Omega^1_V).
\end{equation}

Using Theorem \ref{intro GK critical points pref} and the weak Lefschetz theorem, we are able to establish a similar expression for the Griffiths heights attached to Lefschetz pencils:  
 
\begin{theorem}
\label{intro Lefschetz pencils pref}
When the pencil of hyperplanes in $\PP^r$ containing $\Lambda$ is a Lefschetz pencil, then
 the following equalities hold:
\begin{equation}
\hgt_{GK,+}(\H^{N-1}(\widetilde{V}_\eta / P_\eta) ) 
= \frac{1}{12} \int_V (1 - c_1(\cO_V(1)))^{-2} c_1(\Omega^1_V)c(\Omega^1_V)  
+  \frac{(-1)^{N+1}}{12} \chi_{\mathrm{top}}(V) 
+ v_N^+\vert\Sigma\vert, \label{intro GK Lef pen + pref}
\end{equation}
and:
\begin{equation}
\hgt_{GK,-}(\H^{N-1}(\widetilde{V}_\eta / P_\eta) ) 
= \frac{1}{12} \int_V (1 - c_1(\cO_V(1)))^{-2} c_1(\Omega^1_V)c(\Omega^1_V)  
+  \frac{(-1)^{N+1}}{12} \chi_{\mathrm{top}}(V) 
+ v_N^- \vert\Sigma\vert, \label{intro GK Lef pen - pref}
\end{equation}
where:
\begin{equation*}
v^+_N := 
\begin{cases}
 7(N-1)/24 & \text{if $N$ is odd} \\
 (N+2)/24 & \text{if $N$ is even},
\end{cases}
\end{equation*}
and:
\begin{equation*}
v^-_N := 
\begin{cases}
 - 5(N-1)/24 & \text{if $N$ is odd} \\
 (N+2)/24 & \text{if $N$ is even}.
\end{cases}
\end{equation*}
\end{theorem}

The second of these results concerns pencils of surfaces with non-singular total space and whose singular fibers are semistable with smooth components (namely reduced divisors with strict normal crossings). 

This is a special case of Theorem \ref{intro GK DNC pref}, and in this case, the local terms involving the geometry of the singular fibers are easy to compute. Moreover we get a particularly  simple formula when the generic fibers of our pencil have irregularity zero, namely when their Hodge number $h^{1,0} = h^{0,1}$, or equivalently their first Betti number, vanishes.   

Indeed, in this case, we shall establish that the Griffiths height of the middle-dimensional cohomology of the pencil is given by the following topological formula:

\begin{corollary} In the situation of Theorem \ref{intro GK DNC pref}, when $N=3$,   when the degenerations of the fibers of $g$ are semistable with smooth components,  and when the generic fibers of $g$ have irregularity zero,  the following equality holds:
\begin{equation}\label{surfacessemistable pref}
\hgt_{GK} (\H^2(Y_\eta / C_\eta))  = \frac{1}{12} \int_Y c_1(\omega^1_{Y/C})  c_2 (\omega^1_{Y/C}) - \frac{1}{12} \chi_{\mathrm{top}} (D^2 - D^3),
\end{equation}
where $D^2$ (resp. $D^3$) denotes the subscheme of codimension $2$ (resp. $3$) of $Y$ defined as the union of all the intersections of two (resp. three) distinct components of the divisor with strict normal crossings~$Y_\Delta$.
\end{corollary}

$\mathbf{0.7}$ As detailed in \ref{GK CY} below, we will show how the general results of $\text{0.3}$ may also be applied to pencils of projective varieties whose smooth fibers are \emph{Calabi-Yau} manifolds, namely smooth projective varieties whose dualizing sheaf  is a trivial line bundle.

Actually, in this case, the Griffiths height is the degree of the so-called \emph{BCOV line bundle}, whose metric properties are studied 
 by Eriksson, Freixas and Mourougane
in \cite{EFM18} and \cite{EFM21}.
This line bundle arises in the study of the \emph{BCOV invariant} of Calabi-Yau manifolds, which was introduced by Bershadsky, Cecotti, Ooguri, and Vafa in \cite{BCOV94} as a product of powers of the determinant of the Laplacian acting on differential forms. The specific numerology in its definition, which is similar to the one in \eqref{def GK det}, is dictated by its role  in string theory and mirror symmetry.

It is already mentioned in \cite[p. 374]{BCOV94} that, in this situation, this specific numerology leads to some cancellations of characteristic classes. Moreover the importance of using the Grothendieck-Riemann-Roch theorem in computations related to the BCOV line bundle is already made clear in \cite{EFM21}.\footnote{See also \cite{FLY08, Yoshikawa15, Liu-Xia19} for earlier mathematical works on BCOV invariants, and \cite{EFM22} for their relation with genus one mirror symmetry.}

The papers \cite{EFM18} and \cite{EFM21} have been an important source of inspiration for this work.  Various ideas and techniques developed in these papers for the investigation of families of Calabi-Yau manifolds and their degenerations turned out to be relevant in a much wider framework, in particular for the investigation of pencils of projective hypersurfaces of arbitrary dimension and degree. 

This memoir leaves untouched many basic questions concerning Griffiths heights, even in the special case of pencils of smooth hypersurfaces. For instance, their properties outside the generic situation, when the  fibers of bad reduction are allowed to have more general singularities than ordinary double points; or their finiteness properties, which should constitute a geometric analogue of the expected finiteness properties of Kato heights of motives, and should be expressed in terms of bounded families. 

Our upcoming papers \cite{MordantGIT1, MordantGIT2} explore these questions, by relating the Griffiths heights of pencils of projective hypersurfaces with their heights defined by means of geometric invariant theory.

{\bf 0.8}   \emph{Acknowledgements} The present memoir was written as part of the author's PhD dissertation realized at the Laboratoire de Math\'ematiques d'Orsay, under the direction of Jean-Beno\^{\i}t Bost. We are very grateful to him for numerous discussions concerning the Kato and Griffiths heights and their computations, and for his careful reading of earlier versions of this manuscript.

As mentioned above, the works \cite{EFM21} and \cite{EFM22} by Eriksson, Freixas, and Mou\-rou\-gane have been an important source of inspiration for the derivation of  Theorem \ref{intro GK DNC pref} and for the results in Chapter~\ref{SteenbrinkElExp}. We are very grateful to Dennis Eriksson and Gerard Freixas for  enlightening correspondence and discussions concerning various aspects of their work, notably concerning the proof of  Proposition \ref{elementary exponents non-degenerate singularities}.

Our deepest thanks go to Daniel Huybrechts and Christophe Mourougane, the \emph{rapporteurs} of the author's PhD dissertation, for their careful reading of a first version of this memoir, and for their helpful comments and suggestions.  We are also grateful to two anonymous referees for their valuable comments and for their suggestions of additional references and of improvements of the exposition.

Due to the author's physical disability, the process of typesetting this document into \LaTeX was particularly time-consuming. We are very grateful to Damien Simon for his precious  help on this process.

\chapter*{Conventions and notation} 

If $X$ is a scheme of finite type over some field $k$, $Z$ is a closed subscheme of $X$, locally complete intersection in $X$, and if $i: Z \ra X$ is the inclusion map and $\alpha$ is a class in $\CH^\ast(X)$, we shall denote by: $$\alpha_{| Z} := i^\ast \alpha$$ its image by the Gysin morphism: $$i^\ast: \CH^\ast(X) \lra \CH^\ast(Z).$$ 
We shall  use this notation only when both $X$ and $Z$ are smooth over $k$.

If $X$ is a smooth $k$-scheme of pure dimension $N$, where $N$ is a non-negative integer, we denote by:
$$\int_X: \CH^\ast(X)_\Q \lra \Q$$
the map that sends a class $\alpha$ in $\CH^\ast(X)_\Q$ to the degree of its homogeneous component $\alpha^{[N]}$ in $\CH^N(X)_\Q \simeq \CH_0(X)_\Q$.

If $X$ is a complex scheme (separated and of finite type), we denote by $\chi_{\mathrm{top}}(X)$ the topological Euler characteristic of the complex analytic space $X(\C).$

Following \cite{Kato89} and \cite{Illusie94}, if $Y$ is a smooth complex scheme, $C$ is a smooth complex curve, and $Y \ra C$ is a morphism (necessarily smooth over a Zariski dense open subset of $C$) whose singular fibers are a divisor with normal crossings, we shall denote \textit{}by $\omega^1_{Y/C}$ the sheaf of relative logarithmic differentials (see \ref{Steenbrink theory and logarithmic Hodge bundles} below) and, for any non-negative integer $p$, by $\omega^p_{Y/C}$ its $p$-th exterior power. 

We are aware that the similarity with the traditional  notation for the tensor powers of the relative dualizing sheaf $\omega_{Y/C}$ may be confusing. However this notation for relative  logarithmic differentials takes up less space than the alternative usual notation for those, which would make a number of already  unwieldy formulas in this memoir even more cumbersome. 

If $Y$ is a smooth complex scheme and $C$ is a smooth complex curve, a morphism of complex schemes $g : Y \ra C$ is said to admit a \emph{non-degenerate critical point} at a point $P$ in $Y$ if its differential at $P$ vanishes and if its Hessian at $P$ is a non-degenerate bilinear form on the tangent space $T_{Y, P}$ of $Y$ at $P$ (with values in the line $T_{C, g(P)}$). This is equivalent to the fiber $Y_{g(P)}$ admitting an \emph{ordinary double point} at $P$. 

As a rule, we will refer to this situation by saying that the morphism $g$ admits  a non-degenerate critical point. However, according to common usage, when dealing with blow-ups of the divisor $Y_{g(P)}$, we will often say   that $P$ is  an ordinary double point of this divisor.

\mainmatter
\selectlanguage{english}
\chapter{Introduction}

\section[The Kato height and the Griffiths height]{The Kato height of motives over number fields and the Griffiths height of 
  variations of Hodge structures}

\subsection{The Kato height of motives}\label{subsec:Katoline} In \cite{Kato14}, Kato has introduced  a height function associated to pure motives over a number field $K$, defined in terms of realizations ($p$-adic, Betti, de Rham).

If $M$ is such a motive, say over $K = \Q$, and if $M_{\mathrm{dR}}$  denotes its de Rham realization,  $F^\bullet M_{\mathrm{dR}}$ its Hodge filtration, and $$\text{gr}^r _F M_{\mathrm{dR}}:= F^r M_{\mathrm{dR}}/ F^{r+1} M_{\mathrm{dR}}$$ the associated subquotients, Kato considers the  one-dimensional $K$-vector space:
\begin{equation}\label{Katoline}
L(M) = {\bigotimes}_{r \in \Z} \, ( \textrm{det} \, \text{gr}^r _F M_{\mathrm{dR}}) ^{\otimes r}
\simeq {\bigotimes}_{r > r_0} \, (\textrm{det}_K F^r M_{\mathrm{dR}} ),
\end{equation}
where $r_0$ denotes an integer such that 
$F^{r_0} M_{\mathrm{dR}} = M_{\mathrm{dR}}.$

The $\R$-vector space $L(M)_\R$ is equipped with a canonical norm, defined by means of Hodge theory. Similarly, for every prime number $p$, the $\Q_p$-vector space $L(M)_{\Q_p}$ is endowed with a natural $\Z_p$-lattice, defined by means of the $p$-adic realization of $M$ and of $p$-adic Hodge theory. Equipped with these additional structures, $L(M)$ becomes a Hermitian line bundle $\overline{L(M)}$ over $\Spec \Z$ and, as such, admits an Arakelov degree. This degree defines the \emph{Kato height} of the motive $M$:
\begin{equation}\label{Katohgtdef}
{\hgt}_K(M) := 
\dega \overline{L(M)}  \in \R.
\end{equation}

This definition is inspired by the definition of the Faltings height of an abelian variety over a number field --- which is a special instance of Kato's definition when $M$ is a  motive of weight one --- and by Faltings' proof of the invariance by isogenies of the height of abelian varieties, which played a key role in his proof of the Tate conjecture for abelian varieties over number fields \cite{Faltings83}. 

The specific numerology in the definition \eqref{Katoline} of the $K$-line $L(M)$ --- where the determinant $\textrm{det}_K \text{gr}^r _F M_{\mathrm{dR}}$ of the $r$-th subquotient $\text{gr}^r _F M_{\mathrm{dR}}$ of the Hodge filtration occurs at the tensor power $r$ --- has been actually dictated to Kato by its ``compatibility" with the Weil conjectures: as shown by Kato, the Weil conjectures imply that the  height defined as the Arakelov degree \eqref{Katohgtdef} satisfies an invariance property that generalizes  
the invariance by isogenies of the height of abelian varieties to motives of arbitrary weight. 

After Kato's seminal paper \cite{Kato14}, Kato heights  have been generalized and investigated by Kato himself \cite{Kato18}, by Koshikawa \cite{Koshikawa15} who studied their possible applications to classical boundedness and semisimplicity conjectures concerning motives over number fields, and by Venkatesh \cite{Venkatesh18}, who started to explore their relationship to automorphic forms.

\subsection{The Griffiths line bundle and the Griffiths heights of a VHS over a projective curve}\label{subsec:griffithsline} As pointed out in \cite{Kato18}, a forerunner of Kato's definitions \eqref{Katoline} and \eqref{Katohgtdef} already appears in Griffiths' famous work concerning the Hodge structures on the cohomology of complex projective varieties and the period  mapping associated to a family of such varieties. 

Recall that an integral variation of Hodge structures (VHS) of weight $n \in \N$ over a complex analytic manifold $S$ is a pair:
$$\V := (V_\Z, \cF^\bullet) ,$$
where $V_\Z$ is a local system of finitely generated free $\Z$-modules over $S$, and 
$$\cF^\bullet: \cF^0:= \cV \supseteq \cF^1 \supseteq \dots \supseteq \cF^n \supseteq \cF^{n+1} = 0$$
is a decreasing filtration by subbundles of the holomorphic vector bundle 
$$\cV := V_\Z \otimes_{\Z_S} \cO^{\mathrm{an}}_S,$$
attached to the local system $V_\Z$. 
These data must satisfy the Hodge decomposition condition --- so that for every $s\in S$, the fiber $\V_s:=  (V_{\Z,s}, \cF_s^\bullet)$ is a Hodge structure of weight $n$ --- and the Griffiths horizontality condition (see for instance \cite[III]{Voisin02} or \cite[Chapter 10]{Peters-Steenbrink08}).

To a holomorphic vector bundle $\mathcal{V}$ over a  complex analytic manifold $S$, equipped  with a finite decreasing filtration 
$$F^\bullet := F^0:= \mathcal{V} \supseteq F^1 \supseteq \dots \supseteq F^n \supseteq F^{n+1} = 0$$
by subbundles, we may attach the holomorphic line bundle on $S$:
$$\mathcal{\cGK}_S(\cV, F^\bullet) := \bigotimes_{i=1}^n \det F^i \simeq \bigotimes_{r=0}^n (\det F^r/F^{r+1})^{\otimes r} ,$$ 
that we will call the \emph{Griffiths line bundle} attached to $(\cV, F^\bullet).$ 

In particular, to a variation of Hodge structures $\V= (V_\Z, \cF^\bullet)$ over $S$ as above is attached its Griffiths line bundle:
\begin{equation}\label{defGriffithshgt}
\mathcal{\cGK}_S  (\V) := \mathcal{\cGK}_S(\cV, \cF^\bullet) = \bigotimes_{r=0}^n (\det \cF^r/\cF^{r+1})^{\otimes r}.
\end{equation}
This definition is introduced by Griffiths in \cite[Section 7 b)]{Griffiths70}, under the terminology of \emph{canonical line bundle} of the   variation of Hodge structures. The specific tensor product of powers  of line bundles that appears in the right hand side of \eqref{defGriffithshgt} is the same as in the definition of the ``Kato line"  in \eqref{Katoline}. However it 
arises in Griffiths' work for completely different reasons. As shown in \cite{Griffiths70}, a polarization on the VHS $\V$ induces canonical Hermitian metrics on the vector bundles $\cF^r/\cF^{r+1}$, and therefore on the line bundle $\mathcal{\cGK}_S  (\V)$. The rationale behind the introduction of the line bundle \eqref{defGriffithshgt} is that the curvature\footnote{or more precisely the first Chern form.} of $\mathcal{\cGK}_S  (\V)$ equipped with this canonical Hermitian structure is always a \emph{non-negative} real $(1,1)$-form on $S$. 

In particular, when $S$ is a connected smooth projective complex curve $C$, the degree of this line bundle:
$$\hgt_{GK}(\V) := \deg_C \mathcal{\cGK}_S  (\V)$$
--- which we will call the \emph{Griffiths-Kato height}, or simply the  \emph{Griffiths height}, of the VHS $\V$ over $C$ --- is a non-negative integer when $\V$ admits a polarization.

\subsection{The construction of Peters and the Griffiths height of the cohomology of a pencil of projective varieties}\label{PetersGriffitsCohomDef}
The Griffiths height $\hgt_{GK}(\V)$ may be seen as an analog of the Kato height ${\hgt}_K(M)$, where the number field $K$ is replaced by the function field $\C(C)$, and where the motive $M$ is replaced by the VHS $\V$ over $C$. As shown by Peters \cite{Peters84}, it is actually possible to extend this definition to the situation where $\V:= (V_\Z, \cF^\bullet)$ is a variation of Hodge structures over a  Zariski dense open subset $\Cc$ of $C$.

Peters' construction relies on Deligne's work (\cite{Deligne70}) on extensions of analytic local systems defined on a Zariski dense open  subset of a smooth projective complex variety 
and on Schmid's results (\cite{Schmid73}) on variations of Hodge structures.

It naturally arises in two variants, involving a choice of sign and the so-called upper and lower extensions, and leads to the introduction of some canonical extensions over $C$, denoted by $\mathcal{\cGK}_{C,+}  (\V)$ and $\mathcal{\cGK}_{C,-}  (\V)$, of the analytic line bundle $\mathcal{\cGK}_{\Cc}  (\V)$ over $\Cc$.\footnote{See section  \ref{Peters construction} below for more details. Peters actually considers only the extension  $\mathcal{\cGK}_{C,+}  (\V)$,  for which the positivity of the Griffiths height in the polarized case still holds. Steenbrink's theory (\cite{Steenbrink76, Steenbrink77, Peters-Steenbrink08})  is naturally related to the extension  $\mathcal{\cGK}_{C,-}  (\V)$. This discrepancy leads to some mistakes in the published literature, which are clarified notably in \cite{Kollar86}, \cite{Moriwaki87}, and \cite{EFM21}.}

The associated Griffiths heights will be denoted by:
$$ \hgt_{GK, +}(\V) := \deg_C \mathcal{\cGK}_{C, +}  (\V) \quad \mbox{ and } \quad \hgt_{GK, -}(\V) := \deg_C \mathcal{\cGK}_{C, -}  (\V).$$
The line bundles $\mathcal{\cGK}_{C,+}  (\V)$ and $\mathcal{\cGK}_{C,-}  (\V)$, and therefore the heights $ \hgt_{GK, +}(\V)$ and $\hgt_{GK, -}(\V)$ actually coincide when the local monodromy of $V_\Z$ at every point of the complement $C - \Cc$  (which is automatically quasi-unipotent) is unipotent.

The  setting considered by Peters covers the variations of Hodge structures that ``arise from geometry", namely those associated to a pencil of projective varieties parametrized by $C$ and to the cohomology of its fibers.  

Indeed consider   a smooth projective complex variety $X$ and 
$$f: X \lra C$$
a surjective morphism.  There exists $\Cc$ as above such that the morphism:
$$ f_{\mid X_{\Cc}}: X_{\Cc} := f^{-1}(\Cc) \lra \Cc$$
is smooth. For every $n \in \N,$ we may consider the VHS $\H^n(X_{\Cc}/\Cc)$ over $\Cc$, defined by the relative Betti cohomology in degree $n$, and the filtration on the  relative de Rham cohomology induced by the relative Hodge-de Rham spectral sequence, then
the line bundles $\mathcal{\cGK}_{C,+}  (\H^n(X_{\Cc}/\Cc))$ and $\mathcal{\cGK}_{C,-}  (\H^n(X_{\Cc}/\Cc))$ over $C$, and the corresponding Griffiths heights:
 $$ \hgt_{GK, +}(\H^n(X_{\Cc}/\Cc)) := \deg_C \mathcal{\cGK}_{C, +}  (\H^n(X_{\Cc}/\Cc))$$
 and:
 $$\hgt_{GK, -}(\H^n(X_{\Cc}/\Cc)) := \deg_C \mathcal{\cGK}_{C, -}  (\H^n(X_{\Cc}/\Cc)).$$
 
 These numbers depend only on the generic fiber $X_\eta$ of $f$ over the generic point $\eta$ of $C$, and will also be denoted by $\hgt_{GK, +}(\H^n(X_\eta/C_\eta))$ and $\hgt_{GK, -}(\H^n(X_\eta/C_\eta)).$

 \subsection{Concerning the content of this paper}\label{subsub content}
 
 The starting point of this work has been the observation that, in spite of the significance of Kato's heights for the understanding of pure motives over number fields, basically nothing was known concerning them beyond motives of abelian varieties. In particular, no explicit computation of these heights was known concerning motives of weight greater than one\footnote{apart from motives constructed by tensor operations from the motive of weight 1 associated with an abelian variety.}.  Somewhat surprisingly, the situation is not better concerning the Griffiths heights  $\hgt_{GK, \pm}(\H^n(X_\eta/C_\eta))$ when $n\geq 2$,  which constitute their  geometric counterparts.

 Our aim has been to compute these Griffiths heights $\hgt_{GK, \pm}(\H^n(X_\eta/C_\eta))$, when $n \geq 2$,  in a number of significant situations, with a view towards the derivation of similar formulas concerning Kato's heights. These formulas for Kato's heights would constitute some material for exploring various finiteness conjectures of Kato and Koshikawa (\cite{Kato18, Koshikawa15}).
 
 In this paper, we establish formulas for these Griffiths heights, when  $n$ is the relative dimension of $X/C$, and when  $X/C$ is a semistable pencil of surfaces,  a pencil of hypersurfaces in a projective space,  or a Lefschetz pencil.
 
 Our approach has been inspired by the classical computation by Hirzebruch of the Hodge numbers of complete intersections in projective space (\cite{Hirzebruch56} and \cite[Appendix I]{Hirzebruch95}), and by the recent results of  Eriksson, Freixas, and  Mourougane concerning  BCOV invariants of  Calabi-Yau manifolds of arbitrary dimension (\cite{EFM18, EFM21, EFM22}).\footnote{\label{note BCOV} The BCOV invariants of Calabi-Yau manifolds equipped with a K\"ahler metric are introduced in \cite{BCOV94}. The paper \cite{FLY08} introduces a normalized version of the BCOV invariant attached to K\"ahlerian Calabi-Yau threefolds which does not depend on the choice of a specific K\"ahler metric, and investigates its asymptotic behaviour in degenerations; see also \cite{Yoshikawa15} and \cite{Liu-Xia19}. The BCOV invariants of Calabi-Yau manifolds of arbitrary dimension are introduced and investigated in \cite{EFM18} and \cite{EFM21}, and their relation with genus one mirror symmetry is studied in \cite{EFM22}.}
  
  In the special case of Calabi-Yau manifolds, the alternating tensor product of Griffiths line bundles, which plays a key role in our computation, coincides with the BCOV line bundle that is investigated in  the work of Eriksson, Freixas, and  Mourougane.  Our general formulas shed some light on the remarkable properties of this alternating tensor product when dealing with families of Calabi-Yau manifolds.  
  
  The next sections of this introduction are devoted to the statement of our results. Section  \ref{Peters construction} provides some details on Peters' construction alluded to above, and could be skipped at first reading. 
  In the remaining sections, we state our results as equalities of rational numbers, providing closed formulas for suitable Griffiths heights $\hgt_{GK, \pm}(\H^n(X_\eta/C_\eta))$. We shall actually establish these formulas in a more refined form, concerning the class of the line bundles $\mathcal{\cGK}_{C, \pm}  (\H^n(X_{\Cc}/\Cc))$ in the rational Picard group $\CH^1(C)_\Q$ of the base curve $C$.
  
  The next chapters of the paper are organized as follows. 
  
  Chapter 2 is devoted to  preliminary results, most of them classical, concerning Steenbrink's theory. They will allow us to express the line bundle $\mathcal{\cGK}_{C, \pm}  (\H^n(X_{\Cc}/\Cc))$ in terms of the higher direct images of relative logarithmic differentials when the singular fibers of $X/C$ are divisors with strict normal crossings. 
  
  In Chapter 3, we establish various identities concerning the characteristic classes  of relative differentials and relative logarithmic  differentials in this situation. We believe that several of these formulas are of independent interest.
  
  Chapter 4 contains the proof of the first of our main results, the computation of the alternating sum:
  \begin{equation}\label{altsimhgtgr}
  \sum_{n \in \N} (-1)^{n-1}\,  \hgt_{GK, -}(\H^n(X_\eta/C_\eta)).
  \end{equation}
  Besides the results of Steenbrink recalled in Chapter 2, this computation relies on the Grothendieck-Riemann-Roch theorem and on the identities on characteristic classes established in Chapter~3. It turns out that the specific numerology in the definitions of the Kato line \eqref{Katoline} and of the Griffiths line bundle \eqref{defGriffithshgt}, already discussed in \ref{subsec:Katoline} and \ref{subsec:griffithsline}, leads to remarkable cancellations when applying the  Grothendieck-Riemann-Roch  theorem, and allows one to derive some (relatively) simple formulas for this alternating sum.

In Chapter 5, we give an expression for the alternating sum \eqref{altsimhgtgr} in the situation, better suited to explicit computations, where the 
morphism $X \ra C$ has only non-degenerate critical points.

Finally, in Chapter 6, we combine our previous expressions for the sum \eqref{altsimhgtgr} with the weak Lefschetz theorem to derive, in various significant situations, a formula for its middle-dimensional term $\hgt_{GK, -}(\H^n(X_\eta/C_\eta)),$ where $n$ denotes the relative dimension of $X/C$.

\section[Peters' construction]{Peters' construction and the Griffiths height of a variation of Hodge structures with possible degenerations over a curve}\label{Peters construction}

Let $C$ be a connected smooth projective complex curve, let $\Delta$ be a finite subset of  $C$,  and let us denote by:
$$\Cc := C - \Delta$$
the connected smooth algebraic curve defined by its complement.

 Let $D \subset \C$ be a subset such that the projection $\C \twoheadrightarrow \C / \Z$ induces a bijection $D \overset{\sim}{\ra} \C/\Z$, and let: 
$$\log_{2i\pi D}: \C^\ast \lra 2 i \pi D \subset \C$$ 
be the  inverse of the map $\exp_{\mid 2 i \pi D}.$  

For example, we can take as $D$ the subset of~$\C$:
$$D_+ := \{z \in \C | 0 \leq \Re(z) < 1\},$$
or:
$$D_- := \{z \in \C | -1 < \Re(z) \leq 0\}.$$
We shall note $\log_+$ for $\log_{2i\pi D_+}$, and $\log_-$ for $\log_{2i\pi D_-}$.

 Let $(\cV, \nabla)$ be an analytic vector bundle on $\Cc$ with a flat connection.

\begin{defpropositionEng}[{\cite[II, Prop. 5.4]{Deligne70}, see also \cite[``Key Lemma" p. 547]{Katz76}, and \cite[section 2.1]{EFM21}}]
 \label{Deligne extension} 
 
The bundle with connection $(\cV,\nabla)$ admits a unique analytic\footnote{Since $C$ is projective, the analytic vector bundle with connection $(\cV,\nabla)$ is actually algebraizable.} extension with logarithmic connection $(\overline{\cV}, \overline{\nabla}_D)$ on $C$, such that for all $x$ in $\Delta$, the eigenvalues of the residue endomorphism $\Res_x \overline{\nabla}_D \in \mathrm{End}(\overline{\cV}_D{}_x)$ belong to $- D$.
\end{defpropositionEng}
Following the convention of \cite{EFM21}, actually introduced in \cite[Section 2]{Kollar86} and expanded upon in \cite[Section 2]{Moriwaki87}, we shall call this extension the \emph{upper} (resp. \emph{lower}) \emph{Deligne extension}, if $D = D_+$ (resp. $D_-$). Note that the eigenvalues of the residue of the connection defining the \emph{lower} extension are \emph{non-negative}.

Consider now a VHS on $\Cc$:$$\V := (V_\Z, \nabla, \cF^\bullet),$$ 
where as before $\cF^\bullet$ denotes a filtration of the vector bundle $\cV := V_\Z \otimes_{\Z_\Cc} \cO^{an}_\Cc$. According to a result of Griffiths (\cite[Theorem (4.13), a)]{Schmid73}), 
the subbundles $(\cF^p)_{p \geq 0}$ are algebraic vector subbundles of $\cV_{\mid \Cc}$ equipped with the algebraic structure defined by its Deligne extension.\footnote{The algebraic structure of the restriction $\cV_{\Cc}$, defined by means of Definition and Proposition  \ref{Deligne extension} is easily seen not to depend on the choice of the subset $D$.}

Let us denote by $(\overline{\cV}_+, \overline{\nabla}_+)$ and $(\overline{\cV}_-, \overline{\nabla}_-)$  the upper and lower Deligne extensions of the bundle with connection $(\cV, \nabla)$, and for every
integer $p \geq 0$, by $\overline{\cF}^p_+$ and $\overline{\cF}^p_-$ the vector subbundles (that is, the locally direct summands) of $\overline{\cV}_+$ and $\overline{\cV}_-$ over $C$ that extend the vector subbundle $\cF^p$ of: $$\cV \simeq \overline{\cV}_{+\mid \Cc} \simeq \overline{\cV}_{-\mid \Cc}.$$

The \emph{upper} and \emph{lower Griffiths line bundles} of the VHS $\V$ on $\Cc$ are defined as the line bundles over $C$:
$$\GK_{C,+}(\V_\eta) := \GK_C(\overline{\cV}_+, \overline{\cF}^\bullet_+) = \bigotimes_{p \geq 0} (\det \overline{\cF}^p_+ / \overline{\cF}^{p+1}_+)^{\otimes p},$$
and:
$$\GK_{C,-}(\V_\eta) := \GK_C(\overline{\cV}_-, \overline{\cF}^\bullet_-) = \bigotimes_{p \geq 0} (\det \overline{\cF}^p_- / \overline{\cF}^{p+1}_-)^{\otimes p}.$$
Finally the \emph{upper} and \emph{lower Griffiths-Kato heights} (or simply \emph{upper} and \emph{lower Griffiths heights}) of~$\V$ are defined as the degree of these line bundles:
$$\hgt_{GK,+}(\V_\eta) := \deg_C(\GK_{C,+}(\V_\eta)),$$
and:
$$\hgt_{GK,-}(\V_\eta) := \deg_C(\GK_{C,-}(\V_\eta)).$$

Observe that, by construction, the line bundles $\GK_{C,\pm}(\V_\eta)$ and the heights  $\hgt_{GK,\pm}(\V_\eta)$ are unchanged when $\Delta$ is replaced by a larger finite subset of $C$. They depend only on the restriction of the VHS $\V$ to some arbitrary small Zariski open neighborhood $\Cc$ of the generic point $\eta$ of $C$, as indicated by our notation. 

Observe also that, when the local monodromy of $V_\Z$ at every point of $\Delta$ (which is automatically quasi-unipotent according to a result of Borel \cite[Lemma (4.5)]{Schmid73}) is unipotent, 
the upper and lower Deligne extensions $(\overline{\cV}_+, \overline{\nabla}_+)$ and $(\overline{\cV}_-, \overline{\nabla}_-)$ coincide, and therefore the Griffiths line bundles $\GK_{C,+}(\V_\eta)$ and $\GK_{C,-}(\V_\eta)$ also, and their degree $\hgt_{GK,+}(\V_\eta)$ and $\hgt_{GK,-}(\V_\eta)$ as well. In this case, we shall denote them by $\GK_{C}(\V_\eta)$ and $\hgt_{GK}(\V_\eta)$.

These notions are motivated by the following theorem of Peters, which extends an earlier positivity result of Griffiths \cite{Griffiths70}:

\begin{theorem} [{\cite[Th. 4.1]{Peters84}}]\label{Peterspositive}
If the VHS $\V:=(V_\Z, \cF^\bullet)$ over $\Cc$ is polarized, 
then the upper Griffiths height is always non-negative:
\begin{equation}\label{Peters ineq} \hgt_{GK,+}(\V_\eta) \geq 0.
\end{equation}
When furthermore the weight of the VHS $\V$ is positive, equality holds in \eqref{Peters ineq} if and only if all the subbundles $\cF^p$ are flat for $\nabla$ and the local monodromy of $V_\Z$ at every point of $\Delta$ is unipotent.
\end{theorem}

It is possible to introduce a variant of the heights $\hgt_{GK,+}(\V_\eta)$ and $\hgt_{GK,-}(\V_\eta)$, namely the \emph{stable Griffiths-Kato height} (or simply \emph{stable Griffiths height}) $\mathrm{ht}_{GK,stab}(\V_\eta),$ which is defined as follows (see \cite{Kato18}; this construction is inspired by the definition of the stable Faltings  height of abelian varieties over number fields).

For every $x \in \Delta$, using the result of Borel cited above (\cite[Lemma (4.5)]{Schmid73}), the eigenvalues of the local monodromy of $V_\Z$ at $x$ are roots of unity. We shall denote by $r_x$ the lcm of the order of these roots of unity.

Let $C'$ be a connected smooth projective curve with generic point $\eta'$, and let:
$$\sigma : C' \longrightarrow C$$
be a finite morphism, such that for all $x' \in \sigma ^{-1}(\Delta)$, the index of ramification of $\sigma$ in $x'$ is a multiple of $r_{\sigma(x')}$. Such pairs $(C', \sigma)$ are easily seen to exist. They may be constructed for instance as cyclic coverings of $C$.

Pulling back $\V$ by $\sigma$, we get a VHS:
$$\V' := (\sigma^\ast V_\Z, \sigma^\ast \cF^\bullet)$$
over the complement $\Cc' := C' - \Delta'$
of $\Delta' := \sigma^{-1}(\Delta).$
By construction, the local monodromy of $V'_\Z := \sigma^\ast V_\Z$ at every point of $\Delta'$ is unipotent. Therefore we may define:
$$\mathrm{ht}_{GK,stab}(\V_\eta) := \frac{1}{\deg (\sigma)} \deg _{C'} (\mathcal{\cGK}_{C'} (\sigma ^\ast \V_\eta) ). $$
This rational number is easily seen not to depend on the choice of the ramified covering $(C', \sigma)$ and to satisfy the following inequalities:
$$\hgt_{GK,-}(\V_\eta) \leq  \mathrm{ht}_{GK,stab}(\V_\eta) \leq \hgt_{GK,+}(\V_\eta).$$
These inequalities are simple consequences of the definition of the upper and lower Deligne extensions and do not require the VHS $\V$ to be polarized.

\section{Alternating sums of Griffiths heights and characteristic classes}

\subsection{The main formulas for alternating sums of Griffiths heights}\label{subsec main alter}

Using the Gro\-then\-dieck-Riemann-Roch formula and Steenbrink's theory, 
we shall establish  expressions  in terms of characteristic classes  for the alternating sum of Griffiths heights associated to a pencil of projective varieties whose singular fibers are divisors with strict normal crossings. 

\begin{theorem}
\label{intro GK DNC}
Let $C$ be a  connected smooth projective complex curve with generic point $\eta$,  let $Y$ be a connected smooth projective  complex variety of  dimension $N$, and let
$$
g : Y \lra C
$$
be a surjective morphism of complex varieties. Let us assume that there exists a finite subset $\Delta$ in $C$ such that $g$ is smooth over $C - \Delta$, and such that the divisor $Y_\Delta$ is a divisor with strict normal crossings in $Y$.

Then the following equality holds:
\begin{equation}\label{eq:intro GK DNC}
\sum_{n=0}^{2(N-1)} (-1)^{n-1} \hgt_{GK, -}(\H^n(Y_\eta / C_\eta))
=   \int_Y \rho_{N-1}(\omega_{Y/C}^{1\vee})  \frac{\Td([T_g])}{\Td(\omega_{Y/C}^{1\vee})},
\end{equation}
where $\omega^1_{Y/C}$ denotes the vector bundle over $Y$ of relative logarithmic $1$-forms\footnote{
Observe that this vector bundle does not depend on the choice of the divisor $\Delta$ in $C$ satisfying the above conditions.}:
$$\omega^1_{Y/C} := \Omega^1_{Y/C} (\log Y_\Delta),$$ 
where $[T_g]$ is the relative tangent class in $K$-theory:
$$[T_g] := [T_{Y/\C}] - g^\ast [T_{C/\C}] \in K^0(Y),$$
and where $\Td$ denotes the Todd class and $\rho_{N-1}$  the characteristic class defined in terms of the Chern classes by:
\begin{equation}\label{rhoChern}
\rho_{N-1} := c_{N-2} - \frac{N-1}{2} c_{N-1} + \frac{1}{12} c_1 c_{N-1}.
\end{equation}
\end{theorem}

The occurence of the height $\hgt_{GK, -}$, and not of $\hgt_{GK, +}$  or $\hgt_{GK, stab}$, in the left-hand side of \eqref{eq:intro GK DNC} is due to the specific role played by the \emph{lower} extensions in Steenbrink's  theory.

The class $\rho_{N-1}$, defined by the right-hand side of \eqref{rhoChern}, already appears in a computation of anomaly in \cite[5.8]{BCOV94}. \footnote{We were not aware of this reference when completing the first version of this memoir. We are grateful to Christophe Mourougane for pointing out this reference.} 

In the right-hand side of \eqref{eq:intro GK DNC}, we denote by:
$$\int_Y: \CH^\ast(Y)_\Q \lra \Q$$
the map that sends a class $\alpha$ in $\CH^\ast(Y)_\Q$ to the degree of its homogeneous component $\alpha^{[N]}$ in $\CH^N(Y)_\Q \simeq \CH_0(Y)_\Q$.

In particular, when the morphism $g$ is smooth, the class $[T_g]$ is the class of the vector bundle $$T_{Y/C} \simeq  \Omega^{1\vee}_{Y/C} = \omega^{1\vee}_{Y/C},$$
 and \eqref{eq:intro GK DNC} becomes  the simpler formula:
\begin{equation}\label{main formula smooth}
  \sum_{n=0}^{2(N-1)} (-1)^{n-1} \hgt_{GK, -}(\H^n(Y_\eta / C_\eta))
= \frac{1}{12} \int_Y c_1(\Omega^{1\vee}_{Y/C}) c_{N-1}(\Omega^{1\vee}_{Y/C}). 
\end{equation}

When the morphism $g$ is not necessarily smooth, the restrictions in $K^0(Y-Y_\Delta)$ of $[T_g]$ and  $[\omega_{Y/C}^{1\vee}]$ still coincide, and there exists a cycle  with rational coefficients $V$ supported by $Y_\Delta$ such that:
$$\frac{\Td([T_g])}{\Td(\omega_{Y/C}^{1\vee})} = 1 + [V],$$
and formula \eqref{eq:intro GK DNC} may be written:
\begin{equation}\label{main formula V}
 \sum_{n=0}^{2(N-1)} (-1)^{n-1} \hgt_{GK, -}(\H^n(Y_\eta / C_\eta))
= \frac{1}{12} \int_Y c_1(\omega_{Y/C}^{1\vee}) c_{N-1}(\omega_{Y/C}^{1\vee}) + \int_Y \rho_{N-1}(\omega_{Y/C}^{1\vee})  [V]. 
\end{equation}

Formula \eqref{main formula V} is similar to \eqref{main formula smooth}, but  its right-hand side contains a second term that depends on  the geometry of the  morphism  $g$ in the infinitesimal neighborhood of the ``bad fibers" $Y_\Delta$.

We are going to give a closed formula for this second term, that will be derived by means of an actual computation of the cycle $V$. To achieve this, we need to 
 introduce some more notation.

With the notation of Theorem \ref{intro GK DNC}, let us write the divisor $Y_\Delta$ (defined as the inverse image by $g$ of $\Delta$ seen as a reduced subscheme of $C$) as follows:
$$Y_\Delta = \sum_{i \in I} m_i D_i,$$
where $I$ is a finite set, and where the  $m_i$ are positive integers and  the $D_i$ are pairwise distinct connected non-singular divisors in $Y$.

The set $I$ may be written as the disjoint union:
$$I = \bigcup_{x \in \Delta} I_x,$$
where, for every $x \in \Delta,$ $I_x$ denotes the non-empty subset of $I$ defined by:
$$I_x := \big\{ i \in I \mid g(D_i) = \{x \} \big\}.$$

Let $\prec$ be a total order on the set $I$.

For every integer $r \geq 1$, let $D^r$ be the subscheme of codimension $r$ of $Y$ defined as the union of all the intersections of $r$ distinct components $(D_i)$:
$$D^r := \bigcup_{J \subset I, |J| = r} \bigcap_{i \in J} D_i.$$
For every $i$ in $I$, let:
$$i_{D_i} : D_i \longrightarrow Y$$
be the inclusion map, and 
$$\cN_i := \cN_{D_i} Y$$
be the normal line bundle to  $D_i$ in $Y$.

Observe also that, for every $i \in I,$ the scheme:
$$D_i \cap D^2 = \bigcup_{j \in I, j \neq i} D_{ij}$$
is a divisor with strict normal crossings in $D_i$. Similarly, for every $(i,j) \in I^2$ such that $i \prec j,$ the scheme:
$$D_{ij} \cap D^3 = \bigcup_{k \in I -\{i,j\}} D_{ij} \cap D_k$$
is a divisor with strict normal crossings in $D_{ij}$.  We shall also use the notation:
$$\Dcirc_i:= D_i - D_i \cap D^2 \quad \mbox{ and } \quad \Dcirc_{ij}:= D_{ij} - D_{ij} \cap D^3.$$

\begin{theorem}
\label{intro GK DNC with localized terms}
Under the assumptions of Theorem \ref{intro GK DNC}, using the above notation, the following equality holds:
\begin{equation}\label{eq: intro GK DNC with localized terms}
  \sum_{n=0}^{2(N-1)} (-1)^{n-1} \hgt_{GK, -}(\H^n(Y_\eta / C_\eta))
= \frac{1}{12} \int_Y c_1(\omega_{Y/C}^{1\vee}) c_{N-1}(\omega_{Y/C}^{1\vee}) + \sum_{x \in \Delta} \alpha_x,
\end{equation}
where for every $x$ in $\Delta$, $\alpha_x$ is the rational number given by the following two expressions\footnote{By $\chi_{\mathrm{top}}$, we denote the topological Euler characteristic.}:
\begin{align}
&\alpha_x = \frac{N-1}{4} \sum_{i \in I_x} (m_i - 1) \chi_{\mathrm{top}}(\Dcirc_i) + \frac{1}{12} \sum_{\substack{(i,j) \in I_x^2, \\ i \prec j}} (3 - m_i/m_j - m_j/m_i) \chi_{\mathrm{top}}(\Dcirc_{ij})  \label{intro:geommult}\\
&= \frac{1}{12} \sum_{i \in I_x} \Big[3 (N-1) (m_i - 1) \chi_{\mathrm{top}}(\Dcirc_i) + \int_{D_i} c_1(\cN_i) c_{N-2}(\Omega^{1 \vee} _{D_i} (\log D_i \cap D^2)) \Big] + \frac{1}{4} \sum_{\substack{(i,j) \in I_x^2, \\ i \prec j}} \chi_{\mathrm{top}}(\Dcirc_{ij}). \label{intro:Z/12}
\end{align}
\end{theorem}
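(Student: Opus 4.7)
The starting point is formula \eqref{main formula V} from Theorem \ref{intro GK DNC}, which already isolates the ``smooth'' term $\frac{1}{12}\int_Y c_1(\omega_{Y/C}^{1\vee})c_{N-1}(\omega_{Y/C}^{1\vee})$ and reduces matters to computing $\int_Y \rho_{N-1}(\omega_{Y/C}^{1\vee})[V]$, where $[V]$ is a rational cycle class supported on $Y_\Delta = \bigsqcup_{x \in \Delta} Y_x$. Because $[V]$ decomposes as a sum of classes supported on the individual fibers $Y_x$, the integral splits as $\sum_{x \in \Delta}\alpha_x$ with each $\alpha_x$ a local invariant of $Y_x$. It therefore suffices to compute each $\alpha_x$ and verify it admits the two expressions \eqref{intro:geommult} and \eqref{intro:Z/12}.

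To identify $[V]$ in $K$-theory, the plan is to use the standard residue exact sequence
\[
0 \to \Omega^1_Y \to \omega^1_Y(\log Y_\Delta) \to \bigoplus_{i \in I} (i_{D_i})_*\mathcal{O}_{D_i} \to 0,
\]
its counterpart on $C$, and the tautological sequence $0 \to g^*\omega^1_C(\log \Delta) \to \omega^1_Y(\log Y_\Delta) \to \omega^1_{Y/C} \to 0$. These yield in $K^0(Y)$ the identity
\[
[T_g] - [\omega^{1\vee}_{Y/C}] = \sum_{i\in I}[(i_{D_i})_*\mathcal{O}_{D_i}] - \sum_{x \in \Delta}[\mathcal{O}_{Y_x}],
\]
and multiplicativity of the Todd class then gives $\Td([T_g])/\Td(\omega^{1\vee}_{Y/C}) = 1 + [V]$ with $[V]$ the cycle-class expansion of the right-hand side. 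Applying Grothendieck-Riemann-Roch to the closed embeddings $i_{D_i}\colon D_i \hookrightarrow Y$ (so that $\ch((i_{D_i})_*\mathcal{O}_{D_i}) = (i_{D_i})_*(\Td(\mathcal{N}_i)^{-1})$) and expanding $[\mathcal{O}_{Y_x}]$ via $\ch(\mathcal{O}_{Y_x}) = 1 - e^{-\sum_{i \in I_x}m_iD_i}$, the multiplicities $m_i$ enter through the Chern character of $\mathcal{O}_{Y_x}$.

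With $[V]$ written as a sum of pushforwards from the $D_i$ and $D_{ij}$, extracting the component of $\rho_{N-1}(\omega^{1\vee}_{Y/C})[V]$ in $\CH^N(Y)_\Q$ and applying the projection formula reduces everything to integrals on the $D_i$ and $D_{ij}$; the deeper strata $D^r$ with $r \geq 3$ drop out for dimensional reasons, since $\rho_{N-1}$ has total degree $N-1$. The next step is to use the restriction identity $\omega^1_{Y/C \mid D_i} \cong \Omega^1_{D_i}(\log D_i \cap D^2)(\text{twist}) \oplus \text{trivial}$, or rather the precise Chern-class version of it, together with the characteristic-class identities established in Section~3, to rewrite $\rho_{N-1}(\omega^{1\vee}_{Y/C})|_{D_i}$ in terms of $\Omega^{1\vee}_{D_i}(\log D_i \cap D^2)$ and $c_1(\mathcal{N}_i)$; careful bookkeeping of the multiplicities and symmetrization on ordered pairs $(i,j)$ with $i \prec j$ then yields expression \eqref{intro:Z/12}.

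To pass from \eqref{intro:Z/12} to the geometric form \eqref{intro:geommult}, the key observation is that $\sum_{j \in I_x}m_j [D_j]|_{D_i} = 0$ in $\CH^1(D_i)_\Q$, since it is the restriction of $[Y_x] = g^*[x]$ to $D_i$ and $g(D_i) = \{x\}$; this yields $c_1(\mathcal{N}_i) = -\frac{1}{m_i}\sum_{j \in I_x, j\neq i}m_j [D_{ij}]$. Combined with the splitting $\Omega^1_{D_i}(\log D_i \cap D^2)|_{D_{ij}} = \Omega^1_{D_{ij}}(\log D_{ij} \cap D^3) \oplus \mathcal{O}_{D_{ij}}$ and the log Gauss-Bonnet identity $\int_{D_{ij}}c_{N-2}(\Omega^{1\vee}_{D_{ij}}(\log D_{ij} \cap D^3)) = \chi_{top}(\Dcirc_{ij})$ (and its analogue on $D_i$), this converts each term $\int_{D_i}c_1(\mathcal{N}_i) c_{N-2}(\Omega^{1\vee}_{D_i}(\log D_i \cap D^2))$ into a combination of $\chi_{top}(\Dcirc_{ij})$ with coefficients $-(m_i/m_j + m_j/m_i)$, producing exactly the difference between the two forms. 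The main technical obstacle will be the precise tracking of multiplicities $m_i$ through the expansion of $\Td(W)$ and of combinatorial signs coming from inclusion-exclusion across the strata of $Y_\Delta$, especially verifying that all contributions from intersections $D_J$ with $|J| \geq 3$ genuinely cancel after multiplication by $\rho_{N-1}$.
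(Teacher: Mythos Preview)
Your strategy is essentially that of the paper. The paper's proof (Theorem~\ref{GK in terms of classes in D non-reduced}) starts from the same formula, expands $\rho_{N-1}(\omega^{1\vee}_{Y/C}) = c_{N-2} - \frac{N-1}{2}c_{N-1} + \frac{1}{12}c_1 c_{N-1}$, multiplies by the degree-$1$ and degree-$2$ parts of $\Td([T_g])/\Td(\omega^{1\vee}_{Y/C})$, and then uses the restriction identity $c(\omega^{1\vee}_{Y/C}\!\mid_{D_J}) = c(\Omega^{1\vee}_{D_J}(\log D_J \cap D^{|J|+1}))$ and log Gauss--Bonnet to obtain the Euler characteristics. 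The two expressions \eqref{intro:geommult} and \eqref{intro:Z/12} arise from two equivalent forms of the degree-$2$ part of the Todd quotient (Proposition~\ref{first terms Td/Td general}, equations \eqref{eq Td/Td degree 2 with c_1(N_i)} versus \eqref{eq Td/Td degree 2 with multiplicities}), the passage between them being exactly your normal-bundle relation $m_i\,c_1(\cN_i) = -\sum_{j\neq i} m_j[D_{ij}]$.

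Two technical corrections. First, your $K$-theory identity is miswritten: the residue sequences give $[\omega^1_{Y/C}] - [\Omega^1_{Y/C}] = \sum_i [\cO_{D_i}] - \sum_x [\cO_{Y_x}]$, and dualizing flips the sign \emph{and} sends $[\cO_{D_i}] = 1 - [\cO_Y(-D_i)]$ to $1 - [\cO_Y(D_i)]$; so $[T_g] - [\omega^{1\vee}_{Y/C}]$ is not equal to $\sum_i[\cO_{D_i}] - \sum_x[\cO_{Y_x}]$ as you wrote. The paper sidesteps this by computing $\Td([T_g])/\Td(\omega^{1\vee}_{Y/C})$ directly from multiplicativity of Todd classes (Proposition~\ref{first terms Td/Td general}), obtaining the closed form $\prod_i \td([D_i]) - \frac{1}{2}\sum_i m_i[D_i]$; this is cleaner than routing through GRR for the closed embeddings. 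Second, your dimensional argument is slightly off: $\rho_{N-1}$ has components in degrees $N-2$, $N-1$, \emph{and} $N$; the reason only $[V]^{(1)}$ and $[V]^{(2)}$ contribute is that the \emph{lowest}-degree component of $\rho_{N-1}$ is $c_{N-2}$, not that its top degree is $N-1$. These slips are fixable; your outline is otherwise sound and follows the paper closely.
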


The expression \eqref{intro:geommult} for $\alpha_x$ involves only the topology of the open strata $\Dcirc_i$ and $\Dcirc_{ij}$ of the fiber $g^{-1}(x)$ and the multiplicities $m_i$ of its components $D_i$. The expression \eqref{intro:Z/12} makes clear that $\alpha_x$ belongs to $(1/12)\Z$.

Observe that, when the divisor $Y_\Delta$ is reduced, that is when the degenerations of the fibers of $g$ are semistable with smooth components, the variations of Hodge structures   $\H^n(Y_\eta / C_\eta)$ have unipotent local monodromy, and formula \eqref{eq: intro GK DNC with localized terms} takes the simpler form:
\begin{equation}\label{eq: intro GK DNC with localized terms semistable}
  \sum_{n=0}^{2(N-1)} (-1)^{n-1} \hgt_{GK}(\H^n(Y_\eta / C_\eta))
= \frac{1}{12} \int_Y c_1(\omega_{Y/C}^{1\vee}) c_{N-1}(\omega_{Y/C}^{1\vee}) +  \frac{1}{12} \chi_{\mathrm{top}}(D^2 - D^3).
\end{equation}

In this semistable situation, using the identification of the lower Deligne extension of the cohomology and the logarithmic de Rham cohomology (see Proposition \ref{lower Deligne extension and logarithmic cohomology} below), we also have an isomorphism of line bundles over $C$:
$$\GK_{C}(\H^1(Y_\eta / C_\eta)) \lrasim \det g_\ast \omega^1_{Y/C}.$$
Moreover, using the unipotence of the local monodromy, Poincar\'e duality and the existence of an integral structure on the de Rham cohomology vector bundles as in Proposition \ref{GK Poincare duality}, we have an isomorphism of line bundles over $C$, up to 2-torsion:
$$\GK_{C}(\H^1(Y_\eta / C_\eta)) \simeq \GK_{C}(\H^{2N-3}(Y_\eta / C_\eta)).$$

Under this assumption of semistable reduction, when $N=3$, we also have the relations:
$$\hgt_{GK}(\H^1(Y_\eta / C_\eta))= \hgt_{GK}(\H^3(Y_\eta / C_\eta))= \deg g_\ast \omega^1_{Y/C},$$ and therefore
the equality \eqref{eq: intro GK DNC with localized terms semistable} becomes an expression for the Griffiths height of the variation of Hodge structures $\H^2(Y_\eta / C_\eta)$ associated to a family of surfaces with semistable degenerations:

\begin{corollary} In the situation of Theorem \ref{intro GK DNC}, when $N=3$ and when the degenerations of the fibers of $g$ are semistable with smooth components, the following equality holds:
\begin{equation}\label{surfacessemistable}
\hgt_{GK} (\H^2(Y_\eta / C_\eta))  = 2 \deg g_\ast \omega^1_{Y/C} + \frac{1}{12} \int_Y c_1(\omega^1_{Y/C})  c_2 (\omega^1_{Y/C}) - \frac{1}{12} \chi_{\mathrm{top}} (D^2 - D^3). 
\end{equation}
\end{corollary}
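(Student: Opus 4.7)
The corollary is a direct specialization of formula \eqref{eq: intro GK DNC with localized terms semistable} to $N = 3$, combined with the two identifications of Griffiths line bundles stated in the paragraph preceding it. The plan is therefore to unpack the alternating sum of \eqref{eq: intro GK DNC with localized terms semistable}, identify each term using Poincar\'e duality and the Hodge-theoretic description of $\cF^1 \H^1$, and solve for the single unknown $\hgt_{GK}(\H^2(Y_\eta/C_\eta))$.

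First I would write out the left-hand side of \eqref{eq: intro GK DNC with localized terms semistable} in the case $N=3$ as
\begin{equation*}
\sum_{n=0}^{4} (-1)^{n-1} \hgt_{GK}(\H^n(Y_\eta/C_\eta)) = -\hgt_{GK}(\H^0) + \hgt_{GK}(\H^1) - \hgt_{GK}(\H^2) + \hgt_{GK}(\H^3) - \hgt_{GK}(\H^4).
\end{equation*}
The terms $\hgt_{GK}(\H^0)$ and $\hgt_{GK}(\H^4)$ vanish: for $n=0$ the VHS is the constant $\Z$, with Hodge filtration concentrated in bidegree $(0,0)$ so that $\GK_{C}(\H^0) \simeq \cO_C$; for $n = 2(N-1) = 4$ the VHS is a Tate twist of the constant one (the fundamental-class local system), hence globally constant with trivial Griffiths line bundle.

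Next I would invoke the isomorphism $\GK_C(\H^1(Y_\eta/C_\eta)) \lrasim \GK_C(\H^{2N-1}(Y_\eta/C_\eta))$ (up to $2$-torsion) stated just above the corollary, which yields the equality of degrees $\hgt_{GK}(\H^1) = \hgt_{GK}(\H^3)$. Similarly, from the isomorphism $\det g_\ast \omega^1_{Y/C} \lrasim \GK_C(\H^1(Y_\eta/C_\eta))$ modulo $2$-torsion, we obtain $\hgt_{GK}(\H^1) = \deg g_\ast \omega^1_{Y/C}$; here one uses that in the semistable case with unipotent monodromy, the Hodge bundle $\cF^1 \H^1$ of the weight-one VHS is identified with $g_\ast \omega^1_{Y/C}$, and that $\GK_C$ of a weight-one VHS is just $\det \cF^1$. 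Combining these substitutions, the alternating sum reduces to $2\,\deg g_\ast \omega^1_{Y/C} - \hgt_{GK}(\H^2)$.

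Finally I would equate this with the right-hand side of \eqref{eq: intro GK DNC with localized terms semistable}, handling the sign change induced by passing from the dual bundle $\omega^{1\vee}_{Y/C}$ to $\omega^1_{Y/C}$: since $c_i(E^\vee) = (-1)^i c_i(E)$, we have
\begin{equation*}
c_1(\omega^{1\vee}_{Y/C})\, c_2(\omega^{1\vee}_{Y/C}) = -\,c_1(\omega^1_{Y/C})\, c_2(\omega^1_{Y/C}).
\end{equation*}
Solving the resulting linear equation for $\hgt_{GK}(\H^2(Y_\eta/C_\eta))$ gives exactly the formula of the corollary. The entire argument is really bookkeeping once Theorem \ref{intro GK DNC with localized terms} is available; there is no genuine obstacle, but a mild point of care is the consistent use of sign conventions for Chern classes of duals and the $2$-torsion ambiguities in the stated isomorphisms of Hodge bundles, which fortunately have no impact on the degrees that enter \eqref{eq: intro GK DNC with localized terms semistable}.
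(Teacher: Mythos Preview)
Your proposal is correct and follows exactly the approach the paper intends: the corollary is stated as an immediate specialization of \eqref{eq: intro GK DNC with localized terms semistable} together with the two isomorphisms (up to $2$-torsion) displayed just before it, and you have carried out precisely that bookkeeping. One small remark: the paper's displayed isomorphism $\GK_{C}(\H^1)\simeq\GK_{C}(\H^{2N-1})$ contains a typo (it should read $\H^{2N-3}$, coming from Poincar\'e duality in relative dimension $N-1$), and you have implicitly and correctly interpreted it as $\hgt_{GK}(\H^1)=\hgt_{GK}(\H^3)$ when $N=3$.
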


When the irregularity of the smooth fibers of $g$ is zero, then the direct image $ g_\ast \omega^1_{Y/C}$ vanishes, and formula \eqref{surfacessemistable} becomes:
\begin{equation}\label{surfacessemistablebis}
\hgt_{GK} (\H^2(Y_\eta / C_\eta))  = \frac{1}{12} \int_Y c_1(\omega^1_{Y/C})  c_2 (\omega^1_{Y/C}) - \frac{1}{12} \chi_{\mathrm{top}} (D^2 - D^3), 
\end{equation}
and therefore involves only characteristic classes and the topology of the fibers of $g$ with bad reduction.\footnote{The right-hand side of \eqref{surfacessemistablebis} is easily seen to be also $- (1/12) \int_Y c_1([T_g]) c_2([T_g])$.}

\subsection{Application to pencils of projective varieties 
with only non-degenerate critical points}

We shall notably apply Theorem \ref{intro GK DNC with localized terms} in the case where $Y$ is the blow-up of a finite number of non-degenerate critical points of a morphism
$$f : H \lra C.$$
In this case, using a result of Eriksson, Freixas and Mourougane concerning the comparison of Deligne extensions and the so-called ``elementary exponents'' of Hodge bundles in the case 
where $f$ admits only non-degenerate critical points (\cite[Prop. 3.10]{EFM21}), and computations of characteristic classes, we shall obtain the following result.

\begin{theorem}
\label{intro GK critical points} 
Let $C$ be a connected smooth projective complex curve with generic point $\eta$, $H$ be a smooth projective $N$-dimensional complex scheme, and let:
$$
f : H \lra C
$$
be a  morphism of complex schemes. Let us assume that there exists a finite subset $\Sigma$ in $H$ such that $f$ is smooth on $H - \Sigma$ and admits a non-degenerate critical point at every point of $\Sigma$.

Then the following equalities hold:
\begin{equation}
\label{GK- points doubles}
  \sum_{n=0}^{2(N-1)} (-1)^{n-1} \hgt_{GK, -}(\H^n(H_\eta / C_\eta))
= \frac{1}{12} \int_H  c_1([\Omega^1_{H/C}]^\vee) c_{N-1}([\Omega^1_{H/C}]^\vee)  + u^-_N \, \vert\Sigma\vert,
\end{equation}
and
\begin{equation}
\sum_{n=0}^{2(N-1)} (-1)^{n-1} \hgt_{GK, +}(\H^n(H_\eta / C_\eta))
= \frac{1}{12} \int_H  c_1([\Omega^1_{H/C}]^\vee) c_{N-1}([\Omega^1_{H/C}]^\vee)  + u^+_N \, \vert\Sigma\vert, \label{GK+ points doubles}
\end{equation}
where $u^-_N$ and $u^+_N$ are the rational numbers defined by: 
\begin{equation*}
u^-_N := 
\begin{cases}
 (5N-3)/24 & \text{if $N$ is odd} \\
 N/24 & \text{if $N$ is even},
\end{cases}
\end{equation*}
and:
\begin{equation*}
u^+_N := 
\begin{cases}
 -(7N-9)/24 & \text{if $N$ is odd} \\
 N/24 & \text{if $N$ is even}.
\end{cases}
\end{equation*}
\end{theorem}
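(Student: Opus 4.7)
The plan is to reduce Theorem~\ref{intro GK critical points} to Theorem~\ref{intro GK DNC with localized terms} by blowing up the ordinary double points of $f$, and then to evaluate the resulting local contributions explicitly. Let $\pi \colon Y \to H$ be the blow-up of $H$ along the finite set $\Sigma$, and set $g := f \circ \pi \colon Y \to C$. In local coordinates in which $f$ reads $t = x_1^2 + \cdots + x_N^2$ near $p \in \Sigma$, the exceptional divisor $E_p \simeq \PP^{N-1}$ appears in $g^{-1}(f(p))$ with multiplicity $2$ (since $f$ vanishes to order~$2$ at~$p$), while the strict transform $\tilde F_p$ of $f^{-1}(f(p))$ is smooth and meets $E_p$ transversally along the smooth quadric $Q_p \subset \PP^{N-1}$ of dimension $N-2$. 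Thus $g \colon Y \to C$ satisfies the hypotheses of Theorem~\ref{intro GK DNC with localized terms}, and since $Y_\eta = H_\eta$, applying that theorem yields
\begin{equation*}
\sum_{n=0}^{2(N-1)} (-1)^{n-1} \hgt_{GK,-}(\H^n(H_\eta/C_\eta)) = \frac{1}{12} \int_Y c_1(\omega_{Y/C}^{1\vee}) c_{N-1}(\omega_{Y/C}^{1\vee}) + \sum_{p \in \Sigma} \alpha_p.
\end{equation*}

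Next I would compute the local contribution $\alpha_p$ via formula~\eqref{intro:geommult}. With $I_{f(p)} = \{\tilde F_p, E_p\}$, $m_{\tilde F_p} = 1$, $m_{E_p} = 2$ and $\tilde F_p \cap E_p = Q_p$ as the only double stratum, it reduces to
\begin{equation*}
\alpha_p = \tfrac{N-1}{4}\, \chi_{\mathrm{top}}(\PP^{N-1} \setminus Q_p) + \tfrac{1}{24}\, \chi_{\mathrm{top}}(Q_p),
\end{equation*}
which evaluates, via $\chi_{\mathrm{top}}(\PP^{N-1}) = N$ and the classical Euler characteristic of a smooth quadric of dimension $N-2$ (equal to $N$ when $N$ is even and to $N-1$ when $N$ is odd), to $N/24$ for $N$ even and to $7(N-1)/24$ for $N$ odd. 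In parallel I would compare the global characteristic class integrals on $Y$ and~$H$:
\begin{equation*}
\tfrac{1}{12} \int_Y c_1(\omega_{Y/C}^{1\vee}) c_{N-1}(\omega_{Y/C}^{1\vee}) - \tfrac{1}{12} \int_H c_1([\Omega^1_{H/C}]^\vee) c_{N-1}([\Omega^1_{H/C}]^\vee),
\end{equation*}
which is localized at each $p \in \Sigma$ to a universal constant $\beta_N$ depending only on $N$. This comparison relies on the standard blow-up formulas for Chern classes (with $\cN_{E_p/Y} = \cO_{E_p}(-1)$ and the conormal sequences of $Q_p$ inside $E_p$ and $\tilde F_p$), together with a Koszul-type analysis of the difference between the $K$-theoretic class $[\Omega^1_{H/C}]^\vee$ on $H$ (which is not represented by a vector bundle at the critical points, where $\mathrm{rk}\,\Omega^1_{H/C}$ jumps from $N-1$ to $N$) and the genuinely locally free $\omega^{1\vee}_{Y/C}$ on the blow-up. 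Adding $\alpha_p + \beta_N$ should yield $u_N^-$.

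For the upper Griffiths height formula~\eqref{GK+ points doubles}, I would compare the upper and lower Deligne extensions by a Picard--Lefschetz analysis of the monodromy at $f(p)$ acting on $\H^{N-1}$. For $N$ even (odd fibre dimension) the monodromy on the one-dimensional space of vanishing cycles is unipotent, so the two Deligne extensions coincide and $u_N^+ = u_N^- = N/24$. For $N$ odd (even fibre dimension) it acts by $-1$, and the vanishing cycle is pure of Hodge type $((N-1)/2,(N-1)/2)$; the two extensions then differ by a twist by $\cO(f(p))$ on the single graded piece of the Hodge filtration at level $r = (N-1)/2$, so the Griffiths line bundles differ by that twist raised to the power $r$. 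Summing this correction over $p \in \Sigma$ yields $u_N^+ - u_N^- = -(N-1)/2$ per critical point, which matches the claimed constants.

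The main obstacle is the computation of $\beta_N$: comparing the $K$-theoretic class $[\Omega^1_{H/C}]^\vee$ on $H$ with $\omega^{1\vee}_{Y/C}$ on the blow-up requires careful bookkeeping of both the multiplicity-$2$ exceptional divisor and the multiplicity-$1$ strict transform, and the resulting local integral around each critical point produces different values in the even and odd $N$ cases because of the different Euler characteristics of the smooth quadric $Q_p$. Once this local model is handled, the Picard--Lefschetz comparison giving the upper-lower discrepancy is standard.
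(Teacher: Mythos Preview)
Your proposal is correct and follows essentially the same route as the paper: blow up $\Sigma$, apply Theorem~\ref{intro GK DNC with localized terms} to the resulting DNC model, compute the local terms $\alpha_p$ from the Euler characteristics of $\PP^{N-1}$, the quadric $Q_p$, and their complement, and separately compare the global integral on the blow-up with the one on $H$; the upper/lower discrepancy is then read off from the monodromy on the vanishing cycle. The paper organizes the computation of your $\beta_N$ in two steps---first comparing $(c_1c_{N-1})(\omega^{1\vee}_{Y/C})$ with $(c_1c_{N-1})([T_g])$ on the blow-up via the general DNC comparison formulas of Section~\ref{section comparing Omega^1 and omega^1}, then pushing forward and comparing $(c_1c_{N-1})([T_g])$ with $(c_1c_{N-1})([T_f])$ via the blow-up formula for Chern classes---and packages the Picard--Lefschetz input through the formalism of elementary exponents (Proposition~\ref{elementary exponents non-degenerate singularities}), but this is the same argument you sketch.
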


\section[The Griffiths height of a pencil of hypersurfaces]{The Griffiths height of the middle-dimensional cohomology of a pencil of hypersurfaces}

\subsection{Families of ample hypersurfaces in a smooth pencil}

Applying Theorem  \ref{intro GK critical points} to the situation where $H$ is an hypersurface in some smooth $C$-scheme $X$, and using  the weak Lefschetz theorem (see for instance \cite[Theorem 1.29]{Voisin03}), we obtain the following somewhat technical result.

\begin{proposition}
\label{intro technical result}
Let $C$ be a connected smooth projective complex curve with generic point $\eta$, $X$ be a smooth projective complex scheme of pure dimension $N+1$, and let
$$\pi : X \lra C$$
be a smooth surjective morphism of complex schemes. Let $H$ be a non-singular hypersurface in $X$ such that the morphism
$$\pi_{| H} : H \lra C$$
is flat\footnote{or equivalently, is non-constant on every connected component of $H$, or has fibers of pure dimension $N-1$.} and has a finite set $\Sigma$ of critical points, all of which are non-degenerate.

If we denote by $L$ the line bundle $\cO_X(H)$ on $X$,  then the following equality holds:
\begin{equation}\label{SigmaLX}
\vert \Sigma \vert = \int_X (1-c_1(L))^{-1} c(\Omega^1_{X/C}),
\end{equation}

If moreover the line bundle $L$ is ample relatively to~$\pi$, 
then the following equalities hold:
\begin{multline}\label{GK+XL}
\hgt_{GK, +}(\H^{N-1}(H_\eta / C_\eta))   =\hgt_{GK}(\H^{N-1}(X/C)) + \hgt_{GK}(\H^{N+1}(X/C)) - \hgt_{GK}(\H^N(X/C)) \\
+ \frac{1}{12} \int_X \big[(1-c_1(L))^{-1}c_1(\Omega^1_{X/C}) c(\Omega^1_{X/C}) 
-  c_1(L)c_N(\Omega^1_{X/C})\big] + v^+_N\,  \vert \Sigma\vert,
\end{multline}
and:
\begin{multline}\label{GK-XL}
\hgt_{GK, -}(\H^{N-1}(H_\eta / C_\eta))   =\hgt_{GK}(\H^{N-1}(X/C)) + \hgt_{GK}(\H^{N+1}(X/C)) - \hgt_{GK}(\H^N(X/C)) \\
+ \frac{1}{12} \int_X \big[(1-c_1(L))^{-1}c_1(\Omega^1_{X/C}) c(\Omega^1_{X/C}) 
-  c_1(L)c_N(\Omega^1_{X/C})\big] + v^-_N\,  \vert \Sigma\vert,
\end{multline}
where:
\begin{equation*}
v^+_N := 
\begin{cases}
 7(N-1)/24 & \text{if $N$ is odd} \\
 (N+2)/24 & \text{if $N$ is even},
\end{cases}
\end{equation*}
and:
\begin{equation*}
v^-_N := 
\begin{cases}
 - 5(N-1)/24 & \text{if $N$ is odd} \\
 (N+2)/24 & \text{if $N$ is even}.
\end{cases}
\end{equation*}
\end{proposition}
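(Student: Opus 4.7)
The plan is to combine Theorem~\ref{intro GK critical points}, applied to $\pi_{|H}: H \to C$, with the smooth formula \eqref{main formula smooth}, applied to $\pi: X \to C$, and then use the relative weak Lefschetz theorem to isolate the middle-degree Griffiths height $\hgt_{GK,\pm}(\H^{N-1}(H_\eta/C_\eta))$ from the resulting alternating-sum identity.

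First, I would verify formula \eqref{SigmaLX}. The critical locus of $\pi_{|H}$ is the zero scheme on $H$ of the composite $\phi: T_{X/C}|_H \hookrightarrow T_X|_H \twoheadrightarrow N_{H/X} = L|_H$, the second arrow being the first-order expansion of the defining section of $H$. Non-degeneracy of the critical points means that $\phi$, viewed as a section of the rank-$N$ bundle $\Omega^1_{X/C}|_H \otimes L|_H$ on the $N$-dimensional $H$, vanishes transversally on $\Sigma$, so $|\Sigma| = \int_H c_N(\Omega^1_{X/C} \otimes L)|_H = \int_X c_1(L)\, c_N(\Omega^1_{X/C} \otimes L)$. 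Expanding $c_N(E \otimes L) = \sum_k c_k(E) c_1(L)^{N-k}$ and recognizing the result as the degree-$(N+1)$ component of $(1-c_1(L))^{-1} c(\Omega^1_{X/C})$ yields the claim.

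Next, Theorem~\ref{intro GK critical points} applied to $H$ with critical set $\Sigma$, and \eqref{main formula smooth} applied to $\pi$, yield
\begin{align*}
S_H^\pm &:= \sum_{n=0}^{2(N-1)}(-1)^{n-1}\hgt_{GK,\pm}(\H^n(H_\eta/C_\eta)) = \tfrac{1}{12}\int_H c_1([\Omega^1_{H/C}]^\vee)c_{N-1}([\Omega^1_{H/C}]^\vee) + u_N^\pm|\Sigma|, \\
S_X &:= \sum_{n=0}^{2N}(-1)^{n-1}\hgt_{GK}(\H^n(X_\eta/C_\eta)) = \tfrac{1}{12}\int_X c_1(\Omega^{1\vee}_{X/C})c_N(\Omega^{1\vee}_{X/C}).
\end{align*}
The relative weak Lefschetz theorem, valid because $L$ is relatively ample and $H_\eta$ is smooth in $X_\eta$, provides isomorphisms of VHS $i^*: \H^n(X_\eta) \xrightarrow{\sim} \H^n(H_\eta)$ for $n<N-1$ and $i_*: \H^n(H_\eta)\xrightarrow{\sim} \H^{n+2}(X_\eta)(1)$ for $n>N-1$; in particular the monodromy on $\H^n(H_\eta)$ is trivial for $n \neq N-1$ (inherited from the smooth $\pi$), so the $\pm$-versions of the height agree there. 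The Tate twist translates into the line bundle identity $\GK_C(\H^n(H_\eta)) = \GK_C(\H^{n+2}(X_\eta)) \otimes (\det F^1\H^{n+2}(X_\eta/C_\eta))^{-1}$ for $n>N-1$. Substituting these identifications into $S_H^\pm$ and comparing termwise with $S_X$, the heights $\hgt_{GK}(\H^n(X_\eta/C_\eta))$ for $n \in \{0,\dots,N-2\}\cup\{N+2,\dots,2N\}$ cancel, leaving exactly the three exceptional middle terms $\hgt_{GK}(\H^{N-1}(X/C))$, $\hgt_{GK}(\H^N(X/C))$, $\hgt_{GK}(\H^{N+1}(X/C))$ with signs $+,-,+$, matching the proposition.

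The remaining task is to consolidate the characteristic-class side. The conormal sequence $0 \to L^{-1}|_H \to \Omega^1_{X/C}|_H \to \Omega^1_{H/C} \to 0$ gives, in $K$-theory, $[\Omega^1_{H/C}]^\vee = [T_{X/C}|_H] - [L|_H]$, whence $c([\Omega^1_{H/C}]^\vee) = c(\Omega^{1\vee}_{X/C})|_H \cdot (1+c_1(L)|_H)^{-1}$; combined with $[H] = c_1(L)$, this converts the integral on $H$ into an integral on $X$ of the desired shape. The Tate-twist corrections $\sum_{m}(-1)^{m-1}\deg\det F^1\H^m(X_\eta/C_\eta)$ are rewritten, via the Hodge-de Rham decomposition, as alternating sums of $\deg\det R^q\pi_*\Omega^p_{X/C}$, and then evaluated via Grothendieck-Riemann-Roch applied to the smooth projective $\pi$ to produce another Chern class integral on $X$. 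Assembling everything, the integrand should consolidate into the stated $(1-c_1(L))^{-1}c_1(\Omega^1_{X/C})c(\Omega^1_{X/C}) - c_1(L)c_N(\Omega^1_{X/C})$, while the constants combine from $(-1)^N u_N^\pm$ with an additional $\tfrac{1}{12}$ arising from the GRR consolidation, yielding $v_N^\pm = (-1)^N u_N^\pm + \tfrac{1}{12}$ with the correct parity dependence. The hardest step is precisely this final consolidation: tracking signs, Tate-twist corrections, and binomial manipulations of Chern polynomials so that the conormal rewrite, the GRR output for the Gysin corrections, and the smooth formula on $X$ combine cleanly into the two-term integrand and the constant $v_N^\pm$.
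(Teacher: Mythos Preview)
Your overall strategy matches the paper: compute $|\Sigma|$ via the top Chern class of $\Omega^1_{X/C}|_H\otimes L|_H$, apply Theorem~\ref{intro GK critical points} to $H$ and formula~\eqref{main formula smooth} to $X$, then use weak Lefschetz to reduce the difference of alternating sums to the three middle terms. The computation of $|\Sigma|$ is correct.

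There is, however, a genuine gap in your treatment of the range $n>N-1$. First, your Tate-twist formula is wrong: for any VHS $V$ one has $\GK(V(1))\simeq\GK(V)\otimes(\det V)^{-1}$, not $\otimes(\det F^1V)^{-1}$. Second, and more importantly, even after this correction the determinant $\det\H^{m}(X/C)$ is $2$-torsion (it carries an integral structure over a smooth base; this is Lemma~\ref{det H^n 2-torsion} with $A^n=0$), so it vanishes in $\CH^1(C)_\Q$ and contributes nothing. Your proposed GRR evaluation of these ``Tate-twist corrections'' therefore cannot be the source of the extra $\tfrac{1}{12}$. The paper bypasses this entirely by invoking Poincar\'e duality directly (Proposition~\ref{GK Poincare duality}): for $n>N-1$ one gets $c_1\GK(\H^n(H_\eta))=c_1\GK(\H^{2(N-1)-n}(H_\eta))=c_1\GK(\H^{2(N-1)-n}(X))=c_1\GK(\H^{n+2}(X))$ in $\CH^1(C)_\Q$, with no correction terms.

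The constant identity $v_N^\pm=(-1)^Nu_N^\pm+\tfrac{1}{12}$ you state is correct, but the $\tfrac{1}{12}$ arises purely from the Chern-class expansion you gloss over. Concretely, when you push $i_*\big((c_1c_{N-1})([\Omega^1_{H/C}]^\vee)\big)$ to $X$ via the conormal sequence and $[H]=c_1(L)$, the term $\sum_{k=0}^{N-1}c_1(L)^{N+1-k}c_k(\Omega^1_{X/C})$ differs from the degree-$(N+1)$ piece of $(1-c_1(L))^{-1}c(\Omega^1_{X/C})$ by exactly $c_1(L)c_N(\Omega^1_{X/C})$; equivalently it equals $i_*[\Sigma]-c_1(L)c_N(\Omega^1_{X/C})$. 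This hidden $i_*[\Sigma]$, multiplied by the ambient $\tfrac{(-1)^N}{12}$, is what produces the extra constant. You would discover this only upon actually carrying out the ``final consolidation'' you flag as hardest, and finding that your anticipated GRR contribution is zero.
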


In the expressions in the right-hand side of \eqref{SigmaLX}, \eqref{GK+XL}, and \eqref{GK-XL}, we denote by $c(\Omega^1_{X/C})$ the total Chern class of the vector bundle $\Omega^1_{X/C}$ of rank $N$:
$$c(\Omega^1_{X/C}) := 1 + c_1(\Omega^1_{X/C})+ \dots + c_N(\Omega^1_{X/C}).$$ These expressions may be expanded in terms of $c_1(L),$ $c_1(\Omega^1_{X/C}),$..., $c_N(\Omega^1_{X/C})$. For instance, \eqref{GK+XL} may also be written:
\begin{multline*}
\hgt_{GK, +}(\H^{N-1}(H_\eta / C_\eta))   =\hgt_{GK}(\H^{N-1}(X/C)) + \hgt_{GK}(\H^{N+1}(X/C)) - \hgt_{GK}(\H^N(X/C))  \\
+ \frac{1}{12} \sum_{j=0}^N \int_X c_1(L)^j c_1(\Omega^1_{X/C}) c_{N-j}(\Omega^1_{X/C}) 
-\frac{1}{12} \int_X c_1(L) c_N(\Omega^1_{X/C})  + v^+_N\,  \sum_{j=1}^{N+1} \int_X c_1(L)^j c_{N+1-j}(\Omega^1_{X/C}).
\end{multline*}

\subsection{Pencils of hypersurfaces in the projective space}

As  a first application of  Proposition \ref{intro technical result}, we may study  pencils of projective hypersurfaces. 

Let $E$ be a vector bundle of rank $N+1$ over a connected smooth projective complex curve $C$, and let
$$\pi: \PP(E):= \mathrm{Proj}\,  S^\bullet E^\vee \lra C$$
be the associated projective bundle. We shall denote by $\cO_E(-1)$ the tautological rank one subbundle of $\pi^\ast E$,  and by $\cO_E(1)$ its dual.  An horizontal hypersurface in the projective bundle $\PP(E)$ is an effective Cartier divisor $H$ in $\PP(E)$ such that the morphism
$$\pi_{\mid H} : H \lra C$$
is flat.  
Then its fibers
$$H_x := \pi_{\mid H}^{-1} (x), \quad x \in C$$
are hypersurfaces in the projective spaces $\PP(E_x) \simeq \PP^N(\C)$. Their degree $d$ is independent of $x \in C$, and defines the \emph{relative degree} of the horizontal hypersurface.

We introduce the \emph{intersection-theoretic height} of an horizontal hypersurface $H$. It is defined as the rational number:
\begin{equation}\label{hintdef}
\mathrm{ht}_{int}(H/C) := \int_{\PP(E)} c_1(\cO_E(1))^N \cap [H] + d N \mu(E),
\end{equation}
where $$\mu(E) := \deg_C E /\rk E = \deg (c_1(E) \cap [C]) /(N+1)$$
denotes the slope of the vector bundle $E$ over $C$.\footnote{The additive normalization by $ d N \mu(E)$ in the right-hand side of \eqref{hintdef} ensures that $\mathrm{ht}_{int}(H/C)$ is unchanged when the vector bundle $E$ is replaced by $E \otimes L$ for some line bundle $L$ over $C$: it depends only on $H$  as a subscheme of the projective bundle $\PP:= \PP(E)$, and not on the actual choice of a vector bundle $E$ such that $\PP \simeq \PP(E)$. See Remark \ref{invariance htint} below.}

A version of this height, in the more general setting of pencils of cycles in projective space, is already implicit in \cite{Bost94}; indeed \cite[Theorem III]{Bost94} simply states that when the Chow point attached to the generic fiber of such a pencil is semistable (in the sense of geometric invariant theory), then its height is non-negative.

The arithmetic analog of this height for cycles, when the function field of the curve is replaced by a number field, also appears implicitly in \cite[Theorem I]{Bost94}. It then appears explicitly both in \cite{Zhang96} under the notation $h_{\mathcal{E}}(X)$ and (up to multiplication by a constant factor) in \cite[2.1]{Bost96} under the notation $h_{\mathrm{norm}}(Z, \overline{E})$ (the latter actually introduces it as a special case of a normalized height $h_{\mathrm{norm}}(\mathscr{X}, \overline{\mathscr{L}})$ attached to a projective 
variety $\mathscr{X}$ over the ring of integers of a number field, equipped with a hermitian line bundle $\overline{\mathscr{L}}$).

The relation between this height and geometric invariant theory is studied in \cite[Proposition 4.2]{Zhang96} and \cite[Proposition 3.1]{Bost96}, and extended to arbitrary homogeneous representations of a general linear group in \cite{Gasbarri00} and \cite{Maculan17}.

We will show the following theorem.

\begin{theorem}
\label{intro GK hyp P(E)}
Let $C$ be a connected smooth  projective complex curve with generic point $\eta$, $E$ a vector bundle of rank~$N+1$ over $C$, and $H \subset \PP(E) $ an horizontal hypersurface of relative degree $d$, smooth over $\C.$  

If $\pi_{\mid H}$ has only a finite number of critical points, all of which are non-degenerate, then the cardinality of the set $\Sigma$ of critical points satisfies:
\begin{equation}\label{cardSigmahypproj}
\vert \Sigma \vert = (N+1) (d-1)^N \, \mathrm{ht}_{int}(H/C).
\end{equation}

Moreover, under the same hypothesis, the following equalities hold:
$$\mathrm{ht}_{GK,+}(\H^{N-1}(H_\eta/C_\eta))
= F_+(d,N) \, \mathrm{ht}_{int}(H/C),$$
$$\mathrm{ht}_{GK,-}(\H^{N-1}(H_\eta/C_\eta))
= F_-(d,N) \,  \mathrm{ht}_{int}(H/C),$$
and:
$$\mathrm{ht}_{GK,stab}(\H^{N-1}(H_\eta/C_\eta))
= F_{stab}(d,N) \,  \mathrm{ht}_{int}(H/C),$$
where $F_+(d,N)$, $F_-(d,N)$ and $F_{stab}(d,N)$ are the elements of $(1/12) \Z$ given when  $N$ is odd by:
$$F_{stab}(d,N) := \frac{N+1}{24 d^2} \left[ (d-1)^N  (d^2 N - d^2 - 2 d N - 2 )+ 2 (d^2-1) \right] ,$$
$$F_+(d,N) := F_{stab}(d,N) + \frac{(N+1) (N-1) (d-1)^N}{4},$$
and:
$$F_-(d,N) := F_{stab}(d,N) - \frac{(N+1) (N-1) (d-1)^N}{4},$$
and when $N$ is even by:
$$F_+(d,N) = F_-(d,N) = F_{stab}(d,N) := \frac{N+1}{24 d^2} \left [ (d-1)^N  (d^2 N + 2 d^2 - 2 d N - 2) - 2 (d^2-1) \right ]. $$
\end{theorem}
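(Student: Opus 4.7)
The plan is to apply Proposition \ref{intro technical result} to the smooth projective bundle $\pi: X := \PP(E) \to C$ and the hypersurface $H \subset X$, with $L := \cO_X(H)$. Since $H$ has positive relative degree $d$, the line bundle $L$ is ample relatively to $\pi$, so Proposition \ref{intro technical result} is available in full. First I would observe that the Griffiths heights of the ambient pencil all vanish: because $\pi$ is smooth with fibres $\PP^N$, base change gives $R^p \pi_* \Omega^q_{X/C} = 0$ for $p \neq q$, while $R^p \pi_* \Omega^p_{X/C}$ is a line bundle on $C$ trivialized by the nowhere-vanishing global Hodge class $c_1(\cO_E(1))^p$ restricted fibrewise. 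Hence $\GK_C(\H^n(X/C)) \simeq \cO_C$ and $\mathrm{ht}_{GK}(\H^n(X/C)) = 0$ for every $n$, so the formulas \eqref{GK+XL} and \eqref{GK-XL} reduce to
$$\mathrm{ht}_{GK, \pm}(\H^{N-1}(H_\eta/C_\eta)) = \tfrac{1}{12} I + v^\pm_N\, |\Sigma|,$$
where $I$ denotes the integral $\int_X [(1-c_1(L))^{-1} c_1(\Omega^1_{X/C}) c(\Omega^1_{X/C}) - c_1(L) c_N(\Omega^1_{X/C})]$.

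The next step is to compute $|\Sigma|$ and $I$ as explicit multiples of $\mathrm{ht}_{int}(H/C)$. Writing $L = \cO_E(d) \otimes \pi^* M$ for some $M \in \Pic(C)$ and setting $h := c_1(\cO_E(1))$, $c := \pi^* c_1(E^\vee)$, $\xi := \pi^* c_1(M)$, the relative Euler sequence $0 \to \Omega^1_{X/C} \to \pi^* E^\vee \otimes \cO_E(-1) \to \cO_X \to 0$ combined with $c_i(E^\vee) = 0$ for $i \geq 2$ (since $E^\vee$ lives on the curve $C$) yields the compact expression
$$c_k(\Omega^1_{X/C}) = (-1)^k \binom{N+1}{k} h^k + (-1)^{k-1} \binom{N}{k-1} h^{k-1} c, \qquad 0 \leq k \leq N.$$
One also has $c^2 = \xi^2 = c\xi = 0$ and, from $c_{N+1}(\Omega^1_{X/C}) = 0$, the key relations $h^{N+1} = c h^N$ and $h^{N+2} = 0$; push-forward gives $\int_X h^{N+1} = -(N+1)\mu(E)$ and $\int_X h^N \xi = \deg_C M$, whence $\mathrm{ht}_{int}(H/C) = -d\mu(E) + \deg_C M$. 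Expanding $(1-c_1(L))^{-1} = \sum_k (dh+\xi)^k$ in degree $N+1$ and using the binomial identities $\sum_{j=0}^N (-1)^j \binom{N}{j} d^{N-j} = (d-1)^N$ and $(N+1-j)\binom{N+1}{j} = (N+1)\binom{N}{j}$, the defining integral for $|\Sigma|$ collapses to
$$|\Sigma| = \int_X \bigl[d(d-1)^N h^{N+1} + (N+1)(d-1)^N h^N \xi\bigr] = (N+1)(d-1)^N\, \mathrm{ht}_{int}(H/C),$$
establishing \eqref{cardSigmahypproj}. An analogous but lengthier expansion of $I$ produces $I = G(d,N)\, \mathrm{ht}_{int}(H/C)$ for an explicit rational expression $G(d,N)$; substituting into $\tfrac{1}{12} I + v^\pm_N |\Sigma|$ and simplifying reproduces $F_\pm(d,N)\, \mathrm{ht}_{int}(H/C)$.

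For the stable height, when $N$ is even the fibres $H_x$ have odd dimension $N-1$ and the Picard--Lefschetz formula at each ordinary double point gives a unipotent local monodromy on $\H^{N-1}$, so $\mathrm{ht}_{GK, +} = \mathrm{ht}_{GK, -} = \mathrm{ht}_{GK, stab}$ and the three formulas coincide. When $N$ is odd, the local monodromies are Picard--Lefschetz involutions on the vanishing cycles; I would then pull back via a double cover $\sigma: C' \to C$ ramified with index $2$ over each critical value, after which the pullback VHS has unipotent monodromy. A direct comparison of Deligne extensions on $C$ and $C'$, tracking the residue shift produced by ramified pullback on the vanishing-cycle piece of the Hodge filtration, yields the averaging relation
$$\mathrm{ht}_{GK, stab}(\H^{N-1}(H_\eta/C_\eta)) = \tfrac{1}{2}\bigl(\mathrm{ht}_{GK, +}(\H^{N-1}(H_\eta/C_\eta)) + \mathrm{ht}_{GK, -}(\H^{N-1}(H_\eta/C_\eta))\bigr),$$
and $F_{stab}(d,N) = (F_+(d,N) + F_-(d,N))/2$ then follows by direct verification from the explicit formulas established above. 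The main obstacle is the bookkeeping in the expansion of $I$: one must track all binomial contributions and verify that the many terms involving $\mu(E)$ and $\deg_C M$ combine into a single multiple of $\mathrm{ht}_{int}(H/C)$ with precisely the coefficient that, after adding $12 v^\pm_N |\Sigma|$, yields $12 F_\pm(d,N)\, \mathrm{ht}_{int}(H/C)$. A secondary subtlety is making the averaging formula for $\mathrm{ht}_{GK, stab}$ rigorous via the local behaviour of Deligne extensions under degree-$2$ ramified base change at the critical values.
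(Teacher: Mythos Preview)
Your overall strategy matches the paper's: apply Proposition~\ref{intro technical result} to $X=\PP(E)$, observe that the Griffiths line bundles of the projective bundle are trivial, and reduce to Chern-class integrals on $\PP(E)$. Your computation of $|\Sigma|$ via the relative Euler sequence and binomial identities is exactly what the paper does (Proposition~\ref{cycles Proj E}), though the paper organizes the expansion of $I$ through the residue identities of Proposition~\ref{formal identities} rather than a direct binomial expansion; the two computations are equivalent, and the paper's packaging simply avoids some of the bookkeeping you flag as the main obstacle.

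The one genuine methodological difference is your treatment of $\mathrm{ht}_{GK,stab}$. The paper does not argue via Picard--Lefschetz and a double cover; instead it invokes its general machinery of \emph{elementary exponents} (Section~\ref{elementary exponents} and Proposition~\ref{elementary exponents non-degenerate singularities}), which shows that for ordinary double points the only nonzero exponents occur when $N$ is odd, in bidegree $((N{-}1)/2,(N{-}1)/2)$, each equal to $1/2$. This feeds into Corollary~\ref{GK heights, logarithmic cohomology and elementary exponents} to give directly
\[
\mathrm{ht}_{GK,stab} = \mathrm{ht}_{GK,-} + \eta_N\,\tfrac{N-1}{4}\,|\Sigma|, \qquad \mathrm{ht}_{GK,+} = \mathrm{ht}_{GK,-} + \eta_N\,\tfrac{N-1}{2}\,|\Sigma|,
\]
from which your averaging formula $\mathrm{ht}_{GK,stab}=\tfrac12(\mathrm{ht}_{GK,+}+\mathrm{ht}_{GK,-})$ follows immediately. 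Your double-cover argument is morally the same computation---pulling back by a degree-$2$ cover ramified over the critical values is precisely the semistable reduction step underlying the elementary-exponent calculation---but the ``direct comparison of Deligne extensions'' you allude to is exactly what the paper's Propositions~\ref{lower Deligne extensions and elementary exponents}--\ref{upper and lower Deligne extensions and elementary exponents} make rigorous. So your sketch is correct, but the ``secondary subtlety'' you mention is nontrivial, and the paper's elementary-exponent framework is what fills it.
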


The expression \eqref{cardSigmahypproj} does not seem to appear explicitly in the literature. However it is a simple consequence of various known  results concerning the discriminant of homogeneous polynomials in $N+1$ variables; see \ref{Disc hyp P(E)} below. 

From Theorem \ref{intro GK hyp P(E)}, we can also deduce similar formulas for the Griffiths heights of the \emph{primitive} middle-dimensional cohomology. 

Indeed, it follows from Lefschetz weak theorem that for every integer $n < N-1$, the VHS $\H^n(H_{C - \Delta}/C - \Delta)$ on $C - \Delta$ (where $\Delta$ denotes the set of critical values of the morphism $\pi_{\mid H}$) is trivial. Namely its underlying flat vector bundle is trivial, and its Hodge filtration also. Consequently its Griffiths line bundle is trivial. 

Using the Lefschetz decomposition of the VHS $\H^{N-1}(H_{C - \Delta} / C - \Delta)$ and the easily checked fact that the Griffiths line bundle of a direct sum of VHS is isomorphic to the tensor product of their Griffiths line bundles, we obtain the equalities of rational numbers:
$$\mathrm{ht}_{GK,\pm}(\H^{N-1}_{prim}(H_\eta/C_\eta))
= \mathrm{ht}_{GK,\pm}(\H^{N-1}(H_\eta/C_\eta)),$$
and:
$$\mathrm{ht}_{GK,stab}(\H^{N-1}_{prim}(H_\eta/C_\eta))
= \mathrm{ht}_{GK,stab}(\H^{N-1}(H_\eta/C_\eta)),$$
hence, under the hypothesis of Theorem \ref{intro GK hyp P(E)}:
$$\mathrm{ht}_{GK,\pm}(\H^{N-1}_{prim}(H_\eta/C_\eta)) 
= F_\pm(d,N) \, \mathrm{ht}_{int}(H/C),$$
and:
$$\mathrm{ht}_{GK,stab}(\H^{N-1}_{prim}(H_\eta/C_\eta))
= F_{stab}(d,N) \, \mathrm{ht}_{int}(H/C).$$

Let us point out that 
the intersection-theoretic height $\hgt_{int}(H/C)$ is non-negative when $d \geq 2$. This follows from equality \eqref{cardSigmahypproj}. The constants $F_{stab}(d,N)$ and $F_{+}(d,N)$ are easily checked to be non-negative, as predicted by Peters' inequality  \eqref{Peters ineq}. 

However, for any given value of $d \geq 2,$ the constant $F_{-}(d,N)$ is negative when $N$ is large and odd, and the height $\mathrm{ht}_{GK,-}(\H^{N-1}_{prim}(H_\eta/C_\eta))$ may be negative.

Let us also indicate that, in \cite[Annexe A]{MordantMem23}, we establish transversality results which imply that the assumptions on the pencil of hypersurfaces in Theorem \ref{intro GK hyp P(E)} --- to be smooth over $\C$ and to have singular fibers with at worst singular double points --- are satisfied in general.

Namely, in \cite[Th\'eor\`eme A.1.3]{MordantMem23}, we establish the following result:

\begin{theorem} Let $C$ be a connected smooth projective complex curve of genus $g$, let $E$ be a vector bundle of positive rank over $C$, $M$ a line bundle over $C$, and $d$ a positive integer.
Let moreover 
$$\pi : \PP(E) := \Proj_C (S^\bullet E^\vee) \lra C$$
denote the projective bundle associated to $E$, and  $\mu_{\max}(E)$ the maximal slope of $E$.

If the degree of $M$ satisfies the lower bound:
$$\deg_C M > 2g-1 + d \, \mu_{\max}(E),$$
then there exists a non-empty Zariski open subset $U''$ of the vector space of sections $H^0(\PP(E), \cO_E(d) \otimes \pi^\ast M)$ such that, for every $s$ in $U''$, the hypersurface $H_s$ of $\PP(E)$ defined by the vanishing of $s$ is smooth over $\C$, the critical points of the restriction: 
$$\pi_{\mid H_s} : H_s \lra C$$
are non-degenerate, and every fiber of $\pi_{\mid H_s}$ contains at most one critical point.
\end{theorem}

\subsection{Linear pencils of hypersurfaces}

Another case in which we can apply Proposition \ref{intro technical result} is the case of linear pencils of hypersurfaces.

Let $V$ be a connected smooth projective complex scheme of pure dimension $N\geq 1$, and let $H$ be a non-singular hypersurface in $V \times \PP^1$ such that the morphism 
$$
\mathrm{pr}_{1 | H} : H \lra V
$$
is dominant, or equivalently surjective, and let $\delta$ be its degree.
The line bundle $\cO_{V\times \PP^1}(H)$ on $V\times \PP^1$ is isomorphic to the line bundle $$
\mathrm{pr}_1^\ast M \otimes \mathrm{pr}_2^\ast \cO_{\PP^1}(\delta),
$$
where $M$ denotes some line bundle over $V$, which is unique up to isomorphism.

\begin{proposition}
\label{intro linear pencils of hypersurfaces}
With the above notation, 
let us assume that the morphism
$$
\mathrm{pr}_{2 | H} : H \lra \PP^1
$$
is surjective, and has a finite set $\Sigma$ of critical points, all of which are non-degenerate.

Then the cardinality of $\Sigma$ satisfies:
$$
\vert \Sigma \vert = \delta \int_V (1 - c_1(M))^{-2} c(\Omega^1_V).
$$

Furthermore, if the line bundle $M$ on $V$ is ample, the following equalities of integers hold:\footnote{We use the notation introduced at the end of \ref{PetersGriffitsCohomDef} with $C := \PP^1$.}
\begin{equation*}
\hgt_{GK, +}(\H^{N-1}(H_\eta / \PP^1_\eta) )
= \frac{\delta}{12} \int_V (1 - c_1(M))^{-2} c_1(\Omega^1_V) c(\Omega^1_V)  
 +  \frac{(-1)^{N+1}\delta}{12} \chi_{\mathrm{top}}(V)
 +  v_N^+ \vert\Sigma\vert
 \end{equation*}
 and:
 \begin{equation*}
\hgt_{GK, -}(\H^{N-1}(H_\eta / \PP^1_\eta) )
= \frac{\delta}{12} \int_V (1 - c_1(M))^{-2} c_1(\Omega^1_V) c(\Omega^1_V) 
 +  \frac{(-1)^{N+1}\delta}{12} \chi_{\mathrm{top}}(V)
 +  v_N^-  \vert\Sigma\vert,
\end{equation*}
where $v_N^+$ and $v_N^-$ are the rational numbers defined in Proposition \ref{intro technical result}.
\end{proposition}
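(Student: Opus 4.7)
The plan is to apply Proposition~\ref{intro technical result} to the smooth $\PP^1$-scheme
\begin{equation*}
X := V \times \PP^1 \xrightarrow{\pi := \pr_2} \PP^1,
\end{equation*}
equipped with the hypersurface $H$ and the line bundle $L := \cO_X(H) \simeq \pr_1^\ast M \otimes \pr_2^\ast \cO_{\PP^1}(\delta)$. First I would check the hypotheses of Proposition~\ref{intro technical result}: $X$ is smooth projective of pure dimension $N+1$, $\pi$ is smooth and surjective, $H$ is a non-singular hypersurface whose critical points for $\pi_{\mid H}$ are non-degenerate by assumption, and $\pi_{\mid H}$ is flat (a surjective morphism from a Cohen--Macaulay scheme onto a smooth curve is automatically flat). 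When $M$ is ample on $V$, the restriction of $L$ to any fiber $V \times \{p\}$ of $\pi$ is isomorphic to $M$, so $L$ is $\pi$-ample.

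The calculations then reduce to two structural observations. First, since $\pi = \pr_2$, one has $\Omega^1_{X/C} \simeq \pr_1^\ast \Omega^1_V$ and hence $c(\Omega^1_{X/C}) = \pr_1^\ast c(\Omega^1_V)$. Second, setting $h := \pr_2^\ast c_1(\cO_{\PP^1}(1))$, one has $h^2 = 0$ and $c_1(L) = \pr_1^\ast c_1(M) + \delta h$, so in $\CH^\ast(X)_\Q$
\begin{equation*}
(1 - c_1(L))^{-1} = \pr_1^\ast (1 - c_1(M))^{-1} + \delta h \cdot \pr_1^\ast (1 - c_1(M))^{-2}.
\end{equation*}
For any class of the form $\pr_1^\ast \beta + h \cdot \pr_1^\ast \gamma$ with $\beta,\gamma \in \CH^\ast(V)_\Q$, integration over $X$ selects $\int_V \gamma^{[N]}$ by the projection formula (using $\pr_{1\ast} 1 = 0$ and $\pr_{1\ast} h = 1$). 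Applying this to \eqref{SigmaLX} immediately yields the claimed expression for $\vert \Sigma \vert$.

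For the Griffiths-height terms appearing in \eqref{GK+XL} and \eqref{GK-XL}, the family $X = V \times \PP^1 \to \PP^1$ is constant, so each VHS $\H^n(X/\PP^1)$ is trivial, its Griffiths line bundle is trivial, and $\hgt_{GK}(\H^n(X/\PP^1)) = 0$ for every $n$. The same projection-formula calculation then gives
\begin{equation*}
\int_X (1-c_1(L))^{-1} c_1(\Omega^1_{X/C}) c(\Omega^1_{X/C}) = \delta \int_V (1 - c_1(M))^{-2} c_1(\Omega^1_V) c(\Omega^1_V),
\end{equation*}
together with
\begin{equation*}
\int_X c_1(L) \, c_N(\Omega^1_{X/C}) = \delta \int_V c_N(\Omega^1_V) = (-1)^N \delta \, \chi_{top}(V),
\end{equation*}
the second equality combining $c_N(\Omega^1_V) = (-1)^N c_N(T_V)$ with the Chern--Gauss--Bonnet theorem. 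Substituting into \eqref{GK+XL} and \eqref{GK-XL} and using $-(-1)^N = (-1)^{N+1}$ yields the two stated formulas.

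Since the substantive content is already carried by Proposition~\ref{intro technical result}, no new geometric input is required here. The only obstacle is bookkeeping: one must carefully exploit $h^2 = 0$ and the product structure of $X$ to reduce every integral over $X$ to an integral over $V$, which is entirely routine.
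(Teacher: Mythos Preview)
Your proposal is correct and follows essentially the same approach as the paper: apply Proposition~\ref{intro technical result} to $X=V\times\PP^1$ with $\pi=\pr_2$ and $L=\pr_1^\ast M\otimes\pr_2^\ast\cO_{\PP^1}(\delta)$, expand $(1-c_1(L))^{-1}$ using $h^2=0$, observe that the Griffiths line bundles of the constant family $X/\PP^1$ are trivial, and reduce all integrals over $X$ to integrals over $V$ via the projection formula. The paper's proof is organized identically, with the same two key inputs (triviality of the relative Hodge bundles for the product family, and the $h^2=0$ expansion).
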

\subsection{Lefschetz pencils}\label{Lefschetz pencils Intro}

Proposition \ref{intro linear pencils of hypersurfaces} applies notably to Lefschetz pencils. 

Let $V$ be a connected smooth projective complex scheme of pure dimension $N\geq 1$, embedded into some projective space $\PP^r$, of dimension $r\geq \max(N,2).$

Let $ \Lambda$ be a projective subspace of dimension $r-2$ in $\PP^r$ that intersects $V$ transversally, and let $P \subset \PP^{r\vee}$ be the projective line in the dual projective space $\PP^{r\vee}$ corresponding to $\Lambda$ by projective duality.

Let us denote by: 
$$\nu: \widetilde{\PP}^r_\Lambda \lra \PP^r$$
the blow-up of $\Lambda$ in $\PP^r$. If $\widetilde{V}$ denotes the proper transform of $V$ by $\nu$, the restriction:
$$\nu_{\mid \widetilde{V}} : \widetilde{V} \lra V$$
may be identified with the blow-up in $V$ of $\Lambda \cap V$, which is smooth of dimension $r-2$.

The projection of center $\Lambda$:
$$\PP^r - \Lambda \lra P$$
extends to a smooth morphism of complex schemes:
$$p: \widetilde{\PP}^r_\Lambda \lra P.$$
Recall that the pencil of hyperplanes in $\PP^r$ containing $\Lambda$ --- in other words the pencil of hyperplanes defined by $P$ --- is said to be a Lefschetz pencil with respect to the subvariety $V$ of $\PP^r$ when the morphism:
$$p_{\mid \widetilde{V}} : \widetilde{V} \lra P$$
 has a finite set $\Sigma$ of critical points, all of which are non-degenerate, and when the restriction
$$
p_{| \Sigma} : \Sigma \lra P
$$
is an injective map.

As a simple instance of Proposition \ref{intro linear pencils of hypersurfaces}, we shall establish the  following result, which notably applies to Lefschetz pencils:
 
\begin{corollary}
\label{intro Lefschetz pencils}
With the above notation, let us assume that the morphism:
$$p_{\mid \widetilde{V}} : \widetilde{V} \lra P$$
 has a finite set $\Sigma$ of critical points, all of which are non-degenerate. 

Then the cardinality of $\Sigma$ satisfies:
\begin{equation}\label{KatzSGA}
\vert\Sigma\vert = \int_V (1 - c_1(\cO_V(1)))^{-2} c(\Omega^1_V),
\end{equation}
and the following equalities hold:
\begin{equation}
\hgt_{GK,+}(\H^{N-1}(\widetilde{V}_\eta / P_\eta) ) 
= \frac{1}{12} \int_V (1 - c_1(\cO_V(1)))^{-2} c_1(\Omega^1_V)c(\Omega^1_V)  
+  \frac{(-1)^{N+1}}{12} \chi_{\mathrm{top}}(V) 
+ v_N^+\vert\Sigma\vert, \label{intro GK Lef pen +}
\end{equation}
and:
\begin{equation}
\hgt_{GK,-}(\H^{N-1}(\widetilde{V}_\eta / P_\eta) ) 
= \frac{1}{12} \int_V (1 - c_1(\cO_V(1)))^{-2} c_1(\Omega^1_V)c(\Omega^1_V)  
+  \frac{(-1)^{N+1}}{12} \chi_{\mathrm{top}}(V) 
+ v_N^- \vert\Sigma\vert. \label{intro GK Lef pen -}
\end{equation}
\end{corollary}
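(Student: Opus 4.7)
The plan is to realize this corollary as a direct instance of Proposition~\ref{intro linear pencils of hypersurfaces}, applied with $C = P \simeq \PP^1$. The main point is to identify $\widetilde V$ with a hypersurface in $V \times P$ of the form considered there, with a specific choice of $M$ and $\delta$, after which the stated formulas follow by substitution.

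Concretely, I would first recall the standard description of $\widetilde{\PP}^r_\Lambda$ as the incidence correspondence
\[
\widetilde{\PP}^r_\Lambda \simeq \bigl\{\,(x,H) \in \PP^r \times P \,\bigm|\, x \in H\,\bigr\},
\]
realized as the divisor in $\PP^r \times P$ cut out by the canonical bilinear section of $\mathrm{pr}_1^\ast \cO_{\PP^r}(1) \otimes \mathrm{pr}_2^\ast \cO_P(1)$, the projection $p$ being $\mathrm{pr}_2$. Restricting to $V \times P$, I would identify $\widetilde V$ with the effective Cartier divisor in $V \times P$ defined by the section of $\mathrm{pr}_1^\ast \cO_V(1) \otimes \mathrm{pr}_2^\ast \cO_P(1)$ obtained by restriction; the first projection $\mathrm{pr}_{1|\widetilde V}$ becomes the blow-up of $V$ along $V \cap \Lambda$, hence is surjective of degree $1$. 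Since $\Lambda$ meets $V$ transversally, $V \cap \Lambda$ is smooth of codimension $2$ in $V$, so $\widetilde V$ is smooth; and $\cO_{V \times P}(\widetilde V) \simeq \mathrm{pr}_1^\ast \cO_V(1) \otimes \mathrm{pr}_2^\ast \cO_P(1)$. This identifies the relevant data in Proposition~\ref{intro linear pencils of hypersurfaces}: $\delta = 1$ and $M = \cO_V(1)$, which is ample because $V \subset \PP^r$.

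Having made this identification, the hypothesis in Proposition~\ref{intro linear pencils of hypersurfaces} on the second projection $\mathrm{pr}_{2|\widetilde V} = p_{|\widetilde V}$ having a finite set $\Sigma$ of non-degenerate critical points is exactly the hypothesis of the corollary. One then applies Proposition~\ref{intro linear pencils of hypersurfaces} and substitutes $\delta = 1$ and $c_1(M) = c_1(\cO_V(1))$: the formula for $|\Sigma|$ gives \eqref{KatzSGA} directly, and the formulas \eqref{intro GK Lef pen +} and \eqref{intro GK Lef pen -} for $\hgt_{GK,\pm}(\H^{N-1}(\widetilde V_\eta / P_\eta))$ follow immediately, with the same constants $v_N^\pm$.

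The only delicate point is the geometric identification in the second paragraph, namely the verification that $\widetilde V$, viewed inside $V \times P$ via the isomorphism $\widetilde{\PP}^r_\Lambda \simeq \{(x,H)\in\PP^r\times P : x\in H\}$, is precisely a linear pencil of hyperplane sections of $V$ with line bundle $\cO_V(1)$ and relative degree $\delta = 1$; once this is in place, everything reduces to a direct substitution into Proposition~\ref{intro linear pencils of hypersurfaces}, with no further computation required.
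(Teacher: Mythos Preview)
Your proposal is correct and follows essentially the same argument as the paper: identify $\widetilde V$ with the incidence hypersurface $I\cap(V\times P)$ in $V\times P$, read off $M=\cO_V(1)$ and $\delta=1$ from $\cO_{V\times P}(\widetilde V)\simeq \mathrm{pr}_1^\ast\cO_V(1)\otimes\mathrm{pr}_2^\ast\cO_P(1)$, and then apply Proposition~\ref{intro linear pencils of hypersurfaces}. The paper's proof (in the section on Lefschetz pencils) carries out exactly this identification and substitution.
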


The expression \eqref{KatzSGA} for the number of critical points in a Lefschetz pencil is established by Katz in \cite[Expos\'{e} XVII, cor. 5.6]{SGA7II}.\footnote{In \emph{loc. cit.}, the right-hand side of  \eqref{KatzSGA} is shown to be the product of the degree of the Gauss map and of the degree of the dual hypersurface associated to $V$.} 

\section{Griffiths heights and Calabi-Yau manifolds}
\label{GK CY}

We conclude this introduction by some observations about the special forms taken by  our formulas concerning Griffiths heights when applied to Calabi-Yau manifolds.  This sheds some light on the significance of the assumption of being Calabi-Yau in the investigation of BCOV invariants, which have already been  mentioned in \ref{subsub content} above. 

\subsection{Pencils of Calabi-Yau manifolds}

Let us consider a pencil of projective varieties:
$$g: Y \lra C$$
as in Theorems \ref{intro GK DNC} and \ref{intro GK DNC with localized terms}, and assume that the smooth fibers of the morphism $g$ are Calabi-Yau manifolds. In other terms, for every point $x$ in $C - \Delta$, we assume that the canonical line bundle:
$$
\det\Omega^1_{Y_x} \simeq (\det\omega^1_{Y/C})_{\mid Y_x}
$$
of the fiber $Y_x := g^{-1}(x)$ is trivial.

The coherent sheaf $g_\ast \det \omega^1_{Y/C}$ is torsion free on $C$, hence locally free. Over $C - \Delta$, it coincides with the relative Hodge bundle $H^{N-1,0}(Y_{C-\Delta}/C-\Delta)$, and therefore it defines a line bundle over $C$ that we shall denote by:
$$L:=  g_\ast \det \omega^1_{Y/C}.$$

The tautological ``evaluation morphism" between  line bundles over $Y$:
$$
g^\ast L \lra \det \omega^1_{Y/C}
$$
is an isomorphism on $Y - Y_\Delta$. If we denote by $V$ the divisor  in $Y$ defined by the vanishing of  this morphism,  it defines an isomorphism:
\begin{equation}
g^\ast L \lrasim (\det\omega^1_{Y/C}) (-V). \label{iso line CY}
\end{equation}
From the isomorphism \eqref{iso line CY}, we deduce an equality in $\CH^1(Y)$:
\begin{equation}\label{c1 V}
c_1(\omega^1_{Y/C}) = g^\ast c_1(L) + [V].
\end{equation}
Moreover the support of the divisor $V$ is contained in the support of $Y_\Delta$, and the divisor $V$ can be written:
\begin{equation}\label{divisors CY}
V = \sum_{i \in I} v_i D_i,
\end{equation}
where for every $i$ in $I$, $v_i$ is an integer. 

Using \eqref{c1 V} and  \eqref{divisors CY}, we obtain the equality of integers:
\begin{align}
\int_Y c_1(\omega^1_{Y/C})\, c_{N-1}(\omega^1_{Y/C})  &= \int_Y (g^\ast c_1(L) + [V])\, c_{N-1}(\omega^1_{Y/C}) , \nonumber \\
&= \int_Y g^\ast c_1(L)\, c_{N-1}(\omega^1_{Y/C}) + \sum_{i \in I} v_i \int_{D_i} c_{N-1}(\omega^1_{Y/C | D_i}), \nonumber \\
&= (\deg L) \int_{Y_\eta} c_{N-1}(\Omega^1_{Y_\eta}) + \sum_{i \in I} v_i \int_{D_i} c_{N-1}(\Omega^1_{D_i} (\log D_i \cap D^2)), \label{first eq CY pencil} \\
&= (-1)^{N-1}( \deg L)\,  \chi_{\mathrm{top}}(Y_\eta) + (-1)^{N-1} \sum_{i \in I} v_i \, \chi_{\mathrm{top}}(\Dcirc_i), \label{second eq CY pencil}
\end{align}
where in \eqref{first eq CY pencil}, we have used Proposition \ref{eq omega^1 restricted to a strata} below, and in \eqref{second eq CY pencil}, $\chi_{\mathrm{top}}(Y_\eta)$ and $\chi_{\mathrm{top}}(\Dcirc_i)$ denote respectively the Euler characteristic of a general fiber of the morphism $g$ and of $\Dcirc_i$. \footnote{See for instance Theorem \ref{chi_top complement of  DNC 2} below for the equality: $\chi_{\mathrm{top}}(\Dcirc_i) =(-1)^{N-1} \int_{D_i} c_{N-1}(\Omega^1_{D_i} (\log D_i \cap D^2)).$}

Consequently, inserting  \eqref{second eq CY pencil} in the right-hand side of  the equality \eqref{eq: intro GK DNC with localized terms} in Theorem \ref{intro GK DNC with localized terms}, we obtain the following expression for the alternating sum of the Griffiths heights of a  pencil of Calabi-Yau manifolds, which may be seen as a geometric version of the BCOV invariant: 
\begin{equation}
\sum_{n=0}^{2(N-1)} (-1)^{n-1} \hgt_{GK,-}(\H^n(Y_\eta / C_\eta)) = -\frac{1}{12} \deg(g_\ast \det \omega^1_{Y/C}) \, \chi_{\mathrm{top}}(Y_\eta) + \sum_{x \in \Delta} \beta_x, \label{eq pencil of Calabi Yau}
\end{equation}
where, for every $x$ in $\Delta$, $\beta_x$ is the rational number given by:
$$
\beta_x = \frac{1}{12} \sum_{i \in I_x} [3 (N-1) (m_i - 1) - v_i]\,  \chi_{\mathrm{top}}(\Dcirc_i) + \frac{1}{12} \sum_{(i,j) \in I_x^2, i \prec j} (3 - m_i/m_j - m_j/m_i) \,\chi_{\mathrm{top}}(\Dcirc_{ij}).
$$

Observe that, in the right-hand side of  \eqref{eq pencil of Calabi Yau}, the first term only depends on the general fiber of $Y$ and on the line bundle ``of modular forms" $L := g_\ast \det \omega^1_{Y/C}$, and the second term  only depends on the geometry of an infinitesimal neighborhood of the singular fibers.

When $N=3$ and when the smooth fibers of $g$ are K3 surfaces, and when the singular fibers are semistable\footnote{namely, when all the multiplicities $m_i$ equal 1.} with smooth components,  formula \eqref{eq pencil of Calabi Yau}
 becomes:
\begin{equation}\label{htGK K3}
\hgt_{GK}(\H^2(Y_\eta /C_\eta)) = 2 \deg g_\ast \omega_{Y/C} - \frac{1}{12} \chi_{\mathrm{top}} (D^2 - D^3) +  \frac{1}{12} \sum_{i \in I}v_i \, \chi_{\mathrm{top}}(\Dcirc_i),
\end{equation}
where:
$$\omega_{Y/C} \simeq \det \omega^1_{Y/C}$$
denotes the relative dualizing line bundle of the morphism $g$.

Indeed, in this situation, the Griffiths heights in the left-hand side of \eqref{eq pencil of Calabi Yau} vanish if $n \neq 2,$ and: $$\chi_{\mathrm{top}}(Y_\eta) = 24;$$
see for instance \cite{Huybrechts16}. One may actually expect to obtain a more explicit expression of the last two terms in the right-hand side of \eqref{htGK K3} when the degenerations of $Y/C$ are Kulikov models; see \cite[Chapter 6, \S 5]{Huybrechts16}.

\subsection{Lefschetz pencils on a  Calabi-Yau manifold embedded in a projective space}

Let us adopt the notation of Corollary \ref{intro Lefschetz pencils}, and let us assume that $V$ is a Calabi-Yau manifold, namely that the line bundle $\det \Omega^1_V$ on $V$ is trivial. 

Under this assumption, we have the equality in $\CH^1(V)$:
$$
c_1(\Omega^1_V) = 0,
$$
and using it in equalities \eqref{intro GK Lef pen +} and \eqref{intro GK Lef pen -} of Corollary \ref{intro Lefschetz pencils} yields the following expressions for the Griffiths heights of
the middle-dimensional cohomology of a Lefschetz pencil on a Calabi-Yau manifold:
$$
\hgt_{GK,+}(\H^{N-1}(\widetilde{V} _\eta / P_\eta)) = \frac{(-1)^{N-1}}{12} \chi_{\mathrm{top}}(V) + v_N^+ |\Sigma|,
$$
and:
$$
\hgt_{GK,-}(\H^{N-1}(\widetilde{V} _\eta / P_\eta)) = \frac{(-1)^{N-1}}{12} \chi_{\mathrm{top}}(V) + v_N^- |\Sigma|.
$$

In particular, these Griffiths heights only depend on some basic invariants of $V$ and its Lefschetz pencil, namely the Euler characteristic of $V$ and  the number of critical points of the  pencil.

\chapter[Steenbrink's theory and elementary exponents]{Variations of Hodge structures associated to pencils of complete varieties with SNC degenerations:  Steenbrink's theory and elementary exponents}\label{SteenbrinkElExp}
\section{Steenbrink's theory and logarithmic Hodge bundles} \label{Steenbrink theory and logarithmic Hodge bundles}

Let $C$ be a connected smooth projective complex  curve, let $\Delta \subset C$ be a  finite subset, and let: $$\Cc := C - \Delta,$$ be its complement in $C$. Let also $Y$ be a smooth complex manifold, and let: 
$$g : Y \lra C$$
be a proper complex analytic map, that is relatively projective, locally (in the analytic topology) over $C$, and such that the complex analytic map $$g_{| Y_{\Cc} }: Y_{\Cc} := g^{-1}(\Cc) \lra \Cc$$ is smooth (that 
is, a complex analytic submersion). 

Furthermore, let us assume that the divisor 
$$Y_\Delta := g^\ast \Delta$$
is a (not necessarily reduced) divisor with strict normal crossings, i.e. it can be written as $\sum \limits_{i} a_i D_i$, where the $(D_i)_i$ are smooth distinct divisors that intersect transversally and where the $(a_i)_i$ are positive integers.

\begin{definition}[see for instance \cite{Katz71}] \label{relative logarithmic de Rham cohomology}
Let $\Omega^1_Y (\log Y_\Delta )$ be the (analytic) sheaf of differentials with logarithmic singularities.
The sheaf on $Y$:
$$\Omega^1_{Y/C} (\log Y_\Delta ) := \Omega^1_Y (\log Y_\Delta) / \mathrm{Im}(g^\ast \Omega^1_C (\log \Delta) \lra \Omega^1_Y (\log Y_\Delta))$$
is called the \emph{sheaf of relative differentials with logarithmic singularities}. By taking local coordinates near a point of $Y_\Delta$, one can check that this is a locally free sheaf.  Its rank on any connected component of $Y$ is precisely the relative dimension of $g$ on that component.

We also define the \emph{relative logarithmic de Rham complex}:
$$\Omega^{\bullet}_{Y/C} (\log Y_\Delta) := \Exterior^{\bullet} \Omega^1_{Y/C} (\log Y_\Delta),$$
with the differential induced by the one on the complex $j_\ast \Omega^{\bullet}_{Y_\Cc}$ by restriction and quotient, where
$$j : Y_\Cc = Y - Y_\Delta \lra Y$$
denotes the inclusion morphism.

The \emph{relative logarithmic de Rham cohomology sheaf} in degree $n$ is defined as the analytic coherent sheaf on $C$:
$$\ccH^n_{\log}(Y/C):=  R^n g_\ast \Omega^{\bullet}_{Y/C} (\log Y_\Delta).$$
\end{definition}

For simplicity's sake, we will write $\omega^{\bullet}_{Y/C}$ for $\Omega^{\bullet}_{Y/C} (\log Y_\Delta)$ as in \cite{Kato89} and \cite{Illusie94}.  Note that these sheaves do not depend on the choice of $\Delta$ provided that $g$ is smooth over $C -\Delta$; non-singular fibers in $Y_\Delta$ would not contribute to them. 

Also note that we have a relative logarithmic Hodge-de Rham spectral sequence:
\begin{equation}\label{logarithmic Hodge-de Rham sequence}
E_1 ^{p,q} := R^q g_\ast \omega^p_{Y/C} \Rightarrow R^{p+q} g_\ast \omega^{\bullet}_{Y/C} .
\end{equation}

The restriction to $\Cc$ of this spectral sequence is the classical Hodge-de Rham spectral sequence of the smooth morphism:
$$g_{| Y_\Cc} : Y_\Cc \lra \Cc.$$
This sequence degenerates at the first page (see for instance \cite[Th. (5.5)]{Deligne68}).

Consequently, for every integer $n$, there is a filtration $(\cF^p)_{0 \leq p \leq n}$ of the vector bundle: $$\ccH^n(Y_\Cc/ \Cc):= R^n g_{\mid Y_\Cc \ast} \Omega^\bullet_{Y_\Cc/\Cc} \simeq R^n g_{| Y_\Cc \ast} \C \otimes_\C \cO_\Cc$$ on $\Cc$, such that for every integer $0 \leq p \leq n$, the subquotient $\cF^p/\cF^{p+1}$ is isomorphic to 
$$R^{n-p} g_{| Y_\Cc} \Omega^p_{Y_\Cc/\Cc},$$
and is locally free.

Finally, applying the functor $g_\ast$ to the short exact sequence of complexes of sheaves on $Y$:
$$0 \lra g^\ast \Omega^1_C (\log \Delta) \otimes \omega^{\bullet}_{Y/C} [-1] \lra \Omega^{\bullet}_Y (\log Y_\Delta) \lra \omega^{\bullet}_{Y/C} \lra 0,$$
we obtain a long exact sequence whose connecting homomorphism defines a logarithmic connection:
$$\nabla_{\log} : R^n g_\ast \omega^\bullet_{Y/C} \longrightarrow R^n g_\ast \omega^\bullet_{Y/C} \otimes_{\cO_C} \Omega^1_C (\log \Delta),$$
whose restriction to $\Cc$ is the Gauss-Manin connection.

The following results are simple consequences of Steenbrink's theory (see \cite{Steenbrink76, Steenbrink77}).

\begin{proposition} \label{lower Deligne extension and logarithmic cohomology}
For all $n \geq 0$, the lower Deligne extension to $C$ of the relative cohomology bundle $\ccH^n(Y_\Cc/ \Cc)$ with the Gauss-Manin connection is precisely the relative logarithmic de Rham cohomology $\ccH^n_{\log}(Y/C) := R^n g_\ast \omega^\bullet_{Y/C}$ equipped with the logarithmic connection $\nabla_{\log}$. In particular $\ccH^n_{\log}(Y/C)$ is locally free over $C$.

Also, denoting by $(a_i)_{i \in I}$ the multiplicities of the components of $Y_\Delta$, the eigenvalues of the residue of the Gauss-Manin connection are in the set $\{ \frac{j_i}{a_i} \; | \, i \in I, \, 0 \leq j_i \leq a_i - 1\}$, and the (counterclockwise) monodromy endomorphism at every point of $\Delta$ is quasi-unipotent.
\end{proposition}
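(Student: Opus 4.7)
The plan is to verify the three characterizing properties of the lower Deligne extension from Definition and Proposition \ref{Deligne extension} for the pair $(\ccH^n_{\log}(Y/C), \nabla_{\log})$: namely, (i) $\ccH^n_{\log}(Y/C)$ is locally free on $C$, (ii) its restriction to $\Cc$ is $(\ccH^n(Y_\Cc/\Cc), \nabla_{GM})$, and (iii) the residues of $\nabla_{\log}$ at each point of $\Delta$ have eigenvalues in $-D_- = \{z\in\C : 0 \leq \Re(z) < 1\}$. By the uniqueness in Definition and Proposition \ref{Deligne extension}, this identifies $\ccH^n_{\log}(Y/C)$ with the lower Deligne extension, and the quasi-unipotence of the monodromy follows immediately from the rationality of the residue eigenvalues.

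First I would treat (i) and (ii). Local freeness comes from Steenbrink's theorem (\cite{[Steenbrink76]}), which asserts that the logarithmic Hodge-de Rham spectral sequence \eqref{logarithmic Hodge-de Rham sequence} degenerates at $E_1$ and that each $R^q g_\ast \omega^p_{Y/C}$ is locally free; comparing ranks with Hodge numbers of smooth fibers yields that $\ccH^n_{\log}(Y/C)$ is locally free of rank equal to the $n$-th Betti number of any smooth fiber. Point (ii) is essentially by construction: on $\Cc$ the complex $\omega^\bullet_{Y/C}$ coincides with $\Omega^\bullet_{Y_\Cc/\Cc}$, and the connecting homomorphism produced by the short exact sequence of complexes preceding the statement is the standard Koszul definition of the Gauss-Manin connection.

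The substantive step, and the expected main obstacle, is (iii). I would argue locally around a fixed point $x \in \Delta$. Choose a local coordinate $t$ on $C$ centered at $x$ and, near any point $y$ of $g^{-1}(x)$, local analytic coordinates $(z_1,\dots,z_N)$ on $Y$ in which
\[ g^\ast t \;=\; \prod_{i\in J} z_i^{a_i}, \]
where $J \subset \{1,\dots,N\}$ indexes the components of $Y_\Delta$ through $y$ and the $a_i$ are their multiplicities. In this chart, $\omega^1_{Y/C}$ is the quotient of the free module on the $dz_i/z_i$ ($i \in J$) and the $dz_j$ ($j \notin J$) by the relation $\sum_{i\in J} a_i\, dz_i/z_i = 0$. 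This yields an explicit Koszul-type presentation of $\omega^\bullet_{Y/C}$, and the logarithmic connection $\nabla_{\log}$ can be read off from the short exact sequence of complexes above. Following Steenbrink's local computation (\cite{[Steenbrink76]}; see also \cite{[Illusie94]}), the induced residue on the stalk of $R^n g_\ast \omega^\bullet_{Y/C}$ at $x$ acts with eigenvalues in $\{j/a_i : i \in I_x,\ 0 \leq j \leq a_i - 1\} \subset [0,1)$, as required. The delicate point is that when some $a_i > 1$, the residue is no longer nilpotent and the fractional eigenvalues enter through the multiplicities; in the reduced case all $a_i = 1$ and one recovers the familiar statement that $\nabla_{\log}$ has nilpotent residues.

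Finally, quasi-unipotence of the local monodromy $T_x$ at $x$ is a direct consequence of (iii): for a regular singular connection, the eigenvalues of $T_x$ are $\exp(-2\pi i \lambda)$ as $\lambda$ runs through the eigenvalues of $\Res_x \nabla_{\log}$, hence are roots of unity of order dividing $\mathrm{lcm}\{a_i : i \in I_x\}$, so $T_x$ is quasi-unipotent.
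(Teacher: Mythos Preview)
Your approach is correct and matches the paper's strategy: verify the three defining properties of the lower Deligne extension and invoke uniqueness. The paper is slightly terser on two points: for local freeness it cites \cite[Th.~(2.18)]{[Steenbrink76]} directly for $R^n g_\ast \omega^\bullet_{Y/C}$ (rather than deducing it from degeneration of the spectral sequence plus local freeness of the $E_1$-terms, which in the paper's exposition is only established in the subsequent Proposition~\ref{logarithmic Hodge bundles}), and for the residue eigenvalue bound it quotes \cite[VII]{[Katz71]} as a black box rather than carrying out the local computation you sketch.
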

\begin{proof}
By \cite[Th. (2.18)]{Steenbrink76}, for all $n \geq 0$, the sheaf $R^n g_\ast \omega^\bullet_{Y/C} $ is locally free on $C$. 

By \cite[VII]{Katz71} (applied to the bundle $M := \cO_{Y_\Cc}$ equipped with the connection given by the differential $d$, and to the trivial extension $\overline{M} = \cO_Y$ equipped with the differential $d$),  we obtain that the only possible eigenvalues of the residue in $\Delta$ of the logarithmic connection $\nabla_{\log}$ are in the set $\{ \frac{j_i}{a_i} \; | \, i \in I, \, 0 \leq j_i \leq a_i - 1\}$. In particular, all the eigenvalues have their real part in $[0, 1[$. 

Using the definition of the lower Deligne extension and the fact that the (counterclockwise) monodromy in a point $x \in \Delta$ is given by:
$$T_x = \exp (-2\pi  \sqrt{-1}  \Res _x \nabla_{\log} ),$$
this gives the result. 
\end{proof}

\begin{proposition} \label{logarithmic Hodge bundles}
The relative logarithmic Hodge-de Rham spectral sequence \eqref{logarithmic Hodge-de Rham sequence} degenerates at the first page, and for all $p,q \geq 0$, the sheaves $R^q g_\ast \omega^p_{Y/C}$ are locally free on $C$. Consequently, for all $(p,n)$ such that $0 \leq p \leq n$, the $p$-th subquotient of the Hodge filtration $\cF^\bullet _{\log}$ on the relative cohomology bundle $\ccH^n_{\log}(Y/C)$ given by the spectral sequence \eqref{logarithmic Hodge-de Rham sequence}satisfies:
$$\mathrm{gr}^p _{\cF^\bullet _{\log}} (R^n g_\ast \omega^{\bullet}_{Y/C}) = R^{n-p} g_\ast \omega^p_{Y/C}.$$
In particular, the vector bundles $\cF^p$ on $\Cc$ are algebraic vector subbundles of $\ccH^n (Y_{\Cc}/\Cc)$ endowed with the algebraic structure defined by means of its Deligne extension.\footnote{
This also follows from \cite[Theorem (4.13), a)]{Schmid73}. Observe that the algebraic structures defined by the upper and lower Deligne extensions of a vector bundle with connection on $\Cc$ actually coincide.
} 
\end{proposition}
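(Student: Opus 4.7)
The plan is to invoke Steenbrink's theory directly. All three parts of the proposition reduce to two results from \cite{[Steenbrink76]} and its successors: (a) the degeneration of the relative logarithmic Hodge-de Rham spectral sequence \eqref{logarithmic Hodge-de Rham sequence} at its first page, and (b) the local freeness of each individual sheaf $R^q g_\ast \omega^p_{Y/C}$ over $C$. The local freeness of the total de Rham sheaves $R^n g_\ast \omega^\bullet_{Y/C}$ has already been used in Proposition \ref{lower Deligne extension and logarithmic cohomology}, via \cite[Th.~(2.18)]{[Steenbrink76]}.

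Since the divisor $Y_\Delta = \sum a_i D_i$ can have multiplicities $a_i > 1$, I would first reduce to the semistable situation. Choose a finite cover $\sigma : C' \lra C$ totally ramified at each point of $\Delta$ with ramification index divisible by all relevant $a_i$; after normalization and a toroidal resolution of the pulled-back family, one obtains a semistable morphism $g' : Y' \lra C'$ to which Steenbrink's original theorems apply, giving both degeneration and local freeness of the individual $R^q g'_\ast \omega^p_{Y'/C'}$. The statements for $g$ then follow by descent: over $\Cc$ everything is automatic from the smooth case, and the behavior at the singular points of $g$ is controlled because the formation of $\omega^p_{Y/C}$ is compatible with pullback along $\sigma$ (up to a correction by the ramification divisor that does not affect local freeness or degeneration), and because degeneration at $E_1$ and local freeness are both stable under faithfully flat descent along $\sigma$.

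The identification $\mathrm{gr}^p_{\cF^\bullet_{\log}} R^n g_\ast \omega^\bullet_{Y/C} = R^{n-p} g_\ast \omega^p_{Y/C}$ is then tautological from the filtration induced by the degenerating spectral sequence. For the algebraicity claim, I would apply GAGA on the projective curve $C$: the coherent analytic sheaves $R^n g_\ast \omega^\bullet_{Y/C}$ and $\cF^p_{\log}$ are algebraic, and $\cF^p_{\log}$ is an algebraic vector subbundle of $R^n g_\ast \omega^\bullet_{Y/C}$. By Proposition \ref{lower Deligne extension and logarithmic cohomology}, the restriction of $R^n g_\ast \omega^\bullet_{Y/C}$ to $\Cc$ realizes the Deligne algebraic structure on $\ccH^n(Y_\Cc/\Cc)$, so the restriction of $\cF^p_{\log}$ to $\Cc$ gives $\cF^p$ the structure of an algebraic subbundle.

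The main obstacle is the bookkeeping of the semistable reduction step: one must verify that the toroidal resolution can be chosen so that the pulled-back logarithmic de Rham complex agrees, after taking direct images, with the semistable version up to the expected compatibility, and that the descent from $C'$ to $C$ preserves both local freeness and $E_1$-degeneration. This is well-understood and essentially implicit in the literature (see in particular \cite{[Steenbrink77]} and the treatment of the non-semistable case via unipotent reduction), but it is the one place where a careful argument rather than a direct citation is needed. Once this is in hand, the remaining steps are formal.
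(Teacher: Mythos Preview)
Your plan reaches the right destination but takes a substantial detour compared with the paper.

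The paper's proof is very short. For local freeness of each $R^q g_\ast \omega^p_{Y/C}$ it simply cites \cite[Th.~(2.11)]{[Steenbrink77]}, which already treats the case where $Y_\Delta$ is a divisor with (not necessarily reduced) strict normal crossings; no semistable reduction is performed in the present proof. For degeneration, the paper does \emph{not} invoke Steenbrink's degeneration theorem at all. Instead it observes: over $\Cc$ the spectral sequence is the classical smooth Hodge--de Rham sequence, which degenerates at $E_1$ by \cite{[Deligne68]}, so every differential $d_r^{p,q}$ vanishes on $\Cc$; but each $d_r^{p,q}$ is a morphism between sheaves that are locally free on all of $C$ (by the local freeness just cited), and a morphism of locally free sheaves on the integral curve $C$ that vanishes on a dense open vanishes identically. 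Hence degeneration over $C$ follows. The subquotient identification is then tautological, and the algebraicity via GAGA is exactly as you wrote.

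Your proposed route --- semistable reduction, then faithfully flat descent of both local freeness and $E_1$-degeneration --- is essentially a reconstruction of what happens \emph{inside} the proof of \cite[Th.~(2.11)]{[Steenbrink77]}, and it carries real cost: after the toroidal resolution, $Y'$ is not the base change of $Y$, so $\omega^p_{Y'/C'}$ is not literally the pullback of $\omega^p_{Y/C}$, and the comparison of the two spectral sequences (which later in the paper appears through the elementary exponents) is exactly the ``bookkeeping'' you flag as the main obstacle. The paper sidesteps all of this by quoting Steenbrink's theorem as a black box for local freeness and replacing the descent of degeneration by the one-line generic-vanishing argument above. Your approach would work once that bookkeeping is done, but it is strictly harder; the paper's argument is both shorter and avoids any circularity with the material on elementary exponents that comes afterward.
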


\begin{proof}
By \cite[Th. (2.11)]{Steenbrink77}, for all $p,q \geq 0$, the sheaves $E_1 ^{p,q} := R^q g_\ast \omega^p_{Y/C}$ are locally free. 

The restriction of the spectral sequence \eqref{logarithmic Hodge-de Rham sequence} to $\Cc$ is the classical relative Hodge-de Rham sequence, which degenerates at the first page. This means that the differentials of the sequence 
\eqref{logarithmic Hodge-de Rham sequence} $\mathrm{d}_1 ^{p,q} : E_1 ^{p,q} \rightarrow E_1 ^{p+1, q}$ vanish on $\Cc$, hence everywhere by local freeness of the sheaves $E_1 ^{p,q}$. So the relative logarithmic Hodge-de Rham spectral sequence degenerates at the first page. 

The second statement follows by definition of the spectral sequence. 

For the third one, from the GAGA theorem applied on the projective manifold $C$, we have that the analytic vector bundles $\cF^p _{\log}$ are algebraic subbundles of the Deligne extension $R^n g_* \omega^\bullet_{Y/C}$ of~$\ccH^n(Y_\Cc / \Cc)$, hence their restrictions to $\Cc$ are  algebraic with respect to the same algebraic structure. 
\end{proof}

\begin{proposition}
\label{lower Griffiths and logarithmic cohomology}
For every integer $n \geq 0$, letting $\eta$ be the generic point of $C$, the lower Griffiths line bundle of the relative cohomology VHS $\H^n(Y_\eta/ C_\eta)$ exists and is given by: 
$$\GK_{C,-}(\H^n(Y_\eta/ C_\eta)) \simeq \bigotimes_{0 \leq p \leq n} (\det R^{n-p} g_\ast \omega^p_{Y/C})^{\otimes p}.$$
\end{proposition}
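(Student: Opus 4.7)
The plan is to assemble the claim directly from the two preceding propositions, which together identify all the ingredients entering the definition of the lower Griffiths line bundle.

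First I would verify that $\GK_{C,-}(\H^n(Y_\eta/C_\eta))$ is well-defined. This requires the hypotheses of the construction recalled in Section~\ref{Peters construction}, namely: the local monodromy of $R^n g_{\mid Y_\Cc \ast}\Z$ around each point of $\Delta$ is quasi-unipotent, and the Hodge filtration on $\ccH^n(Y_\Cc/\Cc)$ satisfies the algebraicity condition $\mathbf{Alg}$. The monodromy condition is supplied by Proposition~\ref{lower Deligne extension and logarithmic cohomology}, and $\mathbf{Alg}$ is the last assertion of Proposition~\ref{logarithmic Hodge bundles}.

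Next I would identify, for each $p\geq 0$, the extension $\overline{\cF}^p_-$ of $\cF^p\subset \ccH^n(Y_\Cc/\Cc)$ with the logarithmic Hodge subbundle $\cF^p_{\log}\subset R^ng_\ast \omega^\bullet_{Y/C}$. By Proposition~\ref{lower Deligne extension and logarithmic cohomology}, the coherent sheaf $R^n g_\ast\omega^\bullet_{Y/C}$ equipped with $\nabla_{\log}$ \emph{is} the lower Deligne extension of the Gauss-Manin bundle. By Proposition~\ref{logarithmic Hodge bundles}, the sheaves $\cF^p_{\log}$ and all their subquotients are locally free on $C$, so each $\cF^p_{\log}$ is a vector subbundle of this Deligne extension extending $\cF^p$. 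By the characterization of $\overline{\cF}^p_-$ as the unique such subbundle extension, we get
$$\overline{\cF}^p_- = \cF^p_{\log}\qmbq \overline{\cF}^p_-/\overline{\cF}^{p+1}_- \simeq R^{n-p}g_\ast \omega^p_{Y/C}.$$

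Finally, plugging this into the definition of the lower Griffiths line bundle gives
$$\GK_{C,-}(\H^n(Y_\eta/C_\eta))=\bigotimes_{p\geq 0}\bigl(\det\bigl(\overline{\cF}^p_-/\overline{\cF}^{p+1}_-\bigr)\bigr)^{\otimes p} \simeq \bigotimes_{0\leq p\leq n}\bigl(\det R^{n-p}g_\ast\omega^p_{Y/C}\bigr)^{\otimes p},$$
as desired (the range is finite since $\omega^p_{Y/C}=0$ for $p>N-1$ and the filtration stops at $p=n$). The only nontrivial point is the uniqueness assertion identifying $\overline{\cF}^p_-$ with $\cF^p_{\log}$, which follows from a standard argument: any two vector subbundle extensions of $\cF^p$ inside the lower Deligne extension coincide because their difference would be a coherent subsheaf supported on $\Delta$, and both having locally free quotients forces equality. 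This is the only step that requires care; everything else is a direct citation.
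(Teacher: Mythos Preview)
Your proof is correct and follows essentially the same route as the paper: invoke Proposition~\ref{lower Deligne extension and logarithmic cohomology} to identify the lower Deligne extension with $R^n g_\ast\omega^\bullet_{Y/C}$, invoke Proposition~\ref{logarithmic Hodge bundles} to identify $\overline{\cF}^p_-$ with $\cF^p_{\log}$ and its graded pieces with $R^{n-p}g_\ast\omega^p_{Y/C}$, then substitute into the definition. Your version is slightly more explicit about well-definedness and about the uniqueness of the subbundle extension, which the paper leaves implicit.
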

\begin{proof} Let $n$ be a non-negative integer. 
Using Proposition \ref{lower Deligne extension and logarithmic cohomology}, the lower Deligne extension of the vector bundle $\ccH^n(Y_\Cc/\Cc)$ on $\Cc$ equipped with the Gauss-Manin connection coincides with the relative logarithmic cohomology $\ccH^n_{\log} (Y/C) \simeq R^n g_\ast \omega^\bullet_{Y/C}$ equipped with the logarithmic connection $\nabla_{\log}$. 

Using Proposition \ref{logarithmic Hodge bundles}, for every integer $p \geq 0$, the vector bundle $\cF^p$ on $\Cc$ is an algebraic subbundle of the vector bundle $\ccH^n(Y_\Cc/\Cc)$ endowed with the algebraic structure defined by $R^n g_\ast \omega^\bullet_{Y/C}$, and its extension in the vector bundle $R^n g_\ast \omega^\bullet_{Y/C}$ is the vector bundle $\cF^p_{\log}$ given by the logarithmic Hodge-de Rham exact sequence.

Consequently, the lower Griffiths line bundle is given by:
\begin{align*}
    \GK_{C,-}(\H^n(Y_\eta/C_\eta)) &\simeq \bigotimes_{p \geq 0} (\det \cF^p_{\log}/\cF^{p+1}_{\log})^{\otimes p},\\
    &=  \bigotimes_{0 \leq p \leq n} (\det R^{n-p} g_\ast \omega^p_{Y/C})^{\otimes p} \textrm{ using Proposition \ref{logarithmic Hodge bundles}}. \qedhere
    \end{align*}
\end{proof}

\section{Elementary exponents of a degeneration}
\label{elementary exponents}

In this section, we recall the formalism  of the elementary exponents of a normal crossing degeneration, as developed in \cite[Sections 2.2 and 2.3]{EFM21}.

Let $\mathbb{D}$ be the unit complex disk with coordinate $t$, let $Y$ be a smooth complex manifold, and let
$$
g : Y \lra \mathbb{D}
$$
be a projective morphism such that the fiber $Y_0$ is a (not necessarily reduced) divisor with strict normal crossings.

Let us consider a \emph{semistable reduction diagram}, i.e. a commutative diagram of complex manifolds
\begin{equation} \label{semistable reduction diagram}
\xymatrix{  Y' \ar[r]^{\rho} \ar[d]^{g'} & Y \ar[d]^{g}\\
            \mathbb{D}' \ar[r]^{\sigma} & \mathbb{D}}
\end{equation}
where $\mathbb{D}'$ is a copy of the unit open disk with coordinate $t'$, $\sigma$ is the map sending $t'$ to $t'^l$ where $l$ is some integer, $Y'$ is a smooth $N$-dimensional complex manifold, $g'$ is a morphism such that the open subscheme $Y' - Y'_0$ of $Y'$ is isomorphic to the fiber product $(Y - Y_0) \times_{\mathbb{D}^\ast} \mathbb{D}'^\ast$, and such that $Y'_0$ is a \emph{reduced} divisor with strict normal crossings.

Such a diagram exists by the ``semistable reduction theorem'' (\cite[Chapter II]{KKMS73}).

For every pair of integers $(p,q)$, we can consider the coherent sheaf $R^q g_\ast \omega^p_{Y/\mathbb{D}}$ (resp. $R^q g'_\ast \omega^p_{Y'/\mathbb{D}'}$) on $\mathbb{D}$ (resp. $\mathbb{D}'$), which is locally free using Proposition \ref{logarithmic Hodge bundles}. Both locally free sheaves have the same rank, given by the Hodge number of a general fiber:
$$h^{p,q} := h^{p,q}(Y_\infty).$$

\begin{definition}[{\cite[Def. 2.5]{EFM21}}]
\label{def elementary exponents}
Let $p,q \geq 0$ be integers. With the above notation, the map of locally free sheaves on $\mathbb{D}'$ given by the pullback of forms:
$$
\sigma^\ast R^q g_\ast \omega^p_{Y/\mathbb{D}} \lra R^q g'_\ast \omega^p_{Y'/\mathbb{D}'}
$$
induces an isomorphism on $\mathbb{D}'^\ast$, so it is injective and its cokernel is an $\cO_{\mathbb{D}'}$-module of the form
$$
\bigoplus_{1 \leq j \leq h^{p,q}} \cO_{\mathbb{D}'}/(t'{} ^{b^{p,q}_j} \cO_{\mathbb{D}'}),
$$
where $(b^{p,q}_j)_{1 \leq j \leq h^{p,q}}$ is a family of integers, unique up to order.

The \emph{elementary exponents of the $(p,q)$-Hodge bundle} are the rational numbers:
$$
(\alpha^{p,q}_j)_{1 \leq j \leq h^{p,q}} := \left (\frac{b^{p,q}_j}{l} \right )_{1 \leq j \leq h^{p,q}} \quad \in \left  (\frac{1}{l} \Z \right )^{h^{p,q}}.
$$
We also define rational numbers by:
$$
b^{p,q} := \sum_{1 \leq j \leq h^{p,q}} b^{p,q} _j,
$$
$$
\alpha^{p,q} := \sum_{1 \leq j \leq h^{p,q}} \alpha^{p,q} _j = \frac{b^{p,q}}{l}.
$$
Note that these numbers do not depend on the choice of the coordinates $t'$,$t$.
\end{definition}

\begin{proposition}[{\cite[Lemma 2.4]{EFM21}}]
\label{domain elementary exponents}
For all integers $p,q$, the integers $(b_j^{p,q})_j$ are all in $\{0,...,l-1\}$. Consequently, the elementary exponents $(\alpha_j^{p,q})_j$ are all in $\{0, \frac{1}{l}, ..., \frac{l-1}{l} \}$.

Moreover, the elementary exponents do not depend on the choice of the semistable reduction diagram.
\end{proposition}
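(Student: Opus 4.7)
The plan is to first bound the ``defect lattice'' at the level of the full relative logarithmic de Rham cohomology bundle and then to transfer the bound to each $(p,q)$-Hodge graded piece using the strictness afforded by degeneration of the logarithmic Hodge--de Rham spectral sequence.

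For the bound $b_j^{p,q} \le l-1$, I would set $n = p+q$, $V := R^n g_\ast \omega^\bullet_{Y/\mathbb{D}}$ and $V' := R^n g'_\ast \omega^\bullet_{Y'/\mathbb{D}'}$, and work with the filtered injection $\sigma^\ast V \hookrightarrow V'$ given by pullback of forms. By Proposition \ref{lower Deligne extension and logarithmic cohomology}, $V$ is the lower Deligne extension of the VHS on $\mathbb{D}^\ast$, and $V'$ is the lower Deligne extension of its pullback to $\mathbb{D}'^\ast$; moreover the monodromy of the pulled-back VHS is unipotent since $Y'_0$ is reduced, so the residue of the Gauss--Manin connection on $V'$ vanishes at $0$. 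Since the pullback $t = t'^l$ multiplies residue eigenvalues by $l$, the eigenvalues of the induced residue on $\sigma^\ast V$ at $0$ lie in $[0,l) \cap \mathbb{Q}$; bringing them back into $[0,1)$ to recover $V'$ amounts to modifying each generalized eigenspace by a factor $t'^{-k}$ with $k \in \{0,1,\dots,l-1\}$. Hence the torsion cokernel $V'/\sigma^\ast V$ is annihilated by $t'^{l-1}$.

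Next I would transfer this bound to the Hodge graded pieces. By Proposition \ref{logarithmic Hodge bundles}, both $V$ and $V'$ carry Hodge filtrations $\cF^\bullet_{\log}$ by vector subbundles with graded pieces $R^q g_\ast \omega^p_{Y/\mathbb{D}}$ and $R^q g'_\ast \omega^p_{Y'/\mathbb{D}'}$. The pullback map is filtered, giving $\sigma^\ast \cF^p_{\log}(V) \subseteq \cF^p_{\log}(V')$ inside $V'$. The key algebraic step is the saturation identity
\[
\sigma^\ast \cF^p_{\log}(V) \;=\; \cF^p_{\log}(V') \cap \sigma^\ast V \qquad \text{inside } V',
\]
which holds because $\sigma^\ast \cF^p_{\log}(V)$ is a vector subbundle of $\sigma^\ast V$, hence saturated, and the two filtrations coincide on $\mathbb{D}'^\ast$. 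The second isomorphism theorem then embeds $\cF^p_{\log}(V')/\sigma^\ast \cF^p_{\log}(V)$ into $V'/\sigma^\ast V$, so it is killed by $t'^{l-1}$. Applying the same reasoning at level $p+1$ and taking quotients yields that $t'^{l-1}$ annihilates the cokernel of $\sigma^\ast R^q g_\ast \omega^p_{Y/\mathbb{D}} \to R^q g'_\ast \omega^p_{Y'/\mathbb{D}'}$, giving $b_j^{p,q} \le l-1$; the statement about $\alpha_j^{p,q}$ follows at once.

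For the independence from the choice of semistable reduction diagram, given two such diagrams with parameters $l_1,l_2$, I would dominate them by a common semistable diagram of parameter $l = \mathrm{lcm}(l_1,l_2)$, giving factorisations $\sigma = \sigma_i \circ \tau_i$ with $\deg \tau_i = l/l_i$. Since each $g_i$ is already semistable, the residue of the Gauss--Manin connection on $R^n g_{i\ast}\omega^\bullet_{Y_i/\mathbb{D}_i}$ vanishes, so the local analysis above, applied to $\tau_i$, shows that $\tau_i^\ast R^q g_{i\ast}\omega^p_{Y_i/\mathbb{D}_i} \to R^q g'_\ast \omega^p_{Y'/\mathbb{D}'}$ is an isomorphism. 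Hence the cokernel for $\sigma$ is $\tau_i^\ast$ of the cokernel for $\sigma_i$, which multiplies each elementary divisor $t_i^{b_j^{p,q}(\sigma_i)}$ by the ramification factor $l/l_i$, yielding $\alpha_j^{p,q}(\sigma) = b_j^{p,q}(\sigma)/l = b_j^{p,q}(\sigma_i)/l_i = \alpha_j^{p,q}(\sigma_i)$. The main technical obstacle is the saturation identity displayed above, without which one cannot pass from the bound on the ambient cokernel to the bound on each graded cokernel; all other pieces are either direct eigenvalue bookkeeping or formal consequences of the $E_1$-degeneration established in Section \ref{Steenbrink theory and logarithmic Hodge bundles}.
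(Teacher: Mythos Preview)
The paper does not give its own proof of this proposition; it is quoted from \cite[Lemma 2.4]{[Eriksson-Freixas-Mourougane21]}. For the independence assertion, the paper only remarks, immediately after Proposition~\ref{elementary exponents and monodromy}, that identifying $(\exp(-2i\pi\alpha^{p,q}_j))_j$ with the eigenvalues of the semisimple monodromy on $F_\infty^p/F_\infty^{p+1}$ furnishes an alternative argument. So there is no detailed proof in the paper to compare against line by line.

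Your argument is correct and genuinely self-contained: it avoids both the external citation and the limit mixed Hodge structure underlying Proposition~\ref{elementary exponents and monodromy}. The bound via lower Deligne extensions is the natural one, and your saturation identity
\[
\sigma^\ast \cF^p_{\log}(V)\;=\;\cF^p_{\log}(V')\cap\sigma^\ast V
\]
is exactly what is needed; combined with the snake lemma it transports the bound on $V'/\sigma^\ast V$ to each graded cokernel. Compared with the route via Proposition~\ref{elementary exponents and monodromy}, your approach is more elementary and stays entirely within the circle of ideas of Propositions~\ref{lower Deligne extension and logarithmic cohomology} and~\ref{logarithmic Hodge bundles}; the paper's alternative buys a conceptual link to the monodromy but requires Steenbrink's limiting mixed Hodge structure as input.

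One point in the independence argument deserves tightening. When you invoke ``the local analysis above, applied to $\tau_i$'', you implicitly use a map $\tau_i^\ast R^q g_{i\ast}\omega^p_{Y_i/\mathbb{D}_i}\to R^q g'_\ast\omega^p_{Y'/\mathbb{D}'}$, but there is no morphism $Y'\to Y_i$ in sight to induce it by pullback of forms. The remedy is already latent in your first paragraph: by Propositions~\ref{lower Deligne extension and logarithmic cohomology} and~\ref{logarithmic Hodge bundles}, each $R^q g_{i\ast}\omega^p$ and $R^q g'_\ast\omega^p$ is intrinsically the $p$-th graded piece of the subbundle extension of the Hodge filtration inside the lower Deligne extension; and any map between such locally free sheaves that restricts to the base-change isomorphism on the punctured disk is uniquely determined. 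Since $g_i$ is semistable the residue on $V_i$ is nilpotent, so $\tau_i^\ast V_i$ \emph{is} the lower Deligne extension $V'$, the subbundle extensions of $\cF^p$ agree, and hence $\tau_i^\ast\mathrm{gr}^p_i=\mathrm{gr}'^p$. The two candidate maps $\sigma^\ast\mathrm{gr}^p\to\mathrm{gr}'^p$ (the one via $\rho^\ast$ and the one via $\tau_i^\ast(\rho_i^\ast)$) agree on $\mathbb{D}'^\ast$, hence everywhere, and your factorisation of cokernels is then valid. With this observation made explicit, the proof is complete.
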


The elementary exponents are related to the eigenvalues of the (counterclockwise) monodromy.

Let $\infty$ be a point in $\mathbb{D}^\ast$, and $n \geq 0$ be an integer. As in \cite[(2.8)]{Steenbrink77}, we can equip the fiber 
$$
(R^n g_\ast \C)_\infty \simeq H^n(Y_\infty, \C)
$$
with a mixed Hodge structure. Let us denote by $F_\infty^\bullet$ the associated Hodge filtration, not to be confused with the classical Hodge filtration $(\cF^\bullet)_\infty$ on the same space.

Using \cite[Th. (2.13)]{Steenbrink77}, the semisimple part of the monodromy:
$$
T_s : H^n(Y_\infty) \lrasim H^n(Y_\infty)
$$
respects the mixed Hodge structure, hence the filtration $F_\infty^\bullet$.

\begin{proposition}[{\cite[Cor. 2.8]{EFM21}}] 
\label{elementary exponents and monodromy} 
For every pair of integers $(p,q)$ with $p+q = n$, letting $(\alpha^{p,q} _j)_j$ be the elementary exponents of the $(p,q)$-Hodge bundle, the complex numbers 
$$
(\exp(- 2i\pi \alpha^{p,q} _j) )_j
$$
are precisely the eigenvalues (counted with multiplicities) of the endomorphism $T_s$ acting on the subquotient~$F_\infty^p / F_\infty^{p+1}$.
\end{proposition}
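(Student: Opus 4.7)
The plan is to reduce to the unipotent monodromy case via the semistable reduction diagram \eqref{semistable reduction diagram}, and then to identify the elementary exponents with the eigenvalues of the residue of the Gauss--Manin connection on the logarithmic Hodge bundle $V^{p,q} := R^q g_\ast \omega^p_{Y/\mathbb{D}}$ through a local eigenspace decomposition, before transferring via Steenbrink's identification of the fiber at $0$ of this bundle with the limit Hodge subquotient.

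First, I would decompose $V^{p,q}$ into generalized eigenspaces of the residue $\Res_0$ at $0$. By Proposition \ref{lower Deligne extension and logarithmic cohomology}, these eigenvalues lie in $[0,1)$, so no two differ by a non-zero integer. The standard theory of regular singular connections then yields a canonical decomposition
$$V^{p,q} = \bigoplus_\alpha V^{p,q,\alpha}$$
into $\cO_{\mathbb{D}}$-submodules stable under the logarithmic Gauss--Manin connection, with $\Res_0$ acting on $V^{p,q,\alpha}$ with $\alpha$ as its unique eigenvalue. Since the monodromy of $V^{p,q}|_{\mathbb{D}^\ast}$ is given by $T = \exp(-2\pi i \Res_0)$, the generic rank $r^\alpha$ of $V^{p,q,\alpha}$ coincides with the multiplicity of $\exp(-2\pi i \alpha)$ as an eigenvalue of $T_s$ on $V^{p,q}_\eta$.

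Second, I would carry out the local pullback computation. In a frame on $V^{p,q,\alpha}$ in which $\Res_0 = \alpha \cdot \Id + N$ with $N$ nilpotent, the chain rule applied to $t = t'^l$ shows that $\sigma^\ast V^{p,q,\alpha}$ has residue $l\alpha \cdot \Id + lN$ at $t' = 0$. The semistability of $g'$ forces $l\alpha$ to be a non-negative integer, and the lower Deligne extension $V'^{p,q,\alpha}$ of $\sigma^\ast V^{p,q,\alpha}|_{\mathbb{D}'^\ast}$ is obtained by twisting the frame by $t'^{-l\alpha}$. Consequently, the natural inclusion $\sigma^\ast V^{p,q,\alpha} \hookrightarrow V'^{p,q,\alpha}$ has cokernel isomorphic to $(\cO_{\mathbb{D}'}/t'^{l\alpha}\cO_{\mathbb{D}'})^{r^\alpha}$. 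Summing over $\alpha$, this identifies the multiset $(b^{p,q}_j)_j$ as the multiset in which each $l\alpha$ appears $r^\alpha$ times, so $(\alpha^{p,q}_j)_j$ records the eigenvalues of $\Res_0$ on $V^{p,q}_0$ with multiplicities.

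Third, I would combine this with Steenbrink's description of $V'^{p,q}_0$. In the semistable (unipotent) case, \cite[Th.~(2.13) and \S 4]{[Steenbrink77]} furnishes a canonical isomorphism
$$V'^{p,q}_0 \simeq \mathrm{gr}^p_{F_\infty} H^{p+q}(Y_\infty, \C),$$
compatible with the action of $T_u$ on both sides. Transporting this identification along the Deligne eigenspace decomposition of Step 1, the subspace $V^{p,q,\alpha}_0$ lands precisely in the $\exp(-2\pi i\alpha)$-eigenspace of $T_s$ on $\mathrm{gr}^p_{F_\infty} H^{p+q}(Y_\infty, \C)$, using that $T_s$ preserves $F_\infty^\bullet$ by \cite[Th.~(2.13)]{[Steenbrink77]} together with $T = \exp(-2\pi i \Res_0)$. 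Combining this with Step 2 yields the required equality of multisets between $(\exp(-2\pi i \alpha^{p,q}_j))_j$ and the $T_s$-eigenvalues on $F_\infty^p/F_\infty^{p+1}$. The main obstacle lies in this third step: carefully tracing through Steenbrink's construction of the limit mixed Hodge structure to ensure that $\Res_0$ on $V^{p,q}_0$ corresponds to $-\frac{1}{2\pi i}\log T_s$ on the Hodge subquotient, and that this correspondence is preserved under the semistable reduction $\sigma$.
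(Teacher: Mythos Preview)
The paper does not give its own proof of this proposition: it is simply quoted from \cite[Cor.~2.8]{[Eriksson-Freixas-Mourougane21]}, so there is no argument in the paper to compare against. Your proposal is therefore an attempt to reconstruct the proof of the cited result.

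Your Steps 1 and 2 are correct and constitute the essential computation: the generalized eigenspace decomposition of $V^{p,q}=R^qg_\ast\omega^p_{Y/\mathbb{D}}$ under $\Res_0$ exists because the eigenvalues lie in $[0,1)$ (Proposition~\ref{lower Deligne extension and logarithmic cohomology}), and on each piece the pullback $\sigma^\ast V^{p,q,\alpha}\hookrightarrow V'^{p,q,\alpha}$ is exactly multiplication by $t'^{\,l\alpha}$ in a suitable frame, giving cokernel $(\cO_{\mathbb{D}'}/t'^{\,l\alpha})^{r^\alpha}$. This identifies the multiset $(\alpha^{p,q}_j)_j$ with the eigenvalues of $\Res_0$ on $V^{p,q}_0$, with multiplicities. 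This is essentially the content of \cite[Lemma~2.7]{[Eriksson-Freixas-Mourougane21]}.

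Step 3 is where you have a genuine gap, beyond what you already flag. You invoke Steenbrink's identification $V'^{p,q}_0\simeq \mathrm{gr}^p_{F_\infty}H^{p+q}(Y_\infty,\C)$, which is a statement on the \emph{semistable} side, and then speak of the subspace ``$V^{p,q,\alpha}_0$'' landing in a $T_s$-eigenspace. But $V^{p,q,\alpha}_0$ lives in $V^{p,q}_0$, not in $V'^{p,q}_0$, and the map $(\sigma^\ast V^{p,q,\alpha})_0\to V'^{p,q,\alpha}_0$ is zero whenever $l\alpha>0$. What you need instead is to transport the decomposition to the $Y'$ side, writing $V'^{p,q}=\bigoplus_\alpha V'^{p,q,\alpha}$ (each summand being the Deligne extension on $\mathbb{D}'$ of $\sigma^\ast(V^{p,q,\alpha}|_{\mathbb{D}^\ast})$), and then argue that under Steenbrink's isomorphism the fiber $V'^{p,q,\alpha}_0$ corresponds to the $\exp(-2\pi i\alpha)$-eigenspace of $T_s$ on $\mathrm{gr}^p_{F_\infty}$. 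Establishing this compatibility requires knowing that the limit Hodge filtration $F_\infty$ built from $Y$ agrees with that built from $Y'$ (it does, via Schmid's nilpotent orbit description), and that Steenbrink's isomorphism is equivariant for the action induced by the $Y$-monodromy, not just the $Y'$-monodromy. This is precisely the substance of \cite[Cor.~2.8]{[Eriksson-Freixas-Mourougane21]} and is not a formality; your sketch correctly locates the difficulty but does not resolve it.
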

This proposition yields a new proof that the elementary exponents do not depend on the choice of the semistable reduction diagram.

We can also use them to compare the determinant line bundles of the Hodge bundles.

\begin{proposition}
\label{elementary exponents and det of Hodge} 
With the same notation, the canonical isomorphism of line bundles on $\mathbb{D}'^\ast$:
$$
\sigma^\ast \det R^q g_\ast \Omega^p_{Y - Y_0 / \mathbb{D}^\ast} \lra  \det R^q g'_\ast \Omega^p_{Y' - Y'_0 / \mathbb{D}'^\ast} 
$$
can be extended into an isomorphism of line bundles on $\mathbb{D}'$:
$$
(\sigma^\ast \det R^q g_\ast \omega^p_{Y/\mathbb{D}}) \otimes  \cO_{\mathbb{D}'}(\{0\})^{\otimes l \alpha^{p,q} } \lrasim \det R^q g'_\ast \omega^p_{Y'/\mathbb{D}'}. 
$$
\end{proposition}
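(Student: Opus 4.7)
The plan is straightforward: take the determinant of the canonical inclusion from Definition \ref{def elementary exponents} and compute the order of vanishing of the resulting morphism of line bundles at the origin.

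More precisely, the inclusion of locally free $\cO_{\mathbb{D}'}$-modules of the same rank $h^{p,q}$
\[
i : \sigma^\ast R^q g_\ast \omega^p_{Y/\mathbb{D}} \hookrightarrow R^q g'_\ast \omega^p_{Y'/\mathbb{D}'}
\]
introduced in Definition \ref{def elementary exponents} induces a morphism of line bundles on $\mathbb{D}'$,
\[
\det i : \sigma^\ast \det R^q g_\ast \omega^p_{Y/\mathbb{D}} \longrightarrow \det R^q g'_\ast \omega^p_{Y'/\mathbb{D}'}.
\]
Its restriction to $\mathbb{D}'^\ast$ is the canonical isomorphism appearing in the statement, since $i$ itself is an isomorphism there (its cokernel being a torsion sheaf supported at $\{0\}$).

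The key step is the computation of the $t'$-adic valuation of $\det i$ at the origin. Over the discrete valuation ring $\cO_{\mathbb{D}',0}$, the Smith normal form theorem ensures that $i$ can, in suitable bases of source and target, be represented by a diagonal matrix with diagonal entries $(t'^{b^{p,q}_1},\ldots,t'^{b^{p,q}_{h^{p,q}}})$: this is equivalent to the description of the cokernel as $\bigoplus_j \cO_{\mathbb{D}'}/(t'^{b^{p,q}_j}\cO_{\mathbb{D}'})$ recorded in Definition \ref{def elementary exponents}. The determinant of this diagonal matrix therefore has valuation $\sum_j b^{p,q}_j = b^{p,q} = l\alpha^{p,q}$.

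Equivalently, viewed as a section of the line bundle $(\sigma^\ast \det R^q g_\ast \omega^p_{Y/\mathbb{D}})^\vee \otimes \det R^q g'_\ast \omega^p_{Y'/\mathbb{D}'}$, the morphism $\det i$ has divisor $l\alpha^{p,q}\{0\}$, and consequently defines an isomorphism
\[
(\sigma^\ast \det R^q g_\ast \omega^p_{Y/\mathbb{D}}) \otimes \cO_{\mathbb{D}'}(\{0\})^{\otimes l\alpha^{p,q}} \lrasim \det R^q g'_\ast \omega^p_{Y'/\mathbb{D}'}
\]
that extends the canonical isomorphism on $\mathbb{D}'^\ast$, which is the desired conclusion. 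I do not foresee any real obstacle: once Definition \ref{def elementary exponents} has been unpacked, the entire argument reduces to a formal determinant computation on a DVR.
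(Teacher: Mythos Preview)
Your proof is correct and takes essentially the same approach as the paper: the paper phrases it as ``multiplicativity of the determinant line bundle'' applied to the short exact sequence with torsion cokernel $\bigoplus_j \cO_{\mathbb{D}'}/(t'^{b^{p,q}_j}\cO_{\mathbb{D}'})$, which is exactly your Smith normal form computation of $\mathrm{ord}_0(\det i)=\sum_j b^{p,q}_j=l\alpha^{p,q}$.
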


\begin{proof}
This follows from the multiplicativity of the determinant line bundle applied to the exact sequence of coherent sheaves on $\mathbb{D}'$:
$$
0 \lra \sigma^\ast R^q g_\ast \omega^p_{Y/\mathbb{D}} \lra R^q g'_\ast \omega^p_{Y'/\mathbb{D}'} \lra \bigoplus_{1 \leq j \leq h^{p,q}} \cO_{\mathbb{D}'}/(t'^{l \alpha^{p,q}_j} \cO_{\mathbb{D}'}) \lra 0. \qedhere
$$
\end{proof}

\section{Comparison of Griffiths line bundles and elementary exponents}\label{Comp Griff Elem}

Let $C$ be a connected smooth projective complex curve with generic point $\eta$, $\Delta$ be a finite subset of $C$, which we also see as a reduced divisor, $Y$ be a smooth $N$-dimensional complex scheme, and 
$$
g : Y \lra C
$$
be a projective morphism which is smooth over $\Cc := C - \Delta$, and such that $Y_\Delta$ is a divisor with strict normal crossings. As before, we shall denote:
$$Y_\Cc := g^{-1}(\Cc) = Y - Y_\Delta.$$

For every point $x$ in $\Delta$, let $l_x$ be an integer such that there exists an open disk $\mathbb{D}$ centered in $x$ with a coordinate $t$ that identifies it with the unit open disk, such that the restriction
$$g_{\mathbb{D}} : Y_{\mathbb{D}} := g^{-1}(\mathbb{D}) \lra \mathbb{D}$$
and the morphism
$$\sigma : \mathbb{D'} \lra \mathbb{D}, \quad t' \lra t'^{l_x}$$
where $\mathbb{D}'$ is a copy of the unit open disk with coordinate $t'$, can be inserted into a semistable reduction diagram of the form \eqref{semistable reduction diagram}.

Let $C'$ be a connected smooth projective complex curve with generic point $\eta'$, and
$$
\sigma : C' \lra C
$$
be a finite morphism satisfying the following condition:

\begin{equation}
\mbox{\emph{For every $x'$ in $\sigma^{-1}(\Delta)$, the ramification index $r_{x'}$ of $\sigma$ in $x'$ is a multiple of the integer $l_{\sigma(x')}$. }}\label{condition sigma}
\end{equation}

In particular, for every integer $n$, the monodromy of the bundle $\sigma^\ast \ccH^n(Y_\Cc/ \Cc)$ with the connection given by the pullback of the Gauss-Manin connection, is unipotent at every point of $\sigma^{-1}(\Delta)$. 

For $n$ an integer, let $\cV^n$ be the vector bundle with connection $\ccH^n(Y_\Cc/ \Cc)$ on $\Cc$, and $F^\bullet \cV^n$ be its Hodge filtration.

Let $\overline{\cV^n}_+$ (resp. $\overline{\cV^n}_-$) be its upper (resp. lower) Deligne extension to $C$.

We know that the vector bundle with connection $\sigma^\ast \cV^n$ on: $$\Cc' := C' - \sigma^{-1}(\Delta)$$ has unipotent local monodromy at the points of $\Delta' := \sigma^{-1}(\Delta),$ so we can consider its Deligne extension $\overline{\sigma^\ast \cV^n}$ to $C'$.

Using Proposition \ref{logarithmic Hodge bundles}, for every integer $p$, the vector subbundle $F^p \cV^n$ on $\Cc$ is an algebraic subbundle of the vector bundle $\cV^n$ endowed with the algebraic structure defined by means of its lower Deligne extension $\overline{\cV^n}_-$. As the algebraic structures defined by the upper and the lower Deligne extensions actually coincide, the vector subbundle $F^p \cV^n$ on $\Cc$ is also an algebraic subbundle of the vector bundle $\cV^n$ endowed with the algebraic structure defined by its upper Deligne extension $\overline{\cV^n}_+$. Consequently, we can consider the extension $\overline{F^p \cV^n}_+$ (resp. $\overline{F^p \cV^n}_-$) in the vector bundle $\overline{\cV^n}_+$ (resp. $\overline{\cV^n}_-$) of the vector subbundle $F^p \cV^n$ in $\cV^n$.

Similarly, we can consider the extension $\overline{\sigma^\ast F^p \cV^n}$ in $\overline{\sigma^\ast \cV^n}$, over $C'$, of the subbundle $\sigma^\ast F^p \cV^n$ in $\sigma^\ast \cV^n$ over $\Cc'$. 

For all integers $p,q$, and for every $x$ in $\Delta$, let $(\alpha^{p,q}_{j,x})_{1 \leq j \leq \mathrm{rg}(H^{p,q}(Y_\eta))}$ be the elementary exponents of the $(p,q)$-Hodge bundle at point $x$, and let $\alpha^{p,q}_x$ be their sum. 

Let us define a divisor on $C$ with rational coefficients by:
$$
A^{p,q} := \sum_{x \in \Delta} \alpha^{p,q}_x \{x\}.
$$

\begin{proposition}
\label{lower Deligne extensions and elementary exponents}
With the above notation, for all integers $p$, $n$, there is an isomorphism of line bundles on $C'$:
$$
\sigma^\ast \det(\overline{F^p \cV^n}_- / \overline{F^{p+1} \cV^n}_- ) \otimes \cO_{C'}(\sigma^\ast A^{p,n-p} ) \simeq \det(\overline{\sigma^\ast F^p \cV^n} / \overline{\sigma^\ast F^{p+1} \cV^n} )
$$
whose restriction to $\Cc'$ is the identity of the line bundle $\sigma^\ast \det(F^p \cV^n / F^{p+1} \cV^n)$. 
\end{proposition}

Note that even though $A^{p,n-p}$ is a divisor with only rational coefficients, its pullback by $\sigma$ necessarily has integral coefficients.

\begin{proof}
It is enough to define these isomorphisms locally around a point $x'$ in $\sigma^{-1}(\Delta)$. Let $r_{x'}$ be the ramification index of $\sigma$ at $x'$, and $x$ be the image of $x'$ by $\sigma$.

Let $\mathbb{D}$ be an open disk centered in $x$ with a coordinate $t$ that identifies $\mathbb{D}$ with the unit open disk,  and let $\mathbb{D}'$ be an open disk around $x'$ with a coordinate $t'$  that identifies $\mathbb{D}'$ with the unit open disk, such that  $\sigma(\mathbb{D}') \subseteq \mathbb{D}$ and that we have the equality of functions on $\mathbb{D}'$:
$$
\sigma^\ast t = t'^{r_{x'}}.
$$
Using  condition \eqref{condition sigma}, the integer $r_{x'}$ can be written $r' l_x$, where $r' \geq 1$ and $l_x$ is an integer such that we have a semistable reduction diagram
$$
\xymatrix{  (Y_\mathbb{D})'' \ar[r] \ar[d]^{(g_\mathbb{D})''} & Y_\mathbb{D} \ar[d]^{g_\mathbb{D}}\\
            \mathbb{D}'' \ar[r]^{\sigma'} & \mathbb{D}}
$$
where $\sigma'$ is the elevation to the power $l_x$.

Let 
$$
\tau : \mathbb{D}' \lra \mathbb{D}''
$$
be the elevation to the power $r'$, so that we have the equality of morphisms:
$$
\sigma' \circ \tau = \sigma_{| \mathbb{D}'}.
$$
Using Proposition \ref{lower Deligne extension and logarithmic cohomology}, and the fact that $(g_\mathbb{D})''$ is semistable, i.e. that the fiber $(Y_\mathbb{D})''_0$ is a reduced divisor with strict normal crossings, we obtain that for every $n \geq 0$, the monodromy of the vector bundle $R^n (g_\mathbb{D}) ''_{\mid  (Y_\mathbb{D})'' - (Y_\mathbb{D})''_0 \ast} \Omega^\bullet_{(Y_\mathbb{D})'' - (Y_\mathbb{D})''_0 / \mathbb{D''} - \{0\}}$, with the Gauss-Manin connection, is unipotent. Consequently, the upper and lower Deligne extensions on $\mathbb{D}''$ coincide and are compatible with base change by ramified morphisms.

Consequently, the extensions of the subquotients in the lower Deligne extensions are also compatible with base change. So using Proposition \ref{logarithmic Hodge bundles}, we obtain, for all integers $p,n$, an isomorphism of vector bundles on $\mathbb{D}'$:
\begin{equation}
   \tau^\ast R^{n-p} (g_\mathbb{D})''_\ast \omega^p_{(Y_\mathbb{D}) '' / \mathbb{D}''} \simeq \overline{\sigma^\ast F^p \cV^n}/\overline{\sigma^\ast F^{p+1} \cV^n} \label{first isomorphism comparison Deligne extensions}
\end{equation}
that extends the canonical isomorphism on $\mathbb{D}' - \{ x'\}$.

Using Proposition \ref{elementary exponents and det of Hodge}, we have an isomorphism of line bundles on $\mathbb{D}''$:
$$
(\sigma'^\ast \det R^{n-p} g_{\mathbb{D} \ast} \omega^p_{Y_\mathbb{D}/\mathbb{D}}) \otimes \cO_{\mathbb{D}''}(\{0\})^{\otimes l_x \alpha^{p,n-p}_x } \simeq \det R^{n-p} (g_\mathbb{D}) ''_\ast \omega^p_{(Y_\mathbb{D})'' / \mathbb{D}''},
$$
that extends the canonical isomorphism on $\mathbb{D}'' - \{0\}$. Pulling it back by $\tau$ and using the isomorphism \eqref{first isomorphism comparison Deligne extensions}, we get  an isomorphism of line bundles on $\mathbb{D}'$:
\begin{align*}
(\sigma^\ast \det R^{n-p} g_{\mathbb{D} \ast} \omega^p_{Y_\mathbb{D}/\mathbb{D}}) \otimes \cO_{\mathbb{D}'}(\{x'\})^{\otimes r_{x'} \alpha^{p,n-p}_x } &\simeq \tau^\ast \det R^{n-p} (g_\mathbb{D})''_\ast \omega^p_{(Y_\mathbb{D})'' / \mathbb{D}''} \\
&\simeq \det(\overline{\sigma^\ast F^p \cV^n}/\overline{\sigma^\ast F^{p+1} \cV^n} ).
\end{align*}

Using again Proposition \ref{logarithmic Hodge bundles}, this can be rewritten as an isomorphism of line bundles on $\mathbb{D}'$:
$$
\sigma^\ast \det(\overline{F^p \cV^n}_- / \overline{F^{p+1} \cV^n}_-) \otimes \cO_{\mathbb{D}'}(\{x'\})^{\otimes r_{x'} \alpha^{p,n-p}_x } \simeq \det (\overline{\sigma^\ast F^p \cV^n}/\overline{\sigma^\ast F^{p+1} \cV^n})
$$
that extends the identity map on $\mathbb{D}' - \{x'\}$. Using this extension for every $x'$ in $\sigma^{-1}(\Delta)$ shows that there is an isomorphism of line bundles on $C'$:
$$
(\sigma^\ast \det \overline{F^p \cV^n}_- / \overline{F^{p+1} \cV^n}_- ) \otimes \cO_{C'}\left (\sum_{x' \in \sigma^{-1}(\Delta)} r_{x'} \alpha^{p,n-p}_{\sigma(x')} \{x'\} \right ) \simeq \det(\overline{\sigma^\ast F^p \cV^n} / \overline{\sigma^\ast F^{p+1} \cV^n} ).
$$
that extends the canonical isomorphism on $\Cc'$.

The wanted isomorphism follows, using the fact that for every $x'$ in $\sigma^{-1}(\Delta)$, the multiplicity of the divisor $\sigma^\ast A^{p,n-p}$ at $x'$ is exactly $r_{x'} \alpha^{p,n-p} _{\sigma(x')}$ (which is an integer because the rational number $\alpha^{p,n-p} _{\sigma(x')}$ is in $\frac{1}{l_{\sigma(x')}} \Z$ by definition, and because the integer $r_{x'}$ is a multiple of $l_{\sigma(x')}$ by condition~\eqref{condition sigma}).
\end{proof}

\begin{proposition}
\label{upper Deligne extensions and elementary exponents}
For every pair $(p, n)$ of integers such that $0 \leq p \leq n,$ there is an isomorphism of line bundles on $C'$:
$$
\sigma^\ast \det(\overline{F^p \cV^n}_+ / \overline{F^{p+1} \cV^n}_+ ) \otimes \cO_{C'}(- \sigma^\ast A^{(N-1)-p,(N-1)-(n-p)}) \lra \det(\overline{\sigma^\ast F^p \cV^n} / \overline{\sigma^\ast F^{p+1} \cV^n} )
$$
whose restriction to $\Cc'$ is the identity of the line bundle $\sigma^\ast \det(F^p \cV^n / F^{p+1} \cV^n)$.
\end{proposition}

\begin{proof}
Using Poincar\'e and Serre duality, for every integer $n$, the vector bundle $\cV^n$ on $\Cc$ is isomorphic to the dual of the vector bundle $\cV^{2(N-1)-n}$, and for every integer $p$, the subquotient: 
$$
F^p \cV^n/F^{p+1} \cV^n \simeq R^{n-p} g_\ast \Omega^p_{Y_\Cc}
$$
is identified to the dual of the subquotient
$$F^{(N-1)-p} \cV^{2(N-1)-n} / F^{(N-1)-p + 1} \cV^{2(N-1)-n} \simeq R^{(N-1)-(n-p)} g_\ast \Omega^{(N-1)-p}_{Y_\Cc}.$$
As the duality exchanges the upper and lower Deligne extensions, we obtain an isomorphism of vector bundles on $C$:
$$
\overline{F^p \cV^n}_+ / \overline{F^{p+1} \cV^n}_+ \simeq (\overline{F^{(N-1)-p} \cV^{2(N-1)-n} }_- / \overline{F^{(N-1)-p+1} \cV^{2(N-1)-n} }_- )^\vee
$$
that extends the canonical isomorphism on $\Cc$; hence, pulling back by $\sigma$ and taking the determinant line bundle, an isomorphism of line bundles on $C'$:
$$
\sigma^\ast \det (\overline{F^p \cV^n}_+ / \overline{F^{p+1} \cV^n}_+ ) \simeq \sigma^\ast \det(\overline{F^{(N-1)-p} \cV^{2(N-1)-n} }_- / \overline{F^{(N-1)-p+1} \cV^{2(N-1)-n} }_- )^\vee
$$
that extends the canonical isomorphism on $\Cc'$.

Applying Proposition \ref{lower Deligne extensions and elementary exponents} to the integers $p' := (N-1)-p$, $n' := 2(N-1) - n$, we obtain an isomorphism:
\begin{align*}
&\sigma^\ast \det(\overline{F^{(N-1)-p} \cV^{2(N-1)-n} }_- / \overline{F^{(N-1)-p+1} \cV^{2(N-1)-n} }_- ) \otimes \cO(\sigma^\ast A^{(N-1)-p, (N-1) - (n-p)}) \\
&\simeq \det(\overline{\sigma^\ast F^{(N-1)-p} \cV^{2(N-1)-n} } / \overline{\sigma^\ast F^{(N-1)-p+1}  \cV^{2(N-1)-n} } )
\end{align*}
that extends the canonical isomorphism on $\Cc'$; and consequently, an isomorphism:
\begin{align*}
&\sigma^{\ast} \det(\overline{F^p \cV^n}_+ / \overline{F^{p+1} \cV^n}_+) \otimes \cO(-\sigma^{\ast} A^{(N-1)-p, (N-1) - (n-p)}) \\
&\simeq \det(\overline{\sigma^\ast F^{(N-1)-p} \cV^{2(N-1)-n}} / \overline{\sigma^\ast F^{(N-1)-p+1} \cV^{2(N-1)-n}} )^\vee
\end{align*}
that extends the canonical isomorphism on $\Cc'$.

Applying Poincar\'e and Serre duality again to the vector bundle $\sigma^\ast \cV^{2(N-1)-n}$ on $\Cc'$ and its subquotient $\sigma^\ast F^{(N-1)-p} \cV^{2(N-1)-n} / \sigma^\ast F^{(N-1)-p+1} \cV^{2(N-1)-n}$, and using that duality sends the (upper or lower) Deligne extension on $C'$ of this subquotient to the (upper or lower) Deligne extension of its dual, we obtain an isomorphism:
$$
(\overline{\sigma^\ast F^{(N-1)-p} \cV^{2(N-1)-n} } / \overline{\sigma^\ast F^{(N-1)-p+1} \cV^{2(N-1)-n}} ) ^\vee \simeq \overline{\sigma^\ast F^p \cV^n} / \overline{\sigma^\ast F^{p+1} \cV^n},
$$
that extends the canonical isomorphism on $\Cc'$.

This shows the result.
\end{proof}

\begin{proposition}
\label{upper and lower Deligne extensions and elementary exponents}
With the same notation, for  all integers $p, n$, 
there is an isomorphism of line bundles on $C$:
$$
\det(\overline{F^p \cV^n}_- / \overline{F^{p+1} \cV^n}_- ) \otimes \cO_C(A^{p,n-p} + A^{(N-1)-p,(N-1)-(n-p)}) \lra \det(\overline{F^p \cV^n}_+ / \overline{F^{p+1} \cV^n}_+)
$$
whose restriction to $\Cc$ is the identity of the line bundle $\det(F^p \cV^n / F^{p+1} \cV^n)$.

In particular, the coefficients of the divisor $A^{p,n-p} + A^{(N-1)-p,(N-1)-(n-p)}$ are integers.
\end{proposition}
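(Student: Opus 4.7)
The plan is to combine the two previous propositions, which give the analogous comparisons after pullback by a suitable ramified cover $\sigma: C' \to C$ satisfying condition \eqref{condition sigma}, and then descend the resulting isomorphism from $C'$ to $C$ using the identity map on $\Cc$ as an anchor.

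First I would choose, as in the setup of this subsection, a finite morphism $\sigma: C' \to C$ satisfying condition \eqref{condition sigma}. Proposition \ref{lower Deligne extensions and elementary exponents} applied to this $\sigma$ yields an isomorphism on $C'$
$$\sigma^\ast \det(\overline{F^p \cV^n}_-/\overline{F^{p+1} \cV^n}_-) \otimes \cO_{C'}(\sigma^\ast A^{p,n-p}) \simeq \det(\overline{\sigma^\ast F^p \cV^n}/\overline{\sigma^\ast F^{p+1} \cV^n})$$
extending the canonical isomorphism over $\Cc'$, while Proposition \ref{upper Deligne extensions and elementary exponents} yields, similarly,
$$\sigma^\ast \det(\overline{F^p \cV^n}_+/\overline{F^{p+1} \cV^n}_+) \otimes \cO_{C'}(-\sigma^\ast A^{(N-1)-p,(N-1)-(n-p)}) \simeq \det(\overline{\sigma^\ast F^p \cV^n}/\overline{\sigma^\ast F^{p+1} \cV^n}).$$
Composing one with the inverse of the other produces, on $C'$, an isomorphism
$$\sigma^\ast \bigl[\det(\overline{F^p \cV^n}_-/\overline{F^{p+1} \cV^n}_-) \otimes \cO_C(A^{p,n-p} + A^{(N-1)-p,(N-1)-(n-p)})\bigr] \simeq \sigma^\ast \det(\overline{F^p \cV^n}_+/\overline{F^{p+1} \cV^n}_+)$$
that restricts over $\Cc'$ to the pullback of the identity map on $\Cc$.

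The main remaining step, and the one requiring some care, is to descend this isomorphism from $C'$ to $C$, since an isomorphism after a finite base change does not automatically descend. I would argue directly in terms of divisors. The identity map on $\Cc$ extends uniquely to a meromorphic section $s$ of the line bundle $\det(\overline{F^p \cV^n}_+/\overline{F^{p+1} \cV^n}_+) \otimes \det(\overline{F^p \cV^n}_-/\overline{F^{p+1} \cV^n}_-)^{-1}$ on $C$, and the claim of the proposition is equivalent to the identity $\mathrm{div}(s) = A^{p,n-p} + A^{(N-1)-p,(N-1)-(n-p)}$. Writing $\mathrm{div}(s) = \sum_{x \in \Delta} m_x \{x\}$ with $m_x \in \Z$, for each $x$ in $\Delta$ and any $x'$ in $\sigma^{-1}(x)$ of ramification index $r_{x'}$, the pulled-back section $\sigma^\ast s$ has order of vanishing $r_{x'} m_x$ at $x'$. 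By the isomorphism just constructed on $C'$, this order of vanishing equals the coefficient at $x'$ of $\sigma^\ast(A^{p,n-p} + A^{(N-1)-p,(N-1)-(n-p)})$, namely $r_{x'}\bigl(\alpha^{p,n-p}_x + \alpha^{(N-1)-p,(N-1)-(n-p)}_x\bigr)$. Dividing by $r_{x'}$ gives $m_x = \alpha^{p,n-p}_x + \alpha^{(N-1)-p,(N-1)-(n-p)}_x$, which proves the desired extension and, since $m_x \in \Z$, the integrality of the coefficients of the divisor $A^{p,n-p} + A^{(N-1)-p,(N-1)-(n-p)}$.
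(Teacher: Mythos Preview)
Your proof is correct and follows the same approach as the paper: combine Propositions \ref{lower Deligne extensions and elementary exponents} and \ref{upper Deligne extensions and elementary exponents} to obtain the desired isomorphism after pullback by $\sigma$, observe that over $\Cc'$ it is the pullback of the identity on $\Cc$, and then descend to $C$. Your divisor argument for the descent step spells out carefully what the paper compresses into a single sentence, and in doing so makes the integrality of the coefficients of $A^{p,n-p} + A^{(N-1)-p,(N-1)-(n-p)}$ transparent rather than implicit.
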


\begin{proof}
Combining the isomorphisms of Propositions \ref{lower Deligne extensions and elementary exponents} and \ref{upper Deligne extensions and elementary exponents} yields an isomorphism of line bundles on $C'$:
$$
\sigma^\ast \det(\overline{F^p \cV^n}_- / \overline{F^{p+1} \cV^n}_- ) \otimes \cO_{C'}(\sigma^\ast (A^{p,n-p} + A^{(N-1)-p, (N-1)-(n-p)}) ) \simeq \sigma^\ast \det(\overline{F^p \cV^n}_+ / \overline{F^{p+1} \cV^n}_+ )
$$
that extends the identity isomorphism on $\Cc'$.

This  isomorphism, restricted to $\Cc$, is exactly the pullback by $\sigma$ of the identity isomorphism, so that the latter has an extension to an isomorphism on  $C$ as wanted.
\end{proof}

Now, we can apply these results to compute Griffiths bundles.

\begin{proposition}
\label{GK and elementary exponents}
For every integer $n \geq 0$, we have isomorphisms of line bundles on $C'$:
\begin{equation} \label{GK OA 1}
\sigma^\ast \GK_{C,-}(\H^n(Y_\eta / C_\eta)) \otimes \cO_{C'}\left (\sum_{0 \leq p \leq n} p \, \sigma^\ast A^{p,n-p} \right )
 \simeq \GK_{C'}(\sigma^\ast \H^n(Y_\eta / C_\eta) ),
\end{equation}
\begin{equation} \label{GK OA 2}
\sigma^\ast \GK_{C,+}(\H^n(Y_\eta / C_\eta)) \otimes \cO_{C'} \left (- \sum_{0 \leq p \leq n} p \,\sigma^\ast A^{(N-1)-p,(N-1)-(n-p)} \right )
 \simeq \GK_{C'}(\sigma^\ast \H^n(Y_\eta / C_\eta) ),
\end{equation}
and an isomorphism of line bundles on $C$:
\begin{equation} \label{GK OA 3} 
\GK_{C,-}(\H^n(Y_\eta / C_\eta)) \otimes \cO_C \left (\sum_{0 \leq p \leq n} p (A^{p,n-p} + A
^{(N-1)-p, (N-1)-(n-p)}) \right ) \simeq \GK_{C,+}(\H^n(Y_\eta / C_\eta)).
\end{equation}
\end{proposition}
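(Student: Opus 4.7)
The strategy is to read each of the three isomorphisms as the $p$-th tensor power of the subquotient determinant isomorphisms established in Propositions \ref{lower Deligne extensions and elementary exponents}, \ref{upper Deligne extensions and elementary exponents}, and \ref{upper and lower Deligne extensions and elementary exponents}, combined with the definition of the Griffiths line bundle as
\begin{equation*}
\GK_C(\overline{\cV},\overline{\cF}^\bullet) = \bigotimes_{p\ge 0} \bigl(\det \overline{\cF}^p/\overline{\cF}^{p+1}\bigr)^{\otimes p}.
\end{equation*}
There are essentially no new ideas needed: everything is bookkeeping of tensor powers of already constructed canonical extensions of line bundle isomorphisms.

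For \eqref{GK OA 1}, I would start from the isomorphism of Proposition \ref{lower Deligne extensions and elementary exponents} on $C'$, raise both sides to the $p$-th tensor power, and take the tensor product over $0 \le p \le n$. The left-hand side becomes $\sigma^\ast \GK_{C,-}(\H^n(Y_\eta/C_\eta)) \otimes \cO_{C'}\bigl(\sum_p p\,\sigma^\ast A^{p,n-p}\bigr)$. To identify the right-hand side with $\GK_{C'}(\sigma^\ast\H^n(Y_\eta/C_\eta))$, I would invoke the assumption \eqref{condition sigma} together with Proposition \ref{lower Deligne extension and logarithmic cohomology}: the local monodromy of $\sigma^\ast V_\Z$ at every point of $\Delta' = \sigma^{-1}(\Delta)$ is unipotent, so the upper and lower Deligne extensions of $\sigma^\ast \cV^n$ on $C'$ coincide, and by the last observation of Proposition \ref{lower Deligne extension and logarithmic cohomology} (plus Proposition \ref{logarithmic Hodge bundles}) the extension of $\sigma^\ast F^p\cV^n$ inside this common extension is a vector subbundle. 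Thus the tensor product on the right-hand side is exactly $\GK_{C'}(\sigma^\ast \H^n(Y_\eta/C_\eta))$.

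For \eqref{GK OA 2}, the argument is identical, starting instead from Proposition \ref{upper Deligne extensions and elementary exponents}, with the sign of the auxiliary divisor and the indexing $(N-1)-p,\,(N-1)-(n-p)$ coming directly from that proposition. For \eqref{GK OA 3}, I would do the same with Proposition \ref{upper and lower Deligne extensions and elementary exponents}, which already compares the lower and upper Deligne extensions on $C$ itself, so no pullback to $C'$ is involved and one obtains the isomorphism on $C$ directly after taking $p$-th tensor powers and summing the resulting divisors.

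The only subtle point, rather than a genuine obstacle, is to justify that the divisors appearing in $\cO_{C'}(\cdot)$ on the right-hand sides of \eqref{GK OA 1} and \eqref{GK OA 2}, and in $\cO_C(\cdot)$ on \eqref{GK OA 3}, have integer coefficients so that the line bundle makes sense. For \eqref{GK OA 1}--\eqref{GK OA 2} this is built into Proposition \ref{lower Deligne extensions and elementary exponents} (the pullback $\sigma^\ast A^{p,n-p}$ is already shown to be integral because $r_{x'}\alpha^{p,n-p}_{\sigma(x')} \in \Z$ by \eqref{condition sigma}), and for \eqref{GK OA 3} it is the last assertion of Proposition \ref{upper and lower Deligne extensions and elementary exponents}. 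Consequently, all three statements reduce to taking tensor products of canonical extensions of isomorphisms of line bundles, and the identity maps on $\Cc$ (resp.\ $\Cc'$) underlying the Griffiths bundles glue with those on the boundary to produce the claimed global isomorphisms.
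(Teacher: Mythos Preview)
Your proposal is correct and follows exactly the approach of the paper: the paper's proof is the single sentence ``This follows from Propositions \ref{lower Deligne extensions and elementary exponents}, \ref{upper Deligne extensions and elementary exponents} and \ref{upper and lower Deligne extensions and elementary exponents} and the definition of Griffiths bundles,'' and you have simply unpacked what that sentence means---taking $p$-th tensor powers of the subquotient determinant isomorphisms and tensoring over $0\le p\le n$. Your additional remarks on integrality of the divisors and on why the unipotence of the pulled-back monodromy makes the right-hand side of \eqref{GK OA 1}--\eqref{GK OA 2} equal to $\GK_{C'}(\sigma^\ast \H^n(Y_\eta/C_\eta))$ are accurate and fill in details the paper leaves implicit.
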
 
\begin{proof}
This follows from Propositions \ref{lower Deligne extensions and elementary exponents}, \ref{upper Deligne extensions and elementary exponents} and \ref{upper and lower Deligne extensions and elementary exponents} and the definition of Griffiths bundles.
\end{proof}

\begin{corollary} 
\label{GK heights, logarithmic cohomology and elementary exponents}
For every $n \geq 0$, we have the equalities in $\CH_0(C)$:
\begin{align*}
c_1(\GK_{C,+}(\H^n(Y_\eta / C_\eta))) &= c_1(\GK_{C,-}(\H^n(Y_\eta / C_\eta))) + \sum_{0 \leq p \leq n} p ([A^{p,n-p}] + [A^{(N-1)-p, (N-1)-(n-p)}]), \\
&= \sum_{0 \leq p \leq n} p c_1(R^{n-p} g_\ast \omega^p_{Y/C}) + \sum_{0 \leq p \leq n} p ([A^{p,n-p}] + [A^{(N-1)-p, (N-1)-(n-p)}]), 
\end{align*}
hence the equalities of integers:
\begin{align*}
\hgt_{GK,+}(\H^n(Y_\eta / C_\eta)) &= \hgt_{GK,-}(\H^n(Y_\eta / C_\eta)) + \sum_{0 \leq p \leq n} p \deg_C(A^{p,n-p} + A^{(N-1)-p, (N-1)-(n-p)}), \\
&= \sum_{0 \leq p \leq n} p \deg_C(R^{n-p} g_\ast \omega^p_{Y/C}) + \sum_{0 \leq p \leq n} p \deg_C(A^{p,n-p} + A^{(N-1)-p, (N-1)-(n-p)});
\end{align*}
and we also have the equalities of rational numbers:
\begin{align*}
\hgt_{GK, stab}(\H^n(Y_\eta / C_\eta)) &= \hgt_{GK,-}(\H^n(Y_\eta / C_\eta)) + \sum_{0 \leq p \leq n} p \deg_C(A^{p,n-p}), \\
&= \sum_{0 \leq p \leq n} p \deg_C(R^{n-p} g_\ast \omega^p_{Y/C}) + \sum_{0 \leq p \leq n} p \deg_C(A^{p,n-p}).
\end{align*}
\end{corollary}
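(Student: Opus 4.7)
The plan is to deduce all three sets of equalities directly from Proposition \ref{GK and elementary exponents} by taking first Chern classes (for the statement in $\CH_0(C)$) or degrees (for the numerical statements), and combining this with the explicit description of $\GK_{C,-}$ in terms of logarithmic Hodge bundles provided by Proposition \ref{lower Griffiths and logarithmic cohomology}. Since all the non-trivial work has already been carried out to set up the isomorphisms \eqref{GK OA 1}, \eqref{GK OA 2}, \eqref{GK OA 3}, this corollary is essentially formal; there is no serious obstacle, only a bookkeeping of degrees on $C$ versus on $C'$.

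First, applying $c_1$ to the isomorphism \eqref{GK OA 3} of line bundles on $C$ immediately yields the first equality of the corollary in $\CH_0(C)$. The second expression for $c_1(\GK_{C,+}(\H^n(Y_\eta/C_\eta)))$ is then obtained by substituting the formula
\[
\GK_{C,-}(\H^n(Y_\eta/C_\eta)) \simeq \bigotimes_{0 \leq p \leq n} (\det R^{n-p} g_\ast \omega^p_{Y/C})^{\otimes p}
\]
given by Proposition \ref{lower Griffiths and logarithmic cohomology}, whose first Chern class is $\sum_{0 \leq p \leq n} p\, c_1(R^{n-p} g_\ast \omega^p_{Y/C})$. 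Taking degrees on $C$ of these identities in $\CH_0(C)$ then produces the two displayed equalities of integers for $\hgt_{GK,+}(\H^n(Y_\eta/C_\eta))$; note that by Proposition \ref{upper and lower Deligne extensions and elementary exponents} the divisor $A^{p,n-p} + A^{(N-1)-p,(N-1)-(n-p)}$ has integer coefficients, so the sum on the right is genuinely an integer.

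For the stable Griffiths-Kato height, recall from Section \ref{Peters construction} that
\[
\hgt_{GK,stab}(\H^n(Y_\eta/C_\eta)) = \frac{1}{\deg(\sigma)} \deg_{C'} \GK_{C'}(\sigma^\ast \H^n(Y_\eta/C_\eta))
\]
for any ramified cover $\sigma : C' \to C$ satisfying condition \eqref{condition sigma}. Taking degrees on $C'$ of the isomorphism \eqref{GK OA 1} and using the projection formula $\deg_{C'}(\sigma^\ast D) = \deg(\sigma)\cdot \deg_C D$ valid for any $\Q$-divisor $D$ on $C$, we obtain
\[
\deg(\sigma)\cdot \hgt_{GK,-}(\H^n(Y_\eta/C_\eta)) + \deg(\sigma) \sum_{0 \leq p \leq n} p \deg_C(A^{p,n-p}) = \deg_{C'} \GK_{C'}(\sigma^\ast \H^n(Y_\eta/C_\eta)).
\]
Dividing by $\deg(\sigma)$ yields the first equality of rational numbers for $\hgt_{GK,stab}$. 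The second expression then follows, as before, by invoking Proposition \ref{lower Griffiths and logarithmic cohomology} to rewrite $\hgt_{GK,-}(\H^n(Y_\eta/C_\eta))$ as $\sum_{0 \leq p \leq n} p\, \deg_C(R^{n-p} g_\ast \omega^p_{Y/C})$.
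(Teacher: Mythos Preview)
Your proof is correct and follows essentially the same approach as the paper: applying $c_1$ or degrees to the isomorphisms \eqref{GK OA 1} and \eqref{GK OA 3} of Proposition \ref{GK and elementary exponents}, together with Proposition \ref{lower Griffiths and logarithmic cohomology}. The paper's proof is terser but otherwise identical in structure.
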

\begin{proof}

The first four equalities follow from the  isomorphism \eqref{GK OA 3} in Proposition \ref{GK and elementary exponents} and Proposition \ref{lower Griffiths and logarithmic cohomology}.

For the last two equalities, by definition of the stable height,  using the  isomorphism \eqref{GK OA 1} in  Proposition \ref{GK and elementary exponents}  and Proposition \ref{lower Griffiths and logarithmic cohomology}, we have the equalities of rational numbers:
\begin{align*}
\hgt_{GK, stab}(\H^n(Y_\eta / C_\eta)) &= \frac{1}{\deg(\sigma)}  \deg_{C'}(\GK_{C'}(\sigma^\ast \H^n(Y_\eta / C_\eta) )), \\
&= \frac{1}{\deg(\sigma)}  \deg_{C'} \left (\sigma^\ast \GK_{C,-}(\H^n(Y_\eta / C_\eta)) \otimes \cO_{C'} \left (\sum_{0 \leq p \leq n} p \, \sigma^\ast A^{p,n-p} \right ) \right ),\\
&= \deg_C(\GK_{C,-}(\H^n(Y_\eta / C_\eta))) + \sum_{0 \leq p \leq n} p \deg_C(A^{p,n-p}),\\
&= \sum_{0 \leq p \leq n} p \deg_C(R^{n-p} g_\ast \omega^p_{Y/C}) + \sum_{0 \leq p \leq n} p \deg_C(A^{p,n-p}).
\qedhere
\end{align*}
\end{proof}

\section[Griffiths line bundles and non-degenerate critical points]{Comparison of Griffiths line bundles in the case of non-degenerate critical points}

\subsection{Non-degenerate critical points and elementary exponents}

Let $C$ be a connected smooth projective complex curve, let $H$ be an  $N$-dimensional smooth projective complex manifold, and let:
$$
f : H \lra C
$$
be a complex analytic morphism whose only critical points are non-degenerate.\footnote{Namely. at every point where the differential of $f$ vanishes, its Hessian is non-degenerate.} Let $\Sigma$ be the set of these critical points and $\Delta:= f(\Sigma)$ be its set-theoretic image by $f$.

We can rephrase the hypothesis as the divisor in $H$:
$$
H_\Delta := f^\ast \Delta
$$
having only finitely many singularities, all of which are ordinary double points.

Let 
$$
\nu : Y := \tilde{H} \lra H
$$
be the blow-up of these points and
$$
g := f \circ \nu : Y \lra C
$$
be the composition. For every point $P$ in $\Sigma$, let us define a subscheme in $Y$ by:
$$E_P := \nu^{-1}(\{P\}),$$
so that the $(E_P)_{P \in \Sigma}$ are the connected components of the exceptional divisor $E$. In particular, they are disjoint divisors isomorphic to the complex projective space $\PP^{N-1}$.

Let us define a divisor in $Y$ by:
$$
Y_\Delta := g^\ast \Delta = \nu^\ast H_\Delta.
$$
Since the divisor $H_\Delta$ has its only singularities at points of $\Sigma$, and since all of these singularities are ordinary double points, its pullback divisor $Y_\Delta$ can be written as follows:
\begin{equation}
Y_\Delta = 2 \sum_{P \in \Sigma} E_P + W \label{decomposition Htilde_Delta ele exp}
\end{equation}
where $W$ is the proper transform in $Y$ of the divisor $H_\Delta$ in $H$: it is a non-singular divisor intersecting transversally the components $(E_P)_{P\in \Sigma}$ of the exceptional divisor, and for every point $P$, the intersection $E_P \cap W$ is a smooth quadric in the projective space $E_P$.

In particular, the divisor $Y_\Delta$ is a divisor with strict normal crossings.

This section is devoted to the proof of the following result.

\begin{proposition}
\label{elementary exponents non-degenerate singularities}
With the above notation, for every $x$ in $\Delta$ and $p,q$ integers, all the elementary exponents $(\alpha^{p,q}_{j,x})_j$ of the degeneration $g$ at point $x$ vanish unless $N-1$ is even and $p = q = \frac{N-1}{2}$.

If $N-1$ is even and $p = q = \frac{N-1}{2}$, precisely $\vert \Sigma_x \vert$ of the $(\alpha^{p,q}_{j,x})_j$ are equal to $\frac{1}{2}$, and the other ones  vanish.

In particular, the rational number $\alpha^{p,q}_x$ vanishes unless $N-1$ is even and $p = q = \frac{N-1}{2}$, in which case it is equal to $\frac{1}{2} \vert \Sigma_x\vert$.

In other terms, the divisor $A^{p,q}$ with rational coefficients in $C$ vanishes unless $N-1$ is even and $p = q = \frac{N-1}{2}$, in which case it is given by:
$$
A^{\frac{N-1}{2}, \frac{N-1}{2}} = \frac{1}{2} f_\ast[\Sigma].
$$
\end{proposition}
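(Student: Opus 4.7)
The plan is to reduce the statement to a local computation around each $x \in \Delta$, and then combine the Picard--Lefschetz formula with the classical identification of the Hodge type of the vanishing cycle at an ordinary double point, reading off the elementary exponents via Proposition \ref{elementary exponents and monodromy}.

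First, I would observe that since $\nu : Y \to H$ is an isomorphism over $H - \Sigma$ and $g = f \circ \nu$, the two morphisms induce identical smooth morphisms over $\Cc$, hence the same variation of Hodge structure $\ccH^n(Y_\Cc/\Cc)$. In particular, the local monodromy of this VHS around a point $x \in \Delta$ is computed by Picard--Lefschetz theory applied to the finitely many non-degenerate critical points $\Sigma_x := \Sigma \cap f^{-1}(x)$ of $f$. Moreover, by Proposition \ref{elementary exponents and monodromy}, the elementary exponents $\alpha^{p,q}_{j,x}$ are determined by the eigenvalues of the semisimple part $T_s$ of this monodromy, together with their refinement by the Hodge type in the limit mixed Hodge structure on $H^n(Y_\infty, \C)$.

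Next, I would apply the Picard--Lefschetz formula. The monodromy $T$ is the identity on $H^n$ of a nearby fiber for $n \neq N-1$; on $H^{N-1}$ it acts as the identity on the orthogonal complement of the span of the vanishing cycles $(\delta_P)_{P \in \Sigma_x}$, and as the product of the individual Picard--Lefschetz transformations on this span (which commute since the critical points lie in pairwise disjoint neighborhoods). When $N-1$ is odd, each such transformation is unipotent but non-trivial, since the self-intersection $(\delta_P, \delta_P)$ vanishes in the alternating middle intersection form; hence $T_s$ is the identity everywhere, all its eigenvalues are equal to $1$, and every elementary exponent $\alpha^{p,q}_{j,x}$ vanishes. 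When $N-1$ is even, on the other hand, each Picard--Lefschetz transformation is a reflection acting as $-1$ on $\C \delta_P$ and as the identity on its orthogonal complement, so $T_s = T$ is semisimple with exactly $|\Sigma_x|$ eigenvalues equal to $-1 = \exp(-2 i \pi \cdot \frac{1}{2})$, all other eigenvalues being equal to $1$.

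Finally, I would identify the Hodge type of the vanishing cycles in the limit MHS when $N-1$ is even. For the complex Morse germ $z_1^2 + \cdots + z_N^2 = t$, a classical computation --- which can be carried out via Steenbrink's formula applied to the explicit semistable reduction obtained by the base change $t = t'^2$ and the subsequent resolution of the resulting quadratic singularities of $Y \times_C C'$ --- shows that each vanishing cycle $\delta_P$ spans a one-dimensional subspace of pure Hodge type $(\frac{N-1}{2}, \frac{N-1}{2})$ in the limit MHS on $H^{N-1}(Y_\infty, \C)$. Combined with the previous step and Proposition \ref{elementary exponents and monodromy}, this shows that only the elementary exponents $\alpha^{(N-1)/2, (N-1)/2}_{j,x}$ can be non-zero, and that exactly $|\Sigma_x|$ of them are equal to $\frac{1}{2}$, as asserted. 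Summing these contributions over all $x \in \Delta$ and recognizing $\sum_{x \in \Delta} |\Sigma_x| \{x\} = f_\ast [\Sigma]$ yields the stated formula $A^{(N-1)/2, (N-1)/2} = \frac{1}{2} f_\ast[\Sigma]$. The principal obstacle is this last Hodge-theoretic identification; the most efficient route is the explicit analysis on the semistable model just sketched, avoiding any subtle argument about the nilpotent orbit.
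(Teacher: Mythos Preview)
Your approach is correct and genuinely different from the paper's. You work via Proposition \ref{elementary exponents and monodromy}: Picard--Lefschetz gives the eigenvalues of $T_s$ on $H^{N-1}(Y_\infty)$ together with their multiplicities, and the classical identification of the Hodge type of the $A_1$ vanishing cycle pins down in which graded piece $F_\infty^p/F_\infty^{p+1}$ the $-1$ eigenvalues sit. The paper instead goes back to Definition \ref{def elementary exponents} and computes the cokernel of $\sigma^\ast R^q g_\ast \omega^p_{Y/\mathbb{D}} \to R^q h_\ast \omega^p_{Y'/\mathbb{D}'}$ directly on the explicit semistable model obtained by the double cover and normalization: a short exact sequence coming from $\cO_{\tilde H'}\hookrightarrow \pi_\ast\cO_{Y'}$ splits this cokernel as a direct sum over $P\in\Sigma_x$ of local contributions supported on $E'_P$, each annihilated by $t'$ (so every nonzero exponent equals $1/2$) and depending only on $N,p,q$ via a local-model argument; the single-critical-point case from \cite{[Eriksson-Freixas-Mourougane21]} then identifies the constant $c_N^{p,q}$. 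Thus both proofs reduce to the one-critical-point situation, but yours does so through the monodromy/vanishing-cycle formalism while the paper's does so through an explicit sheaf computation that never leaves the definition of elementary exponents and never invokes Proposition \ref{elementary exponents and monodromy} or the limit MHS. Your route is shorter if one is willing to quote the Hodge type of the $A_1$ vanishing cycle as known; the paper's route is more self-contained and makes the additivity in $|\Sigma_x|$ transparent at the level of coherent sheaves, which is convenient since the surrounding arguments (e.g.\ Corollary \ref{GK heights, logarithmic cohomology and elementary exponents}) are also phrased sheaf-theoretically.
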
 

This proposition is stated in \cite[Prop. 3.10]{EFM21}, but only proved there in the case where each fiber of the morphism $f$ contains only one critical point.  In Subsection \ref{Proof Prop eens} below, we present an argument, which was explained  to us by the authors of \cite{EFM21}, that reduces the proposition to this special case.

\begin{corollary}
\label{GK bundles with elementary exponents for non-degenerate singularites}
For every integer n, we have the equality in $\CH_0(C)_\Q$:
\begin{align}
c_1(\GK_{C,+}(\H^n(H_\eta / C_\eta))) &= c_1(\GK_{C,-}(\H^n(H_\eta / C_\eta))) + \delta^{n,N-1} \eta_N \frac{N-1}{2} f_\ast [\Sigma], \label{c1 GK + non-degenerate 1}  \\
&= \sum_{0 \leq p \leq n} p c_1(R^{n-p} g_\ast \omega^p_{Y/C}) + \delta^{n,N-1} \eta_N \frac{N-1}{2} f_\ast [\Sigma], \label{c1 GK - non-degenerate 2}
\end{align}
hence the equality of rational numbers:
\begin{align}
\hgt_{GK,+}(\H^n(H_\eta / C_\eta)) &= \hgt_{
GK,-}(\H^n(H_\eta / C_\eta)) + \delta^{n,N-1} \eta_N \frac{N-1}{2} \vert \Sigma \vert \label{ht + non-degenerate 1} \\
&= \sum_{0 \leq p \leq n} p \deg_C(R^{n-p} g_\ast \omega^p_{Y/C}) + \delta^{n,N-1} \eta_N \frac{N-1}{2} \vert \Sigma \vert; \label{ht + non-degenerate 2}
\end{align}
and we also have the equality of rational numbers:
\begin{align}
\hgt_{GK, stab}(\H^n(H_\eta / C_\eta)) &= \hgt_{GK,-}(\H^n(H_\eta / C_\eta)) + \delta^{n,N-1} \eta_N \frac{N-1}{4} \vert \Sigma \vert \label{ht stab non-degenerate 1} \\
&= \sum_{0 \leq p \leq n} p \deg_C(R^{n-p} g_\ast \omega^p_{Y/C}) + \delta^{n,N-1} \eta_N \frac{N-1}{4} \vert \Sigma\vert, \label{ht stab non-degenerate 2}
\end{align}
where $\eta_N$ is $1$ if $N$ is odd and $0$ if $N$ is even.
\end{corollary}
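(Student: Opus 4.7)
The plan is to deduce this corollary directly from Corollary \ref{GK heights, logarithmic cohomology and elementary exponents} applied to the morphism $g = f\circ\nu : Y \to C$, combined with the explicit description of the elementary exponents provided by Proposition \ref{elementary exponents non-degenerate singularities}.

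First, I would observe that the blow-up $\nu : Y \to H$ is an isomorphism away from the finite set $\Sigma$, which lies in the union of the singular fibers $H_\Delta$. In particular $\nu$ restricts to an isomorphism $Y_\Cc \lrasim H_\Cc$ over $\Cc = C-\Delta$, so the generic fibers $Y_\eta$ and $H_\eta$ are canonically identified and the VHS $\H^n(H_\eta/C_\eta)$ and $\H^n(Y_\eta/C_\eta)$ coincide. Thus their upper, lower and stable Griffiths line bundles and heights agree. Moreover, by the decomposition \eqref{decomposition Htilde_Delta ele exp}, the divisor $Y_\Delta$ has strict normal crossings, so that $g$ satisfies the standing hypotheses of Subsection \ref{Comp Griff Elem}.

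Second, I would apply the three pairs of equalities of Corollary \ref{GK heights, logarithmic cohomology and elementary exponents} to $g$. For each $n \geq 0$, this produces expressions for $c_1(\GK_{C,+}(\H^n(Y_\eta/C_\eta)))$, $\hgt_{GK,+}(\H^n(Y_\eta/C_\eta))$ and $\hgt_{GK,stab}(\H^n(Y_\eta/C_\eta))$ whose correction terms relative to the corresponding lower quantities are, respectively,
$$
\sum_{0\leq p\leq n} p\bigl([A^{p,n-p}] + [A^{N-1-p,\, N-1-(n-p)}]\bigr)
\quad\text{and}\quad
\sum_{0\leq p\leq n} p\,[A^{p,n-p}],
$$
together with their degrees.

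The substantive step is the evaluation of these sums via Proposition \ref{elementary exponents non-degenerate singularities}: the divisor $A^{p,q}$ vanishes unless $N-1$ is even (equivalently $\eta_N = 1$) and $p = q = (N-1)/2$, in which case $A^{(N-1)/2,(N-1)/2} = \tfrac{1}{2}\,f_\ast[\Sigma]$. Under the constraint $p+q = n$, both $A^{p,n-p}$ and $A^{N-1-p,N-1-(n-p)}$ reduce to $A^{(N-1)/2,(N-1)/2}$, and this forces $n = N-1$ and $p = (N-1)/2$. For this unique surviving value of $p$, the first sum equals $\tfrac{N-1}{2}\cdot\bigl(\tfrac{1}{2}+\tfrac{1}{2}\bigr)\,f_\ast[\Sigma] = \tfrac{N-1}{2}\,f_\ast[\Sigma]$ and the second sum equals $\tfrac{N-1}{2}\cdot\tfrac{1}{2}\,f_\ast[\Sigma] = \tfrac{N-1}{4}\,f_\ast[\Sigma]$. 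Substituting into the formulas from Corollary \ref{GK heights, logarithmic cohomology and elementary exponents}, and taking degrees $\deg_C f_\ast[\Sigma] = |\Sigma|$, yields the six stated equalities, with the Kronecker factor $\delta^{n,N-1}\eta_N$ expressing the joint condition ``$n=N-1$ and $N$ odd''.

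The main obstacle here is really Proposition \ref{elementary exponents non-degenerate singularities} itself, whose proof is non-trivial; once that input is accepted, the corollary becomes a routine bookkeeping exercise, the only care being required in pairing the two contributions $A^{p,n-p}$ and $A^{N-1-p,N-1-(n-p)}$ in the upper-versus-lower comparison and in tracking the rational coefficients in the stable case.
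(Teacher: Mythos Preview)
Your proposal is correct and follows essentially the same approach as the paper: apply Corollary \ref{GK heights, logarithmic cohomology and elementary exponents} to the morphism $g$, then use Proposition \ref{elementary exponents non-degenerate singularities} to evaluate the sums $\sum_p p\,[A^{p,n-p}]$ and $\sum_p p\,([A^{p,n-p}]+[A^{N-1-p,N-1-(n-p)}])$, observing that the only surviving term occurs at $n=N-1$, $p=(N-1)/2$ when $N$ is odd. Your added remark that $\nu$ induces an isomorphism $Y_\eta \simeq H_\eta$, so that the Griffiths data for $H_\eta/C_\eta$ and $Y_\eta/C_\eta$ coincide, makes explicit an identification the paper uses tacitly.
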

\begin{proof}
For $n$ an integer, using Proposition \ref{elementary exponents non-degenerate singularities}, the sum in $\CH_0(C)_\Q$:
$$
\sum_{0 \leq p \leq n} p ([A^{p,n-p}] + [A^{(N-1)-p, (N-1)-(n-p)}])
$$
vanishes unless $n = N-1$ and $N-1$ is even, and in this case, it is equal to
$$
\frac{N-1}{2} ([A^{\frac{N-1}{2}, \frac{N-1}{2}}] + [A^{\frac{N-1}{2}, \frac{N-1}{2}}])
= \frac{N-1}{2} f_\ast [\Sigma].
$$
So for every integer $n$, we obtain the equality:
$$
\sum_{0 \leq p \leq n} p ([A^{p,n-p}] + [A^{(N-1)-p, (N-1)-(n-p)}]) = \delta^{n,N-1} \eta_N \frac{N-1}{2} f_\ast [\Sigma].
$$
Similarly, for every integer $n$, we obtain the equality in $\CH_0(C)_\Q$:
$$
\sum_{0 \leq p \leq n} p [A^{p,n-p}] = \delta^{n,N-1} \eta_N \frac{N-1}{4} f_\ast [\Sigma].
$$
Replacing these equalities in Corollary \ref{GK heights, logarithmic cohomology and elementary exponents} yields the result.
\end{proof}

\subsection{Proof of Proposition \ref{elementary exponents non-degenerate singularities}}\label{Proof Prop eens} 

Let $x$ be a point in $\Delta$. As the proposition is local around $x$, we can replace the curve $C$ by the unit open disk $\mathbb{D}$ with coordinate $t$, the point $x$ by $0$, and assume that $f$ is smooth over $\mathbb{D}^\ast := \mathbb{D} - \{0\}$ and 
admits only non-degenerate critical points. In particular, these critical points are the points in $\Sigma_x$; we shall simply call this subset $\Sigma$.

Let us introduce a copy $\mathbb{D}'$ of the unit open disk with coordinate $t'$, let us define an analytic morphism by:
$$
\sigma : \mathbb{D}' \lra \mathbb{D}, \quad t' \longmapsto t'^2,
$$
and let us define the fiber product:
\begin{equation}
\xymatrix{
\tilde{H}' := Y \times_{\mathbb{D}} \mathbb{D}' \ar[d]^{g'} \ar[r]^-{\rho} & Y \ar[d]^{g} \\
\mathbb{D}' \ar[r]^{\sigma} & \mathbb{D} \\
} \label{commutative diagram double points}
\end{equation}
Let us define the following divisors in the singular analytic space $\tilde{H}'$:
$$
E'_P := \rho^\ast E_P, P \in \Sigma,
$$
$$
E' := \bigsqcup_{P \in \Sigma} E'_P,
$$
and:
$$
W' := \rho^\ast W.
$$
Using \eqref{decomposition Htilde_Delta ele exp}, we get the equality of Cartier divisors in $\tilde{H}'$:
\begin{equation}
2 \tilde{H}'_0 = \rho^\ast Y_0 = 2 E' + W'. \label{eq div fiber product}
\end{equation}
Let us consider the normalization of the singular analytic space $\tilde{H}'$:
$$
\pi : Y' \lra \tilde{H}',
$$
and the composition:
$$
h := g' \circ \pi : Y' \lra \mathbb{D}'.
$$
Since the morphism $g$ is smooth on the open subset $Y - E \subset Y$, the morphism $g'$ is smooth on the open subset $\tilde{H}' - E' \subset \tilde{H}'$, in particular the analytic space $\tilde{H}' - E'$ is non-singular, and the morphism $\pi$ induces an isomorphism between $Y' - \pi^{-1}(E')$ and $\tilde{H}' - E'$.

The cokernel of the canonical injective morphism of $\cO_{\tilde{H}'}$-modules
$$
\phi: \cO_{\tilde{H}'} \lra \pi_\ast \cO_{Y'}
$$
is supported by the closed subset $E'$, so we can write it as follows:
$$
\mathrm{Coker}(\phi) = \bigoplus_{P \in \Sigma} \mathcal{F}_P,
$$
where for every point $P$ in $\Sigma$, $\mathcal{F}_P$ is a coherent sheaf supported by the closed subset $E'_P$.

The proof of Proposition \ref{elementary exponents non-degenerate singularities} shall rely on the following three lemmas.

\begin{lemma}
\label{ele exp double points 1}
The three following statements hold.

\emph{(i)} The morphism 
$$
h : Y' \lra \mathbb{D}'
$$
defines a semistable reduction of the morphism $g$; namely the analytic space $Y'$ is non-singular and the fiber $Y'_0:=h^{-1}(0)$ is a reduced divisor with strict normal crossings in $Y'$.

\emph{(ii)} For every integer $p$, the canonical isomorphism of vector bundles on $Y'_{\mathbb{D}'^\ast}$:
$$
(\rho \circ \pi)^\ast \Omega^p_{Y_{\mathbb{D}^\ast} / \mathbb{D}^\ast} \lra \Omega^p_{Y'_{\mathbb{D}'^\ast} / \mathbb{D}'^\ast}
$$
extends into an isomorphism of vector bundles on $Y'$:
$$
(\rho \circ \pi)^\ast \omega^p_{Y/\mathbb{D}} \lra \omega^p_{Y'/\mathbb{D}'}.
$$

\emph{(iii)} For every point $P$ in $\Sigma$, the $\cO_{\tilde{H}'}$-module $(g'^\ast t')  \mathcal{F}_P$ vanishes.
\end{lemma}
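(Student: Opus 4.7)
The plan is to reduce everything to a local computation near each critical point $P \in \Sigma$. By the Morse lemma, I can pick local holomorphic coordinates $(z_1, \ldots, z_N)$ on $H$ near $P$ in which $f = \sum_{i=1}^N z_i^2$. In a standard chart of the blow-up $\nu : Y \to H$, say $z_1 = u$ and $z_i = u v_i$ for $i \geq 2$, this becomes $g^\ast t = u^2 q$ with $q = 1 + \sum_{i \geq 2} v_i^2$; the exceptional divisor is $E_P = \{u = 0\}$ and the proper transform is $W = \{q = 0\}$, meeting $E_P$ transversely along a smooth quadric. Two local models then cover $Y$ near $E_P$: \emph{Model A} at a point of $E_P \setminus W$, where $q$ is a unit and admits a holomorphic square root $r$ with $g^\ast t = (ur)^2$; and \emph{Model B} at a point of $E_P \cap W$, where I may take $w := q$ as a local coordinate, so that $g^\ast t = u^2 w$ in suitable coordinates $(u, w, v_3, \ldots, v_N)$ with $W = \{w = 0\}$.

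For (i), I will compute $\tilde{H}' = Y \times_{\mathbb{D}} \mathbb{D}'$ in each model. In Model A, $\tilde{H}'$ has local equation $t'^2 = (ur)^2$, which factors as $(t' - ur)(t' + ur) = 0$, i.e., a transverse union of two smooth hypersurfaces; its normalization $Y'$ is their disjoint union, which is smooth, and on each branch $h^\ast \{0\}$ restricts to the reduced divisor $\{u = 0\}$. In Model B, the equation $t'^2 = u^2 w$ defines the Whitney umbrella times $\mathbb{C}^{N-2}$, whose normalization is $\mathbb{C}^N$ with coordinates $(u, s, v_3, \ldots, v_N)$ and map $\pi : (u, s, v_i) \longmapsto (u, s^2, us, v_i)$. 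One then computes $h^\ast \{0\} = \mathrm{div}(us) = \{u = 0\} + \{s = 0\}$, a reduced simple normal crossing divisor, establishing (i).

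For (ii), I will reduce to $p = 1$ by taking exterior powers and then verify the extension chart by chart. In each model the sheaves $\omega^1_{Y/\mathbb{D}}$ and $\omega^1_{Y'/\mathbb{D}'}$ are locally free of rank $N - 1$, with explicit presentations coming from the defining relations $g^\ast(dt/t) = 2\,du/u + dq/q$ and $h^\ast(dt'/t') = du/u + dr/r$ (Model A) or $du/u + ds/s$ (Model B); the substitution $w = s^2$, giving $dw/w = 2\,ds/s$ in Model B (and $t' = \pm ur$ in Model A), matches these relations under $(\rho \circ \pi)^\ast$. Hence the pullback is a morphism of locally free sheaves of equal rank which is generically an isomorphism, therefore an isomorphism everywhere. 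For (iii), I will analyze $\mathcal{F}_P$ locally: in Model A it is the cokernel of the embedding $\cO_{\tilde{H}'} \hookrightarrow \cO_{\tilde{H}'_1} \oplus \cO_{\tilde{H}'_2}$, and since $t' \cdot (a, b) = (ura, -urb)$ has difference $ur(a + b)$ divisible by $u$, its class vanishes in $\mathcal{F}_P$; in Model B, $\pi_\ast \cO_{Y'}$ is generated over $\cO_{\tilde{H}'}$ by $1$ and $s$, so $\mathcal{F}_P$ is generated by the class of $s$, and the identity $t' \cdot s = us \cdot s = u w \in \cO_{\tilde{H}'}$ shows that $t' \cdot [s] = 0$.

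The hardest part is the bookkeeping in Model B --- the Whitney umbrella --- where I must check that $Y'$ is genuinely the analytic normalization (and not merely a resolution), that $\pi_\ast \cO_{Y'}$ admits the $\cO_{\tilde{H}'}$-module presentation used above so that $\mathcal{F}_P$ is captured by the single generator $[s]$, and that the substitution $w = s^2$ is compatible with the logarithmic de Rham structures on both sides for part (ii). Model A is essentially trivial once the two branches are separated; the substantive content is all concentrated in Model B, but once the local parametrization is in hand the three parts each follow from short computations.
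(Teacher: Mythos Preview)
Your argument is correct and follows the same route as the paper: a chart-by-chart local analysis on the blow-up, with your Model~B (the Whitney umbrella near $E_P \cap W$) being precisely the case the paper writes out in detail --- the paper's coordinates $(y_1, y_2, z)$ correspond to your $(w, u, s)$, and the computations of the normalization, of $Y'_0$, of the logarithmic frames, and of the module structure of $\pi_\ast\cO_{Y'}$ match yours line by line.

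One caution on~(ii): the concluding clause ``a morphism of locally free sheaves of equal rank which is generically an isomorphism, therefore an isomorphism everywhere'' is false as a general principle (multiplication by a coordinate on $\cO$ is a counterexample). What actually proves the extension is the frame compatibility you establish just before: under $(\rho\circ\pi)^\ast$ the log-generators $du/u,\,dw/w,\,dv_i$ are sent to $du/u,\,2\,ds/s,\,dv_i$, and the single relation $2\,du/u + dw/w = 0$ pulls back to $2(du/u + ds/s) = 0$, so the explicit local frames (e.g.\ $[du/u],[dv_3],\ldots,[dv_N]$) on each side are carried bijectively to one another. The paper writes this frame comparison out explicitly; you should phrase your conclusion to rest on that rather than on the generic-isomorphism shortcut.
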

\begin{proof}

The three statements can be proved in local models of $\tilde{H}'$ and of $Y'$. We shall only show them in a local model of $\tilde{H}'$ near a point of $E'_P \cap W'$ and a local model of $Y'$ near a point of $\pi^{-1}(E'_P \cap W')$ (where $P$ is a point in $\Sigma$) when $N \geq 2$, which is the most complicated case.

Let $P$ be a point in $\Sigma$. Near a point of $E_P \cap W$, we can choose a local coordinate system $(y_1,...,y_N)$ of $Y$, such that the divisor $W$ is defined by $(y_1 = 0)$, the divisor $E_P$ is defined by $(y_2 = 0)$, and satisfying the equality of functions:
$$
g^\ast t = y_1 y_2^2.
$$
Consequently, near a point of $E'_P \cap W'$, the analytic space $\tilde{H}'$ admits coordinates $(g'^\ast t', \rho^\ast y_1,...,\rho^\ast y_N)$, subjected to the relation:
$$
g'^\ast t'^2 = (\rho^\ast y_1) \, (\rho^\ast y_2)^2.
$$
The local model is that of the product of a ``Whitney umbrella'' singularity by $\C^{N-2}$.

Finally, near a point of $\pi^{-1}(E'_P \cap W')$, the analytic space $Y'$ admits a local coordinate system $(z, (\rho \circ \pi)^\ast y_2,...,(\rho \circ \pi)^\ast y_N)$ satisfying the relations:
$$
h^\ast t' = z \, (\rho \circ \pi)^\ast y_2
$$
and:
$$
(\rho \circ \pi)^\ast y_1 = z^2.
$$
In particular, the analytic space $Y'$ is smooth, and the divisor $\pi^\ast W'$ in $Y'$ is of multiplicity $2$, which allows us to define a divisor by $(1/2) \pi^\ast W'$ (defined by $(z = 0)$ in the above local model). We also have that the morphism $h$ is smooth over $\mathbb{D}'^\ast$, and that the divisor $Y'_0$ in $Y'$ can be written:
\begin{equation}
Y'_0 = \sum_{P \in \Sigma} \pi^\ast E'_P + (1/2) \pi^\ast W', \label{eq div normalization}
\end{equation}
where the divisors $(\pi^\ast E'_P)_{P \in \Sigma}$ are disjoint and non-singular, where the divisor $(1/2) \pi^\ast W'$ is non-singular, and intersects transversally the divisors $(\pi^\ast E'_P)_{P \in \Sigma}$.

Consequently, the degeneration $h$ is semistable, which shows (i).

It clearly suffices to prove (ii) when the integer $p$ is $1$. In the above local models, the locally free sheaf $\omega^1_{Y/\mathbb{D}}$ on $Y$ is generated by $([\mathrm{d} y_1/y_1], [\mathrm{d} y_2/y_2], [\mathrm{d} y_3],...,[\mathrm{d} y_N])$ modulo the relation $[\mathrm{d} y_1/y_1] + 2 [\mathrm{d} y_2/y_2] = 0$, hence it admits a local frame $([\mathrm{d} y_2/y_2], [\mathrm{d} y_3],...,[\mathrm{d} y_N])$. 

On the other hand, the locally free sheaf $\omega^1_{Y'/\mathbb{D}'}$ on $Y'$ is generated by ($[\mathrm{d} z/z]$, $(\rho \circ \pi)^\ast [\mathrm{d} y_2/y_2]$, $(\rho \circ \pi)^\ast [\mathrm{d} y_3]$,...) modulo the relation $[\mathrm{d} z/z] + (\rho \circ \pi)^\ast [\mathrm{d} y_2/y_2] = 0$, hence it admits a local frame $((\rho \circ \pi)^\ast [\mathrm{d} y_2/y_2], (\rho \circ \pi)^\ast [\mathrm{d} y_3],...)$.

The canonical morphism of sheaves on $Y'_{\mathbb{D}'^\ast}$:
$$
(\rho \circ \pi)^\ast \Omega^1_{Y_{\mathbb{D}^\ast}/\mathbb{D}^\ast} \simeq (\rho \circ \pi)^\ast \omega^1_{Y/\mathbb{D} | Y_{\mathbb{D}^\ast}} \lra \Omega^1_{Y'_{\mathbb{D}'^\ast}/\mathbb{D}'^\ast} \simeq \omega^1_{Y'/\mathbb{D}' | Y'_{\mathbb{D}'^\ast}}
$$
is compatible with these local frames, hence extends into an isomorphism on $Y'$, which shows (ii).

For (iii), in the above local models, the $\cO_{\tilde{H}'}$-algebra $\pi_\ast \cO_{Y'}$ is generated by $z$, subjected to the relations:
$$
g'^\ast t' = z \,  (\rho^\ast y_2)
$$
and:
$$
\rho^\ast y_1 = z^2.
$$
Consequently, the $\cO_{\tilde{H}'}$-module $\pi_\ast \cO_{Y'}$ is generated by the powers of $z$.
For every even integer $r$, the element $z^r$ is contained in $\cO_{\tilde{H}'}$, so this module is generated by the odd powers of $z$.

Let $r$ be an odd integer, we have the equality of elements of the algebra $\pi_\ast \cO_{Y'}$:
$$
(g'^\ast t') \, z^r = z^{r+1} \, (\rho^\ast y_2) \in \cO_{\tilde{H}'},
$$
Consequently, the $\cO_{\tilde{H}'}$-submodule $(g'^\ast t') \, \pi_\ast \cO_{Y'}$ of $\pi_\ast \cO_{Y'}$ is contained in $\cO_{\tilde{H}'}$. By definition of the morphism $\phi$, the element $(g'^\ast t')$ is in the annihilator of its cokernel $\mathcal{F}_P$, which shows (iii).
\end{proof}

\begin{lemma}
\label{ele exp double points 2}
For every pair of non-negative integers $(p,q)$, the cokernel of the natural injective map of coherent sheaves on $\mathbb{D}'$:
$$
u^{p,q} : \sigma^\ast R^q g_\ast \omega^p_{Y/\mathbb{D}} \lra R^q h_\ast \omega^p_{Y'/\mathbb{D}'}
$$
is isomorphic to the coherent sheaf:
$$
\bigoplus_{P \in \Sigma} R^q g'_\ast (\mathcal{F}_P \otimes \rho^\ast \omega^p_{Y/\mathbb{D}}).
$$
\end{lemma}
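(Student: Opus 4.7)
The plan is to express the target of $u^{p,q}$ as a direct image on the singular space $\tilde{H}'$ through $g'$, and to compare it to the source via the short exact sequence associated with the normalization map $\pi$. First, by Lemma \ref{ele exp double points 1}(ii), there is a canonical isomorphism $\omega^p_{Y'/\mathbb{D}'} \simeq (\rho\circ\pi)^\ast\omega^p_{Y/\mathbb{D}} = \pi^\ast\rho^\ast\omega^p_{Y/\mathbb{D}}$. Because the normalization $\pi$ is finite, the projection formula combined with the exactness of $\pi_\ast$ yields
$$R^q h_\ast\omega^p_{Y'/\mathbb{D}'} \;\simeq\; R^q g'_\ast\bigl(\pi_\ast\cO_{Y'}\otimes_{\cO_{\tilde{H}'}}\rho^\ast\omega^p_{Y/\mathbb{D}}\bigr).$$

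Next, I would tensor the defining short exact sequence $0\to\cO_{\tilde{H}'}\to\pi_\ast\cO_{Y'}\to\bigoplus_{P\in\Sigma}\mathcal{F}_P\to 0$ with the locally free sheaf $\rho^\ast\omega^p_{Y/\mathbb{D}}$ --- which preserves exactness --- and apply $Rg'_\ast$. Flat base change along the flat finite morphism $\sigma$ identifies $R^q g'_\ast\rho^\ast\omega^p_{Y/\mathbb{D}}$ with $\sigma^\ast R^q g_\ast\omega^p_{Y/\mathbb{D}}$, and (since the $\mathcal{F}_P$ have pairwise disjoint supports) one obtains the long exact sequence
\begin{multline*}
\cdots \lra \sigma^\ast R^q g_\ast\omega^p_{Y/\mathbb{D}} \xrightarrow{u^{p,q}} R^q h_\ast\omega^p_{Y'/\mathbb{D}'} \xrightarrow{v} \\
\bigoplus_{P\in\Sigma} R^q g'_\ast\bigl(\mathcal{F}_P\otimes\rho^\ast\omega^p_{Y/\mathbb{D}}\bigr) \xrightarrow{\delta} \sigma^\ast R^{q+1} g_\ast\omega^p_{Y/\mathbb{D}} \lra \cdots
\end{multline*}
in which the first arrow is precisely the map of the statement.

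The injectivity of $u^{p,q}$ follows from both $\sigma^\ast R^q g_\ast\omega^p_{Y/\mathbb{D}}$ and $R^q h_\ast\omega^p_{Y'/\mathbb{D}'}$ being locally free (by Proposition \ref{logarithmic Hodge bundles} applied to $g$ and to $h$, together with the flatness of $\sigma$) and the map being an isomorphism on the dense open $\mathbb{D}'^\ast$. The whole problem then reduces to the key --- but short --- step of showing that the connecting map $\delta$ vanishes. For this I would invoke Lemma \ref{ele exp double points 1}(iii): each $\mathcal{F}_P$ is annihilated by the function $g'^\ast t'$, hence so is $\mathcal{F}_P\otimes\rho^\ast\omega^p_{Y/\mathbb{D}}$, and therefore the coherent sheaf $R^q g'_\ast(\mathcal{F}_P\otimes\rho^\ast\omega^p_{Y/\mathbb{D}})$ on $\mathbb{D}'$ is killed by $t'\in\cO_{\mathbb{D}'}$. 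Since the target $\sigma^\ast R^{q+1}g_\ast\omega^p_{Y/\mathbb{D}}$ is locally free and hence $t'$-torsion free, any $\cO_{\mathbb{D}'}$-linear map from a $t'$-torsion module to it is zero, so $\delta=0$. This forces $v$ to be surjective and identifies $\mathrm{Coker}(u^{p,q})$ with the desired sheaf.

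I do not foresee a serious difficulty; the only points requiring care are the validity of flat base change and of the projection formula in this mildly singular analytic setting, both of which are standard for finite morphisms and for the flat morphism $\sigma$ between smooth curves.
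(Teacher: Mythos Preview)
Your proof is correct and follows essentially the same route as the paper: tensor the normalization exact sequence by $\rho^\ast\omega^p_{Y/\mathbb{D}}$, push forward by $g'$, use flat base change along $\sigma$ and the finiteness of $\pi$, and kill the connecting maps by observing that the $\mathcal{F}_P$-terms are torsion while the target is locally free. The only minor difference is that you invoke Lemma~\ref{ele exp double points 1}(iii) to see that the $\mathcal{F}_P$-terms are $t'$-torsion, whereas the paper simply notes that $\mathcal{F}_P$ is supported on $E'_P$, so its higher direct images are supported at $0$ and hence torsion; either observation suffices here.
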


\begin{proof}

Tensoring the following exact sequence of coherent sheaves on $\tilde{H}'$:
$$
0 \lra \cO_{\tilde{H}} \overset{\phi}{\lra} \pi_\ast \cO_{Y'} \lra \bigoplus_{P \in \Sigma} \mathcal{F}_P \lra 0,
$$
by the locally free sheaf $\rho^\ast \omega^p_{Y/\mathbb{D}}$, and using the projection formula, yields an exact sequence of coherent sheaves on $\tilde{H}'$:
\begin{equation}
0 \lra \rho^\ast \omega^p_{Y/\mathbb{D}} \lra \pi_\ast (\rho \circ \pi)^\ast \omega^p_{Y/\mathbb{D}} \lra \bigoplus_{P \in \Sigma} \mathcal{F}_P \otimes (\rho^\ast \omega^p_{Y/\mathbb{D}}) \lra 0. \label{second exact sequence elementary exponents}
\end{equation}
Using Lemma \ref{ele exp double points 1}, (ii), we have a canonical isomorphism of vector bundles on $Y'$:
$$
(\rho \circ \pi)^\ast \omega^p_{Y/\mathbb{D}} \lra \omega^p_{Y'/\mathbb{D}'}.
$$
Hence replacing in \eqref{second exact sequence elementary exponents}, we obtain an exact sequence of coherent sheaves on $\tilde{H}'$:
\begin{equation}
0 \lra \rho^\ast \omega^p_{Y/\mathbb{D}} \lra \pi_\ast \omega^p_{Y'/\mathbb{D}'} \lra \bigoplus_{P \in \Sigma} \mathcal{F}_P \otimes (\rho^\ast \omega^p_{Y/\mathbb{D}}) \lra 0. \label{third exact sequence elementary exponents}
\end{equation}

The higher direct images of  \eqref{third exact sequence elementary exponents} by the morphism $g'$ fit into the following long exact sequence of coherent analytic sheaves over $\mathbb{D}'$ (with the convention that $R^{-1} g'_\ast = 0$):
\begin{multline}
... \lra \bigoplus_{P \in \Sigma} R^{q-1} g'_\ast (\mathcal{F}_P \otimes \rho^\ast \omega^p_{Y/\mathbb{D}}) 
\lra R^q g'_\ast (\rho^\ast \omega^p_{Y/\mathbb{D}}) \lra R^q g'_\ast (\pi_\ast \omega^p_{Y'/\mathbb{D}'}) \\
\lra \bigoplus_{P \in \Sigma} R^q g'_\ast (\mathcal{F}_P \otimes \rho^\ast \omega^p_{Y/\mathbb{D}}) 
\lra R^{q+1} g'_\ast (\rho^\ast \omega^p_{Y/\mathbb{D}}) \lra ... \label{long exact sequence elementary exponents}
\end{multline}

Since the morphism $\sigma$ is flat, by base change using the commutative diagram \eqref{commutative diagram double points}, we obtain the following isomorphisms of coherent sheaves on $\mathbb{D}'$:
$$
R^q g'_\ast (\rho^\ast \omega^p_{Y/\mathbb{D}}) \simeq \sigma^\ast R^q g_\ast \omega^p_{Y/\mathbb{D}},
$$
and:
$$
R^{q+1} g'_\ast (\rho^\ast \omega^p_{Y/\mathbb{D}}) \simeq \sigma^\ast R^{q+1} g_\ast \omega^p_{Y/\mathbb{D}}.
$$
These sheaves are locally free on $\mathbb{D}'$ by Proposition \ref{logarithmic Hodge bundles}. 

On the other hand, since for every point $P$ in $\Sigma$, the coherent sheaf $\mathcal{F}_P$ on $\tilde{H}'$ is supported on $E'_P$, the coherent sheaves $\bigoplus_{P \in \Sigma} R^q g'_\ast (\mathcal{F}_P \otimes \rho^\ast \omega^p_{Y/\mathbb{D}})$ and $\bigoplus_{P \in \Sigma} R^{q-1} g'_\ast (\mathcal{F}_P \otimes \rho^\ast \omega^p_{Y/\mathbb{D}})$ on $\mathbb{D}'$ are clearly supported by the point $0$. 

Consequently, the connecting morphisms in \eqref{long exact sequence elementary exponents} vanish, and \eqref{long exact sequence elementary exponents} induces a short exact sequence of coherent sheaves on $\mathbb{D}'$:
\begin{equation}
0 \lra R^q g'_\ast (\rho^\ast \omega^p_{Y/\mathbb{D}}) \simeq \sigma^\ast R^q g_\ast \omega^p_{Y/\mathbb{D}} \lra R^q g'_\ast (\pi_\ast \omega^p_{Y'/\mathbb{D}'}) \lra \bigoplus_{P \in \Sigma} R^q g'_\ast (\mathcal{F}_P \otimes \rho^\ast \omega^p_{Y/\mathbb{D}}) \lra 0. \label{short exact sequence D elementary exponents}
\end{equation}

Furthermore, the normalization morphism $\pi$ is finite, so its higher direct image functors on coherent sheaves vanish. Using the spectral sequence of composed functors, we can rewrite the second sheaf in \eqref{short exact sequence D elementary exponents}:
$$
R^q g'_\ast (\pi_\ast \omega^p_{Y'/\mathbb{D}'}) \simeq R^q h_\ast \omega^p_{Y'/\mathbb{D}'}.
$$
Under this identification, the second arrow in the diagram \eqref{short exact sequence D elementary exponents} is precisely $u^{p,q}$. So we obtain that its cokernel is the coherent sheaf on $\mathbb{D}'$ given by:
$$
\mathrm{Coker}\, u^{p,q} \simeq \bigoplus_{P \in \Sigma} R^q g'_\ast (\mathcal{F}_P \otimes \rho^\ast \omega^p_{Y/\mathbb{D}}),
$$
which concludes the proof.
\end{proof}

\begin{lemma}
\label{ele exp double points 3}
The isomorphism class of the coherent sheaf  on ${\mathbb{D}}'$, supported by $\{0\}$:
$$
R^q g'_\ast (\mathcal{F}_P \otimes \rho^\ast \omega^p_{Y/\mathbb{D}}),
$$
where $(p,q)$ is a pair of non-negative integers and $P$ is a point in $\Sigma$, only depends on the integers $N,$ $p,$ and~$q$.
\end{lemma}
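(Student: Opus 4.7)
The plan is to combine two observations: that the coherent sheaf in question is a skyscraper whose isomorphism class is determined by a single non-negative integer, and that this integer is a purely local analytic invariant of the non-degenerate critical point $P$.

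First, I would argue that $R^q g'_\ast(\mathcal{F}_P \otimes \rho^\ast \omega^p_{Y/\mathbb{D}})$ is a skyscraper sheaf at $0 \in \mathbb{D}'$ killed by $t'$. By Lemma~\ref{ele exp double points 1}\emph{(iii)}, the sheaf $\mathcal{F}_P$ is annihilated by $g'^\ast t'$, hence so is $\mathcal{F}_P \otimes \rho^\ast \omega^p_{Y/\mathbb{D}}$ and therefore also its higher direct image. Combined with the fact that $\mathcal{F}_P$ is supported on $E'_P \subset g'^{-1}(0)$, this forces $R^q g'_\ast(\mathcal{F}_P \otimes \rho^\ast \omega^p_{Y/\mathbb{D}})$ to be a coherent sheaf on $\mathbb{D}'$ supported at $\{0\}$ and killed by the maximal ideal $(t') \subset \cO_{\mathbb{D}', 0}$. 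It is therefore entirely determined, up to isomorphism, by the non-negative integer
$$
k(N, p, q, P) := \dim_{\mathbb{C}} R^q g'_\ast(\mathcal{F}_P \otimes \rho^\ast \omega^p_{Y/\mathbb{D}})_0.
$$

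Second, I would show that $k(N, p, q, P)$ depends only on the analytic germ of $(H, f)$ at $P$. Since $\mathcal{F}_P$ is supported on the proper subvariety $E'_P$, for any open neighborhood $U$ of $P$ in $H$ small enough to be disjoint from the other critical points of $f$, the natural restriction maps identify the stalk at $0$ of $R^q g'_\ast(\mathcal{F}_P \otimes \rho^\ast \omega^p_{Y/\mathbb{D}})$ with the analogous sheaf stalk computed by replacing $f$ by $f_{|U}: U \to \mathbb{D}$ in the entire construction. The holomorphic Morse lemma then provides, after shrinking $U$, a biholomorphism $\phi: U \lrasim B$, with $B$ an open ball around $0$ in $\mathbb{C}^N$, satisfying $\phi(P) = 0$ and $(f \circ \phi^{-1})(y_1, \ldots, y_N) = y_1^2 + \cdots + y_N^2$. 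This biholomorphism canonically carries the blow-up of $P$, the fiber product with $\sigma$, the normalization $\pi$, the relative logarithmic differentials $\omega^p_{Y/\mathbb{D}}$, and the local factor $\mathcal{F}_P$ of the cokernel of $\cO_{\tilde{H}'} \to \pi_\ast \cO_{Y'}$ to the analogous objects built from the universal quadratic model $q: \mathbb{C}^N \to \mathbb{C}$. Therefore $k(N, p, q, P)$ coincides with the integer computed from this universal model, and so depends only on $N$, $p$, and $q$.

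The main (rather mild) obstacle is the bookkeeping needed to verify that all the auxiliary constructions involved — the base change by $\sigma$, the normalization $\pi$, the local pieces $\mathcal{F}_P$ of the cokernel of $\cO_{\tilde{H}'} \to \pi_\ast \cO_{Y'}$, and the sheaves of logarithmic relative differentials — are genuinely local analytic invariants, so that they transport cleanly along the Morse normalization. Once this is granted, the content of the lemma reduces to a tautology about the standard quadratic singularity.
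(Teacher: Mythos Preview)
Your proposal is correct and takes essentially the same approach as the paper: both arguments use the holomorphic Morse lemma to identify a neighborhood of $P$ with a universal quadratic model, then verify that the entire chain of constructions (blow-up, base change by $\sigma$, normalization, the cokernel $\mathcal{F}_P$, and the logarithmic differentials $\omega^p_{Y/\mathbb{D}}$) transports along this identification. Your preliminary reduction to a single integer via Lemma~\ref{ele exp double points 1}\emph{(iii)} is a mild streamlining---the paper instead establishes a direct isomorphism of sheaves with the universal model and only invokes the skyscraper-killed-by-$t'$ observation afterwards, in the proof of Proposition~\ref{elementary exponents non-degenerate singularities}---but the substantive content is identical.
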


This lemma is a simple consequence of the local and intrinsic character of the notion of non-degenerate critical point, and of the  operations of blow-up, base change and normalization involved in the above construction. For the sake of completeness, we include some details that the readers could skip at their discretion.

\begin{proof}
For every point $P$ in $\Sigma$, we can construct local models of neighborhoods of $E'_P$ in $\tilde{H}'$ and of $\pi^{-1}(E'_P)$ in $Y'$, that only depend on the integer $N$, as follows.

Since the morphism $f$ admits a non-degenerate critical point at $P$, we can choose an analytic coordinate system $(x_{1, P},...,x_{N, P})$ on a neighborhood $U_P$ of $P$ in $H$, defining a morphism inserted in a commutative diagram:
$$\xymatrix{U_P \ar[r]^-{\sim} \ar[d]^{f_{| U_P}} & U \subset \C^N \ar[ld]^{f_{loc}} 
\\ \mathbb{D} & }$$
where $U$ is some fixed neighborhood of $0$ in $\C^N$ (for instance the open polydisk of radius $\sqrt{1/N}$ centered in 0), and $f_{loc}$ is the morphism sending $(x_1,...,x_N)$ to $x_1^2 + ... + x_N^2$.

Taking the blow-ups of $U_P$ at $P$ and of $U$ at the origin, we obtain a commutative diagram:
$$\xymatrix{V_P \ar[r]^-{\sim} \ar[d]_{g_{| V_P}} & V \ar[ld]^{g_{loc}} 
\\ \mathbb{D} & }$$
where $V_P$ is a neighborhood of $E_P$ in $\tilde{H}$, $V$ is an analytic manifold that only depends on $N$, containing a divisor $E_{loc}$ identified with $E_P$, and $g_{loc}$ is an  analytic morphism that only depends on $N$.

Taking the fiber product of the morphisms $g_{| V_P}$ and $g_{loc}$ with the morphism $\sigma$, then taking the normalizations of the resulting analytic spaces, we obtain a commutative diagram:
$$\xymatrix{W_P \ar[r]^-{\sim} \ar[d]^{\pi_{| W_P}} & W \ar[d]^{\pi_{loc}} & \\
V'_P \ar[r]^-{\sim} \ar[dr]_{g'_{| V'_P}} & V' \ar[d]^{g'_{loc}} \ar[r]^{\rho_{loc}} & V \ar[d]^{g_{loc}}
\\ & \mathbb{D'} \ar[r]^{\sigma} & \mathbb{D} }$$
where $V'_P$ (resp. $W_P$) is a neighborhood of $E'_P$ (resp. $\pi^{-1}(E'_P)$) in $\tilde{H}'$ (resp. $Y'$), $V'$ (resp. $W$) is an analytic space (resp. manifold) that only depends on $N$, containing a divisor $E'_{loc}$ (resp. $\pi_{loc}^{-1}(E'_{loc})$) identified with $E'_P$ (resp. $\pi^{-1}(E'_P)$) via the isomorphisms, and $\pi_{loc}$, $g'_{loc}$, and $\rho_{loc}$ are analytic morphisms that only depend on $N$.

The morphism of coherent sheaves $\phi_{| V'_P}$ on $V'_P$ is identified via these isomorphisms with a morphism of coherent sheaves on $V'$:
$$\phi_{loc} : \cO_{V'} \lra \pi_{loc \ast} \cO_W.$$
Let $\mathcal{F}_{loc}$ be its cokernel: it is a coherent sheaf on $V'$ supported on $E'_{loc}$, that only depends on $N$, identified via the isomorphisms with $\mathcal{F}_P$.

Similarly, for every integer $p$, the locally free sheaf $(\rho^\ast \omega^p_{Y/\mathbb{D}})_{| V'_P}$ on $V'_P$ is identified with the locally free sheaf $\rho_{loc}^\ast \omega^p_{V/\mathbb{D}}$ on $V'$, which only depends on $N$.

Consequently, for every pair of integers $(p,q)$, it follows from the commutativity of the diagram, and from the fact that the coherent sheaf $\mathcal{F}_P \otimes (\rho^\ast \omega^p_{Y/\mathbb{D}})$ is supported on $E'_P$, hence on $V'_P$, that we have an isomorphism of coherent sheaves on $\mathbb{D}'$:
\begin{align*}
    R^q g'_\ast (\mathcal{F}_P \otimes \rho^\ast \omega^p_{Y/\mathbb{D}}) &\simeq R^q g'_\ast (\mathcal{F}_P \otimes \rho^\ast \omega^p_{Y/\mathbb{D}})_{| V'_P}, \\
    &\simeq R^q g'_{loc \ast} (\mathcal{F}_{loc} \otimes \rho_{loc}^\ast \omega^p_{V/\mathbb{D}}),
\end{align*}
so that this coherent sheaf only depends on the integers $N,p,q$.
\end{proof}

\begin{proof}[Proof of Proposition \ref{elementary exponents non-degenerate singularities}]
Using Lemma \ref{ele exp double points 1}, (i), and the fact that $\pi$ is an isomorphism over $\mathbb{D}'^\ast$, the commutative diagram:
$$
\xymatrix{
Y' \ar[r]^{\rho \circ \pi} \ar[d]^{h} & Y \ar[d]^{g} \\
\mathbb{D}' \ar[r]^{\sigma} & \mathbb{D} \\
}
$$
is a semistable reduction diagram. Therefore, according to the  definition of elementary exponents, for every pair of integers $(p,q)$, the elementary exponents $(\alpha^{p,q}_{j,x})_{j}$ can be described in terms of the cokernel of the injective map $u^{p,q}$ from Lemma \ref{ele exp double points 2}. By this Lemma, this cokernel is isomorphic to the coherent sheaf:
$$
\bigoplus_{P \in \Sigma} R^q g'_\ast (\mathcal{F}_P \otimes \rho^\ast \omega^p_{Y/\mathbb{D}}).
$$
Using Lemma \ref{ele exp double points 1}, (iii), for every point $P$ in $\Sigma$, the coherent sheaf $\mathcal{F}_P$ on $\tilde{H}'$ is annihilated by the function $g^\ast t'$, hence can be written $i_{\tilde{H}'_0 \ast} \mathcal{F}'_P$, where $\mathcal{F}'_P$ is some coherent sheaf on $\tilde{H}'_0$, and $i_{\tilde{H}'_0}$ is the inclusion map. 

This implies that the cokernel of $u^{p,q}$ is isomorphic to the coherent sheaf:
$$
\bigoplus_{P \in \Sigma} H^q(\tilde{H}'_0, \mathcal{F}'_P \otimes (\rho^\ast \omega^p_{Y/\mathbb{D}})_{| \tilde{H}'_0} )_0,
$$
where $(.)_0$ denotes the skyscraper sheaf on $0$ in $\mathbb{D}'$ associated with a vector space.

We can rewrite this coherent sheaf as:
$$
\bigoplus_{P \in \Sigma} (\cO_{\mathbb{D}'}/(t' \cO_{\mathbb{D}'}) )^{\oplus  h^q(\tilde{H}'_0, \mathcal{F}'_P \otimes (\rho^\ast \omega^p_{Y/\mathbb{D}})_{| \tilde{H}'_0})}.
$$
Hence by definition of the elementary exponents $(\alpha^{p,q}_{j,x})_{j}$, there are precisely $$\sum_{P \in \Sigma} h^q(\tilde{H}'_0, \cF'_P \otimes (\rho^\ast \omega^p_{Y/\mathbb{D}})_{| \tilde{H}'_0})$$ of them that are  nonzero, and all of these are equal to $1/2$.

Using Lemma \ref{ele exp double points 3}, the number: 
$$
h^q(\tilde{H}'_0, \cF'_P \otimes (\rho^\ast \omega^p_{Y/\mathbb{D}})_{| \tilde{H}'_0})
$$
only depends on the integers $N,p,q,$ and we shall denote it $c_N^{p,q}$. Consequently, the number of nonzero elementary exponents is $|\Sigma| . c_N^{p,q}$.

If there is only one critical point in the fiber,\footnote{An instance of this situation actually exists for every choice of $N\geq 1$.  Consider for instance the hypersurface $X_1^2 + \dots + X_N^{2} = t X_0^2$ in $\PP^N \times \mathbb{D}$.} the argument in \cite{EFM21} shows that the number of nonzero elementary exponents is $0$ unless $N-1$ is even and $p = q = (N-1)/2$, in which case it is $1$. Consequently, the integer $c_N^{p,q}$ is $0$ unless $N-1$ is even and $p = q = (N-1)/2$, in which case it is $1$, and we obtain in general that all the elementary exponents vanish unless $N-1$ is even and $p = q = (N-1)/2$, in which case precisely $|\Sigma|$ of them do not vanish, and they are equal to~$1/2$.
\end{proof}

\section[Poincar\'e duality and Griffiths line bundles]{Poincar\'{e} duality and Griffiths line bundles of cohomology in complementary degrees}

We return to the notation of Section \ref{Comp Griff Elem}. Namely, let $Y$ be a smooth projective complex scheme of pure dimension $N$, $C$ be a connected smooth projective curve, and
$$
g : Y \lra C
$$
be a surjective morphism that is smooth over $\Cc:= C - \Delta$, where $\Delta$ is a finite subset of $C$, and such that the divisor $Y_\Delta$ is a divisor with strict normal crossings. Finally we denote:
$$Y_\Cc := g^{-1}(\Cc) = Y - Y_\Delta.$$

Let $\eta$ be the generic point of $C$.

Let $n$ be an integer such that $0 \leq n \leq 2(N-1)$.

For every point $x$ in $\Delta$, let $(\alpha^n_{j,x})_j$ be the union over the pairs $(p,q)$ such that $p+q = n$, of the elementary exponents $(\alpha^{p,q}_{j,x})_{j, p,q}$ of the $(p,q)$-Hodge bundle of the degeneration $g$ at the point $x$. Let $\alpha^n_x$ be their sum, and let us define a divisor in $C$ with rational coefficients by:
$$
A^n := \sum_{x \in \Delta} \alpha^n_x \{x\}.
$$
Observe that with the notation of the previous sections, this divisor is also given by:
$$A^n = \sum_{p,q \geq 0, p+q = n} A^{p,q}.$$
Let $r$ be the least common multiple of the denominators of the rational numbers $(\alpha^n_x)_{x \in \Delta}$, so that $r A^n$ is a divisor with integral coefficients.
\begin{lemma}
\label{det H^n 2-torsion}
With the above notation, the line bundle on $C$:
$$
\big(\det \overline{\ccH^n(Y_\Cc/ \Cc)}_- \big)^{\otimes r} \otimes \cO_C(r A^n)
$$
is of 2-torsion.
\end{lemma}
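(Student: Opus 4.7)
The plan is to deduce the lemma from the sharper identity
$$(\det \overline{\ccH^n(Y_\Cc/\Cc)}_-)^{\otimes 2} \otimes \cO_C(2A^n) \simeq \cO_C,$$
which, upon raising to the $r$-th tensor power, yields that the square of the line bundle in the lemma is trivial. The key input beyond Subsection \ref{Comp Griff Elem} is the integral structure on the VHS: the underlying local system $V_\Z$ of $\ccH^n(Y_\Cc/\Cc)$ has monodromy in $\mathrm{GL}_{b_n}(\Z)$, so $\det V_\Z$ has monodromy in $\{\pm 1\}$, and in particular $(\det V_\Z)^{\otimes 2}$ has trivial monodromy. A consequence is that $\alpha^n_x \in \tfrac{1}{2}\Z$ for every $x \in \Delta$, so $2A^n$ is an honest integral divisor (and $r \in \{1,2\}$, although the argument does not need this).

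First I would verify that the logarithmic connection on $(\det \overline{\ccH^n(Y_\Cc/\Cc)}_-)^{\otimes 2} \otimes \cO_C(2A^n)$ has vanishing residue at every $x \in \Delta$. Since the residue of the induced connection on a determinant is the trace of the residue on the original bundle, Proposition \ref{elementary exponents and monodromy} (applied to each $(p,q)$-piece with $p+q=n$) identifies the residue of $\det \overline{\ccH^n(Y_\Cc/\Cc)}_-$ at $x$ with $\sum_j \alpha^n_{j,x} = \alpha^n_x$; doubling gives $2\alpha^n_x$. Now the canonical flat connection on $\cO_C(\{x\})$ extending $d$ has residue $-1$ at $x$ (a straightforward local computation with generator $t^{-1}$), so $\cO_C(2A^n)$ contributes residue $-2\alpha^n_x$, exactly cancelling the previous term.

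It then remains to identify this line bundle equipped with its unramified flat connection. Its restriction to $\Cc$ is $(\det \ccH^n(Y_\Cc/\Cc))^{\otimes 2}$, which by the integrality discussion is isomorphic as a flat bundle to $(\cO_\Cc, d)$. A local computation near each $x \in \Delta$ (the unramified connection is locally integrable, so it admits a local flat trivialisation) shows that any global flat trivialisation on $\Cc$ extends to a nowhere-vanishing global flat section on $C$, so the line bundle with flat connection must be isomorphic to $(\cO_C, d)$, yielding the sharper identity above. The main bookkeeping is in the residue computation of the first step; granted Proposition \ref{elementary exponents and monodromy} and the identification of the residue of the canonical connection on $\cO_C(D)$ as $-\mathrm{mult}_x D$, there is no substantive obstacle.
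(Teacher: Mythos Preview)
Your proposal is correct and is essentially the same argument as the paper's, only phrased in the language of residues of logarithmic connections rather than pole orders of an explicit section. The paper writes down the flat generator $\psi(1)$ of $(\det\cV)^{\otimes 2}$ on $\Cc$ coming from an integral frame, and computes via $\det(e^{B_-\log t})^{2r}=t^{-2r\alpha^n_x}$ that this section acquires a pole of order $2r\alpha^n_x$ at each $x\in\Delta$ in the lower Deligne extension; you compute instead that $\mathrm{tr}(\Res_x\overline{\nabla}_-)=\alpha^n_x$ (same trace of $-B_-$) and cancel it against the residue of the canonical connection on $\cO_C(2A^n)$, then extend the flat trivialisation across $\Delta$. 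Both routes hinge on the integral structure forcing $\det T\in\{\pm1\}$ and on identifying the local invariant at $x$ with $\alpha^n_x$. Your observation that consequently $\alpha^n_x\in\tfrac{1}{2}\Z$ (hence $r\mid 2$ and $2A^n$ is already integral) is a pleasant sharpening not made explicit in the paper. One small point of exposition: Proposition~\ref{elementary exponents and monodromy} gives eigenvalues of $T_s$ on the subquotients of the \emph{limit} Hodge filtration, so strictly speaking you should remark that these are therefore the eigenvalues of $T$ on all of $H^n$, and then invoke the defining property of the lower Deligne extension to conclude that the eigenvalues of $\Res_x$ are exactly the $\alpha^n_{j,x}$.
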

\begin{proof}
Let us denote simply $\cV$ the complex analytic vector bundle $\ccH^n(Y_\Cc / \Cc)$ on $\Cc$. It has an integral structure, associated to Betti cohomology with $\Z$-coefficients, so its determinant line bundle is of $2$-torsion, and we have an isomorphism of line bundles on $\Cc$:
$$
\psi : \cO_{\Cc} \lra (\det \cV)^{\otimes 2},
$$
that sends $1$ to $(v_1 \wedge ... \wedge v_s)^2$, where $(v_1,...,v_s)$ is any local integral frame of the vector bundle $\cV$.

Let $x$ be a point of $\Delta$, let $U$ be a neighborhood of $x$ with a local coordinate $t$ centered in $x$, let $\infty$ be a point in $U - \{x\}$, and let $v_1,...,v_s$ be an integral frame of the vector space $\cV_\infty$.

Let $T$ be the (counterclockwise) monodromy automorphism on $\cV_\infty$ around the point $x$. As the divisor $Y_x$ in $Y$ is a divisor with strict normal crossings, using Proposition \ref{lower Deligne extension and logarithmic cohomology}, this automorphism is quasi-unipotent.

As in the construction of the lower Deligne extension (see \cite[II, Prop. 5.4]{Deligne70}, \cite[``Key Lemma" p. 547]{Katz76}, and \cite[section 2.1]{EFM21}), let $B_-$ be the only automorphism on $\cV_\infty$ whose eigenvalues have their real parts in $]-1, 0]$, and satisfying the equality,
$$
e^{2i\pi B_-} = T.
$$
As the automorphism $T$ is quasi-unipotent, these eigenvalues are rational numbers.

By Proposition \ref{domain elementary exponents}, the elementary exponents $(\alpha_{j,x}^n)_j$ are all in $[0, 1[$, and by Proposition \ref{elementary exponents and monodromy}, they satisfy that the $(e^{- 2i\pi \alpha^n_{j,x}})_j$ are the eigenvalues of the monodromy $T$. 

Consequently, the eigenvalues of the automorphism $B_-$ are exactly the $(- \alpha^n_{j,x})_j$.

By definition of the lower Deligne extension, we have an isomorphism of vector bundles on $U$:
$$
\chi : \cV_\infty \otimes_\C \cO_U \lra (\overline{\cV}_-)_{| U},
$$
such that on every point $x'$ in $U$ with coordinate $t(x')$, an integral frame of $\cV_{x'}$ is given by:
$$
(\chi_{x'}(e^{B_- (\log(t(x')) - \log(t(\infty)))} . v_j) )_j,
$$
depending on a choice of logarithm of $t(x')$ and of $t(\infty)$.

Observe that adding a multiple of $2i\pi$ to $\log(t(x')) - \log(t(\infty))$ multiplies the basis by $e^{2i\pi B_-} = T$, which respects the integral structure.

Let $x'$ be a point in $U$ with coordinate $t(x')$, and let us choose a logarithm $\log(t(x'))$ and a logarithm $\log(t(\infty))$. By definition of the morphisms $\psi$ and $\chi$, the image of $1$ by the morphism of vector spaces
$$
(\det \chi_{x'}^{-1}) ^{\otimes 2r} \circ \psi_{x'}^{\otimes r} : \C \lra (\det \cV_\infty)^{\otimes 2r},
$$
is given by:
\begin{align}
((\det \chi_{x'}^{-1}) ^{\otimes 2r} \circ \psi_{x'}^{\otimes r})(1) 
&= (e^{B_- (\log(t(x')) - \log(t(\infty)))} v_1 \wedge ... \wedge e^{B_- (\log(t(x')) - \log(t(\infty)))} v_s )^{2r} \nonumber\\
&= \det(e^{B_- (\log(t(x')) - \log(t(\infty)) )} )^{2r} . (v_1 \wedge ... \wedge v_s)^{2r} \nonumber\\
&= e^{2r \mathrm{tr}(B_- (\log(t(x')) - \log(t(\infty)))) } . (v_1 \wedge ... \wedge v_s)^{2r} \nonumber\\
&= e^{2r (\log(t(x')) - \log(t(\infty)) ) \mathrm{tr}(B_-)} . (v_1 \wedge ... \wedge v_s)^{2r} \nonumber\\
&= e^{2r (\log(t(x')) - \log(t(\infty))) \sum_j (- \alpha^n_{j,x}) } . (v_1 \wedge ... \wedge v_s)^{2r} \label{eq Griff int 1}\\
&= e^{-2r (\log(t(x')) - \log(t(\infty))) \alpha^n_x} . (v_1 \wedge ... \wedge v_s)^{2r} \nonumber\\
&= (t(x')/t(\infty))^{-2r \alpha^n_x} . (v_1 \wedge ... \wedge v_s)^{2r}. \label{eq Griff int 2}
\end{align}
The equality \eqref{eq Griff int 1} holds because 
 the eigenvalues of the automorphism $B_-$ are exactly the $(-\alpha^n_{j,x})_j$; \eqref{eq Griff int 2} holds because $r \alpha^n_x$ is an integer.
Observe that this expression does not depend on the choices of the logarithms $\log(t(x'))$ and $\log(t(\infty))$.

Consequently, the section $((\det \chi^{-1}) ^{\otimes 2r} \circ \psi_{| U - \{x\}}^{\otimes r})(1)$ over $U -\{x\}$ of the line bundle $$(\det \cV_\infty)^{\otimes 2r} \otimes \cO_{U}$$ admits a pole of order $2r \alpha^n_x$ in $x$.

Since $\chi$ is an isomorphism of vector bundles, the section $\psi_{| U - \{x\}}^{\otimes r}(1)$ of the line bundle 
$$
(\det \cV_{| U - \{x\}})^{\otimes 2r} \simeq (\det \overline{\cV}_-)_{| U - \{x\}}^{\otimes 2r} 
$$
also admits a pole of order $2r \alpha^n_x$ in $x$.

Consequently, the isomorphism of line bundles $\psi^{\otimes r}$ on $\Cc$ can be extended into an isomorphism of line bundles on $C$:
$$
\overline{\psi^{\otimes r}}_- : \cO_C(-2r \sum_{x \in \Delta} \alpha^n_x \{x\}) = \cO_C(-2r A^n) \lra (\det \overline{\cV}_- )^{\otimes 2r},
$$
so that the line bundle on $C$:
$$
\big(\det \overline{\ccH^n(Y_\Cc / \Cc)}_- \big)^{\otimes r} \otimes \cO_C(r A^n)
$$
is of $2$-torsion, as wanted.
\end{proof}

Now, let us show the main result of this section.

\begin{proposition}
\label{GK Poincare duality}
With the same notation as in Lemma \ref{det H^n 2-torsion}, the line bundle on $C$:
$$
\GK_{C,+}(\H^{2(N-1)-n}(Y_\eta/C_\eta))^{\otimes r} \otimes \GK_{C,-}(\H^n(Y_\eta/C_\eta))^{\otimes -r} \otimes \cO_C(-r (N-1) A^n)
$$
is of $2$-torsion.

Moreover, if the local monodromy at every point of $\Delta$ of the VHS $\H^n(Y_\Cc/\Cc)$ on $\Cc$ is unipotent, then so is the local monodromy of the VHS $\H^{2(N-1)-n}(Y_\Cc /\Cc)$, and the line bundle on $C$:
$$
 \GK_{C}(\H^{2(N-1)-n}(Y_\eta/C_\eta)) \otimes \GK_{C}(\H^n(Y_\eta/C_\eta))^\vee
$$
is of $2$-torsion.
\end{proposition}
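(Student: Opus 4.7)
The plan is to use Poincaré--Serre duality together with the fact that taking the dual of a flat bundle with quasi-unipotent local monodromies exchanges upper and lower Deligne extensions.

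First, over the smooth locus $\Cc$, relative Poincaré--Serre duality for the proper smooth morphism $g_{\mid Y_\Cc}$ provides a canonical perfect pairing of polarizable VHS (up to a Tate twist by $-(N-1)$, immaterial for the analytic bundles and Hodge filtrations):
$$\H^n(Y_\Cc/\Cc) \otimes \H^{2(N-1)-n}(Y_\Cc/\Cc) \lra \C.$$
Under this pairing, $F^p \H^n$ is the annihilator of $F^{(N-1)-p+1} \H^{2(N-1)-n}$, whence canonical isomorphisms of associated subquotients
$$\mathrm{gr}^p_F \H^{2(N-1)-n}(Y_\Cc/\Cc) \lrasim (\mathrm{gr}^{(N-1)-p}_F \H^n(Y_\Cc/\Cc))^\vee.$$

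Second, I would use the key observation that, for a flat vector bundle $(\cV, \nabla)$ on $\Cc$ with quasi-unipotent local monodromy, the dual connection $\nabla^\vee$ on $\cV^\vee$ has residue at each point of $\Delta$ given by minus the transpose of that of $\nabla$; consequently its eigenvalues lie in $(-1,0] = -D_+$ whenever those of $\Res(\nabla)$ lie in $[0,1) = -D_-$, so that
$$(\overline{\cV}_-)^\vee \simeq \overline{\cV^\vee}_+.$$
The pairing above therefore extends uniquely to a perfect pairing $\overline{\H^n}_- \otimes \overline{\H^{2(N-1)-n}}_+ \to \cO_C$, and, since the Hodge subbundles are algebraic with respect to Deligne's algebraic structure by Proposition \ref{logarithmic Hodge bundles}, the annihilator identification extends too, yielding canonical isomorphisms over $C$:
$$\overline{\mathrm{gr}^p_F \H^{2(N-1)-n}(Y_\eta/C_\eta)}_+ \simeq \bigl(\overline{\mathrm{gr}^{(N-1)-p}_F \H^n(Y_\eta/C_\eta)}_-\bigr)^\vee.$$

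Third, inserting this in the definition of the upper Griffiths line bundle of $\H^{2(N-1)-n}$ and reindexing with $q := (N-1)-p$, using that the determinant of the Deligne extension equals the tensor product of the determinants of the Hodge subquotients, I would obtain
$$\GK_{C,+}(\H^{2(N-1)-n}(Y_\eta/C_\eta)) \simeq \GK_{C,-}(\H^n(Y_\eta/C_\eta)) \otimes (\det \overline{\H^n(Y_\Cc/\Cc)}_-)^{\otimes -(N-1)}.$$
Raising this to the $r$-th power and applying Lemma \ref{det H^n 2-torsion} (noting that any integer power of a 2-torsion line bundle is 2-torsion) gives the first statement of the proposition. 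For the second statement, Poincaré duality exhibits the monodromy of $\H^{2(N-1)-n}$ as the inverse transpose of that of $\H^n$, so unipotency is preserved, all elementary exponents then vanish, $A^n = 0$, and upper and lower extensions coincide; the above isomorphism combined with the 2-torsion of $\det \overline{\H^n}$ (which carries an integral structure from $\det R^n g_\ast \Z$ modulo torsion) yields the conclusion.

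The main obstacle is to make the extension step in the second paragraph fully rigorous, namely to verify that Poincaré--Serre duality, a priori defined only on $\Cc$, extends to a perfect pairing between the lower Deligne extension of $\H^n$ and the upper Deligne extension of $\H^{2(N-1)-n}$, and that this extension is compatible with the extensions of the Hodge filtrations given by Proposition \ref{logarithmic Hodge bundles}; this can be done either locally near each point of $\Delta$ using the explicit form of the Deligne extensions, or globally by invoking the pullback to a cover $\sigma: C' \to C$ that kills the monodromy (as in Subsection \ref{Comp Griff Elem}) and descending via the formulas of Proposition \ref{GK and elementary exponents}.
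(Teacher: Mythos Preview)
Your proposal is correct and follows essentially the same route as the paper's own proof: Poincar\'e--Serre duality on $\Cc$, the fact that duality exchanges upper and lower Deligne extensions, the resulting isomorphism
$\GK_{C,+}(\H^{2(N-1)-n}) \simeq \GK_{C,-}(\H^n) \otimes (\det \overline{\H^n}_-)^{\otimes -(N-1)}$, and then Lemma~\ref{det H^n 2-torsion}. The paper handles the extension step you flag as the ``main obstacle'' exactly as you suggest in your first alternative --- by invoking directly that duality swaps upper and lower Deligne extensions (together with the algebraicity of the Hodge subbundles from Proposition~\ref{logarithmic Hodge bundles}) --- without passing to a cover.
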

\begin{proof}
Let us denote by $F^{\bullet} \ccH^n$ (resp. $F^{\bullet} \ccH^{2(N-1)-n}$) the Hodge filtration by locally direct summands on the vector bundle $\ccH^n(Y_\Cc / \Cc)$ (resp. $\ccH^{2(N-1)-n} (Y_\Cc / \Cc)$).

Using Poincar\'{e} and Serre duality, we have an isomorphism of vector bundles on $\Cc$:
$$
\phi_n : \ccH^{2(N-1)-n}(Y_\Cc / \Cc) \lra \ccH^n(Y_\Cc / \Cc) ^\vee,
$$
that sends, for every integer $p$, the subbundle $F^p \ccH^{2(N-1)-n}$ into the orthogonal subbundle:
$$
(F^{N-p} \ccH^n)^\perp \subset \ccH^n(Y_\Cc / \Cc) ^\vee.
$$
Consequently, for every integer $p$, this isomorphism induces by restriction and quotient an isomorphism:
$$
\phi_n^p : \mathrm{gr}_F^p \ccH^{2(N-1)-n} \lra (F^{N-p} \ccH^n)^\perp / (F^{N-1-p} \ccH^n)^\perp \simeq (\mathrm{gr}_F^{N-1-p} \ccH^n)^\vee.
$$
Recall that the duality of vector bundles exchanges the upper and lower Deligne extensions, therefore, the isomorphism $\phi_n$ can be extended into an isomorphism of vector bundles on $C$:
$$
\overline{\phi_n} : \overline{\ccH^{2(N-1)-n} (Y_\Cc / \Cc) } _+ \lra (\overline{\ccH^n (Y_\Cc / \Cc)}_- )^\vee,
$$
that sends for every integer $p$, the subbundle $\overline{F^p 
\ccH^{2(N-1)-n}}_+$ into the 
subbundle $(\overline{F^{N-p} \ccH^n}_-)^\perp$, hence induces an isomorphism by restriction and quotient:
$$
\overline{\phi_n^p} : \mathrm{gr}_{\overline{F}_+} ^p \overline{\ccH^{2(N-1)-n}}_+ \lra (\mathrm{gr}_{\overline{F}_-}^{N-1-p} \overline{\ccH^n}_-)^\vee.
$$
By definition, the upper Griffiths line bundle on $C$ of the VHS $\H^{2(N-1)-n}(Y_\Cc / \Cc)$ on $\Cc$ is given by:
\begin{align*}
\GK_{C,+}(\H^{2(N-1)-n}(Y_\eta/C_\eta))
&= \bigotimes_{p \in \Z} (\det \mathrm{gr}_{\overline{F}_+} ^p \overline{\ccH^{2(N-1)-n}}_+ )^{\otimes p}\\
&\simeq \bigotimes_{p \in \Z} (\det \mathrm{gr}_{\overline{F}_-}^{N-1-p} \overline{\ccH^n}_-)^{\otimes -p} \textrm{ using the isomorphisms $(\overline{\phi_n^p})_p$}\\
&\simeq \bigotimes_{p' \in \Z} (\det \mathrm{gr}_{\overline{F}_-}^{p'} \overline{\ccH^n}_- )^{\otimes p' - (N-1)}\\
&\simeq \bigotimes_{p' \in \Z} (\det \mathrm{gr}_{\overline{F}_-}^{p'} \overline{\ccH^n}_- )^{\otimes p'} \otimes \big(\bigotimes_{p' \in \Z} (\det \mathrm{gr}_{\overline{F}_-}^{p'} \overline{\ccH^n}_- ) \big)^{\otimes - (N-1)}\\
&\simeq \GK_{C,-}(\H^n(Y_\eta / C_\eta)) \otimes (\det \overline{\ccH^n(Y_\Cc / \Cc)}_-  )^{\otimes -(N-1)}.
\end{align*}
Consequently, there is an isomorphism of line bundles on $C$:
\begin{equation*}
\GK_{C,+}(\H^{2(N-1)-n}(Y_\eta/C_\eta)) \otimes \GK_{C,-}(\H^n(Y_\eta / C_\eta))^\vee 
\simeq \big(\det \overline{\ccH^n(Y_\Cc / \Cc)}_- \big)^{\otimes -(N-1)}.
\end{equation*}

Using Lemma \ref{det H^n 2-torsion}, the line bundle on $C$:
$$
\big(\det \overline{\ccH^n(Y_\Cc / \Cc)}_- \big)^{\otimes r} \otimes \cO_C(r A^n)
$$
is of $2$-torsion. Consequently, the line bundle on $C$:
\begin{align*}
\GK_{C,+}(&\H^{2(N-1)-n}(Y_\eta/C_\eta))^{\otimes r} \otimes \GK_{C,-}(\H^n(Y_\eta / C_\eta))^{\otimes (-r)} \otimes \cO_C(-r (N-1) A^n)\\
&\simeq (\det \overline{\ccH^n(Y_\Cc / \Cc)}_- )^{\otimes - r(N-1)} \otimes \cO_C(-r (N-1) A^n), \\
&\simeq \big [(\det \overline{\ccH^n(Y_\Cc / \Cc)}_- )^{\otimes r} \otimes \cO_C( r A^n) \big ]^{\otimes - (N-1)},
\end{align*}
is of $2$-torsion, as wanted.

For proving the second assertion in Proposition  \ref{GK Poincare duality}, we use that the isomorphism $\phi_n$ is also compatible with the connections of both VHS, hence with their local monodromy. If the local monodromy at every point of $\Delta$ of the VHS $\H^n(Y_\Cc / \Cc)$ is unipotent, so is the local monodromy of the dual VHS $\H^n(Y_\Cc / \Cc) ^\vee$, and so is the monodromy of the VHS $\H^{2(N-1)-n}(Y_\Cc / \Cc)$.

Finally,  applying the first part of the proof with $A^n = 0$ and $r = 1$, we obtain the last assertion in Proposition  \ref{GK Poincare duality}.
\end{proof}

\chapter[Characteristic classes of logarithmic relative differentials]{Characteristic classes of relative differentials and of logarithmic relative differentials}
\label{section comparing Omega^1 and omega^1}

In this chapter, we establish various results relating the Chern classes of sheaves of differentials, and vector bundles of logarithmic differentials. These results will be used in Chapter \ref{Alternating product of Griffiths line bundles and the Grothendieck-Riemann-Roch formula} to derive Theorems \ref{intro GK DNC with localized terms}  and  \ref{GK in terms of classes in D non-reduced} from Theorems \ref{intro GK DNC} and \ref{GK and rho_r}, and are also of independent interest. 

The statements of our results are gathered in Section \ref{The statements}. The following sections, which contain their rather technical proofs and will not be referred to in the remainder of this paper, could be skipped at first reading.

\section[Chern and Todd classes of differentials and logarithmic differentials]{Comparing Chern classes and Todd classes of differentials and logarithmic differentials}
\label{The statements}

\subsection{Chern classes of differentials and of logarithmic differentials: the absolute case}
Although we are ultimately interested in characteristic classes of sheaves of relative differentials, we shall first prove the following results in the absolute case.

Let $X$ be a smooth $n$-dimensional complex scheme and let $E$ be a reduced divisor with strict normal crossings in $X$. Let us denote by $(E_i)_{i \in I}$ the irreducible components of $E$, and for any subset $J \subseteq I$, let us define:\footnote{In particular, $E_\emptyset = X$.}
$$E_J := \bigcap_{i \in J} E_i.$$
It is a smooth subscheme of codimension $\vert J \vert$ in $X$, and we shall denote by:
$$i_{E_J}: E_J \hlra X$$
the inclusion morphism.

As in \cite[II, 3.4]{Deligne70}, for every  non-negative integer $r$, we denote by  $E^r$ the subscheme of codimension $r$ of $X$ defined as the union of  the intersections of $r$ distinct components of $E$:
$$
E^r := \bigcup_{J \subset I, |J| = r} E_J.
$$
Observe that for every subset $J$ in $I$, the subscheme:
$$
E_J \cap E^{|J| + 1}=\bigcup_{i \in I - J} E_{J \cup \{i\}}
$$
is a divisor with normal crossings in $E_J$.

\begin{proposition}
\label{comparison Omega log and Omega easy cases general}
With the previous notation, for every subset $J$ of $I$, the following relation holds between total Chern classes in $\CH^\ast(E_J)$:
\begin{equation}\label{eq Omega (log) _E_J}
c\big(\Omega^1_X (\log E)_{\vert E_J}\big) = c\big(\Omega^1_{E_J} (\log E_J \cap E^{\vert J \vert +1})\big).
\end{equation}
Moreover the following equality holds in $\CH^\ast(X)$:
\begin{equation}\label{Chern class Omega1log}
c(\Omega_X^1(\log E)) = \sum_{J \subseteq I} i_{E_J \ast} c(\Omega^1_{E_J}),
\end{equation}
and the following equality holds in $\CH^\ast(X)_\Q$:
\begin{equation} \label{Td Omega1log}
    \Td(\Omega^1_X (\log E)) = \Td(\Omega^1_X) \prod_{i \in I} \td(-[E_i])^{-1},
\end{equation}
where $\td(x)$ denotes the formal series $x/(1 - e^{-x})$.
\end{proposition}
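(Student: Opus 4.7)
The plan is to derive all three identities from short exact sequences of logarithmic and ordinary differentials, together with the projection formula for the closed immersions $i_{E_J}$. The key mechanism is multiplicativity of the total Chern class $c$ and of the Todd class $\Td$ on the Grothendieck group $K^0(X)$, which is available since $X$ is smooth and every coherent sheaf therefore admits a finite locally free resolution.

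First I would prove \eqref{eq Omega (log) _E_J}. The iterated Poincar\'e residues along the $\vert J\vert$ components of $E$ containing $E_J$ form a short exact sequence of locally free sheaves on $E_J$
\begin{equation*}
0 \lra \Omega^1_{E_J}(\log E_J \cap E^{\vert J\vert +1}) \lra \Omega^1_X(\log E)_{\vert E_J} \lra \cO_{E_J}^{\oplus \vert J\vert} \lra 0
\end{equation*}
(see for instance \cite[II, 3.6]{[Deligne70]}). Multiplicativity of $c$ together with $c(\cO_{E_J}^{\oplus \vert J\vert})=1$ immediately gives \eqref{eq Omega (log) _E_J}.

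Next, for \eqref{Td Omega1log}, I would use the global Poincar\'e residue sequence
\begin{equation*}
0 \lra \Omega^1_X \lra \Omega^1_X(\log E) \lra \bigoplus_{i \in I} i_{E_i *}\cO_{E_i} \lra 0
\end{equation*}
combined with the structure sequences $0 \to \cO_X(-E_i) \to \cO_X \to i_{E_i *}\cO_{E_i} \to 0$. These yield in $K^0(X)$
\begin{equation*}
[\Omega^1_X(\log E)] = [\Omega^1_X] + \sum_{i\in I} \bigl([\cO_X] - [\cO_X(-E_i)]\bigr).
\end{equation*}
Applying multiplicativity of $\Td$ together with $\Td(\cO_X(-E_i)) = \td(-[E_i])$ yields \eqref{Td Omega1log}. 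Applying instead multiplicativity of $c$ and $c(\cO_X(-E_i)) = 1-[E_i]$ gives the auxiliary identity
\begin{equation}
c(\Omega^1_X(\log E)) = c(\Omega^1_X) \prod_{i \in I} \frac{1}{1-[E_i]}. \label{planaux}
\end{equation}

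Finally, I would prove \eqref{Chern class Omega1log} by matching its right-hand side with \eqref{planaux}. Since the $E_i$ meet transversally, the conormal bundle $\cN^\vee_{E_J/X}$ splits as $\bigoplus_{i\in J} \cO_X(-E_i)_{\vert E_J}$, and the conormal exact sequence for $E_J \hookrightarrow X$ gives
\begin{equation*}
c(\Omega^1_{E_J}) = \frac{c(\Omega^1_X)_{\vert E_J}}{\prod_{i \in J}\bigl(1-[E_i]_{\vert E_J}\bigr)}.
\end{equation*}
Combined with the transverse-intersection identity $[E_J]=\prod_{i\in J}[E_i]$ in $\CH^\ast(X)$ and the projection formula $i_{E_J *} i_{E_J}^\ast \alpha = [E_J]\cdot\alpha$, expanding the denominators as geometric series yields
\begin{equation*}
i_{E_J *} c(\Omega^1_{E_J}) = c(\Omega^1_X) \prod_{i \in J}\frac{[E_i]}{1-[E_i]}.
\end{equation*}
Summing over $J\subseteq I$ and distributing,
\begin{equation*}
\sum_{J\subseteq I}\prod_{i\in J}\frac{[E_i]}{1-[E_i]} = \prod_{i\in I}\left(1+\frac{[E_i]}{1-[E_i]}\right) = \prod_{i\in I}\frac{1}{1-[E_i]},
\end{equation*}
which matches \eqref{planaux} and closes the argument. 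The main technical point is the correct identification of the Poincar\'e residue exact sequences on $X$ and on each stratum $E_J$; once these are in hand, the remainder of the proof is formal bookkeeping with the projection formula and an elementary distributive identity.
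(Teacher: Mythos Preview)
Your proof is correct. For \eqref{eq Omega (log) _E_J} and \eqref{Td Omega1log} your argument is essentially identical to the paper's: the same Poincar\'e residue sequence on $E_J$ and the same global residue and structure sequences on $X$, combined with multiplicativity of $c$ and $\Td$.

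For \eqref{Chern class Omega1log} your route differs genuinely from the paper's. The paper argues by induction on $\dim X$: from $c(\Omega^1_X)=c(\Omega^1_X(\log E))\prod_i(1-[E_i])$ it expands the product as $\sum_{K}(-1)^{|K|}[E_K]$, invokes \eqref{eq Omega (log) _E_J} to rewrite $c(\Omega^1_X(\log E))_{|E_K}$ as $c(\Omega^1_{E_K}(\log E_K\cap E^{|K|+1}))$, applies the induction hypothesis on each lower-dimensional $E_K$ to expand this as $\sum_{K\subset J}i_{E_J,E_K*}c(\Omega^1_{E_J})$, and collapses the double sum via $\sum_{\emptyset\neq K\subset J}(-1)^{|K|}=-1$. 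You instead compute each $i_{E_J*}c(\Omega^1_{E_J})$ directly from the conormal sequence of $E_J\hookrightarrow X$ and the projection formula, obtaining $c(\Omega^1_X)\prod_{i\in J}[E_i]/(1-[E_i])$, and then sum over $J$ with the distributive identity. Your approach is shorter, avoids induction, and does not use \eqref{eq Omega (log) _E_J} at all in the proof of \eqref{Chern class Omega1log}; the paper's approach, while longer, makes visible how the stratum-by-stratum identity \eqref{eq Omega (log) _E_J} feeds recursively into the global formula.
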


The relation \eqref{Chern class Omega1log} may be reformulated as the following equalities, where $r$ denotes an integer  such that $0 \leq r \leq n$:
\begin{equation}
c_r(\Omega^1_X(\log E)) = \sum_{J \subseteq I, \vert J \vert \leq r} i_{E_J \ast} c_{r - \vert J \vert}(\Omega^1_{E_J}).
\end{equation}

\subsection{Notation}
\label{Notation Chern classes omega^1 strict DNC}

For the rest of the statements, let us adopt the following notation.

Let $Y$ be a connected smooth projective complex scheme of pure dimension $N$, let $C$ be a connected smooth projective complex curve, and let
$$
g : Y \lra C
$$
be a surjective morphism that is smooth outside of $Y_\Delta$, where $\Delta$ is a reduced divisor in $C$.

Let us assume that the divisor $Y_\Delta$ is a divisor with strict normal crossings, and let us write it as:
$$
D = \sum_{i \in I} m_i D_i,
$$
where $I$ is a finite set and for every $i$ in $I$, $m_i$ is a positive integer, and where $D_i$ is a smooth connected divisor such that the $(D_i)_{i \in I}$ intersect each other transversally.

As before, for every subset $J$ of $I$, we denote:
$$
D_J := \bigcap_{i \in J} D_i,
$$
and  for every integer $r \geq 1$,  we define the subscheme:
$$
D^r := \bigcup_{J \subset I, |J| = r} D_J.
$$
The subscheme:
$$
D_J \cap D^{|J| + 1}=\bigcup_{i \in I - J} D_{J \cup \{i\}}
$$
is a divisor with normal crossings in $D_J$.

For every element $i$ in $I$, we denote by
$$
\cN_i := \cN_{D_i} Y \simeq \cO_Y(D_i) _{| D_i}
$$
 the normal vector bundle of the smooth divisor $D_i$ in $Y$.

Let $\preceq$ be a total order on $I$.

\subsection{Normal bundles to the strata $D_J$ and Chern classes}

\begin{proposition}
\label{omega^1 restricted to a strata}
For every non-empty subset $J$ in $I$, we have the equality of total Chern classes in $\CH^\ast(D_J)$:
\begin{equation}
\label{eq omega^1 restricted to a strata}
c\big(\omega^1_{Y/C | D_J}\big) = c\big(\Omega^1_{D_J} (\log D_J \cap D^{|J| + 1})\big).
\end{equation}
\end{proposition}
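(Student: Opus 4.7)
The plan is to deduce the identity from Proposition \ref{comparison Omega log and Omega easy cases general} via the short exact sequence of vector bundles on $Y$
$$0 \longrightarrow g^{\ast}\Omega^{1}_{C}(\log \Delta) \longrightarrow \Omega^{1}_{Y}(\log Y_{\Delta}) \longrightarrow \omega^{1}_{Y/C} \longrightarrow 0.$$
This sequence is implicit in Definition \ref{relative logarithmic de Rham cohomology}: the surjection is the defining one, and the injectivity and local freeness of the quotient are proved there by a direct coordinate computation in local parameters adapted to $Y_{\Delta}$. Note that $\Omega^{1}_{Y}(\log Y_{\Delta})$ depends only on the support $\bigcup_{i \in I} D_{i}$ of $Y_{\Delta}$, so the multiplicities $m_{i}$ do not enter.

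First I will restrict the above sequence to the smooth closed subscheme $D_{J}$, obtaining a short exact sequence of locally free $\cO_{D_{J}}$-modules. Since $J$ is non-empty, $D_{J}$ is contained in a fiber $g^{-1}(x)$ with $x \in \Delta$, so $g^{\ast}\Omega^{1}_{C}(\log \Delta)_{\vert D_{J}}$ is canonically a trivial line bundle on $D_{J}$. Multiplicativity of the total Chern class in a short exact sequence of vector bundles then gives
$$c\big(\Omega^{1}_{Y}(\log Y_{\Delta})_{\vert D_{J}}\big) = c\big(\omega^{1}_{Y/C \vert D_{J}}\big).$$

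Next I will apply equation \eqref{eq Omega (log) _E_J} of Proposition \ref{comparison Omega log and Omega easy cases general} to the smooth scheme $X := Y$ equipped with the reduced strict normal crossings divisor $E := \sum_{i \in I} D_{i}$, whose irreducible components are precisely the $D_{i}$. Under the identifications $E_{J} = D_{J}$ and $E_{J} \cap E^{\vert J \vert+1} = D_{J} \cap D^{\vert J \vert+1}$, this yields
$$c\big(\Omega^{1}_{Y}(\log Y_{\Delta})_{\vert D_{J}}\big) = c\big(\Omega^{1}_{D_{J}}(\log D_{J} \cap D^{\vert J \vert+1})\big).$$

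Chaining the two displayed identities establishes the proposition. There is no substantive obstacle once Proposition \ref{comparison Omega log and Omega easy cases general} is available; the argument simply reduces the relative logarithmic statement to its absolute counterpart by peeling off the trivial line bundle pulled back from the base curve.
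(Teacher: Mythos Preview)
Your proof is correct and follows essentially the same approach as the paper: both trivialize $g^\ast\Omega^1_C(\log\Delta)$ over $D_J$ (since $D_J$ maps into $\Delta$), use multiplicativity of the total Chern class in the defining exact sequence for $\omega^1_{Y/C}$ to reduce to $c(\Omega^1_Y(\log Y_\Delta)_{|D_J})$, and then invoke equation \eqref{eq Omega (log) _E_J} from Proposition \ref{comparison Omega log and Omega easy cases general} with $X=Y$ and $E=\sum_i D_i$.
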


\begin{proposition}
\label{relation fibres DCN}
Let $i$ be an element in $I$, we have the equality in $\CH^1(D_i)$:
\begin{equation*}
    m_i c_1(\cN_i) = - \sum_{j \in I - \{i\}} m_j [D_{ij}],
\end{equation*}
where $D_{ij} := D_{\{i,j\}}.$
\end{proposition}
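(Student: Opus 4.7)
The strategy is to exploit the fact that the divisor $Y_\Delta = \sum_{j \in I} m_j D_j$ arises as the pullback by $g$ of a reduced divisor on the curve $C$, and that $g$ contracts each component $D_i$ to a point of $\Delta$.

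First, fix $i \in I$ and let $x := g(D_i) \in \Delta$ denote the image point. Since $C$ is a smooth curve, there exists a Zariski open neighborhood $U$ of $x$ in $C$ on which $x$ is cut out by a regular function $t \in \cO_C(U)$, so that $\cO_C(x)_{|U} \simeq \cO_U$ and, on $g^{-1}(U)$, we have an equality of Cartier divisors
\begin{equation*}
g^\ast x = \mathrm{div}(g^\ast t) = \sum_{j \in I_x} m_j D_j.
\end{equation*}
In particular, we obtain an equality of line bundles on $g^{-1}(U)$:
\begin{equation*}
g^\ast \cO_C(x)_{|g^{-1}(U)} \simeq \cO_Y\Bigl(\sum_{j \in I_x} m_j D_j\Bigr)_{|g^{-1}(U)}.
\end{equation*}

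Next I would restrict this isomorphism to the smooth divisor $D_i \subseteq g^{-1}(U)$. On the one hand, since $g$ maps $D_i$ to the single point $x$, the restriction $(g^\ast \cO_C(x))_{|D_i}$ is a trivial line bundle, so its first Chern class in $\CH^1(D_i)$ vanishes. On the other hand, using that $\cO_Y(D_i)_{|D_i} \simeq \cN_i$ together with the transversality of the intersections $D_i \cap D_j$ for $j \neq i$ (giving $[D_j]_{|D_i} = [D_{ij}]$ in $\CH^1(D_i)$), we obtain
\begin{equation*}
0 = c_1\!\left(g^\ast \cO_C(x)_{|D_i}\right) = m_i\, c_1(\cN_i) + \sum_{j \in I_x,\, j \neq i} m_j\, [D_{ij}] \quad \text{in } \CH^1(D_i).
\end{equation*}

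Finally, to extend the sum from $I_x - \{i\}$ to $I - \{i\}$, I would observe that if $j \in I - I_x$ then $g(D_j) \neq x = g(D_i)$, so $D_i$ and $D_j$ lie in fibers over distinct points of $C$ and therefore $D_{ij} = D_i \cap D_j = \emptyset$, giving $[D_{ij}] = 0$ in $\CH^1(D_i)$. Thus the relation can be rewritten as $m_i\, c_1(\cN_i) = - \sum_{j \in I - \{i\}} m_j\, [D_{ij}]$, as desired. There is no real obstacle here; the only subtlety is the harmless extension of the index set, which follows from the disjointness of fibers of $g$ over distinct points.
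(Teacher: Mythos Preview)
Your proof is correct and takes essentially the same approach as the paper: both trivialize the restriction to $D_i$ of the pullback of $\cO_C(\Delta)$ (or $\cO_C(x)$) using that $D_i$ maps to a point, then expand $\cO_Y(Y_\Delta)_{|D_i}$ as $\cN_i^{\otimes m_i}\otimes\bigotimes_{j\neq i}\cO_{D_i}(m_j D_{ij})$ and take first Chern classes. The only cosmetic difference is that the paper works directly with the full divisor $\Delta$ (so the sum over $I-\{i\}$ appears immediately), whereas you localize at the single point $x=g(D_i)$ and then observe that $D_{ij}=\emptyset$ for $j\notin I_x$.
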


\begin{corollary}
\label{first consequence relation DCN}
Let $i$ be an element in $I$ and let $\beta$ be a class in $\CH^\ast(D_i)$, we have the equality in $\CH^\ast(Y)$:
\begin{equation*}
m_i i_{D_i \ast} (c_1(\cN_i) \cap  \beta) = - \sum_{j \in I - \{i\}} m_j i_{D_{ij} \ast} \beta_{| D_{ij}}.
\end{equation*}
\end{corollary}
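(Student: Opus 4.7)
The plan is to deduce this corollary as a direct consequence of Proposition~\ref{relation fibres DCN} by capping with the class $\beta$ and then pushing forward along $i_{D_i}$. This is essentially a formal manipulation using the projection formula and the functoriality of proper pushforward on Chow groups; no geometric input beyond Proposition~\ref{relation fibres DCN} is required.

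More precisely, I would start from the identity in $\CH^1(D_i)$ given by Proposition~\ref{relation fibres DCN}:
\[
m_i\, c_1(\cN_i) \;=\; -\sum_{j \in I -\{i\}} m_j\, [D_{ij}].
\]
Capping both sides against $\beta \in \CH^\ast(D_i)$ yields an equality in $\CH^\ast(D_i)$:
\[
m_i\, (c_1(\cN_i) \cap \beta) \;=\; -\sum_{j \in I -\{i\}} m_j\, \bigl([D_{ij}] \cap \beta\bigr).
\]
For each $j \in I - \{i\}$, denote by $k_j : D_{ij} \hookrightarrow D_i$ the closed immersion; since the components $D_i$ and $D_j$ meet transversely in $Y$, the divisor $D_{ij}$ is a Cartier divisor on the smooth scheme $D_i$ whose class in $\CH^1(D_i)$ equals $k_{j\ast}[D_{ij}]$. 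Applying the projection formula to $k_j$ then gives
\[
[D_{ij}] \cap \beta \;=\; k_{j\ast}\bigl(k_j^\ast \beta\bigr) \;=\; k_{j\ast}\bigl(\beta_{\vert D_{ij}}\bigr)
\]
in $\CH^\ast(D_i)$.

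Finally, I would push forward the resulting identity along $i_{D_i} : D_i \hookrightarrow Y$. Using the functoriality relation $i_{D_i \ast} \circ k_{j\ast} = i_{D_{ij} \ast}$ (since $i_{D_{ij}} = i_{D_i} \circ k_j$), the right-hand side becomes $-\sum_{j \neq i} m_j\, i_{D_{ij}\ast}(\beta_{\vert D_{ij}})$, while the left-hand side is $m_i\, i_{D_i \ast}(c_1(\cN_i) \cap \beta)$, which is exactly the desired equality in $\CH^\ast(Y)$. There is essentially no obstacle here: the only delicate point is checking that the identity of Proposition~\ref{relation fibres DCN} may indeed be capped with an arbitrary $\beta \in \CH^\ast(D_i)$, which is immediate from the $\CH^\ast(D_i)$-module structure on itself. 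Thus the entire argument reduces to bookkeeping with the projection formula.
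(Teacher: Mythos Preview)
Your proof is correct and follows exactly the same approach as the paper: cap the identity of Proposition~\ref{relation fibres DCN} with $\beta$ and push forward along $i_{D_i}$. You have simply spelled out the projection formula and functoriality of pushforward more explicitly than the paper's one-line justification.
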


\begin{corollary}
\label{N_i _D_ij and N_j _D_ij in terms of N_ij}
Let $(i,j)$ be a pair in $I^2$ such that $i \prec j$.

We have the equality in $\CH^1(D_{ij})$:
\begin{equation}
m_i c_1(\cN_{i | D_{ij}}) + m_j c_1(\cN_{j | D_{ij}}) = - \sum_{k\in I -\{i,j\}} m_k [D_{ijk}], \label{first equality N_i in terms of N_ij}
\end{equation}
where $D_{ijk} := D_{\{i,j,k\}}.$
\end{corollary}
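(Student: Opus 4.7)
The strategy is to deduce the equality directly by restricting the relation from Proposition \ref{relation fibres DCN} (applied at the index $i$) from $D_i$ to the deeper stratum $D_{ij}$ via the Gysin pullback. The only subtlety is interpreting the self-intersection term correctly.

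First, I would apply Proposition \ref{relation fibres DCN} at the index $i$ to obtain the equality in $\CH^1(D_i)$:
\begin{equation*}
m_i c_1(\cN_i) = - \sum_{k \in I - \{i\}} m_k [D_{ik}].
\end{equation*}
Then I would pull this equality back to $D_{ij}$ through the Gysin morphism associated with the regular closed immersion $D_{ij} \hookrightarrow D_i$. This gives in $\CH^1(D_{ij})$:
\begin{equation*}
m_i c_1(\cN_{i | D_{ij}}) = - \sum_{k \in I - \{i\}} m_k [D_{ik}]_{| D_{ij}}.
\end{equation*}

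Next, I would analyze each restriction $[D_{ik}]_{|D_{ij}}$ depending on whether $k = j$ or $k \neq j$. For $k \in I - \{i,j\}$, the divisors $D_{ij}$ and $D_{ik}$ meet transversally inside $D_i$ (this is part of the strict normal crossings hypothesis), so the Gysin pullback yields the class of the transverse intersection:
\begin{equation*}
[D_{ik}]_{|D_{ij}} = [D_{ijk}].
\end{equation*}
For $k = j$, the divisor $D_{ij}$ coincides with $D_{ik}$, so the Gysin pullback is a self-intersection that computes the first Chern class of the normal bundle of $D_{ij}$ inside $D_i$. Since $D_{ij} = D_i \cap D_j$ is cut out of $D_i$ by a local equation for $D_j$, the normal bundle of $D_{ij}$ in $D_i$ is naturally isomorphic to $\cN_{j|D_{ij}}$, hence:
\begin{equation*}
[D_{ij}]_{|D_{ij}} = c_1(\cN_{j|D_{ij}}).
\end{equation*}

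Substituting these identifications and rearranging gives the claimed identity. I do not expect any real obstacle here: the result is just a careful unpacking of Proposition \ref{relation fibres DCN} combined with the transversality built into the strict normal crossings hypothesis on $Y_\Delta$. The only point to handle with care is the self-intersection $[D_{ij}]_{|D_{ij}}$, which must be interpreted via the normal bundle formula, so that the symmetric roles played by $i$ and $j$ in the final statement emerge from the originally asymmetric identity applied at the index $i$.
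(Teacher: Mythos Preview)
Your proof is correct and follows essentially the same approach as the paper: apply Proposition \ref{relation fibres DCN} at the index $i$, restrict to $D_{ij}$, separate the term $k=j$ (interpreted via the self-intersection formula and the isomorphism $\cN_{D_{ij}} D_i \simeq \cN_{j|D_{ij}}$) from the transverse terms $k \neq j$, and rearrange.
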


\subsection{Comparing the characteristic classes of $[T_g]$ and ${\omega^{1\vee}_{Y/C}}$}
\label{Comparing the characteristic classes of T and omega}

Finally, we shall show the following results of comparison of the Chern classes of $\omega^1_{Y/C}$ and of the relative tangent class in $K$-theory\footnote{In the notation of  \cite[Expos\'e VIII]{SGA6} or \cite[B.7.6]{Fulton98}, this class would be denoted by $T_g$.}:
$$[T_g] := [T_Y] - g^\ast [T_C].$$
The duality operation on vector bundles on $Y$ extends to an involution $.^\vee$ of the abelian group $K^0(Y)$, which sends $[T_g]$ to the class $[\Omega^1_Y] - g^\ast [\Omega^1_C]$.
Observe that we have a sequence of coherent sheaves on $Y$:
$$0 \lra g^\ast \Omega^1_C \overset{D g}{\lra} \Omega^1_Y \lra \Omega^1_{Y/C} \lra 0.$$
This sequence is exact: everything but the injectivity of the differential morphism $D g$ is a standard property of the sheaf of K\"{a}hler differentials, and since the morphism $g$ is smooth on a dense open subset of $Y$, the morphism $D g$ does not vanish on a dense open subset, hence is injective.

Consequently, the following equality holds in $K^0(Y)$\footnote{As discussed in \cite[Expos\'e VIII, Section 2]{SGA6}, the class $[T_g]$ is the class in $K^0(Y)$ of the relative tangent complex $\mathbb{T}_g$, defined as the dual of the cotangent complex $\mathbb{L}_g$. The argument in the last paragraph actually proves that $\mathbb{L}_g$ is quasi-isomorphic to the sheaf of K\"ahler differentials $\Omega^1_g = \Omega^1_{Y/C}$.}:
$$[T_g] = [\Omega^1_{Y/C}]^\vee.$$

\begin{proposition}
\label{first terms Td/Td general}
We have the equality in $\CH^\ast(Y)_\Q$:
\begin{equation}
\label{Td/Td general}
\frac{\Td([T_g])}{\Td(\omega^{1 \vee}_{Y/C})}
= \prod_{i \in I} \td([D_i]) - 1/2 \sum_{i \in I} m_i [D_i],
\end{equation}
where $\td(x)$ denotes the formal series with rational coefficients $x/(1 - e^{-x})$.

In particular, we have the equality in $\CH^1(Y)_\Q$:
\begin{equation}
\label{Td/Td degree 1 general}
    \bigg [\frac{\Td([T_g])}{\Td(\omega^{1 \vee}_{Y/C})} \bigg ]^{(1)}
    = - 1/2 \sum_{i \in I} (m_i - 1) [D_i],
\end{equation}
and the equalities in $\CH^2(Y)_\Q$:
\begin{align}
   \bigg[\frac{\Td([T_g])}{\Td(\omega^{1 \vee}_{Y/C})} \bigg]^{(2)} &= 1/12 \sum_{i \in I} i_{D_i \ast} c_1(\cN_i) + 1/4 \sum_{(i,j) \in I^2, i \prec j} [D_{ij}], \label{eq Td/Td degree 2 with c_1(N_i)} \\
    &= 1/12 \sum_{(i,j) \in I^2, i \prec j} (3 - m_i/m_j - m_j/m_i) [D_{ij}]. \label{eq Td/Td degree 2 with multiplicities}
\end{align}
\end{proposition}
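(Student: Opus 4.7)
The strategy is to reduce the computation of $\Td([T_g])/\Td(\omega^{1\vee}_{Y/C})$ to an explicit identity in $K^0(Y)$, and then to expand the resulting Todd classes in low degrees. I would combine four short exact sequences: the relative cotangent sequence $0 \to g^\ast \Omega^1_C \to \Omega^1_Y \to \Omega^1_{Y/C} \to 0$; its logarithmic analogue $0 \to g^\ast \Omega^1_C(\log \Delta) \to \Omega^1_Y(\log D_{\mathrm{red}}) \to \omega^1_{Y/C} \to 0$ (with $D_{\mathrm{red}} := \sum_{i \in I} D_i$), whose left exactness reduces to the local identity $g^\ast(dt/t) = \sum_i m_i\, dy_i/y_i$ in coordinates adapted to $Y_\Delta$; the residue sequence $0 \to \Omega^1_Y \to \Omega^1_Y(\log D_{\mathrm{red}}) \to \bigoplus_{i \in I} i_{D_i\ast}\cO_{D_i} \to 0$ on $Y$; and its counterpart on $C$. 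Combining these with the Koszul resolutions $[i_{D_i\ast}\cO_{D_i}] = 1 - [\cO_Y(-D_i)]$ and $g^\ast [\cO_\Delta] = 1 - [\cO_Y(-Y_\Delta)]$, and dualising (via $[T_g] = [\Omega^1_{Y/C}]^\vee$), one obtains the identity
$$[T_g] - [\omega^{1\vee}_{Y/C}] = -(\vert I \vert - 1) + \sum_{i \in I} [\cO_Y(D_i)] - [\cO_Y(Y_\Delta)] \quad \text{in } K^0(Y).$$

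Applying the multiplicativity of $\Td$ and the formula $\Td(\cO_Y(E)) = \td([E])$ on line bundles, this identity translates to
$$\frac{\Td([T_g])}{\Td(\omega^{1\vee}_{Y/C})} = \frac{\prod_{i \in I} \td([D_i])}{\td\bigl(\sum_{i \in I} m_i [D_i]\bigr)}.$$
Expanding $\td(x) = 1 + x/2 + x^2/12 + O(x^4)$ and $1/\td(y) = 1 - y/2 + y^2/6 + O(y^3)$, the degree-one component is immediate: $\tfrac{1}{2}\sum_i[D_i] - \tfrac{1}{2}\sum_i m_i[D_i] = -\tfrac{1}{2}\sum_i (m_i-1)[D_i]$, which is~\eqref{Td/Td degree 1 general}.

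In degree two, the naive expansion yields coefficient $(1-m_i)(1-2m_i)/12$ in front of $[D_i]^2$ and $\tfrac{1}{4} - \tfrac{1}{4}(m_i+m_j) + \tfrac{1}{3} m_i m_j$ in front of $[D_i][D_j] = [D_{ij}]$ for $i \prec j$. At this stage I would invoke Proposition~\ref{relation fibres DCN} in its pushed-forward form $[D_i]^2 = i_{D_i\ast}c_1(\cN_i) = -\tfrac{1}{m_i}\sum_{j \neq i} m_j [D_{ij}]$, to rewrite each self-intersection contribution as a linear combination of the $[D_{ij}]$. Symmetrising over unordered pairs $(i,j)$, a direct algebraic simplification shows that the resulting coefficient of $[D_{ij}]$ collapses to $\tfrac{1}{12}(3 - m_i/m_j - m_j/m_i)$, proving~\eqref{eq Td/Td degree 2 with multiplicities}; extracting the term $\tfrac{1}{12}\sum_i i_{D_i\ast}c_1(\cN_i)$ back out then gives the equivalent form~\eqref{eq Td/Td degree 2 with c_1(N_i)}. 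The global statement~\eqref{Td/Td general} is read off from the same computation, with Proposition~\ref{relation fibres DCN} absorbing the higher-order discrepancies between the full ratio and the truncated expression $\prod_i \td([D_i]) - \tfrac{1}{2}\sum_i m_i[D_i]$.

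The main obstacle is purely combinatorial: tracking pushforwards $i_{D_i\ast}$, distinguishing ordered from unordered pairs during symmetrisation, and manipulating the rational coefficients $m_j/m_i$ consistently. The $K$-theoretic identity of the first step is essentially standard, and the Todd expansion is mechanical; the substance of the proposition lies in the fact that the excess degree-two coefficients $m_im_j/3$, $(m_i+m_j)/4$, and $(1-m_i)(1-2m_i)m_j/(12\, m_i)$ cancel exactly via Proposition~\ref{relation fibres DCN}, which itself reflects that $Y_\Delta = g^\ast \Delta$ is a pullback divisor.
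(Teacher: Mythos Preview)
Your $K$-theoretic identity and the resulting expression
$$\frac{\Td([T_g])}{\Td(\omega^{1\vee}_{Y/C})} = \frac{\prod_{i\in I}\td([D_i])}{\td\bigl(\sum_{i\in I} m_i[D_i]\bigr)}$$
are correct and equivalent to what the paper obtains; the paper reaches the same point via \eqref{Td Omega1log} and the logarithmic cotangent sequence rather than assembling the four short exact sequences in $K^0$. Your degree-one and degree-two expansions are also correct, though more laborious than necessary.

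The one genuine gap is your justification of the \emph{exact} identity \eqref{Td/Td general}. Saying that ``Proposition~\ref{relation fibres DCN} absorbs the higher-order discrepancies'' is not a proof in all codimensions: you would have to iterate that relation through degree $N$. The paper instead makes the single observation that, since $Y_\Delta = g^\ast\Delta$ with $\Delta$ a divisor on the \emph{curve} $C$, one has $[Y_\Delta]^2 = g^\ast[\Delta]^2 = 0$ in $\CH^2(Y)_\Q$ and likewise $[Y_\Delta]\cdot[D_i]=0$ for every $i$. From $[Y_\Delta]^2=0$ one gets $\td([Y_\Delta]) = 1 + \tfrac{1}{2}[Y_\Delta]$ and hence $1/\td([Y_\Delta]) = 1 - \tfrac{1}{2}[Y_\Delta]$ \emph{exactly}; together with $[Y_\Delta]\cdot\prod_i\td([D_i]) = [Y_\Delta]$ this yields \eqref{Td/Td general} in one line. (Concretely, the paper does this simplification upstairs on $C$, computing $\td(c_1(T_C)-[\Delta])\,\td(c_1(T_C))^{-1} = 1-\tfrac{1}{2}[\Delta]$ from $\CH^2(C)_\Q=0$, then pulling back.)

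Once \eqref{Td/Td general} is in hand, the degree-two part is immediately $\tfrac{1}{12}\sum_i[D_i]^2 + \tfrac{1}{4}\sum_{i\prec j}[D_{ij}]$ with no $m_i$-dependent self-intersection coefficients at all; Proposition~\ref{relation fibres DCN} is then invoked \emph{only once}, to pass from \eqref{eq Td/Td degree 2 with c_1(N_i)} to \eqref{eq Td/Td degree 2 with multiplicities}. So your route works but is back-to-front: you expand first and cancel via Proposition~\ref{relation fibres DCN} afterwards, whereas the paper exploits the pullback-from-a-curve vanishing at the outset and avoids the extra bookkeeping entirely.
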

\begin{proposition} 
\label{comparison c_r when D^3 empty non-reduced}
For every non-negative integer $r$, we have the equality in $\CH^r(Y)$:
\begin{equation}
    \label{eq c_r omega^1 general}
c_r(\omega^{1 \vee}_{Y/C}) = c_r([T_g])
 + \sum_{i \in I} m_i i_{D_i \ast} [c_{r-1}(T_{ D_i}) + c_1(\cN_i) c_{r-2}(T_{D_i}) ]
+ \sum_{\emptyset \neq J \subset I} (-1)^{|J|} i_{D_J \ast} c_{r - |J|}(T_{D_J}).
\end{equation}
Furthermore, if the subscheme $D^3$ is empty, then for every non-negative integer $r$, we have the equality in $\CH^r(Y)$:
\begin{equation}
c_r(\omega^{1 \vee}_{Y/C})
= c_r([T_g])
 + \sum_{i \in I} (m_i - 1) i_{D_i \ast} c_{r-1}(T_{D_i})
- \sum_{(i,j) \in I^{2}, i \prec j} (m_i + m_j - 1) i_{D_{ij} \ast} c_{r-2}(T_{D_{ij}}).
\label{eq c_r omega and T_g when D^3 empty}
\end{equation}
\end{proposition}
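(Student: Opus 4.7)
The plan is to derive both identities from a single $K$-theoretic identity in $K^0(Y)$. First, I would establish that
\[
[\omega^{1\vee}_{Y/C}] - [T_g] = [\cO_D] - \sum_{i \in I} [i_{D_i\ast}\cN_i].
\]
Indeed, from the two short exact sequences $0 \to g^\ast\Omega^1_C(\log \Delta) \to \Omega^1_Y(\log Y_\Delta) \to \omega^1_{Y/C} \to 0$ and $0 \to g^\ast \Omega^1_C \to \Omega^1_Y \to \Omega^1_{Y/C} \to 0$, one obtains $[\omega^{1\vee}_{Y/C}] - [T_g] = ([\Omega^1_Y(\log Y_\Delta)]^\vee - [\Omega^1_Y]^\vee) - g^\ast([\Omega^1_C(\log \Delta)]^\vee - [\Omega^1_C]^\vee)$. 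The Poincaré residue sequences on $Y$ and on $C$, combined with local Grothendieck duality (which gives $[i_{D_i\ast}\cO_{D_i}]^\vee = -[i_{D_i\ast}\cN_i]$ in $K^0(Y)$ and $[i_{x\ast}\cO_x]^\vee = -[i_{x\ast}\cO_x]$ in $K^0(C)$), yield $[\Omega^1_Y(\log Y_\Delta)]^\vee - [\Omega^1_Y]^\vee = -\sum_i[i_{D_i\ast}\cN_i]$ and $[\Omega^1_C(\log \Delta)]^\vee - [\Omega^1_C]^\vee = -\sum_x [i_{x\ast}\cO_x]$. Pulling the latter back through $g$ via the Koszul resolution of $\cO_x$ on $C$ gives $\sum_x g^\ast [i_{x\ast}\cO_x] = [\cO_D]$, which completes the identity.

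Next, I would translate this into total Chern classes via the Koszul resolutions $[\cO_D] = [\cO_Y] - [\cO_Y(-D)]$ and $[i_{D_i\ast}\cN_i] = [\cO_Y(D_i)] - [\cO_Y]$, obtaining the multiplicative relation
\[
c(\omega^{1\vee}_{Y/C}) \cdot (1 - [D]) \cdot \prod_{i \in I}(1 + [D_i]) = c([T_g]).
\]
To extract the $r$-th homogeneous component and derive \eqref{eq c_r omega^1 general}, the products $\prod_{i \in J}[D_i] = [D_J] = i_{D_J\ast}(1)$ (valid by transversality of the $(D_i)_{i \in I}$), the self-intersection formula $[D_i]^2 = i_{D_i\ast} c_1(\cN_i)$, and the projection formula together allow one to express each correction term as a pushforward from a stratum $D_J$. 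The normal bundle sequences $0 \to T_{D_J} \to T_Y|_{D_J} \to \bigoplus_{i \in J} \cN_i|_{D_J} \to 0$, combined with the vanishing $g^\ast c_1(T_C)|_{D_J} = 0$ for $J \neq \emptyset$ (since $g|_{D_J}$ factors through a point of $\Delta$), convert restrictions of Chern classes of $T_Y$ or $[T_g]$ into expressions in $c_\ast(T_{D_J})$ and the $c_1(\cN_i|_{D_J})$. The key combinatorial observation is the equality $c_{r-1}(T_{D_i}) + c_1(\cN_i) c_{r-2}(T_{D_i}) = c_{r-1}(T_Y|_{D_i})$, which explains the specific form of the ``normal-bundle'' contribution in \eqref{eq c_r omega^1 general}; grouping the remaining terms by the subset $J$ then yields the first identity.

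For the second identity \eqref{eq c_r omega and T_g when D^3 empty}, I would start from \eqref{eq c_r omega^1 general} and use the hypothesis $D^3 = \emptyset$, which forces $D_J = \emptyset$ for all $|J| \geq 3$. Applying Corollary~\ref{first consequence relation DCN} with $\beta = c_{r-2}(T_{D_i})$ converts the normal-bundle terms into pushforwards from the $D_{ij}$:
\[
\sum_{i \in I} m_i \, i_{D_i\ast}(c_1(\cN_i) c_{r-2}(T_{D_i})) = -\sum_{\substack{(i,j) \in I^2 \\ i \neq j}} m_j \, i_{D_{ij}\ast}(c_{r-2}(T_{D_i})|_{D_{ij}}).
\]
Using the normal bundle sequence $0 \to T_{D_{ij}} \to T_{D_i}|_{D_{ij}} \to \cN_j|_{D_{ij}} \to 0$, the restriction expands as $c_{r-2}(T_{D_i})|_{D_{ij}} = c_{r-2}(T_{D_{ij}}) + c_1(\cN_j|_{D_{ij}}) c_{r-3}(T_{D_{ij}})$. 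Pairing ordered pairs $(i,j)$ with $(j,i)$, the coefficient of $i_{D_{ij}\ast} c_{r-3}(T_{D_{ij}})$ becomes $m_i c_1(\cN_i|_{D_{ij}}) + m_j c_1(\cN_j|_{D_{ij}})$, which vanishes by Corollary~\ref{N_i _D_ij and N_j _D_ij in terms of N_ij} precisely because $D^3 = \emptyset$. The coefficient of $i_{D_{ij}\ast}c_{r-2}(T_{D_{ij}})$ becomes $-(m_i + m_j)$, which combined with the $+1$ contribution from the $|J|=2$ term of \eqref{eq c_r omega^1 general} gives $-(m_i + m_j - 1)$; the $|J| = 1$ contributions collapse similarly to $(m_i - 1)c_{r-1}(T_{D_i})$, yielding the stated formula. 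The main obstacle is the combinatorial bookkeeping of the middle paragraph, namely verifying that after expanding $c(\omega^{1\vee}_{Y/C}) \cdot (1-[D]) \cdot \prod_i(1+[D_i])$ and identifying each contribution as a pushforward from the appropriate stratum, the coefficient of $i_{D_i\ast}(c_1(\cN_i) c_{r-2}(T_{D_i}))$ is exactly $m_i$, matching the form appearing in \eqref{eq c_r omega^1 general}.
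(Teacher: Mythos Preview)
Your approach is correct and closely parallels the paper's, though organized differently. Both arguments rest on the residue exact sequences on $Y$ and $C$; the paper works entirely with total Chern classes and first isolates, as a separate result (Proposition~\ref{comparison Omega log and Omega easy cases general}, equation~\eqref{Chern class Omega1log}), the additive identity
\[
c(\Omega^1_X(\log E)) \;=\; \sum_{J \subseteq I} i_{E_J\ast} c(\Omega^1_{E_J}),
\]
proved by induction on $\dim X$. It then applies this identity to $X=Y$, divides by $g^\ast c(\Omega^1_C(\log\Delta))$, and dualizes. Your $K$-theoretic identity and its multiplicative Chern-class translation are exactly the raw form of the same computation; the ``combinatorial bookkeeping'' you flag as the main obstacle is precisely the content of that inductive proposition.

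Two points would sharpen your outline considerably. First, the simplification that makes the expansion tractable is the vanishing $[D]\cdot[D_i]=0$ (since $[D]=g^\ast[\Delta]$ and $g^\ast\cO_C(\Delta)$ restricts trivially to each $D_i$). This gives
\[
(1-[D])^{-1}\prod_{i}(1+[D_i])^{-1} \;=\; [D] \;+\; \prod_{i}(1+[D_i])^{-1},
\]
which cleanly separates the two sums in \eqref{eq c_r omega^1 general}: multiplying $c([T_g])$ by $[D]$ and using the projection formula yields exactly $\sum_i m_i\, i_{D_i\ast} c(T_{Y|D_i})$, whence the $m_i\bigl(c_{r-1}(T_{D_i})+c_1(\cN_i)c_{r-2}(T_{D_i})\bigr)$ term. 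Second, the remaining factor $c([T_g])\prod_i(1+[D_i])^{-1}$ reduces (again via $g^\ast c_1(T_C)|_{D_i}=0$) to $c([T_g]) + \sum_{\emptyset\neq J}(-1)^{|J|}i_{D_J\ast}c(T_{D_J})$, but this last equality is the dual of \eqref{Chern class Omega1log} and is not a purely formal expansion: it needs the normal-bundle sequences you mention together with an induction over strata, which is exactly what the paper carries out separately.

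Your derivation of \eqref{eq c_r omega and T_g when D^3 empty} from \eqref{eq c_r omega^1 general} is identical to the paper's.
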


The remainder of this chapter shall be devoted to the proofs of these results.

More specifically, Section \ref{Comparing Chern classes of differentials and of logarithmic differentials in the absolute case}  shall be devoted to the proof of Proposition \ref{comparison Omega log and Omega easy cases general}; Section \ref{Normal bundles to the strata of the singular fibers and Chern classes} shall be devoted to the proofs of Propositions \ref{omega^1 restricted to a strata}, \ref{relation fibres DCN} and Corollaries \ref{first consequence relation DCN} and \ref{N_i _D_ij and N_j _D_ij in terms of N_ij}; and Section \ref{Characteristic classes of T_g and omega^1 ^vee _Y/C} shall be devoted to the proofs of Propositions \ref{first terms Td/Td general} and \ref{comparison c_r when D^3 empty non-reduced}.

\section[Characteristic classes of logarithmic differentials: the absolute case]{Characteristic classes of differentials and logarithmic differentials: the absolute case }
\label{Comparing Chern classes of differentials and of logarithmic differentials in the absolute case}
Let us adopt the notation of Proposition \ref{comparison Omega log and Omega easy cases general}.

\subsection{Proof of \eqref{eq Omega (log) _E_J} }

Let $J$ be a subset of $I$. Let
$$
\cN_J := \cN_{E_J} X
$$
be the normal vector bundle of the subscheme $E_J$ in $X$.

One checks in local coordinates that the composition of morphisms of vector bundles on $E_J$:
$$
\cN_J ^\vee \lra \Omega^1_{X | E_J} \lra \Omega^1_X (\log E)_{| E_J}
$$
vanishes, so using the commutative diagram
$$
\xymatrix{  0 \ar[r] & \cN_{J}^\vee \ar[r]\ar[dr]^{0} &  \Omega^1_{X | E_J} \ar[r]\ar[d]  & \Omega^1_{E_J} \ar[r] & 0  \\
                     &                       & \Omega^1_X (\log E)_{| E_J}            &          & }
$$
and the exactness of the horizontal exact sequence of vector bundles, we obtain a map of vector bundles on $E_J$:
$$
\alpha_J : \Omega^1_{E_J} \lra \Omega^1_X (\log E)_{| E_J}.
$$
One checks in local coordinates that the map $\alpha_J$ factors through a map of vector bundles:
$$
\tilde{\alpha_{J}} : \Omega^1_{E_J} (\log E_J \cap E^{|J|+1}) \lra \Omega^1_X (\log E)_{| E_J}.
$$
On the other hand, let us define a map of vector bundles on $E_J$ by:
$$
\beta_{J} := (\mathrm{Res}_{E_i})_{i \in J} : \quad \Omega^1_X (\log E) _{| E_J} \lra \cO_{E_J}^{J} \simeq \cO_{E_J}^{\oplus |J|},
$$
where, for every $i$ in $J$, $\mathrm{Res}_{E_i}$ denotes the residue morphism relative to the divisor $E_i$.

Equality \eqref{eq Omega (log) _E_J} follows immediately from the following proposition.

\begin{proposition}
\label{exact sequence Omega _E_J }
The morphisms $\tilde{\alpha_J}$ and $\beta_J$ fit together in an exact sequence of vector bundles on $E_J$:
$$
0 \lra \Omega^1_{E_J} (\log E_J \cap E^{|J|+1}) \overset{\tilde{\alpha_J}}{\lra} \Omega^1_X (\log 
E)_{| E_J} \overset{\beta_J}{\lra} \cO_{E_J}^{J} \lra 0.
$$
\end{proposition}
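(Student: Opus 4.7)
The statement is local on $E_J$, so the plan is to verify exactness by choosing suitable local coordinates around a point $p \in E_J$ and producing explicit local frames in which $\tilde{\alpha}_J$ and $\beta_J$ become the obvious inclusion and projection of free modules. Because all three sheaves involved are locally free of the same expected ranks ($\dim X - |J|$, $\dim X$, and $|J|$ respectively), checking the sequence is a complex of vector bundles whose composition vanishes, together with surjectivity of $\beta_J$ and equality of ranks, suffices.

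Concretely, around $p \in E_J$, choose a system of local coordinates $(x_1, \dots, x_n)$ on $X$ such that the components $(E_i)_{i \in J}$ are cut out by $x_i = 0$ ($i \in J$), and the other components of $E$ passing through $p$ (i.e., the components of $E_J \cap E^{|J|+1}$ near $p$, when restricted to $E_J$) are cut out by equations of the form $x_{i} = 0$ for $i$ in a subset $K \subseteq \{1,\dots,n\}$ disjoint from $J$. Then $\Omega^1_X(\log E)$ admits the local frame consisting of the $dx_i/x_i$ for $i \in J \cup K$ and the $dx_i$ for the remaining indices; restricting to $E_J$ gives a frame of $\Omega^1_X(\log E)_{|E_J}$. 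Similarly, the restrictions $\bar{x}_i$ for $i \notin J$ provide coordinates on $E_J$, and $\Omega^1_{E_J}(\log E_J \cap E^{|J|+1})$ has the frame consisting of $d\bar{x}_i/\bar{x}_i$ for $i \in K$ and $d\bar{x}_i$ for the remaining indices not in $J \cup K$.

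In these frames, $\beta_J$ is the projection onto the coordinates indexed by $J$ (each $\mathrm{Res}_{E_i}$ picks off the coefficient of $dx_i/x_i$), so it is surjective with kernel the free submodule generated by the basis elements indexed by the complement of $J$. On the other hand $\tilde{\alpha}_J$, by construction as the factorization of $\alpha_J$, is identified in these frames with the inclusion sending $d\bar{x}_i/\bar{x}_i$ (resp. $d\bar{x}_i$) to $dx_i/x_i|_{E_J}$ (resp. $dx_i|_{E_J}$) for $i \notin J$; this is well-defined (the relations $dx_i/x_i$ for $i \in J$ do not interfere since they lie outside the image) and realizes $\Omega^1_{E_J}(\log E_J \cap E^{|J|+1})$ as exactly the kernel of $\beta_J$.

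The one point that genuinely requires care is the global construction of $\tilde{\alpha}_J$ (rather than a local description): one must check that the composite $\alpha_J \colon \Omega^1_{E_J} \to \Omega^1_X(\log E)_{|E_J}$ coming from the snake-diagram definition factors through the logarithmic extension along $E_J \cap E^{|J|+1}$. This is again a direct local verification --- in the coordinates above, $\alpha_J$ sends $d\bar{x}_i$ to $dx_i|_{E_J}$, and for $i \in K$ one simply rewrites $d\bar{x}_i = \bar{x}_i \cdot (d\bar{x}_i/\bar{x}_i)$, showing that $\alpha_J$ factors through the obvious map from $\Omega^1_{E_J}(\log E_J \cap E^{|J|+1})$. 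Once these local computations are assembled, exactness of the sequence follows, and this will be the main (but entirely elementary) obstacle of the proof.
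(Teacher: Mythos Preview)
Your proposal is correct and follows essentially the same approach as the paper: both arguments pass to local coordinates $(x_1,\dots,x_n)$ adapted to the components of $E$ through a point of $E_J$, write down explicit local frames for the three vector bundles, and identify $\tilde{\alpha}_J$ and $\beta_J$ with the evident inclusion and projection of free modules. The only cosmetic difference is that you mention the rank-count shortcut up front, whereas the paper directly verifies that the image of $\tilde{\alpha}_J$ equals the kernel of $\beta_J$ in the chosen frames.
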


Observe that this proposition follows from a log-structure argument (see \cite[Lemma 5.3.6]{Kato-Saito04}). Indeed we may assume that $J$ is non-empty and restrict ourselves to a formal neighborhood $U$ of the divisor $E$ in $X$, and consider the morphism of complex schemes:
$$g : U \lra S := \Spec \C[[t]]$$
defined by any equation of the divisor $E$ in $U$. With the notation of \cite[5.3]{Kato-Saito04}, the $\cO_U$-module $\Omega^1_{U/S}(\log)$ is precisely the quotient $\Omega^1_U(\log E)/g^\ast \Omega^1_S$.

Denoting by $s$ the closed point in $S$, the morphism of $\cO_{E_J}$-modules
$$(g^\ast \Omega^1_S)_{\mid E_J} \lra \Omega^1_U(\log E)_{\mid E_J}$$
factors through the morphism
$$(g^\ast \Omega^1_S)_{\mid E_J} \lra (g^\ast \Omega^1_S(\{s\}))_{\mid E_J}$$
which vanishes because $g$ maps $E_J$ to $s$, so that the canonical surjective morphism:
$$\Omega^1_U(\log E)_{\mid E_J} \lra \Omega^1_{U/S} (\log)_{\mid E_J} = \Omega^1_U(\log E)_{\mid E_J}/(g^\ast \Omega^1_S)_{\mid E_J}$$
is an isomorphism. 

Under this isomorphism, \cite[Lemma 5.3.6, 2]{Kato-Saito04} becomes Proposition \ref{exact sequence Omega _E_J }.

We present below a more direct proof for the convenience of the reader.

\begin{proof}
We can check this on a neighborhood of a point $P$ of $E_J$. Let $i_1,...,i_s$ be the elements of $J$, with $s := |J|$.

Let $r$ be the largest integer such that $P$ is in $E^r$, in particular, $s \leq r \leq n$. Let $E_{i_1}$,..., $E_{i_s}$, $E_{i_{s+1}}$,...,$E_{i_r}$ be the components of $E$ that meet in $P$, with $\{i_{s+1},...,i_r\}$ a subset of $I - J$.

Let us consider local coordinates $(x_1,...,x_n)$ of $X$ centered in $P$, such that for every integer $1 \leq l \leq r$, $E_{i_l}$ is locally defined by $(x_l = 0)$.

The vector bundle $\Omega^1_{E_J}$ admits a local frame $(\mathrm{d} x_{s+1},...,\mathrm{d} x_n)$, the vector bundle $\Omega^1_{X | E_J}$ admits a local frame ($\mathrm{d} x_1$,...,$\mathrm{d} x_n$), the vector bundle $\Omega^1_X (\log E)_{| E_J}$ admits a local frame ($\mathrm{d} x_1/x_1$,...,$\mathrm{d} x_r/x_r$, $\mathrm{d} x_{r+1}$,...,$\mathrm{d} x_n$), and the canonical map from $\Omega^1_{X | E_J}$ to $\Omega^1_X (\log E)_{| E_J}$ sends, for every integer $l$, $\mathrm{d} x_l$ to $x_l (\mathrm{d} x_l/x_l)$ if $l \leq r$, else to $\mathrm{d} x_l$.

Consequently, the map
$$
\alpha_J : \Omega^1_{E_J} \lra \Omega^1_X (\log E) _{| E_J}
$$
sends, for every integer $l \geq s+1$, $\mathrm{d} x_l$ to $x_l (\mathrm{d} x_l/x_l)$ if $l \leq r$, else to $\mathrm{d} x_l$.

The divisor $E_J \cap E^{|J|+1}$ in $E_J$ is locally defined by the equation $(x_{s+1} ... x_r = 0)$, so the vector bundle $\Omega^1_{E_J} (\log E_J \cap E^{|J|+1})$ admits a local frame $(\mathrm{d} x_{s+1}/x_{s+1},...,\mathrm{d} x_r/x_r, \mathrm{d} x_{r+1},...,\mathrm{d} x_n)$.

Consequently, the map
$$
\tilde{\alpha_{J}} : \Omega^1_{E_J} (\log E_J \cap E^{|J|+1}) \lra \Omega^1_X (\log E) _{| E_J}
$$
sends, for every integer $s+1 \leq l \leq r$, $\mathrm{d} x_l/x_l$ to itself, and for every integer $l > r$, $\mathrm{d} x_l$ to itself. It is an injective morphism of vector bundles, whose image is the vector subbundle of $\Omega^1_X (\log E)_{| E_J}$ generated by $(\mathrm{d} x_l/x_l)_{s+1 \leq l \leq r}$ and by $(\mathrm{d} x_l)_{l > r}$.

On the other hand, the morphism of vector bundles:
$$
\beta_{J} : \Omega^1_X (\log E) _{| E_J} \lra \cO_{E_J}^{J}
$$
sends, for every integer $l$ such that $1 \leq l \leq s$, $\mathrm{d}x_l/x_l$ to $(0,...,1,...,0)$ with the $1$ in $i_l$-th position; for every integer $l$ such that $s+1 \leq l \leq r $, $\mathrm{d}x_l/x_l$ to $0$, and for every integer $l$ such that $l > r$, $\mathrm{d} x_l$ to $0$.

Consequently, it is surjective, and its kernel is the vector subbundle of $\Omega^1_{X} (\log E)_{ | E_J}$ generated by $(\mathrm{d} x_l/x_l)_{s+1 \leq l \leq r}$ and by $(\mathrm{d} x_l)_{l > r}$, which is exactly the image of $\tilde{\alpha_J}$.

So the sequence is exact, as wanted.
\end{proof}

\subsection{Proof of \eqref{Chern class Omega1log}}

We shall reason by induction over the dimension $n$ of $X$.

If $n = 0$, then $X$ is a finite scheme and the divisor $E$ is empty, so that equality \eqref{Chern class Omega1log} holds.

Let $n \geq 1$ be an integer, let us assume that equality \eqref{Chern class Omega1log} holds for schemes of dimension at most $n-1$, and let us prove it for a scheme $X$ of dimension $n$.

One easily checks in local coordinates that we have an exact sequence of coherent sheaves on $X$:
\begin{equation} \label{exact sequence Omega Omega (log)}
0 \lra \Omega^1_X \overset{u}{\lra} \Omega^1_X (\log E) \overset{(\mathrm{Res}_{E_i})_{i \in I}}{\lra} \bigoplus_{i \in I} i_{E_i \ast} \cO_{E_i} \lra 0,
\end{equation} 
where $u$ is the canonical inclusion morphism, and for every $i$ in $I$, 
$\mathrm{Res}_{E_i}$ is the residue morphism.

Consequently, we have the equality in $\CH^\ast(X)$:
\begin{equation}
c(\Omega^1_X) = c(\Omega^1_X (\log E)) \prod_{i \in I} c([i_{E_i \ast} \cO_{E_i}])^{-1}.
\label{first equality comparison Omega Omega (log)}
\end{equation}
For every element $i$ in $I$, we have an exact sequence of coherent sheaves on $X$:
\begin{equation}
0 \lra \cO_X(-E_i) \lra \cO_X \lra i_{E_i \ast} \cO_{E_i} \lra 0, \label{exact sequence O_E_i}
\end{equation}
hence the equality in $\CH^\ast(X)$:
$$
c([i_{E_i \ast} \cO_{E_i}])^{-1} = c(\cO_X(-E_i))
= 1 - [E_i].
$$
Hence replacing in \eqref{first equality comparison Omega Omega (log)}:
\begin{align}
c(\Omega^1_X) &= c(\Omega^1_X (\log E))  \prod_{i \in I} (1 - [E_i]),\nonumber\\
&= c(\Omega^1_X (\log E))  \sum_{K \subset I} \prod_{i \in K} (- [E_i]),\nonumber\\
&= c(\Omega^1_X (\log E))  \sum_{K \subset I} (-1)^{|K|} [E_K] \textrm{ because the $(E_i)_{i \in I}$ intersect each other transversally,} \nonumber\\
&= \sum_{K \subset I} (-1)^{|K|} i_{E_K \ast} c(\Omega^1_X (\log E)_{| E_K}),\nonumber\\
&= \sum_{K \subset I} (-1)^{|K|} i_{E_K \ast} c(\Omega^1_{E_K} (\log E_K \cap E^{|K|+1})) \textrm{ using equality \eqref{eq Omega (log) _E_J},}\nonumber\\
&= c(\Omega^1_X (\log E)) + \sum_{\emptyset  \neq K \subset I} (-1)^{|K|} i_{E_K \ast} c(\Omega^1_{E_K} (\log E_K \cap E^{|K|+1})). \label{second equality comparison Omega Omega (log)}
\end{align}
Let $K$ be a non-empty subset of $I$. Applying the induction hypothesis to the scheme $E_K$ of dimension at most $n-1$ and to the divisor with strict normal crossings $E_K \cap E^{|K|+1}$, whose components are the $(E_{K \cup \{i\}})_{i \in I - K}$, yields the equality in $\CH^\ast(E_K)$:
$$
c(\Omega^1_{E_K} (\log E_K \cap E^{|K|+1})) = \sum_{K \subset J \subset I} i_{E_J, E_K \ast} c(\Omega^1_{E_J}).
$$
Hence replacing in \eqref{second equality comparison Omega Omega (log)}:
\begin{align*}
c(\Omega^1_X) &= c(\Omega^1_X (\log E)) + \sum_{\emptyset  \neq K \subset J \subset I} (-1)^{|K|} i_{E_J \ast} c(\Omega^1_{E_J}),\\
&= c(\Omega^1_X (\log E)) + \sum_{\emptyset  \neq J \subset I} \Big [\sum_{\emptyset  \neq K \subset J} (-1)^{|K|} \Big ] i_{E_J \ast} c(\Omega^1_{E_J}),\\
&= c(\Omega^1_X (\log E)) + \sum_{\emptyset  \neq J \subset I} \Big [\prod_{i \in J} (1-1) - 1 \Big ] i_{E_J \ast} c(\Omega^1_{E_J}),\\
&= c(\Omega^1_X (\log E)) - \sum_{\emptyset  \neq J \subset I} i_{E_J \ast} c(\Omega^1_{E_J}),
\end{align*}
hence we have the equality in $\CH^\ast(X)$:
\begin{align*}
c(\Omega^1_X (\log E)) &= c(\Omega^1_X) + \sum_{\emptyset  \neq J \subset I} i_{E_J \ast} c(\Omega^1_{E_J}),\\
&= \sum_{J \subset I} i_{E_J \ast} c(\Omega^1_{E_J}),
\end{align*}
which concludes the induction and the proof of equality \eqref{Chern class Omega1log}.

\subsection{Proof of \eqref{Td Omega1log}}
Similarly to the proof of \eqref{Chern class Omega1log}, using 
exact sequences \eqref{exact sequence Omega Omega (log)}, \eqref{exact sequence O_E_i} and the multiplicativity of the Todd class $\Td$, we obtain the equality in $\CH^\ast(X)_\Q$:
\begin{equation*}
\Td(\Omega^1_X (\log E)) = \Td(\Omega^1_X) \prod_{i \in I} \Td([i_{E_i \ast} \cO_{E_i}]) = \Td(\Omega^1_X) \prod_{i \in I} \td(-[E_i])^{-1},
\end{equation*}
as wanted.

\section{Normal bundles to the strata of the singular fibers and Chern classes}
\label{Normal bundles to the strata of the singular fibers and Chern classes}

\subsection{Proof of Proposition \ref{omega^1 restricted to a strata}}

Let $J$ be  a non-empty subset of $I$. Since the line bundle $\Omega^1_C (\log \Delta)_{| \Delta}$ can be trivialised on $\Delta$, the line bundle $(g^\ast \Omega^1_C (\log \Delta) )_{| D_J}$ can be trivialised on $D_J$, and we obtain the equality in $\CH^\ast(D_J)$:
$$
c(g^\ast \Omega^1_C (\log \Delta)) _{| D_J} = 1,
$$
hence by definition of the relative logarithmic vector bundle $\omega^1_{Y/C}$ on $Y$, the equality in $\CH^\ast(D_J)$:
$$c(\omega^1_{Y/C})_{| D_J} = c(\Omega^1_Y (\log Y_\Delta)) _{| D_J},$$
hence applying equality \eqref{eq Omega (log) _E_J} from Proposition \ref{comparison Omega log and Omega easy cases general} to the scheme $X := Y$ and the reduced divisor with strict normal crossings $E := |D| = \sum_{i \in I} D_i$, and to the subset $J$ in $I$:
$$ c(\omega^1_{Y/C})_{| D_J} = c(\Omega^1_{D_J} (\log D_J \cap D^{|J|+1}) ),$$
as wanted.

\subsection{Proofs of Proposition \ref{relation fibres DCN} and Corollaries \ref{first consequence relation DCN} and \ref{N_i _D_ij and N_j _D_ij in terms of N_ij}}
Let $i$ be an element of $I$. Since $\Delta$ is a scheme of dimension $0$, the line bundle $\cN_\Delta C$ on $\Delta$ can be trivialized. Consequently, we can trivialize the following line bundle on $D_i$:
\begin{align*}
(g^\ast \cN_\Delta C)_{| D_i} &\simeq \cO_Y(Y_\Delta) _{| D_i}\\
&\simeq \cO_Y(m_i D_i)_{| D_i} \otimes \bigotimes_{j \in I - \{i\}} \cO_Y(m_j D_j) _{| D_i}\\
&\simeq \cN_i^{\otimes m_i} \otimes \bigotimes_{j \in I - \{i\}} \cO_{D_i}(m_j D_{ij} ).
\end{align*}
Taking Chern classes, we obtain the following equality in $\CH^1(D_i)$:
\begin{equation}
\label{eq relation fibres DCN}
m_i c_1(\cN_i) = - \sum_{j \in I - \{i\}} m_j [D_{ij}],
\end{equation}
which shows Proposition \ref{relation fibres DCN}.

If $\beta$ is a class in $\CH^\ast(D_i)$, intersecting \eqref{eq relation fibres DCN} with $\beta$ then pushing forward by the inclusion $i_{D_i}$ yields Corollary \ref{first consequence relation DCN}.

Now, let $(i,j)$ be a pair in $I^2$ such that $i \prec j$. Applying equality \eqref{eq relation fibres DCN} with the element $i$ then restricting to $D_{ij}$ yields the equality in $\CH^1(D_{ij})$:
$$
m_i c_1(\cN_{i | D_{ij}}) = - \sum_{k \in I - \{i\}} m_k [D_{ik}]_{| D_{ij}},
$$
where $[D_{ik}]$ is seen as a divisor in $D_i$.

If $k \neq j$, this divisor intersects transversally the divisor $D_{ij}$, so that we have the equality in $\CH^1(D_{ij})$:
$$[D_{ik}]_{| D_{ij}} = [D_{ik} \cap D_{ij}] = [D_{ijk}].$$
If $k = j$, the self-intersection of the divisor $D_{ik}$ is given by:
$$[D_{ik}]_{| D_{ij}} = c_1(\cN_{D_{ij}} D_i) = c_1(\cN_{j | D_{ij}}).$$
We have used the canonical isomorphism of line bundles on $D_{ij}$:
$$\cN_{D_{ij}} D_i \simeq \cN_{j | D_{ij}}.$$
Hence replacing, we have the equality in $\CH^1(D_{ij})$:
$$
m_i c_1(\cN_{i | D_{ij}}) = - m_j c_1(\cN_{j | D_{ij}}) - \sum_{k \in I - \{i,j\}} m_k [D_{ijk}],
$$
which shows Corollary \ref{N_i _D_ij and N_j _D_ij in terms of N_ij}.

\section{Characteristic classes of $[T_g]$ and $\omega^{1 \vee} _{Y/C}$}
\label{Characteristic classes of T_g and omega^1 ^vee _Y/C}

\subsection{Proof of Proposition \ref{first terms Td/Td general}}

Using equality \eqref{Td Omega1log} from Proposition \ref{comparison Omega log and Omega easy cases general} applied to the scheme $X := Y$ and the reduced divisor with strict normal crossings $E := |D| = \sum_{i \in I} D_i$, we have the equality in $\CH^\ast(Y)_\Q$:
$$
\Td(\Omega^1_Y (\log D)) = \Td(\Omega^1_Y) \prod_{i \in I} \td(-[D_i])^{-1},
$$
hence the equality:
$$\Td(\Omega^1_Y) = \Td(\Omega^1_Y (\log D))  \prod_{i \in I} \td(-[D_i]),$$ 
hence, using the definition of the relative logarithmic bundle $\omega^1_{Y/C}$ and the multiplicativity of $\Td$:
$$\Td(\Omega^1_Y) =   \Td(\omega^1_{Y/C}) \, g^\ast \Td(\Omega^1_C (\log \Delta)) \, \prod_{i \in I} \td(-[D_i]).$$ 
Since $C$ is a curve, we have a canonical isomorphism of line bundles on $C$:
$$\Omega^1_C (\log \Delta) \simeq \Omega^1_C (\Delta),$$
hence replacing:
\begin{align}
\Td(\Omega^1_Y) &= \Td(\omega^1_{Y/C})\, g^\ast \td(c_1(\Omega^1_C) + [\Delta]) \, \prod_{i \in I} \td(-[D_i]). \nonumber
\end{align}
Taking duals, we obtain the equality in $\CH^\ast(Y)_\Q$:
\begin{equation}
\label{first equality td omega}
    \Td(T_Y) = \Td(\omega^{1 \vee} _{Y/C}) \, g^\ast \td(c_1(T_C) - [\Delta]) \, \prod_{i \in I} \td([D_i]).
\end{equation}
By definition of the relative tangent class $[T_g]$ in $K^0(Y)$, we have the identity in $\CH^\ast(Y)_\Q$:
\begin{align*}
\Td([T_g]) &= \Td(T_Y) g^\ast \Td(T_C)^{-1}, \\
&= \Td(\omega^{1 \vee}_{Y/C}) \, g^\ast [\td(c_1(T_C) - [\Delta])   \td(c_1(T_C))^{-1} ] \prod_{i \in I} \td([D_i]) \textrm{ using \eqref{first equality td omega},}
\end{align*}
hence the equality:
\begin{equation}
\label{second equality Td omega}
\frac{\Td([T_g])}{\Td(\omega^{1 \vee}_{Y/C})} = g^\ast [\td(c_1(T_C) - [\Delta])  \td(c_1(T_C))^{-1} ] \prod_{i \in I} \td([D_i]).
\end{equation}
Since $C$ is a curve, all the classes in $\CH^2(C)_\Q$ vanish. Consequently, we have the equality in $\CH^\ast(C)_\Q$:
\begin{align*}
\td(c_1(T_C) - [\Delta])  \td(c_1(T_C))^{-1} &= [1 + 1/2\, (c_1(T_C) - [\Delta])] [1 + 1/2\,  c_1(T_C)]^{-1}, \\
&= [1 + 1/2 (c_1(T_C) - [\Delta])] [1 - 1/2\ c_1(T_C)], \\
&= 1 - 1/2 \, [\Delta].
\end{align*}
Hence replacing in \eqref{second equality Td omega}, we obtain the equality in $\CH^\ast(Y)_\Q$:
\begin{align}
    \frac{\Td([T_g])}{\Td(\omega^{1 \vee}_{Y/C})} &= g^\ast [1 - 1/2\,  [\Delta]] \prod_{i \in I} \td([D_i]), \nonumber \\
    &= \prod_{i \in I} \td([D_i]) - 1/2\,  (g^\ast [\Delta] ) \prod_{i \in I} \td([D_i]). \label{third equality Td omega}
\end{align}
The line bundle $\cO_C(\Delta)_{| \Delta}$ on $\Delta$ can be trivialized, so for every element $i$ in $I$, the line bundle $(g^\ast \cO_C(\Delta))_{| D_i}$ on $D_i$ can be trivialized. Consequently, for every element $i$ in $I$, we have the equality in $\CH^2(Y)$:
$$(g^\ast [\Delta] ) [D_i] = 0,$$
hence we have the equality in $\CH^\ast(Y)_\Q$:
$$(g^\ast [\Delta] )  \prod_{i \in I} \td([D_i]) = g^\ast [\Delta] = \sum_{i \in I} m_i [D_i].$$
Hence replacing in \eqref{third equality Td omega}, we have the equality in $\CH^\ast(Y)_\Q$:
$$\frac{\Td([T_g])}{\Td(\omega^{1 \vee}_{Y/C})} = \prod_{i \in I} \td([D_i]) - 1/2 \sum_{i \in I} m_i [D_i],$$
which shows equality \eqref{Td/Td general}.

Taking the terms of codimension 1, we obtain the equality in $\CH^1(Y)_\Q$:
\begin{align*}
    \Big [ \frac{\Td([T_g])}{\Td(\omega^{1 \vee}_{Y/C})} \Big ]^{(1)} &= \sum_{i \in I} \td([D_i])^{(1)} - 1/2 \sum_{i \in I} m_i [D_i], \\
    &= \sum_{i \in I} (1/2 [D_i]) - 1/2 \sum_{i \in I} m_i [D_i], \\
    &= - 1/2 \sum_{i \in I} (m_i - 1) [D_i],
\end{align*}
which shows equality \eqref{Td/Td degree 1 general}.

Taking instead the terms of codimension 2, we obtain the equality in $\CH^2(Y)_\Q$:
\begin{align}
    \Big [ \frac{\Td([T_g])}{\Td(\omega^{1 \vee}_{Y/C})} \Big ]^{(2)} &= \big [\prod_{i \in I} \td([D_i]) \big ] ^{(2)}, \nonumber \\
    &= \sum_{i \in I} \td([D_i])^{(2)} + \sum_{(i,j) \in I^2, i \prec j} \td([D_i])^{(1)} . \td([D_j])^{(1)}, \nonumber \\
    &= \sum_{i \in I} (1/12\,  [D_i]^2) + \sum_{(i,j) \in I^2, i \prec j} (1/2\,  [D_i]) . (1/2 [D_j]), \nonumber \\
    &= 1/12\,  \sum_{i \in I} [D_i]^2 + 1/4 \sum_{(i,j) \in I^2, i \prec j} [D_i] . [D_j]. \label{first equality Td/Td degree 2}
\end{align}
For every element $i$ in $I$, the self-intersection of the divisor $D_i$ is given in $\CH^2(Y)$ by:
$$[D_i]^2 = i_{D_i \ast} c_1(\cN_i).$$
On the other hand, for every pair $(i,j)$ in $I^2$ such that $i \prec j$, the divisors $D_i$ and $D_j$ intersect transversally, so that we have the equality in $\CH^2(Y)$:
$$[D_i] . [D_j] = [D_{ij}].$$
Hence replacing in \eqref{first equality Td/Td degree 2}:
$$\Big [ \frac{\Td([T_g])}{\Td(\omega^{1 \vee}_{Y/C})} \Big ]^{(2)} = 1/12 \sum_{i \in I} i_{D_i \ast} c_1(\cN_i) + 1/4 \sum_{(i,j) \in I^2, i \prec j} [D_{ij}],$$
which shows equality \eqref{eq Td/Td degree 2 with c_1(N_i)}.

Finally, for every element $i$ in $I$, applying Proposition \ref{relation fibres DCN} and dividing by $m_i$, we have the equality in $\CH^1(D_i)_\Q$:
$$c_1(\cN_i) = - \sum_{j \in I - \{i\}} m_j/m_i [D_{ij}],$$
hence pushing forward by the inclusion of $D_i$ in $Y$, then summing over $i$, we have the equality in $\CH^2(Y)_\Q$:
$$
\sum_{i \in I} i_{D_i \ast} c_1(\cN_i)
= - \sum_{(i,j) \in I^2, i \prec j} (m_i/m_j + m_j/m_i) [D_{ij}],
$$
hence replacing, we obtain the equality in $\CH^2(Y)_\Q$:
\begin{align*}
    \Big [ \frac{\Td([T_g])}{\Td(\omega^{1 \vee}_{Y/C})} \Big ]^{(2)} &= - 1/12 \sum_{(i,j) \in I^2, i \prec j} (m_i/m_j + m_j/m_i) [D_{ij}] + 1/4 \sum_{(i,j) \in I^2, i \prec j} [D_{ij}], \\
    &= 1/12 \sum_{(i,j) \in I^2, i \prec j} (3 - m_i/m_j - m_j/m_i) [D_{ij}], 
\end{align*}
which shows equality \eqref{eq Td/Td degree 2 with multiplicities}.

This concludes the proof of Proposition \ref{first terms Td/Td general}.

\subsection{Proof of Proposition \ref{comparison c_r when D^3 empty non-reduced}}

Using equality \eqref{Chern class Omega1log} from Proposition \ref{comparison Omega log and Omega easy cases general} applied to the scheme $X := Y$ and the reduced divisor with strict normal crossings $E := |D| = \sum_{i \in I} D_i$, we have the equality in $\CH^\ast(Y)$:
$$c(\Omega^1_Y (\log D)) = c(\Omega^1_Y) + \sum_{\emptyset \neq J \subset I} i_{D_J \ast} c(\Omega^1_{D_J}),$$
hence, by definition of the relative logarithmic bundle $\omega^1_{Y/C}$, the equality in $\CH^\ast(Y)$:
\begin{align}
c(\omega^1_{Y/C}) &= \big[c(\Omega^1_Y) + \sum_{\emptyset \neq J \subset I} i_{D_J \ast} c(\Omega^1_{D_J})\big] \, g^\ast c(\Omega^1_C (\log \Delta))^{-1}. \nonumber 
\end{align}
Since $C$ is a curve, we have a canonical isomorphism of line bundles on $C$:
$$\Omega^1_C (\log \Delta) \simeq \Omega^1_C (\Delta),$$
hence replacing:
\begin{equation*}
c(\omega^1_{Y/C}) = \big[c(\Omega^1_Y) + \sum_{\emptyset \neq J \subset I} i_{D_J \ast} c(\Omega^1_{D_J})\big] \, g^\ast (1 + c_1(\Omega^1_C) + [\Delta])^{-1},
\end{equation*}
hence taking duals:
\begin{equation}
    c(\omega^{1 \vee}_{Y/C}) = \big[c(T_Y) + \sum_{\emptyset \neq J \subset I} (-1)^{|J|} i_{D_J \ast} c(T_{D_J})\big]\,  g^\ast (1 + c_1(T_C) - [\Delta])^{-1}. \label{first equality computation c_r(omega^1) general}
\end{equation}
The line bundles $T_{C | \Delta}$ and $\cO_C(\Delta)_{| \Delta}$ on $\Delta$ can be trivialized, so for every non-empty subset $J$ in $I$, the line bundles $(g^\ast T_C)_{| D_J}$ and $(g^\ast \cO_C(\Delta))_{| D_J}$ on $D_J$ can be trivialized. Consequently, for every non-empty subset $J$ in $I$, we have the equalities in $\CH^\ast(Y)$:
$$[i_{D_J \ast} c(T_{D_J}) ] (g^\ast c_1(T_C) ) = [i_{D_J \ast} c(T_{D_J}) ] (g^\ast [\Delta]) = 0,$$
hence we have the equality in $\CH^\ast(Y)$:
$$\bigg[\sum_{\emptyset \neq J \subset I} (-1)^{|J|} i_{D_J \ast} c(T_{D_J})\bigg] \, g^\ast (1 + c_1(T_C) - [\Delta])^{-1} = \sum_{\emptyset \neq J \subset I} (-1)^{|J|} i_{D_J \ast} c(T_{D_J}).$$
Hence replacing in \eqref{first equality computation c_r(omega^1) general}, we obtain the equality:
\begin{equation}
    c(\omega^{1 \vee}_{Y/C}) = c(T_Y) g^\ast (1 + c_1(T_C) - [\Delta])^{-1} + \sum_{\emptyset \neq J \subset I} (-1)^{|J|} i_{D_J \ast} c(T_{D_J}). \label{second equality computation c_r(omega^1) general}
\end{equation}
Since $C$ is a curve, all the classes in $\CH^2(C)$ vanish. Consequently, we have the equality in $\CH^\ast(C)$:
$$(1 + c_1(T_C) - [\Delta])^{-1} = (1 + c_1(T_C))^{-1} + [\Delta].$$
Hence replacing in \eqref{second equality computation c_r(omega^1) general}:
\begin{align}
    c(\omega^{1 \vee}_{Y/C}) &= c(T_Y) g^\ast [(1 + c_1(T_C))^{-1} + [\Delta]] + \sum_{\emptyset \neq J \subset I} (-1)^{|J|} i_{D_J \ast} c(T_{D_J}), \nonumber \\
    &= c(T_Y) g^\ast c(T_C)^{-1} + c(T_Y) \big(\sum_{i \in I} m_i [D_i]\big) + \sum_{\emptyset \neq J \subset I} (-1)^{|J|} i_{D_J \ast} c(T_{D_J}), \nonumber \\
    &= c([T_g]) + \sum_{i \in I} m_i i_{D_i \ast} c(T_{Y | D_i}) + \sum_{\emptyset \neq J \subset I} (-1)^{|J|} i_{D_J \ast} c(T_{D_J}) \textrm{ by definition of the class $[T_g]$.} \nonumber 
\end{align}
Let $r$ be a non-negative integer, taking the terms of codimension $r$, we obtain the equality in $\CH^r(Y)$:
\begin{equation}
c_r(\omega^{1 \vee}_{Y/C}) =  c_r([T_g]) + \sum_{i \in I} m_i i_{D_i \ast} c_{r-1}(T_{Y | D_i}) + \sum_{\emptyset \neq J \subset I} (-1)^{|J|} i_{D_J \ast} c_{r-|J|}(T_{D_J}).
 \label{third equality computation c_r(omega^1) general}
\end{equation}
For every element $i$ in $I$, we have an exact sequence of vector bundles on $D_i$:
$$0 \lra T_{D_i} \lra T_{Y | D_i} \lra \cN_i \lra 0,$$
hence the equality in $\CH^\ast(D_i)$:
$$c(T_{Y | D_i}) = c(T_{D_i}) (1 + c_1(\cN_i)),$$
hence the equality in $\CH^{r-1}(D_i)$,
$$c_{r-1}(T_{Y | D_i}) = c_{r-1}(T_{D_i}) + c_1(\cN_i) c_{r-2}(T_{D_i}).$$
Hence replacing in \eqref{third equality computation c_r(omega^1) general}, we obtain the equality in $\CH^r(Y)$:
$$c_r(\omega^{1 \vee}_{Y/C}) =  c_r([T_g]) + \sum_{i \in I} m_i i_{D_i \ast} [c_{r-1}(T_{ D_i}) + c_1(\cN_i) c_{r-2}(T_{D_i})] + \sum_{\emptyset \neq J \subset I} (-1)^{|J|} i_{D_J \ast} c_{r-|J|}(T_{D_J}),
$$
which shows equality \eqref{eq c_r omega^1 general}.

Now, let us assume that the subscheme $D^3$ is empty. In this case, equality \eqref{eq c_r omega^1 general} can be rewritten:
\begin{align}
c_r(\omega^{1 \vee}_{Y/C}) &=  c_r([T_g]) + \sum_{i \in I} m_i i_{D_i \ast} [c_{r-1}(T_{ D_i}) + c_1(\cN_i) c_{r-2}(T_{D_i})] \nonumber \\
&-  \sum_{i \in I} i_{D_i \ast} c_{r-1}(T_{D_i}) + \sum_{(i,j) \in I^2, i \prec j} i_{D_{ij} \ast} c_{r-2}(T_{D_{ij}}), \nonumber \\
&= c_r([T_g]) + \sum_{i \in I} (m_i-1) i_{D_i \ast} c_{r-1}(T_{ D_i}) \nonumber \\
&+ \sum_{i \in I} m_i i_{D_i \ast} (c_1(\cN_i) c_{r-2}(T_{D_i})) + \sum_{(i,j) \in I^2, i \prec j} i_{D_{ij} \ast} c_{r-2}(T_{D_{ij}}). \label{first eq c_r omega^1 D^3 empty}
\end{align}
For every element $i$ in $I$, applying Corollary \ref{first consequence relation DCN} to the class $\beta := c_{r-2}(T_{D_i})$ in $\CH^{r-2}(D_i)$ yields the equality in $\CH^r(Y)$:
\begin{equation}\label{sum mult Ni}
m_i i_{D_i \ast} (c_1(\cN_i) c_{r-2}(T_{D_i})) = - \sum_{j \in I - \{i\}} m_j i_{D_{ij} \ast} c_{r-2}(T_{D_i | D_{ij}}),
\end{equation}
hence summing over $i$, we obtain the equality in $\CH^r(Y)$:
$$\sum_{i \in I} m_i i_{D_i \ast} (c_1(\cN_i) c_{r-2}(T_{D_i})) = - \sum_{(i,j) \in I^2, i \prec j} i_{D_{ij} \ast} (m_j c_{r-2}(T_{D_i | D_{ij}}) + m_i c_{r-2}(T_{D_j | D_{ij}})).$$
For every pair $(i,j)$ in $I^2$ such that $i \prec j$, we have the following normal exact sequences of vector bundles on $D_{ij}$:
$$
0 \lra T_{D_{ij}} \lra T_{D_i | D_{ij}} \lra \cN_{D_{ij}} D_i \lra 0,
$$
$$
0 \lra T_{D_{ij}} \lra T_{D_j | D_{ij}} \lra \cN_{D_{ij}} D_j \lra 0,
$$
hence the following equalities in $\CH^{r-2}(D_{ij})$:
$$
c_{r-2}(T_{D_i | D_{ij}}) = c_{r-2}(T_{D_{ij}}) + c_1(\cN_{D_{ij}} D_i) c_{r-3}(T_{D_{ij}}),$$
$$
c_{r-2}(T_{D_j | D_{ij}}) = c_{r-2}(T_{D_{ij}}) + c_1(\cN_{D_{ij}} D_j) c_{r-3}(T_{D_{ij}}),$$
hence using the canonical isomorphisms of line bundles on $D_{ij}$ between $\cN_{D_{ij}} D_j$ and $\cN_{i | D_{ij}}$, as well as between $\cN_{D_{ij}} D_i$ and $\cN_{j | D_{ij}}$:
$$
c_{r-2}(T_{D_i | D_{ij}}) = c_{r-2}(T_{D_{ij}}) + c_1(\cN_{j | D_{ij}}) c_{r-3}(T_{D_{ij}}),
$$
$$c_{r-2}(T_{D_j | D_{ij}}) = c_{r-2}(T_{D_{ij}}) + c_1(\cN_{i | D_{ij}}) c_{r-3}(T_{D_{ij}}),
$$
hence replacing in \eqref{sum mult Ni}, we obtain the equality in $\CH^r(Y)$:
\begin{multline*} \sum_{i \in I} m_i i_{D_i \ast} (c_1(\cN_i) c_{r-2}(T_{D_i}))
     = - \sum_{(i,j) \in I^2, i \prec j} (m_j + m_i) i_{D_{ij} \ast} c_{r-2}(T_{D_{ij}}) \\
     - \sum_{(i,j) \in I^2, i \prec j} i_{D_{ij} \ast} [\{m_i c_1(\cN_{i | D_{ij}}) + m_j c_1(\cN_{j | D_{ij}})\} c_{r-3}(T_{D_{ij}}]; 
    \end{multline*}
using Corollary \ref{N_i _D_ij and N_j _D_ij in terms of N_ij}, and using that $D^3$ is empty, we get:
$$  \sum_{i \in I} m_i i_{D_i \ast} (c_1(\cN_i) c_{r-2}(T_{D_i}))= - \sum_{(i,j) \in I^2, i \prec j} (m_i + m_j) i_{D_{ij} \ast} c_{r-2}(T_{D_{ij}}).$$ 

Hence replacing in \eqref{first eq c_r omega^1 D^3 empty}, we obtain the equality in $\CH^r(Y)_\Q$:
\begin{align*}
c_r(\omega^{1 \vee}_{Y/C}) &=  c_r([T_g]) + \sum_{i \in I} (m_i-1) i_{D_i \ast} c_{r-1}(T_{ D_i}) \\
&-  \sum_{(i,j) \in I^2, i \prec j} (m_i + m_j) i_{D_{ij} \ast} c_{r-2}(T_{D_{ij}}) + \sum_{(i,j) \in I^2, i \prec j} i_{D_{ij} \ast} c_{r-2}(T_{D_{ij}}), \\
&= c_r([T_g]) + \sum_{i \in I} (m_i-1) i_{D_i \ast} c_{r-1}(T_{ D_i}) -  \sum_{(i,j) \in I^2, i \prec j} (m_i + m_j-1) i_{D_{ij} \ast} c_{r-2}(T_{D_{ij}}).
\end{align*}
This proves equality \eqref{eq c_r omega and T_g when D^3 empty}.

\chapter[Alternating product of Griffiths line bundles and SNC degenerations]{The alternating product of Griffiths line bundles associated to a pencil of  complete varieties with SNC degenerations}
\label{Alternating product of Griffiths line bundles and the Grothendieck-Riemann-Roch formula}
In this chapter, we work over a base field $k$ of characteristic zero, and by the word ``scheme", we mean a separated algebraic scheme of finite type over $k$.

\section{The characteristic classes $\rho$ and $\rho_r$ and the Gro\-then\-dieck-Riemann-Roch formula}

In this section, we will work on a fixed smooth $k$-scheme $X$.

\subsection{The classes $\lambda_y$, $\phi_y$, and $\rho$}

Recall that the \emph{Grothendieck $\lambda$-map}:
$$\lambda_y : K^0(X) \lra 1 + y \, K^0(X)[[y]], $$
where $y$ denotes an indeterminate, 
is defined as the only map that is multiplicative for exact sequences, and that is given on (the $K$-theory class of)  a vector bundle $V$ by:
$$\lambda_y([V]) := \sum \limits_{i = 0}^{\rk(V)} y^i \left[ \Exterior^i V \right] \in 1 + y \, K^0(X)[y];$$
see for instance \cite[app. to Expos\'e 0]{SGA6}.

\begin{definition}  \label{phi_y} 
For every vector  bundle $V$ on $X$, we define  $\phi_y(V)$ as the element of the polynomial ring $\CH^\ast(X)_\Q [y]$ given by:
\begin{equation}
\phi_y(V) := \ch\big(\lambda_y(V^{\vee})\big) \,  \, \Td(V)
= \sum \limits_{i = 0}^{\rk(V)} y^i \ch \left(\Exterior ^i V^{\vee} \right) \, \Td(V).
\end{equation}
\end{definition}

The ring $\CH^\ast(X)_\Q [y]$ has a new grading constructed in terms of the existing natural bigrading: 
$$\left(\CH^\ast(X)_\Q [y] \right) ^{(d)} := \overset{d}{\underset{e = 0}{\bigoplus}} \CH^e(X)_\Q \otimes _\Q \Q[y]_{d-e}.$$
For every vector bundle $V$ over $X$, the term of total degree $0$ of the element $\phi_y(V)$ is $1$, so this element is in $ 1 + (\CH^\ast(X)_\Q [y] )^{(\geq 1)}.$

\begin{proposition} \label{phi_y is multiplicative}
The map $\phi_y$ is multiplicative for short exact sequences.
\end{proposition}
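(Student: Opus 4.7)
The plan is to decompose $\phi_y$ as a product of two factors, each of which is already known to behave multiplicatively on short exact sequences, and then simply combine them. Given a short exact sequence of vector bundles on $X$:
\begin{equation*}
0 \lra V' \lra V \lra V'' \lra 0,
\end{equation*}
I will establish the equality $\phi_y(V) = \phi_y(V') \, \phi_y(V'')$ in $\CH^\ast(X)_\Q[y]$.

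First, I would recall that the Todd class $\Td$ is multiplicative on short exact sequences of vector bundles, so $\Td(V) = \Td(V') \, \Td(V'')$. Next, dualizing the given exact sequence produces another short exact sequence $0 \to V''{}^\vee \to V^\vee \to V'{}^\vee \to 0$, and the Grothendieck $\lambda$-map $\lambda_y$ is by definition multiplicative on short exact sequences, yielding the identity $\lambda_y(V^\vee) = \lambda_y(V'{}^\vee) \, \lambda_y(V''{}^\vee)$ in $1 + y \, K^0(X)[[y]]$. Since the Chern character $\ch \colon K^0(X) \lra \CH^\ast(X)_\Q$ is a ring homomorphism, applying it coefficient by coefficient (with respect to the indeterminate $y$) gives $\ch(\lambda_y(V^\vee)) = \ch(\lambda_y(V'{}^\vee)) \, \ch(\lambda_y(V''{}^\vee))$ in $1 + y \, \CH^\ast(X)_\Q[[y]]$.

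Finally, multiplying the two identities together in the commutative ring $\CH^\ast(X)_\Q[y]$ yields the desired relation:
\begin{equation*}
\phi_y(V) = \ch(\lambda_y(V^\vee)) \, \Td(V) = \bigl[\ch(\lambda_y(V'{}^\vee)) \, \Td(V')\bigr] \bigl[\ch(\lambda_y(V''{}^\vee)) \, \Td(V'')\bigr] = \phi_y(V') \, \phi_y(V'').
\end{equation*}
There is essentially no obstacle here: the proposition is a formal consequence of the multiplicativity of $\lambda_y$ and $\Td$, together with the fact that $\ch$ is a ring homomorphism and that dualization preserves short exact sequences of vector bundles. The only mild point worth spelling out is that $\ch$ is applied to a power series in $y$ with coefficients in $K^0(X)$, but this poses no difficulty since $\phi_y(V)$ is actually a polynomial of degree at most $\rk(V)$ in $y$, and $\ch$ extends term-by-term.
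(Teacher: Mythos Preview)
Your proof is correct and follows essentially the same approach as the paper's own proof, which simply notes that $\lambda_y$ and $\Td$ are multiplicative for short exact sequences and that $\ch$ is a ring homomorphism. You have merely spelled out the intermediate step of dualizing the exact sequence and applying $\ch$ coefficientwise, which the paper leaves implicit.
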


\begin{proof} The $\lambda$-map $\lambda_y$ and the Todd class $\Td$ are both multiplicative for short exact sequences, and the Chern character $\ch$ is a morphism of rings. 
\end{proof}

\begin{remark}
 We could use this to descend $\phi_y$ to a morphism of groups from $K^0(X)$ to $1 + \left ( \CH^\ast(X)_\Q [[y]] \right )^{(\geq 1)}$, but we would then lose the property that $\phi_y(V)$ is a polynomial, which is necessary for our purpose.
\end{remark}

\begin{proposition} \label{phi_y of a line bundle}
Let $L$ be a line bundle on $X$, we have the identity in~$\CH^\ast(X)_\Q [y]$:
\begin{equation}\label{formula phi_y line bundle}
\phi_y(L) = \td(c_1(L))  \, \left[1 + y \,  e^{-c_1(L)}\right].
\end{equation}
In particular, replacing $y$ with $-1$:
\begin{equation} \phi_{-1}(L) = c_1(L). \label{formula phi_y line bundle 2} \end{equation}
\end{proposition}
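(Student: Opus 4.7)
The plan is to unfold the definition of $\phi_y$ on a line bundle and use the two elementary facts that $\Exterior^i L^\vee$ vanishes for $i \geq 2$ and that the Todd class of a line bundle is given by the formal power series $\td$.

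First, I would observe that for a line bundle $L$, the exterior powers of its dual are especially simple: $\Exterior^0 L^\vee = \cO_X$ and $\Exterior^1 L^\vee = L^\vee$, while all higher exterior powers vanish. Therefore, directly from the definition of the Grothendieck $\lambda$-map,
\[
\lambda_y(L^\vee) = 1 + y\,[L^\vee] \quad \in K^0(X)[y].
\]
Applying the Chern character, which is a ring homomorphism, and using $\ch(L^\vee) = e^{c_1(L^\vee)} = e^{-c_1(L)}$, yields
\[
\ch\bigl(\lambda_y(L^\vee)\bigr) = 1 + y\, e^{-c_1(L)}.
\]

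Next, I would recall the classical formula $\Td(L) = c_1(L)/(1 - e^{-c_1(L)}) = \td(c_1(L))$ for the Todd class of a line bundle. Substituting these two expressions into the definition of $\phi_y(L)$ in Definition~\ref{phi_y} gives at once
\[
\phi_y(L) = \ch\bigl(\lambda_y(L^\vee)\bigr)\,\Td(L) = \td(c_1(L))\,\bigl[1 + y\, e^{-c_1(L)}\bigr],
\]
which is formula \eqref{formula phi_y line bundle}.

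Finally, to deduce \eqref{formula phi_y line bundle 2}, I would specialize to $y = -1$ and use the telescoping cancellation
\[
\phi_{-1}(L) = \frac{c_1(L)}{1 - e^{-c_1(L)}}\,\bigl[1 - e^{-c_1(L)}\bigr] = c_1(L).
\]
There is no real obstacle here: the statement is a direct unpacking of the definitions. The only point worth mentioning is that the computation in $\CH^\ast(X)_\Q[y]$ is a formal manipulation of power series in $c_1(L)$, which terminates in $\CH^\ast(X)_\Q$ because $c_1(L)$ is nilpotent on a scheme of finite Krull dimension, so all the formal identities above make literal sense.
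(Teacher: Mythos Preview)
Your proof is correct and follows essentially the same approach as the paper: both unfold the definition of $\phi_y$ on a line bundle using $\ch(L^\vee)=e^{-c_1(L)}$ and $\Td(L)=\td(c_1(L))$, then specialize to $y=-1$ and invoke the definition of $\td$ to get $c_1(L)$. Your version is simply more explicit than the paper's brief ``direct computation''.
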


Recall that, in the right-hand side of \eqref{formula phi_y line bundle}, we denote by 
$\td(x)$  the formal series $x/(1 - e^{-x})$.

\begin{proof} The first equality \eqref{formula phi_y line bundle} comes from a direct computation of $\phi _y(L)$, using the equality in $\CH^\ast(X)_\Q$:
$$\ch(L ^\vee) = e^{-c_1(L)}.$$
The second equality \eqref{formula phi_y line bundle 2} follows from the  definition of the series $\td$.
\end{proof}

The identity \eqref{formula phi_y line bundle 2} is also a particular case of \cite[Ex. 3.2.5]{Fulton98}. Indeed, the upcoming Proposition \ref{first terms of rho(V)} can be seen as a variant of this example.

\begin{definition}
\label{rho and rho_r}
If $V$ is a vector bundle on $X$, the class $\rho(V)$ is defined by:
$$\rho(V) := \dot{\phi}_{-1}(V) = \Big(\frac{\mathrm{d}}{\mathrm{d} y} \ch(\lambda_y(V^\vee))\Big)_{| y = -1} \Td(V) \in \CH^\ast(X)_\Q.$$
the derivative of $\phi$ applied to $y = -1$, it is well-defined since $\phi_y(V)$ is a polynomial. 

If $r \geq 1$ is an integer, we also define the characteristic class:
$$\rho_r := c_{r-1} - \frac{r}{2} c_r + \frac{1}{12} c_1 c_r : K^0(X) \lra \CH^{\ast}(X)_\Q.$$
\end{definition}

We are interested in comparing the classes $\rho(V)$ and $\rho_r([V])$ when $V$ is a vector bundle of rank~$r$.

\begin{proposition} \label{rho of a line bundle}
Let $L$ be a line bundle on $X$, we have the identity in $\CH^\ast(X)_\Q $:
\begin{equation}\label{formula rho line bundle}
\rho(L) = \td(-c_1(L)) =: \Td (L^\vee).
\end{equation}
\end{proposition}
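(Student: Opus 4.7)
The proof will be a direct computation using the explicit formula for $\phi_y(L)$ already established in Proposition \ref{phi_y of a line bundle}. The plan is as follows.

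First, I would start from the identity \eqref{formula phi_y line bundle} of Proposition \ref{phi_y of a line bundle}, which gives
$$\phi_y(L) = \td(c_1(L))\,\bigl[1 + y\,e^{-c_1(L)}\bigr].$$
Since this is an element of $\CH^\ast(X)_\Q[y]$ that is affine in $y$, I can differentiate with respect to $y$ and obtain
$$\dot{\phi}_y(L) = \td(c_1(L))\,e^{-c_1(L)}$$
in $\CH^\ast(X)_\Q$, an expression which does not depend on $y$. Evaluating at $y=-1$ yields
$$\rho(L) = \td(c_1(L))\,e^{-c_1(L)}.$$

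Second, I would check the elementary identity between formal series
$$\td(x)\,e^{-x} = \frac{x}{1-e^{-x}}\,e^{-x} = \frac{x}{e^x-1} = \frac{-x}{1-e^x} = \td(-x),$$
applied with $x = c_1(L)$ (which is a nilpotent element of $\CH^\ast(X)_\Q$, so all the above manipulations are licit). Combining the two steps gives the claimed equality $\rho(L) = \td(-c_1(L))$. There is no genuine obstacle here: the result is essentially a one-line computation from the formula of Proposition \ref{phi_y of a line bundle} together with the symmetry $\td(x)e^{-x}=\td(-x)$ of the Todd series.
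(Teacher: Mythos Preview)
Your proof is correct and follows essentially the same approach as the paper: differentiate the explicit formula \eqref{formula phi_y line bundle} for $\phi_y(L)$ to obtain $\rho(L)=\td(c_1(L))\,e^{-c_1(L)}$, then use the identity $\td(x)\,e^{-x}=\td(-x)$ of formal series with $x=c_1(L)$.
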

\begin{proof}
We differentiate the equality \eqref{formula phi_y line bundle}. Since $\phi_y(L)$ is a polynomial of degree at most 1, its derivative is its coefficient of degree $1$, which gives the equality:
$$\rho(L) = \td(c_1(L)) \,  e^{-c_1(L)}.$$
Now, we have an equality of formal series in an indeterminate $x$:
$$\td(x) \,  e^{-x} = \td(-x),$$ 
that we can show using the definition of $\td$ in terms of exponentials, as follows:
\begin{align*}
\td(x) \, e^{-x} & = \bigg(\frac{x}{1 - e^{-x}}\bigg)\, e^{-x}, \\
& = \frac{x}{e^x - 1}, \\
& = - \frac{x}{1 - e^{- (-x)} }, \\
& = \td(-x).
\end{align*}
Replacing $x$ with $c_1(L)$, this becomes the result. 
\end{proof}

\begin{corollary}
\label{Leibniz rule}
Let $V$ be a vector bundle of rank $r$ on $X$, and let
$$V = V_r \supset ... \supset V_0 = 0$$
be a filtration such that $L_i := V_i/V_{i-1}$ is a line bundle for every integer $i\in \{1, \dots, r\}.$

We have the equality in $\CH^{\ast}(X)_\Q$:
$$\rho(V) = \sum \limits_{i=1}^r \td(-c_1(L_i)) \,  \prod \limits_{\substack{j = 1 \\ j \neq i}}^r c_1(L_j).$$
\end{corollary}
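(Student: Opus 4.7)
The plan is to combine the multiplicativity of $\phi_y$ established in Proposition \ref{phi_y is multiplicative} with the explicit formulas for $\phi_y$ and $\rho$ on line bundles given in Propositions \ref{phi_y of a line bundle} and \ref{rho of a line bundle}, using nothing more than the Leibniz rule for differentiation of a product of polynomials in $y$.

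First, I would apply $\phi_y$ to each of the short exact sequences
$$0 \lra V_{i-1} \lra V_i \lra L_i \lra 0, \quad 1 \leq i \leq r,$$
and use Proposition \ref{phi_y is multiplicative} inductively on $i$ to obtain the factorization in $\CH^\ast(X)_\Q [y]$
$$\phi_y(V) = \prod_{i=1}^r \phi_y(L_i).$$
Since each $\phi_y(L_i)$ is a polynomial in $y$ (of degree $1$, by \eqref{formula phi_y line bundle}), so is the product, which justifies termwise differentiation with respect to $y$.

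Next, differentiating both sides with respect to $y$ and applying the ordinary Leibniz rule yields
$$\dot\phi_y(V) = \sum_{i=1}^r \dot\phi_y(L_i) \prod_{\substack{j=1 \\ j \neq i}}^r \phi_y(L_j).$$
I would then specialize at $y = -1$. By definition of $\rho$ (Definition \ref{rho and rho_r}) the left-hand side becomes $\rho(V)$, and by the same definition each factor $\dot\phi_{-1}(L_i)$ is $\rho(L_i)$, which equals $\td(-c_1(L_i))$ by Proposition \ref{rho of a line bundle}. The remaining factors $\phi_{-1}(L_j)$ are equal to $c_1(L_j)$ by equation \eqref{formula phi_y line bundle 2} of Proposition \ref{phi_y of a line bundle}. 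Substituting gives exactly the desired identity
$$\rho(V) = \sum_{i=1}^r \td(-c_1(L_i)) \prod_{\substack{j=1 \\ j \neq i}}^r c_1(L_j).$$

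There is no real obstacle: the whole argument is a formal consequence of multiplicativity and the product rule, and the only point deserving attention is the verification that $\phi_y(V)$ is indeed a polynomial in $y$ (so that evaluation of its derivative at $y=-1$ is unambiguous), which is immediate from the factorization above and the fact that each $\phi_y(L_i)$ has degree $1$ in $y$.
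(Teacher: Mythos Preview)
Your proof is correct and follows essentially the same approach as the paper: use the multiplicativity of $\phi_y$ (Proposition \ref{phi_y is multiplicative}) to factor $\phi_y(V)$ as $\prod_i \phi_y(L_i)$, differentiate via the Leibniz rule, evaluate at $y=-1$, and then invoke \eqref{formula phi_y line bundle 2} and \eqref{formula rho line bundle}. Your added remark that each $\phi_y(L_i)$ is a polynomial of degree $1$ in $y$, so the product is a polynomial and evaluation of the derivative at $y=-1$ is unambiguous, is a nice point of care that the paper leaves implicit.
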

\begin{proof}
Using Proposition \ref{phi_y is multiplicative}, the map $\phi_y$ is multiplicative for exact sequences, so we have the equality:
$$\phi_y(V) = \prod \limits_{i=1}^r \phi_y(L_i).$$
Differentiating and applying the Leibniz rule, we obtain:
$$\dot{\phi}_{-1}(V) = \sum \limits_{i=1}^r \dot{\phi}_{-1}(L_i)  \, \prod \limits_{\substack{j = 1 \\ j \neq i}}^r \phi_{-1}(L_j).$$
Using \eqref{formula phi_y line bundle 2} and \eqref{formula rho line bundle}, we obtain the result.
\end{proof}

This allows us to compare $\rho$ and $\rho_r$:

\begin{proposition}[compare {\cite[p. 374]{BCOV94}}]  \label{first terms of rho(V)} Let $V$ be a vector bundle of rank $r$ on $X$, we have the identity in $\CH^\ast(X)_\Q$:
\begin{equation}\label{formula rho first terms}
\rho(V) = \rho_r([V]) + (\textrm{terms of codimension} \geq r+2).
\end{equation}
\end{proposition}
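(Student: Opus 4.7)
My plan is to reduce to the split case via the splitting principle and then do an elementary expansion of $\td(-x)$ up to the relevant order.

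By the splitting principle, there exists a smooth $k$-scheme $\pi : X' \to X$ such that $\pi^\ast : \CH^\ast(X)_\Q \to \CH^\ast(X')_\Q$ is injective and $\pi^\ast V$ admits a filtration by vector subbundles with line bundle quotients $L_1, \ldots, L_r$. Both sides of the claimed identity \eqref{formula rho first terms} are compatible with pullbacks, and the condition of being supported in codimension $\geq r+2$ descends along the injective map $\pi^\ast$, so it suffices to establish the equality for $\pi^\ast V$. In other words, I may assume $V$ itself admits such a filtration, and then apply Corollary \ref{Leibniz rule} to write, with $x_i := c_1(L_i)$,
\[
\rho(V) = \sum_{i=1}^r \td(-x_i) \prod_{\substack{j=1 \\ j \neq i}}^r x_j.
\]

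The key computation is the expansion
\[
\td(-x) = 1 - \tfrac{1}{2} x + \tfrac{1}{12} x^2 + O(x^4),
\]
which follows from the standard expansion $\td(x) = 1 + x/2 + x^2/12 - x^4/720 + \cdots$ (with vanishing odd coefficients beyond degree one). Since the factor $\prod_{j \neq i} x_j$ contributes codimension $r-1$, inserting this expansion yields
\[
\rho(V) = \sum_i \prod_{j \neq i} x_j - \tfrac{1}{2} \sum_i x_i \prod_{j \neq i} x_j + \tfrac{1}{12} \sum_i x_i^2 \prod_{j \neq i} x_j + R,
\]
where $R$ consists of terms of codimension at least $r-1+4 = r+3$, a fortiori $\geq r+2$.

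The three sums on the right are elementary to identify in terms of Chern classes of $V$: the first is the elementary symmetric polynomial $e_{r-1}(x_1, \ldots, x_r) = c_{r-1}(V)$; the second satisfies $x_i \prod_{j \neq i} x_j = \prod_j x_j$ so the sum equals $r \, c_r(V)$; and the third equals $(\sum_i x_i) \cdot \prod_j x_j = c_1(V)\, c_r(V)$. Collecting these gives exactly
\[
\rho(V) = c_{r-1}(V) - \tfrac{r}{2}\, c_r(V) + \tfrac{1}{12}\, c_1(V)\, c_r(V) + R = \rho_r([V]) + R,
\]
which is the claim. There is no real obstacle here; the only point that requires a bit of care is that $\rho$ is a well-defined operation on $K^0(X)$ (so the splitting principle argument is legitimate), but this has already been packaged into Corollary \ref{Leibniz rule}, so the proof is essentially an application of that corollary combined with the low-order expansion of $\td$.
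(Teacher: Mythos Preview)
Your proof is correct and follows essentially the same approach as the paper: reduce to the split case via the splitting principle, apply Corollary~\ref{Leibniz rule}, and expand $\td$ to second order to identify $c_{r-1}$, $c_r$, and $c_1 c_r$. The only cosmetic differences are that the paper invokes the splitting principle at the end rather than the beginning, and that you observe (slightly more sharply than the paper) that the cubic coefficient of $\td(-x)$ vanishes, so the remainder $R$ actually lies in codimension $\geq r+3$.
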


\begin{proof}
First, suppose that $V$ has a filtration by subbundles $V = V_r \supset ... \supset V_0 = 0$, such that for $1 \leq i \leq r$, $L_i := V_i/V_{i-1}$ is a line bundle.
Using Corollary \ref{Leibniz rule}, we have the equality:
$$\rho(V) = \sum \limits_{i=1}^r \td(-c_1(L_i)) \,  \prod \limits_{\substack{j = 1 \\ j \neq i}}^r c_1(L_j).$$

Recall the first terms of the development of $\td$:
$$\td(x) = 1 + \frac{x}{2} + \frac{x^2}{12} + ... \quad .$$
Accordingly we have:
\begin{align*}
\rho(V) & = \sum \limits_{i=1}^r \Bigg[ \Big(1 - \frac{c_1(L_i)}{2} + \frac{c_1(L_i)^2}{12} + (\textrm{terms of codimension} \geq 3)\Big)  \prod \limits_{\substack{j = 1 \\ j \neq i}}^r c_1(L_j) \Bigg], \\
& = \sum \limits_{i=1}^r \prod \limits_{\substack{j = 1 \\ j \neq i}}^r c_1(L_j) 
- \frac{1}{2} \sum \limits_{i=1}^r \prod \limits_{j=1}^r c_1(L_j) 
+ \frac{1}{12} \sum \limits_{i=1}^r \Big(c_1(L_i) \,  \prod \limits_{j=1}^r c_1(L_j) \Big)
+ (\textrm{codim} \geq r+2)), \\
& = \sum \limits_{i=1}^r \prod \limits_{\substack{j = 1 \\ j \neq i}}^r c_1(L_j) 
- \frac{r}{2} \prod \limits_{j=1}^r c_1(L_j) 
 + \frac{1}{12} \Big(\sum \limits_{i=1}^r c_1(L_i)   \Big)  \prod \limits_{j=1}^r c_1(L_j)
 + (\textrm{codim} \geq r+2),
\end{align*}
where by $(\textrm{codim} \geq r+2)$ we mean an element in $\CH^{\geq r+2}(X)_\Q$.

By multiplicativity of the total Chern class, the elementary symmetric polynomials of $(c_1(L_i))_i$ are simply the Chern classes of $V$, which gives the equality:
\begin{align*}
    \rho(V) &= c_{r-1}(V) - \frac{r}{2} c_r(V) + \frac{1}{12} c_1(V) c_r(V) +(\textrm{codim} \geq r+2), \\
    &= \rho_r([V]) + (\textrm{codim} \geq r+2)).
\end{align*}
This shows the desired formula when $V$ has such a filtration.

In the general case, we are reduced to this situation by the ``splitting principle" (\cite[Remark 3.2.3]{Fulton98}). 
\end{proof}

\subsection{The class $\rho$ and the Grothendieck-Riemann-Roch formula}

The Riemann-Roch formula implies the following result.

\begin{proposition}
\label{RRG and rho}
Let $X, S$ be two smooth $k$-schemes of pure dimension, let:
$$
f : X \lra S 
$$
be a proper morphism of relative dimension $m \geq 0$, and let $V$ be a vector bundle on $X$.
Let us denote by 
$$
[T_f]:= [T_{X/k}] - f^\ast [T_{S/k}] \in K^0(X)
$$
the relative tangent class associated with the l.c.i. morphism $f$.\footnote{see for instance  \cite[Expos\'e VIII]{SGA6} or \cite[B.7.6]{Fulton98}.}

We have the equality in $\CH^\ast(S)_\Q$:
$$
f_\ast \left (\rho(V)  \frac{\Td([T_f])}{\Td(V)} \right )
= \sum \limits_{ \substack{0 \leq p \leq \rk(V),\\ 0 \leq q \leq m}} p (-1)^{p+q-1} \ch(R^q f_\ast \Exterior^p V^\vee).
$$
In particular, taking the terms of codimension $1$, we have the equality in $\CH^1(S)_\Q$:
$$
f_\ast \left [ \left (\rho(V)  \frac{\Td([T_f])}{\Td(V)} \right) ^{(m + 1)} \right ]
= \sum \limits_{\substack{0 \leq p \leq \rk(V),\\ 0 \leq q \leq m}} p (-1)^{p+q-1} c_1(R^q f_\ast \Exterior^p V^\vee).
$$
\end{proposition}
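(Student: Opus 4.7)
The proof will be essentially a direct unpacking of definitions combined with the Riemann-Roch-Grothendieck theorem, so it should be short. The key observation is that $\rho(V)$ is a weighted alternating sum of the Chern characters of the exterior powers $\Exterior^p V^\vee$, weighted by $p$, and the Todd factor $\Td(V)$ in its definition combines with the denominator $\Td(V)$ in the statement to leave only $\Td([T_f])$, which is exactly what RRG requires.

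More precisely, the first step is to differentiate the polynomial expression
\[
\phi_y(V) = \sum_{p=0}^{\rk(V)} y^p \, \ch(\Exterior^p V^\vee)\, \Td(V)
\]
with respect to $y$ and evaluate at $y = -1$, yielding
\[
\rho(V) = \sum_{p=0}^{\rk(V)} p\,(-1)^{p-1}\, \ch(\Exterior^p V^\vee)\, \Td(V).
\]
Multiplying by $\Td([T_f])/\Td(V)$ makes the factor $\Td(V)$ cancel, so that
\[
\rho(V)\,\frac{\Td([T_f])}{\Td(V)} = \sum_{p=0}^{\rk(V)} p\,(-1)^{p-1}\, \ch(\Exterior^p V^\vee)\, \Td([T_f]).
\]

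The second step is to apply the Riemann-Roch-Grothendieck theorem to the l.c.i.\ morphism $f$ and to each of the vector bundles $\Exterior^p V^\vee$ on $X$. This yields, for each integer $p$ with $0 \leq p \leq \rk(V)$, the equality in $\CH^\ast(S)_\Q$:
\[
f_\ast\!\left(\ch(\Exterior^p V^\vee)\,\Td([T_f])\right) = \ch\!\left(\sum_{q=0}^{m} (-1)^q [R^q f_\ast \Exterior^p V^\vee]\right) = \sum_{q=0}^{m}(-1)^q \ch(R^q f_\ast \Exterior^p V^\vee).
\]
Applying $f_\ast$ to the previous display, using its $\CH^\ast(S)_\Q$-linearity, and substituting this RRG identity gives the first equality of the proposition immediately:
\[
f_\ast\!\left(\rho(V)\,\frac{\Td([T_f])}{\Td(V)}\right) = \sum_{\substack{0\leq p\leq \rk(V)\\ 0\leq q\leq m}} p\,(-1)^{p+q-1}\, \ch(R^q f_\ast \Exterior^p V^\vee).
\]

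The final step is to deduce the codimension-one statement. Since $f$ is proper of relative dimension $m$, the pushforward $f_\ast$ sends $\CH^{m+1}(X)_\Q$ to $\CH^{1}(S)_\Q$, so taking the component in $\CH^1(S)_\Q$ of the identity just obtained amounts to extracting the degree $m+1$ part of the integrand on the left and the degree~$1$ part of each Chern character on the right. The degree-one part of $\ch(R^q f_\ast \Exterior^p V^\vee)$ is by definition $c_1(R^q f_\ast \Exterior^p V^\vee)$, giving the second equality. There is no real obstacle: the only point to be careful about is the sign bookkeeping in the differentiation of $\phi_y$ at $y=-1$, and checking that the $\Td(V)$ in $\rho(V)$ is exactly the factor needed to turn the integrand into the RRG integrand.
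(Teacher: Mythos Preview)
Your proof is correct and follows essentially the same approach as the paper's own proof: unpack the definition of $\rho(V)$, cancel the $\Td(V)$ factor, and apply Riemann--Roch--Grothendieck term by term. The only cosmetic difference is that the paper commutes the $y$-derivative past $f_\ast$ and applies RRG before differentiating at $y=-1$, whereas you differentiate first and then push forward; these are equivalent since differentiation with respect to $y$ is linear over $\CH^\ast(S)_\Q$.
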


\begin{proof}
We have the equality:
\begin{align}
f_\ast \left (\rho(V)  \frac{\Td([T_f])}{\Td(V)} \right)
&= f_\ast \left (\Big(\frac{\mathrm{d}}{\mathrm{d}y} \ch(\lambda_y(V^\vee))\Big) _{| y = -1} \Td(V)  \frac{\Td[T_f]}{\Td(V)} \right ), \label{first eq push rho} \\
&= \frac{\mathrm{d}}{\mathrm{d}y} f_\ast \left (\ch(\lambda_y(V^\vee))  \Td([T_f]) \right) _{| y = -1}, \nonumber \\
&= \frac{\mathrm{d}}{\mathrm{d}y} f_\ast \left (\sum_{0 \leq p \leq \rk(V)} y^p \ch(\Exterior^p V^\vee)  \Td([T_f]) \right )_{| y = -1}, \label{second eq push rho} \\
&= \frac{\mathrm{d}}{\mathrm{d}y} \left (\sum_{0 \leq p \leq \rk(V)} y^p \ch(R^{\bullet} f_\ast \Exterior^p V^\vee) \right )_{| y = -1}, \label{third eq push rho} \\
&= \sum_{0 \leq p \leq \rk(V)} p (-1)^{p-1} \ch(R^{\bullet} f_\ast \Exterior^p V^\vee), \nonumber \\
&= \sum \limits_{\substack{0 \leq p \leq \rk(V) \\ 0 \leq q \leq m}} p (-1)^{p-1} (-1)^q \ch(R^q f_\ast \Exterior^p V^\vee), \label{fourth eq push rho} 
\end{align}
where in \eqref{first eq push rho}, we have used the definition of the class $\rho$; in \eqref{second eq push rho}, we have used the definition of the class $\lambda_y$; in \eqref{third eq push rho}, we have used the Grothendieck-Riemann-Roch formula; and in \eqref{fourth eq push rho}, we have used the definition of the morphism of abelian groups $R^{\bullet} f_\ast$.
\end{proof}

\section{Application to Griffiths line bundles of fibrations over curves with SNC fibers}

\subsection{An application of Steenbrink's theory}

We are now able to compute a certain combination of the Griffiths line bundles in terms of the function $\rho_r$.

\begin{theorem}
\label{GK and rho_r}
Let $C$ be a connected smooth projective complex curve with generic point $\eta$, $Y$ be a connected smooth projective $N$-dimensional complex scheme, and let
$$
g : Y \lra C
$$
be a surjective morphism of complex schemes. Let us assume that there exists a finite subset $\Delta$ in $C$ such that $g$ is smooth over $C - \Delta$, and such that the divisor $Y_\Delta$ is a divisor with strict normal crossings in $Y$.

We have the equality in $\CH^1(C)_\Q$:
\begin{equation*}
\sum_{n=0}^{2(N-1)} (-1)^{n-1} c_1(\GK_{C,-}(\H^n(Y_\eta / C_\eta)))
= g_\ast \left [ \left (\rho_{N-1}(\omega^{1 \vee}_{Y/C})  \frac{\Td([T_g])}{\Td(\omega^{1 \vee}_{Y/C})} \right )^{(N)} \right ].
\end{equation*}
\end{theorem}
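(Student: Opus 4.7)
The plan is to reduce this to a direct chain of three ingredients that have been established earlier: Proposition \ref{lower Griffiths and logarithmic cohomology} (expression of the lower Griffiths line bundle via logarithmic Hodge bundles), Proposition \ref{RRG and rho} (Riemann--Roch--Grothendieck applied to exterior powers of a vector bundle), and Proposition \ref{first terms of rho(V)} (the comparison $\rho(V) = \rho_r([V]) \;+ \;\mathrm{codim}\,\geq r+2$). All three combine cleanly, with only a degree bookkeeping to carry out.

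First I would rewrite the left-hand side. Taking $c_1$ of the isomorphism in Proposition \ref{lower Griffiths and logarithmic cohomology} gives, for each $n$,
\[
c_1(\GK_{C,-}(\H^n(Y_\eta/C_\eta))) = \sum_{0\leq p\leq n} p\, c_1(R^{n-p} g_\ast \omega^p_{Y/C}).
\]
Substituting and reindexing with $q := n-p$ (and using that $\omega^p_{Y/C}=0$ for $p\geq N$ and $R^q g_\ast = 0$ for $q\geq N$, by relative dimension) converts the alternating sum into
\[
\sum_{n=0}^{2(N-1)} (-1)^{n-1} c_1(\GK_{C,-}(\H^n(Y_\eta/C_\eta))) = \sum_{0\leq p,q\leq N-1} (-1)^{p+q-1} p\, c_1(R^q g_\ast \omega^p_{Y/C}).
\]

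Next I would apply Proposition \ref{RRG and rho} to the proper morphism $g : Y \to C$ of relative dimension $m = N-1$, with the vector bundle $V := \omega^{1\vee}_{Y/C}$ of rank $N-1$. Since $\Exterior^p V^\vee = \omega^p_{Y/C}$, the codimension-$1$ statement of that proposition reads
\[
g_\ast\!\left[\left(\rho(\omega^{1\vee}_{Y/C})\,\frac{\Td([T_g])}{\Td(\omega^{1\vee}_{Y/C})}\right)^{(N)}\right] = \sum_{0\leq p,q\leq N-1} (-1)^{p+q-1} p\, c_1(R^q g_\ast \omega^p_{Y/C}),
\]
which is exactly the rewriting of the left-hand side obtained above. (Note that local freeness of the $R^q g_\ast \omega^p_{Y/C}$, guaranteed by Proposition \ref{logarithmic Hodge bundles}, is what ensures $\ch(R^q g_\ast \omega^p_{Y/C})^{(1)} = c_1(R^q g_\ast \omega^p_{Y/C})$.)

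Finally, it remains to replace $\rho$ by $\rho_{N-1}$ in the displayed formula. By Proposition \ref{first terms of rho(V)} applied to the rank-$(N-1)$ bundle $\omega^{1\vee}_{Y/C}$, the difference $\rho(\omega^{1\vee}_{Y/C}) - \rho_{N-1}([\omega^{1\vee}_{Y/C}])$ lies in $\CH^{\geq N+1}(Y)_\Q$. Since the ratio $\Td([T_g])/\Td(\omega^{1\vee}_{Y/C})$ starts with $1$, multiplying this difference by it keeps the result in codimension $\geq N+1$, hence the codimension-$N$ parts of the two products agree. This yields the claimed identity. The only step requiring real care is this last degree truncation, but it is mechanical once the three inputs are in place; I do not anticipate any serious obstacle.
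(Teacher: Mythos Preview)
Your proposal is correct and follows essentially the same approach as the paper's own proof: the same three inputs (Proposition \ref{lower Griffiths and logarithmic cohomology}, Proposition \ref{RRG and rho} applied to $V=\omega^{1\vee}_{Y/C}$, and Proposition \ref{first terms of rho(V)}) are chained in the same order, with the same reindexing and degree truncation. Your parenthetical remark about local freeness of $R^q g_\ast \omega^p_{Y/C}$ justifying $\ch^{(1)}=c_1$ is a nice touch that the paper leaves implicit.
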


\begin{proof}
Using Proposition \ref{lower Griffiths and logarithmic cohomology}, for every integer $n$, we have an isomorphism of line bundles over $C$:
$$\GK_{C,-}(\H^n(Y_\eta / C_\eta)) \simeq \bigotimes_{0 \leq p \leq n} \big(\det R^{n-p} g_\ast \omega^p_{Y/C}\big)^{\otimes p}.$$
Consequently, taking Chern classes and taking the alternate sum, we obtain the equality in $\CH^1(C)_\Q$:
$$\sum_{n=0}^{2(N-1)} (-1)^{n-1} c_1(\GK_{C,-}(\H^n(Y_\eta / C_\eta))) = \sum_{n=0}^{2(N-1)} (-1)^{n-1} \sum_{0 \leq p \leq n} p c_1(R^{n-p} g_\ast \omega^p_{Y/C}).$$
We reindex this sum on $p$ and $q := n-p$, and we get: 
$$\sum_{n=0}^{2(N-1)} (-1)^{n-1} c_1(\GK_{C,-}(\H^n(Y_\eta / C_\eta))) = \sum \limits_{\substack{p,q \geq 0,\\ p+q \leq 2(N-1)}} p (-1)^{p+q-1} c_1(R^q g_\ast \omega^p_{Y/C}).$$
Since the morphism $g$ is of relative dimension $N-1$, the term in the sum vanishes unless $p,q \leq N-1$, so we can rewrite:
\begin{align*}
\sum_{n=0}^{2(N-1)} (-1)^{n-1} c_1(\GK_{C,-}(\H^n(Y_\eta / C_\eta))) & = \sum_{0 \leq p,q \leq N-1} p (-1)^{p+q-1} c_1(R^q g_\ast \omega^p_{Y/C}), \\
&= \sum_{0 \leq p,q \leq N-1} p (-1)^{p+q-1} c_1(R^q g_\ast \Exterior^p (\omega^{1 \vee}_{Y/C})^\vee ).
\end{align*}
Now, we apply Proposition \ref{RRG and rho} to the morphism $g$ of relative dimension $N-1$ and the locally free sheaf
$$
V := \omega^{1 \vee}_{Y/C}
$$
of rank $N-1$. This yields the equality:
$$ \sum_{n=0}^{2(N-1)} (-1)^{n-1} c_1(\GK_{C,-}(\H^n(Y_\eta / C_\eta))) = g_\ast \left [ \left (\rho(\omega^{1 \vee}_{Y/C})  \frac{\Td([T_g])}{\Td(\omega^{1 \vee}_{Y/C})} \right )^{(N)} \right ],$$
hence applying Proposition \ref{first terms of rho(V)} to the vector bundle $V$ of rank $N-1$:
\begin{align*}
 & \sum_{n=0}^{2(N-1)} (-1)^{n-1} c_1(\GK_{C,-}(\H^n(Y_\eta / C_\eta))) \\ 
&= g_\ast \left [ \left (\{\rho_{N-1}(\omega^{1 \vee}_{Y/C}) + (\textrm{terms of codimension} \geq N+1) \}  \frac{\Td([T_g])}{\Td(\omega^{1 \vee}_{Y/C})} \right )^{(N)} \right ],\\
&= g_\ast \left [ \left (\rho_{N-1}(\omega^{1 \vee}_{Y/C})  \frac{\Td([T_g])}{\Td(\omega^{1 \vee}_{Y/C})} \right )^{(N)} \right ]. \qedhere
\end{align*}
\end{proof}

\subsection{The alternating product  of Griffiths line bundles and the geometry of singular fibers}

We adopt the notation of the previous subsection, and assume, without loss of generality, that the divisor $\Delta$ in $C$ is reduced.

Let us adopt the notation of Subsection \ref{Notation Chern classes omega^1 strict DNC}. Namely, we write the divisor with strict normal crossings $Y_\Delta$ as:
$$
D = \sum_{i \in I} m_i D_i,
$$
where $I$ is a finite set, for every $i$ in $I$, $m_i \geq  1$ is an integer, and $D_i$ is a smooth connected divisor, such that the $(D_i)_{i \in I}$ intersect each other transversally. We denote by $\cN_i$ the normal bundle $N_{D_i} Y$ of $D_i$ in $Y$.

The set $I$ may be written as the disjoint union:
$$I = \bigcup_{x \in \Delta} I_x,$$
where, for every $x \in \Delta,$ $I_x$ denotes the non-empty subset of $I$ defined by:
$$I_x := \big\{ i \in I \mid g(D_i) = \{x \} \big\}.$$

For every subset $J$ of $I$, let us denote:
$$
D_J := \bigcap_{i \in J} D_i,
$$
it is a smooth subscheme of codimension $|J|$.

We adopt the notation of \cite[II, 3.4]{Deligne70}. Namely, we denote, for every integer $r \geq 1$, $D^r$ the subscheme of codimension $r$ of $Y$ defined by the union of all the intersections of $r$ different components $(D_i)$:
$$
D^r := \bigcup_{J \subset I, |J| = r} D_J.
$$
Let us choose a total order $\preceq$ on $I$.

For every element $i$ in $I$, let us define an open subset $\Dcirc_i$ of the divisor $D_i$ by:
$$\Dcirc_i := D_i - D_i \cap D^2.$$
Similarly, for every pair $(i,j)$ in $I^2$ such that $i \prec j$, let us define an open subset $\Dcirc_{ij}$ of the subscheme
$$D_{ij} := D_{\{i,j\}}$$
by:
$$\Dcirc_{ij} := D_{ij} - D_{ij} \cap D^3.$$
Let $\chi_{\mathrm{top}}$ denote the topological Euler characteristic.

First recall the following classical theorem:

\begin{theorem} \label{chi_top complement of DNC 2}
Let $X$ be a proper smooth $n$-dimensional complex scheme and $E$ be a reduced divisor with strict normal crossings in $X$. We have the equality of integers:
\begin{equation*}
    \chi_{\mathrm{top}}(X - E) := \sum_{i = 0}^{n} (-1)^i \dim_\C H^i(X-E, \C) = \int_X c_n(\Omega^{1 \vee}_X (\log E)).
\end{equation*}
\end{theorem}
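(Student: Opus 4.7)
The plan rests on three ingredients: Deligne's logarithmic de Rham theorem, the Hirzebruch--Riemann--Roch formula, and an elementary Koszul-type identity on Chern characters.

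First, by Deligne's theorem (\cite{[Deligne70]}, II, Cor. 3.2.13) the inclusion $j: X-E \hookrightarrow X$ induces a quasi-isomorphism $\Omega^{\bullet}_X(\log E) \simeq R j_\ast \C_{X-E}$ in the analytic topology, and the associated Hodge-to-de Rham spectral sequence
$$E_1^{p,q} = H^q(X, \Omega^p_X(\log E)) \Longrightarrow H^{p+q}(X-E, \C)$$
degenerates at $E_1$. Taking alternating sums (which is how we interpret $\chi_{top}$; the sum displayed in the statement should be alternating), this gives the identity
$$\chi_{top}(X-E) = \sum_{p=0}^n (-1)^p \chi(X, \Omega^p_X(\log E)).$$

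Next, since $X$ is smooth and projective and each $\Omega^p_X(\log E)$ is locally free, Hirzebruch--Riemann--Roch yields
$$\chi(X, \Omega^p_X(\log E)) = \int_X \ch\bigl(\Omega^p_X(\log E)\bigr)\, \Td(T_X).$$
Setting $V := \Omega^{1\vee}_X(\log E)$ (the logarithmic tangent bundle, of rank $n$), one has $\Omega^p_X(\log E) = \Exterior^p V^\vee$, so summing over $p$ with alternating signs gives
$$\chi_{top}(X-E) = \int_X \Td(T_X) \cdot \sum_{p=0}^n (-1)^p \ch(\Exterior^p V^\vee).$$

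Finally, I invoke the standard Koszul identity: if $V$ has Chern roots $\alpha_1, \ldots, \alpha_n$, then
$$\sum_{p=0}^n (-1)^p \ch(\Exterior^p V^\vee) = \prod_{i=1}^n (1 - e^{-\alpha_i}) = \frac{\prod_i \alpha_i}{\prod_i \td(\alpha_i)} = \frac{c_n(V)}{\Td(V)}.$$
Substituting into the previous display yields
$$\chi_{top}(X-E) = \int_X c_n(V) \cdot \frac{\Td(T_X)}{\Td(V)}.$$
Since $c_n(V)$ already lives in the top degree $\CH^n(X)_\Q$ and the codimension-zero component of $\Td(T_X)/\Td(V)$ equals $1$, only this constant term contributes to the integral, and we conclude
$$\chi_{top}(X-E) = \int_X c_n(\Omega^{1\vee}_X(\log E)),$$
as desired.

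The principal technical input is the first step: Deligne's theorem together with the $E_1$-degeneration of the logarithmic Hodge-to-de Rham spectral sequence. The remaining two steps are formal once this is granted.
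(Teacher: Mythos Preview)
Your proof is correct, and it takes a genuinely different route from the paper's.

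The paper proceeds combinatorially: it invokes the identity \eqref{Chern class Omega1log} established earlier,
\[
c(\Omega^1_X(\log E)) = \sum_{J \subseteq I} i_{E_J \ast} c(\Omega^1_{E_J}),
\]
takes the degree-$n$ part, integrates, and applies the classical Gauss--Bonnet formula $\chi_{top}(E_J) = \int_{E_J} c_{\dim E_J}(T_{E_J})$ to each closed stratum $E_J$; inclusion--exclusion then collapses the alternating sum $\sum_J (-1)^{|J|} \chi_{top}(E_J)$ to $\chi_{top}(X - E)$. Your argument, by contrast, goes through Deligne's logarithmic de~Rham comparison and Hirzebruch--Riemann--Roch, reducing to the Koszul--Borel--Serre identity $\sum_p (-1)^p \ch(\Exterior^p V^\vee) = c_n(V)/\Td(V)$ and the observation that $c_n(V)$ already has top degree.

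Both arguments are standard; what each buys is different. The paper's route is self-contained within its own framework: the Chern-class identity \eqref{Chern class Omega1log} is proved there by induction on $\dim X$, and the only external input is Gauss--Bonnet for smooth proper varieties. Your route is shorter and avoids stratification entirely, but imports the heavier machinery of Deligne's theorem and HRR. One small remark: you do not actually need the $E_1$-degeneration of the logarithmic Hodge--de~Rham spectral sequence; the equality $\chi_{top}(X-E) = \sum_{p,q}(-1)^{p+q}\dim H^q(X,\Omega^p_X(\log E))$ follows from the quasi-isomorphism $\Omega^\bullet_X(\log E)\simeq Rj_\ast\C$ and the general fact that alternating sums of dimensions are preserved along any convergent spectral sequence.
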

\begin{proof}
We can give a proof of this theorem using the results of this paper. Let us adopt the notation of Proposition \ref{comparison Omega log and Omega easy cases general}, we have the equality of integers:
\begin{align*}
\int_X c_n(\Omega^{1 \vee}_X (\log E)) &= (-1)^n \int_X c_n(\Omega^1_X (\log E)),\\
&= (-1)^n \sum_{J \subset I} \int_{E_J} c_{n-|J|}(\Omega^1_{E_J}) \textrm{ using equality \eqref{Chern class Omega1log} from Proposition \ref{comparison Omega log and Omega easy cases general},}\\
&= \sum_{J \subset I} (-1)^{|J|} \int_{E_J} c_{n-|J|}(T_{E_J}),\\
&= \sum_{J \subset I} (-1)^{|J|} \chi_{\mathrm{top}}(E_J),\\
&= \chi_{\mathrm{top}}(X) - \chi_{\mathrm{top}}(E) \textrm{ using the inclusion-exclusion formula,}\\
&= \chi_{\mathrm{top}}(X - E). \qedhere
\end{align*}
\end{proof}

\begin{theorem}
\label{GK in terms of classes in D non-reduced}
With the notation of this subsection, we have the equality in $\CH_0(C)_\Q$:
\begin{equation}
    \sum_{n=0}^{2(N-1)} (-1)^{n-1} c_1(\GK_{C,-}(\H^n(Y_\eta/C_\eta))) = \frac{1}{12} g_\ast (c_1 c_{N-1})(\omega^{1 \vee}_{Y/C}) + \sum_{x \in \Delta} \alpha_x [x],
\end{equation}
where for every $x$ in $\Delta$, $\alpha_x$ is the rational number given by:
\begin{align}
&\alpha_x = \frac{N-1}{4} \sum_{i \in I_x} (m_i - 1) \chi_{\mathrm{top}}(\Dcirc_i) + \frac{1}{12} \sum_{\substack{(i,j) \in I_x^2, \\ i \prec j}} (3 - m_i/m_j - m_j/m_i) \chi_{\mathrm{top}}(\Dcirc_{ij}), \label{geommult}\\
&= \frac{1}{12} \sum_{i \in I_x} \Big[3 (N-1) (m_i - 1) \chi_{\mathrm{top}}(\Dcirc_i) + \int_{D_i} c_1(\cN_i) c_{N-2}(\Omega^{1 \vee} _{D_i} (\log D_i \cap D^2)) \Big] + \frac{1}{4} \sum_{\substack{(i,j) \in I_x^2, \\ i \prec j}} \chi_{\mathrm{top}}(\Dcirc_{ij}). \label{Z/12}
\end{align}
\end{theorem}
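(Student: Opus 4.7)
The plan is to start from Theorem~\ref{GK and rho_r}, which reduces the computation of $\sum_n (-1)^{n-1} c_1(\GK_{C,-}(\H^n(Y_\eta/C_\eta)))$ to extracting the codimension~$N$ part of $\rho_{N-1}(\omega^{1\vee}_{Y/C})\cdot \Td([T_g])/\Td(\omega^{1\vee}_{Y/C})$ and pushing it forward by $g_\ast$. Write $V := \omega^{1\vee}_{Y/C}$, which has rank~$N-1$, so by the definition of $\rho_{N-1}$ in Definition~\ref{rho and rho_r},
$$\rho_{N-1}(V)= c_{N-2}(V)-\tfrac{N-1}{2}c_{N-1}(V)+\tfrac{1}{12}c_1(V)c_{N-1}(V),$$
whose homogeneous components sit in codimensions $N-2$, $N-1$ and $N$ respectively. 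On the other hand Proposition~\ref{first terms Td/Td general} gives
$$\Td([T_g])/\Td(V)=1+A^{(1)}+A^{(2)}+\text{higher codimension},$$
where the summands $A^{(i)}$ are supported on $Y_\Delta$ for $i\geq 1$, and are computed explicitly in \eqref{Td/Td degree 1 general} and in the two equivalent forms \eqref{eq Td/Td degree 2 with c_1(N_i)}--\eqref{eq Td/Td degree 2 with multiplicities}.

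Multiplying and taking the codimension~$N$ part, only three contributions survive:
$$\bigl[\rho_{N-1}(V)\,\Td([T_g])/\Td(V)\bigr]^{(N)}=\tfrac{1}{12}c_1(V)c_{N-1}(V)-\tfrac{N-1}{2}c_{N-1}(V)\cdot A^{(1)}+c_{N-2}(V)\cdot A^{(2)}.$$
Applying $g_\ast$, the first term yields the ``smooth'' main term $\tfrac{1}{12}g_\ast(c_1c_{N-1})(V)$ in the statement. The other two terms are supported on $Y_\Delta=\bigsqcup_{x\in\Delta} g^{-1}(x)$, so they produce the contributions $\alpha_x[x]$, with $\alpha_x$ determined by the strata $D_i$, $D_{ij}$ with $i,j\in I_x$.

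To compute the boundary contributions, I use the projection formula together with Proposition~\ref{omega^1 restricted to a strata}, which identifies $V_{|D_J}=\Omega^{1\vee}_{D_J}(\log D_J\cap D^{|J|+1})^{\ldots}$ so that for any $J$,
$$c_{N-|J|}(V)_{|D_J}=c_{\dim D_J}\bigl(\Omega^{1\vee}_{D_J}(\log D_J\cap D^{|J|+1})\bigr).$$
Combined with Theorem~\ref{chi_top complement of DNC 2}, this turns integrals of top Chern classes of $\Omega^{1\vee}_{D_i}(\log D_i\cap D^2)$ and $\Omega^{1\vee}_{D_{ij}}(\log D_{ij}\cap D^3)$ over $D_i$ and $D_{ij}$ into the topological Euler characteristics $\chi_{top}(\Dcirc_i)$ and $\chi_{top}(\Dcirc_{ij})$. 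Substituting the form \eqref{eq Td/Td degree 2 with multiplicities} of $A^{(2)}$ yields directly expression \eqref{geommult} for $\alpha_x$. Substituting instead the form \eqref{eq Td/Td degree 2 with c_1(N_i)} of $A^{(2)}$ yields expression \eqref{Z/12}: the term $\tfrac{1}{12}\sum_i i_{D_i\ast}c_1(\cN_i)$ in $A^{(2)}$ contributes $\tfrac{1}{12}\sum_{i\in I_x}\int_{D_i}c_1(\cN_i)c_{N-2}\bigl(\Omega^{1\vee}_{D_i}(\log D_i\cap D^2)\bigr)$, while the term $\tfrac{1}{4}\sum_{i\prec j}[D_{ij}]$ contributes $\tfrac{1}{4}\sum_{(i,j)\in I_x^2,\,i\prec j}\chi_{top}(\Dcirc_{ij})$.

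The main technical obstacle is bookkeeping: one must verify that no unwanted codimension contribution arises from the product (which is ensured because $\rho_{N-1}(V)$ has no component of codimension less than $N-2$, so $A^{(k)}$ with $k\geq 3$ would push to codim $> N$), and that the two forms of $\alpha_x$ agree on the nose. The latter equivalence is precisely the content of the second equality in Proposition~\ref{first terms Td/Td general}, so no extra verification is needed beyond consistency with that proposition. The remaining subtlety is that components $D_i$ with $i\in I_x$ are mapped entirely to the single point $x$, so each pushforward by $g\circ i_{D_J}$ of a zero-cycle on $D_J$ amounts simply to its degree multiplied by $[x]$.
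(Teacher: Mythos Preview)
Your proof is correct and follows essentially the same route as the paper's own argument: start from Theorem~\ref{GK and rho_r}, expand $\rho_{N-1}$ into its three homogeneous pieces, pair them against the degree~$0$, $1$, and $2$ components of $\Td([T_g])/\Td(\omega^{1\vee}_{Y/C})$ given by Proposition~\ref{first terms Td/Td general}, then restrict to the strata via Proposition~\ref{omega^1 restricted to a strata} and convert the resulting top Chern class integrals to Euler characteristics using Theorem~\ref{chi_top complement of  DNC 2}. The only cosmetic slip is the notation ``$V_{|D_J}=\Omega^{1\vee}_{D_J}(\log D_J\cap D^{|J|+1})^{\ldots}$'': Proposition~\ref{omega^1 restricted to a strata} asserts equality of total Chern classes, not an isomorphism of bundles (the ranks differ), but since you only use the Chern-class identity this does not affect the argument.
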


The expression \eqref{geommult} for $\alpha_x$ involves only the topology of the open strata $\Dcirc_i$ and $\Dcirc_{ij}$ of the singular fibers of $g$ and the multiplicities $m_i$ of their components $D_i$. The expression \eqref{Z/12} makes clear that $\alpha_x$ belongs to $(1/12)\Z$.

\begin{proof}
By Theorem \ref{GK and rho_r} and by  definition of the class $\rho_{N-1}$, the following equalities hold in $\CH_0(C)_\Q$:
\begin{align}
\sum_{n=0}^{2(N-1)} &(-1)^{n-1} c_1(\GK_{C,-}(\H^n(Y_\eta / C_\eta))) \nonumber\\
&= g_\ast \Big [\Big (\rho_{N-1}(\omega^{1 \vee}_{Y/C}) . \frac{\Td([T_g])}{\Td(\omega^{1 \vee}_{Y/C})} \Big )^{(N)} \Big ], \nonumber\\
&= g_\ast \Big  [ \Big ( (c_{N-2}(\omega^{1 \vee}_{Y/C}) - \frac{N-1}{2} c_{N-1}(\omega^{1 \vee}_{Y/C}) + \frac{1}{12} (c_1 c_{N-1})(\omega^{1 \vee}_{Y/C}) )  \frac{\Td([T_g])}{\Td(\omega^{1 \vee}_{Y/C})} \Big )^{(N)} \Big ], \nonumber\\
&= \frac{1}{12} g_\ast (c_1 c_{N-1})(\omega^{1 \vee}_{Y/C}) - \frac{N-1}{2} g_\ast \Big [c_{N-1}(\omega^{1 \vee}_{Y/C})  \, \Big (\frac{\Td([T_g])}{\Td(\omega^{1 \vee}_{Y/C})} \Big )^{(1)} \Big ]  \nonumber\\
&+ g_\ast \Big [c_{N-2}(\omega^{1 \vee}_{Y/C}) \Big (\frac{\Td([T_g])}{\Td(\omega^{1 \vee}_{Y/C})} \Big  )^{(2)} \Big ]. \label{first equality computation rho omega}
\end{align}
Using successively equality \eqref{Td/Td degree 1 general} from Proposition \ref{first terms Td/Td general} and  Proposition \ref{omega^1 restricted to a strata},
we get the following equality in $\CH^N(Y)_\Q$:
\begin{align*}
c_{N-1}(\omega^{1 \vee}_{Y/C}) \, \Big (\frac{\Td([T_g])}{\Td(\omega^{1 \vee}_{Y/C})} \Big ) ^{(1)} 
&= -\frac{1}{2} \sum_{i \in I} (m_i - 1) i_{D_i \ast} c_{N-1}(\omega^{1 \vee}_{Y/C | D_i} ),\\
&= -\frac{1}{2} \sum_{i \in I} (m_i - 1) i_{D_i \ast} c_{N-1}(\Omega^{1 \vee}_{D_i} (\log D_i \cap D^2))
\end{align*}
hence pushing forward by $g$, the equality in $\CH_0(C)_\Q$:
\begin{align*}
g_\ast \Big [ c_{N-1}(\omega^{1 \vee}_{Y/C}) \, \Big (\frac{\Td([T_g])}{\Td(\omega^{1 \vee}_{Y/C})} \Big )^{(1)} \Big ]
&= -\frac{1}{2} \sum_{i \in I} (m_i - 1) g_\ast i_{D_i \ast} c_{N-1}(\Omega^{1 \vee}_{D_i} (\log D_i \cap D^2)),\\
&= \sum_{x \in \Delta} \alpha_{1,x} [x],
\end{align*}
where for every point $x$ in $\Delta$, $\alpha_{1,x}$ is the rational number given by:
\begin{align*}
\alpha_{1,x} &= -\frac{1}{2} \sum_{i \in I_x} (m_i - 1) \int_{D_i} c_{N-1}(\Omega^{1 \vee}_{D_i} (\log D_i \cap D^2)),\\
&= -\frac{1}{2} \sum_{i \in I_x} (m_i - 1)\,  \chi_{\mathrm{top}}(\Dcirc_i) \textrm{ \quad \quad using Theorem \ref{chi_top complement of DNC 2}.}
\end{align*}
Similarly, using equality \eqref{eq Td/Td degree 2 with multiplicities} from Proposition \ref{first terms Td/Td general} and Proposition \ref{omega^1 restricted to a strata},  the following  equalities hold in $\CH^N(Y)_\Q$:
\begin{align*}
c_{N-2}(\omega^{1 \vee}_{Y/C}) \Big (\frac{\Td([T_g])}{\Td(\omega^{1 \vee}_{Y/C})} \Big  )^{(2)}
&= \frac{1}{12} \sum_{\substack{(i,j) \in I^2, \\ i \prec j}} (3 - m_i/m_j - m_j/m_i) i_{D_{ij} \ast} c_{N-2}(\omega^{1 \vee}_{Y/C | D_{ij}} ),\\
&= \frac{1}{12} \sum_{\substack{(i,j) \in I^2, \\ i \prec j}} (3 - m_i/m_j - m_j/m_i) i_{D_{ij} \ast} c_{N-2}(\Omega^{1 \vee}_{D_{ij}} (\log D_{ij} \cap D^3) ),
\end{align*}
hence pushing forward by $g$, the equality in $\CH_0(C)_\Q$:
$$
g_\ast \Big [c_{N-2}(\omega^{1 \vee}_{Y/C})  \Big (\frac{\Td([T_g])}{\Td(\omega^{1 \vee}_{Y/C})} \Big )^{(2)} \Big ]
= \sum_{x \in \Delta} \alpha_{2,x} [x],
$$
where, for every point $x$ in $\Delta$, $\alpha_{2,x}$ is the rational number given by:
\begin{align*}
\alpha_{2,x} &= \frac{1}{12} \sum_{\substack{(i,j) \in I_x^2, \\ i \prec j}} (3 - m_i/m_j - m_j/m_i) \int_{D_{ij}} c_{N-2}(\Omega^{1 \vee}_{D_{ij}} (\log D_{ij} \cap D^3) ),\\
&= \frac{1}{12} \sum_{\substack{(i,j) \in I_x^2, \\ i \prec j}} (3 - m_i/m_j - m_j/m_i) \chi_{\mathrm{top}}(\Dcirc_{ij} ) \textrm{ using again Theorem \ref{chi_top complement of DNC 2}.}
\end{align*}
Hence replacing in \eqref{first equality computation rho omega}, we obtain the equality in $\CH_0(C)_\Q$:
$$
\sum_{n=0}^{2(N-1)} (-1)^{n-1} c_1(\GK_{C,-}(\H^n(Y_\eta / C_\eta)))
= \frac{1}{12} g_\ast (c_1 c_{N-1})(\omega^{1 \vee}_{Y/C}) + \sum_{x \in \Delta} \alpha_x [x],
$$
where for every point $x$ in $\Delta$, the rational number $\alpha_x$ is given by:
\begin{align*}
&\alpha_x = - \frac{N-1}{2} \alpha_{1,x} + \alpha_{2,x},\\
&= \frac{N-1}{4} \sum_{i \in I_x} (m_i - 1) \chi_{\mathrm{top}}(\Dcirc_i) + \frac{1}{12} \sum_{\substack{(i,j) \in I_x^2, \\ i \prec j}} (3 - m_i/m_j - m_j/m_i) \chi_{\mathrm{top}}(\Dcirc_{ij}).
\end{align*}
For the other expression of the rational number $\alpha_x$, one reasons similarly using the expression from \eqref{eq Td/Td degree 2 with c_1(N_i)} from Proposition \ref{first terms Td/Td general} instead of \eqref{eq Td/Td degree 2 with multiplicities}.
\end{proof}

\chapter[Griffiths line bundles and non-degenerate critical points]{The alternating product of Griffiths line bundles associated to a proper fibration with  non-degenerate critical points}

The objective of this chapter is to prove the following theorem.

\begin{theorem}
\label{GK critical points}
Let $C$ be a connected smooth projective complex curve with generic point $\eta$, $H$ be a smooth projective $N$-dimensional complex scheme, and let
$$
f : H \lra C
$$
be a morphism of complex schemes. Let us assume that there exists a finite subset $\Sigma$ in $H$ such that $f$ is smooth on $H - \Sigma$ and admits a non-degenerate critical point at any point of $\Sigma$.

Let $[\Sigma]$ be the reduced $0$-cycle in $H$ associated with the finite subset $\Sigma$. 

We have the equalities in $\CH_0(C)_\Q$:
\begin{equation}
\label{pas intro GK- points doubles}
\sum_{n = 0}^{2(N-1)} (-1)^{n-1} c_1\big(\GK_{C, -}(\H^n(H_\eta / C_\eta))\big)
= \frac{1}{12} f_\ast \big((c_1 c_{N-1})([\Omega^1_{H/C}]^\vee)\big) + u^-_N f_\ast [\Sigma],
\end{equation}
and
\begin{equation}
\sum_{n = 0}^{2(N-1)} (-1)^{n-1} c_1\big(\GK_{C, +}(\H^n(H_\eta / C_\eta))\big)
= \frac{1}{12} f_\ast \big((c_1 c_{N-1}\big)([\Omega^1_{H/C}]^\vee)\big) + u_N^+ f_\ast [\Sigma], \label{pas intro GK+ points doubles}
\end{equation}
where $u^-_N$ and $u^+_N$ are the rational numbers defined by: 
\begin{equation*}
u^-_N := 
\begin{cases}
 (5N-3)/24 & \text{if $N$ is odd} \\
 N/24 & \text{if $N$ is even},
\end{cases}
\end{equation*}
and:
\begin{equation*}
u^+_N := 
\begin{cases}
 -(7N-9)/24 & \text{if $N$ is odd} \\
 N/24 & \text{if $N$ is even}.
\end{cases}
\end{equation*}
\end{theorem}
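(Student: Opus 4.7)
The plan is to reduce to the divisor-with-strict-normal-crossings case by blowing up the critical points and then apply Theorem \ref{GK in terms of classes in D non-reduced}. Specifically, let $\nu : Y := \widetilde{H} \to H$ be the blow-up of the finite set $\Sigma$ of non-degenerate critical points, and set $g := f \circ \nu$. As recalled in Subsection \ref{Proof Prop eens}, the variety $Y$ is smooth of dimension $N$, and $Y_\Delta = g^\ast \Delta$ is a divisor with strict normal crossings decomposing as $Y_\Delta = 2\sum_{P \in \Sigma} E_P + W$, where each $E_P \simeq \PP^{N-1}$, where $W$ is the smooth proper transform of $H_\Delta$, where $E_P \cap W =: Q_P$ is a smooth quadric of dimension $N-2$ inside $E_P$, and where $E_P \cap E_{P'} = \emptyset$ for $P \neq P'$. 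Since $\nu$ is an isomorphism over $H - \Sigma \supseteq f^{-1}(C - \Delta)$, the VHS $\H^n(H_\eta/C_\eta)$ and $\H^n(Y_\eta/C_\eta)$ coincide, so their Griffiths line bundles coincide as well.

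Applying Theorem \ref{GK in terms of classes in D non-reduced} to the DNC degeneration $g$ then yields
\begin{equation*}
\sum_{n=0}^{2(N-1)} (-1)^{n-1} c_1(\GK_{C,-}(\H^n(H_\eta/C_\eta))) = \tfrac{1}{12}\, g_\ast (c_1 c_{N-1})(\omega^{1\vee}_{Y/C}) + \sum_{x \in \Delta} \alpha_x [x].
\end{equation*}
The local contributions $\alpha_x$ are computed from formula \eqref{geommult} using the indexed components $I_x = \{E_P : P \in \Sigma_x\} \cup \{W_x\}$ with multiplicities $(2,\dots,2,1)$. The only nonempty double intersections are the $(E_P, W_x)$, contributing with coefficient $3 - 2/1 - 1/2 = 1/2$; moreover $\chi_{top}(\Dcirc_{E_P}) = N - \chi_{top}(Q_{N-2})$, $\chi_{top}(\Dcirc_{E_P W_x}) = \chi_{top}(Q_{N-2})$, and $\Dcirc_{W_x}$ drops out because $m_{W_x} - 1 = 0$. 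Using $\chi_{top}(Q_{N-2}) = N-1$ for $N$ odd and $N$ for $N$ even yields explicit values of $\alpha_x$ proportional to $|\Sigma_x|$.

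The remaining step, which I expect to be the main technical obstacle, is to rewrite $g_\ast (c_1 c_{N-1})(\omega^{1\vee}_{Y/C})$ in terms of $f_\ast (c_1 c_{N-1})([T_f])$. Because $D^3 = \emptyset$, Proposition \ref{comparison c_r when D^3 empty non-reduced} expresses $c_r(\omega^{1\vee}_{Y/C}) - c_r([T_g])$ as an explicit cycle supported on the $E_P$ and the $Q_P = E_P \cap W$ for $r = N-1$, while for $r = 1$ it simplifies to $c_1(\omega^{1\vee}_{Y/C}) - c_1([T_g]) = \sum_P [E_P]$. Expanding the product $c_1 c_{N-1}$ and combining with the standard blow-up formula relating $[T_Y]$ and $\nu^\ast [T_H]$ (which adds a correction supported on $E$ with known Chern class via $\mathcal{N}_{E_P/Y} = \mathcal{O}_{\PP^{N-1}}(-1)$), all the correction terms become cycles supported on the $E_P$ and $Q_P$. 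Their $g_\ast$-pushforwards are local invariants depending only on $N$, contributing a term $\kappa_N \, f_\ast [\Sigma]$ at each critical point. Summing $\alpha_x + \kappa_N \lvert \Sigma_x \rvert$ gives $u_N^- \, \lvert \Sigma_x \rvert$, establishing \eqref{pas intro GK- points doubles}; the arithmetic must be carried out separately in the parity cases $N$ even and $N$ odd.

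For the upper Griffiths height \eqref{pas intro GK+ points doubles}, I would use Corollary \ref{GK bundles with elementary exponents for non-degenerate singularites}, specifically the identity \eqref{c1 GK + non-degenerate 1} that gives
\begin{equation*}
c_1(\GK_{C,+}(\H^n(H_\eta/C_\eta))) - c_1(\GK_{C,-}(\H^n(H_\eta/C_\eta))) = \delta^{n, N-1}\, \eta_N\, \tfrac{N-1}{2}\, f_\ast [\Sigma].
\end{equation*}
Taking the alternating sum over $n$, only the middle term $n = N-1$ contributes, with sign $(-1)^{N-2} = (-1)^N$. Hence $u_N^+ = u_N^- + (-1)^N \eta_N (N-1)/2$, which vanishes for $N$ even (consistent with $u_N^+ = u_N^- = N/24$) and, for $N$ odd, yields $u_N^+ = (5N-3)/24 - (N-1)/2 = -(7N-9)/24$ as required.
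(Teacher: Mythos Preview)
Your proposal is correct and follows essentially the same route as the paper: blow up $\Sigma$, apply Theorem \ref{GK in terms of classes in D non-reduced} to the resulting DNC degeneration, compute the local terms $\alpha_x$ via the Euler characteristics of $E_P$, $Q_P$, and their complements, and then reduce $g_\ast (c_1 c_{N-1})(\omega^{1\vee}_{Y/C})$ to $f_\ast (c_1 c_{N-1})([T_f])$ plus a multiple of $f_\ast[\Sigma]$ using Proposition \ref{comparison c_r when D^3 empty non-reduced} together with the blow-up formula for Chern classes; the passage from $u_N^-$ to $u_N^+$ via Corollary \ref{GK bundles with elementary exponents for non-degenerate singularites} is exactly as in the paper. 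The step you flag as the main technical obstacle is indeed the bulk of the work: the paper packages it as Lemma \ref{nu_ast c_1 c_N-1 omega points doubles}, proved by first comparing $(c_1 c_{N-1})(\omega^{1\vee}_{\tilde H/C})$ with $(c_1 c_{N-1})([T_g])$ (Proposition \ref{computation c_1 c_N-1 omega blow-up}) and then $(c_1 c_{N-1})([T_g])$ with $\nu^\ast(c_1 c_{N-1})([T_f])$ (Proposition \ref{nu_ast c_1 c_N-1}), exactly along the lines you sketch.
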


Observe that, as the morphism of non-singular complex schemes $f:H \ra C$ is smooth on an open dense subset of $H$, we have the following equality in $K^0(H) \simeq K_0(H)$:
$$
[T_f] = [\Omega^1_{H/C}] ^\vee,
$$
where $.^\vee$ denotes the duality involution on $K^0(H)$, by the same reasoning as in 
 Subsection \ref{Comparing the characteristic classes of T and omega}.

\section[Reduction of the proof to a computation in $\CH_0(H)$]{Reduction of the proof of Theorem \ref{GK critical points} to  a computation in $\CH_0(H)$}
\label{Reduction of the proof}

As in Theorem \ref{GK critical points}, let $C$ be a connected smooth projective complex curve, let $H$ be a smooth projective complex scheme of pure dimension $N \geq 1$, and let
$$
f : H \lra C
$$
be a proper surjective morphism with a finite subset $\Sigma$ of critical points, all of which are non-degenerate.

Let $\Delta := f(\Sigma)$ be the set-theoretic image of $\Sigma$, that we see as a reduced divisor in $C$. We can rephrase the hypothesis as the divisor in $H$:
$$
H_\Delta := f^\ast \Delta
$$
having only finitely many singularities, all of which are ordinary double points.

Let
$$
\nu : \tilde{H} \lra H
$$
be the blow-up of $H$ at $\Sigma$, and let $E$ be the exceptional divisor. Let
$$
g := f \circ \nu : \tilde{H} \lra C
$$
be its composition with $f$. Over the generic point $\eta$ of $C$, this morphism can be identified with the morphism $f$.

For every point $P$ in $\Sigma$, let us define a subscheme in $\tilde{H}$ by:
$$E_P := \nu^{-1}(\{P\}),$$
so that the $(E_P)_{P \in \Sigma}$ are the connected components of the exceptional divisor $E$. In particular, they are disjoint divisors, isomorphic to the complex projective space $\PP^{N-1}$; see for instance Proposition \ref{nu_* eta_p^N}  below.

Let us define a divisor in $\tilde{H}$ by:
$$
\tilde{H}_\Delta := g^\ast \Delta = \nu^\ast H_\Delta.
$$
Since the divisor $H_\Delta$ has its only singularities at points of $\Sigma$, and since all of these singularities are ordinary double points, its pullback divisor $\tilde{H}_\Delta$ can be written as follows: 
\begin{equation}
\tilde{H}_\Delta = 2 \sum_{P \in \Sigma} E_P + W \label{decomposition Htilde_Delta}
\end{equation}
where $W$ is the proper transform in $\tilde{H}$ of the divisor $H_\Delta$ in $H$; it is a non-singular divisor intersecting transversally the components $(E_P)_{P\in \Sigma}$ of the exceptional divisor, and for every point $P$, the intersection $E_P \cap W$ is a smooth quadric in the projective space $E_P$.

In particular, the divisor $\tilde{H}_\Delta$ is a divisor with strict normal crossings, which allows us to define the relative logarithmic bundle $\omega^1_{\tilde{H}/C}$.

We are going to apply Theorem \ref{GK in terms of classes in D non-reduced} to the morphism $g$ and to the variations of Hodge structures of relative cohomology:
$$\H^n({H}_\eta/C_\eta)  \lrasim \H^n(\widetilde{H}_\eta/C_\eta).$$ 
For this, we are going to prove the following lemma using the results of Chapter \ref{section comparing Omega^1 and omega^1}.

\begin{lemma} \label{nu_ast c_1 c_N-1 omega points doubles}
With the above notation, the following formula holds in $\CH^N(H)$:
\begin{equation}
\nu_\ast\big( (c_1 c_{N-1})(\omega^{1\vee}_{\widetilde{H}/C})\big) = (c_1 c_{N-1})([T_f]) - (N-2) \frac{1 - (-1)^N}{2} [\Sigma].
\end{equation}
\end{lemma}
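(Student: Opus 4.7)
The plan is to apply Proposition \ref{comparison c_r when D^3 empty non-reduced} to $g: \widetilde{H} \to C$, then push the resulting identities forward by $\nu$. The DNC divisor $\widetilde{H}_\Delta = 2\sum_P E_P + W$ satisfies $D^3 = \emptyset$, since the components $E_P$ of the exceptional divisor are pairwise disjoint, the smooth divisor $W$ has pairwise disjoint components, and each non-empty intersection $E_P \cap W$ is a smooth quadric $Q_P \subset E_P \simeq \mathbb{P}^{N-1}$ (degenerating to two points when $N=2$). Applying \eqref{eq c_r omega and T_g when D^3 empty} with $r=1$ and $r=N-1$ gives
\begin{align*}
c_1(\omega^{1\vee}_{\widetilde{H}/C}) &= c_1([T_g]) + \textstyle\sum_P [E_P], \\
c_{N-1}(\omega^{1\vee}_{\widetilde{H}/C}) &= c_{N-1}([T_g]) + \textstyle\sum_P i_{E_P \ast}c_{N-2}(T_{E_P}) - 2\textstyle\sum_P i_{Q_P \ast}c_{N-3}(T_{Q_P}).
\end{align*}

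From the blowup sequence $0 \to \nu^\ast \Omega^1_H \to \Omega^1_{\widetilde{H}} \to \bigoplus_P i_{E_P \ast}\Omega^1_{E_P} \to 0$, Grothendieck--Serre duality yields in $K^0(\widetilde{H})$ the identity $[T_g] = \nu^\ast[T_f] - \sum_P i_{E_P \ast}[T_{E_P} \otimes \mathcal{O}_{E_P}(E_P)]$. Since $c_1$ of $i_{E_P \ast}$ applied to a rank-$(N-1)$ virtual bundle equals $(N-1)[E_P]$, the first formula above simplifies to $c_1(\omega^{1\vee}_{\widetilde{H}/C}) = \nu^\ast c_1([T_f]) - (N-2)\sum_P[E_P]$. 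Multiplying by $c_{N-1}(\omega^{1\vee}_{\widetilde{H}/C})$ and applying the projection formula, one gets
$$\nu_\ast(c_1 c_{N-1})(\omega^{1\vee}_{\widetilde{H}/C}) = c_1([T_f]) \cdot \nu_\ast c_{N-1}(\omega^{1\vee}_{\widetilde{H}/C}) - (N-2)\textstyle\sum_P \nu_\ast\bigl([E_P] \cdot c_{N-1}(\omega^{1\vee}_{\widetilde{H}/C})\bigr).$$
The correction terms in $c_{N-1}(\omega^{1\vee}_{\widetilde{H}/C})$ supported on $E_P$ or $Q_P$ push forward to zero in $\CH^{N-1}(H)$ for dimensional reasons ($\nu(E_P) = \{P\}$ has codimension $N$), and the analogous argument applied to the $K$-theoretic correction $[T_g] - \nu^\ast[T_f]$ gives $\nu_\ast c_{N-1}(\omega^{1\vee}_{\widetilde{H}/C}) = c_{N-1}([T_f])$.

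The exceptional pushforward $\nu_\ast([E_P] \cdot c_{N-1}(\omega^{1\vee}_{\widetilde{H}/C}))$ splits into three pieces via the projection formula, using $i_{E_P}^\ast[E_P] = -H_P$ (where $H_P$ is the hyperplane class of $E_P \simeq \mathbb{P}^{N-1}$) and $i_{Q_P}^\ast[E_P] = -H_P|_{Q_P}$. The contribution from $c_{N-1}([T_g])$ is computed via the $K$-theoretic self-intersection formula $i_{E_P}^\ast i_{E_P \ast}[V] = [V](1 - [\mathcal{O}_{E_P}(1)])$, which yields $i_{E_P}^\ast[T_g] = (N-1) + [T_{E_P}] - [T_{E_P}(-1)]$, and the Euler sequences on $\mathbb{P}^{N-1}$ give $c(i_{E_P}^\ast[T_g]) = (1+H_P)^N(1-H_P)$, whose coefficient of $H_P^{N-1}$ is $N - \binom{N}{2}$. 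Combined with $c_{N-2}(T_{\mathbb{P}^{N-1}}) = \binom{N}{2}H_P^{N-2}$, the adjunction formula $c(T_{Q_P}) = (1+H_P)^N/(1+2H_P)|_{Q_P}$, and $\deg_{Q_P}(H_P|_{Q_P}^{N-2}) = 2$, this gives
$$\nu_\ast\bigl([E_P] \cdot c_{N-1}(\omega^{1\vee}_{\widetilde{H}/C})\bigr) = \bigl[-N(N-2) + 4\alpha_N\bigr]\cdot[P],$$
where $\alpha_N$ is the coefficient of $H^{N-3}$ in $(1+H)^N/(1+2H)$.

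The main obstacle is the final arithmetic identity: the lemma is equivalent to $-N(N-2) + 4\alpha_N = (1-(-1)^N)/2$, i.e.\ $\alpha_N = (N-1)^2/4$ for $N$ odd and $\alpha_N = N(N-2)/4$ for $N$ even. This follows after substituting $u = 2H$, writing $(1+H)^N/(1+2H) = (2+u)^N/(2^N(1+u))$, expanding $1/(1+u) = \sum_{k \geq 0}(-1)^k u^k$, and computing the coefficient of $u^{N-3}$ via the evaluation $\sum_{k=0}^N \binom{N}{k}(-1)^k 2^{N-k} = 1$, which reduces the needed partial sum to the complement of the three top binomial contributions and collapses to the stated closed form.
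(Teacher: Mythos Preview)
Your proof is correct and takes a genuinely different route from the paper's. The paper proves the lemma by composing two separate computations: Proposition~\ref{computation c_1 c_N-1 omega blow-up}, which gives $(c_1c_{N-1})(\omega^{1\vee}_{\widetilde H/C}) = (c_1c_{N-1})([T_g]) + \beta(N)\sum_P\eta_P^N$ for an explicit constant $\beta(N)$, and Proposition~\ref{nu_ast c_1 c_N-1}, which gives $\nu_\ast(c_1c_{N-1})([T_g]) = (c_1c_{N-1})([T_f]) + \tfrac{N(N-1)(N-3)}{2}[\Sigma]$; the latter rests on Fulton's explicit Chern-class formula for a point blowup (Proposition~\ref{comparison Omega^1 and nu^ast Omega^1} and Corollary~\ref{blowing up c_1 c_N-1}). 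The two correction constants are then added and simplified.

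You instead short-circuit both intermediate constants. Using the blowup exact sequence $0\to\nu^\ast\Omega^1_H\to\Omega^1_{\widetilde H}\to\bigoplus_P i_{E_P\ast}\Omega^1_{E_P}\to 0$ and duality in $K$-theory, you collapse the $c_1$-level identity directly to $c_1(\omega^{1\vee}_{\widetilde H/C}) = \nu^\ast c_1([T_f]) - (N-2)\sum_P[E_P]$, and then observe via a dimension argument (every correction class supported on $E_P$ pushes to codimension $\geq N$ in $H$, hence vanishes in $\CH^{N-1}(H)$) that $\nu_\ast c_{N-1}(\omega^{1\vee}_{\widetilde H/C}) = c_{N-1}([T_f])$. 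This reduces the entire lemma to the single exceptional number $\nu_\ast([E_P]\cdot c_{N-1}(\omega^{1\vee}_{\widetilde H/C}))$, which you compute by restriction to $E_P\simeq\PP^{N-1}$. The final combinatorial identity $-N(N-2)+4\alpha_N = \tfrac{1-(-1)^N}{2}$ is exactly the content of the paper's Proposition~\ref{formal identities} with $n=N$, $r=3$, $a=2$, and your verification via the binomial identity $(2-1)^N=1$ is equivalent. The paper's route is more modular (the intermediate Propositions~\ref{nu_ast c_1 c_N-1} and~\ref{computation c_1 c_N-1 omega blow-up} stand alone), while yours is shorter and never computes $\beta(N)$ or the full blowup Chern polynomial. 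One small point worth making explicit in your write-up: the fact that $c_k(i_{E_P\ast}[\cdots])\in i_{E_P\ast}\CH^{k-1}(E_P)$ for $k\geq 1$ (needed in the dimension argument) follows from the localization sequence, since the $K$-class restricts to zero off $E_P$.
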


Let us prove Theorem \ref{GK critical points} using this lemma. 

For every point $P$ in $\Sigma$, the complex scheme $E_P$ is isomorphic to the projective space $\PP^{N-1}$, hence its topological Euler characteristic is given by:
$$\chi_{\mathrm{top}}(E_P) = N.$$
Furthermore, the intersection $E_P \cap W$ is isomorphic to a smooth quadric in $\PP^{N-1}$, hence its topological Euler characteristic is given by:
$$\chi_{\mathrm{top}}(E_P \cap W) = \frac{1}{2} [(-1)^N + 2 N - 1].$$
This follows for instance from equalities  \eqref{eq c_r tangent quadric} and \eqref{eq nu_* eta_p^N} in Proposition \ref{nu_* eta_p^N} below, applied to the integers $m_P := 2$ and  $r := N-2$ and the hypersurface $Q_P:= E_P \cap W$, and from equality \eqref{non square denominator} in Proposition \ref{formal identities} below, applied to the integers $n := N$ and $r := 2$ and the complex number $a := 2$.

By additivity of the topological Euler characteristic, the characteristic of the complement $E_P - E_P \cap W$ is given by:
$$\chi_{\mathrm{top}}(E_P - E_P \cap W) = N - \frac{1}{2} [(-1)^N + 2 N - 1] = \frac{1}{2} [1 - (-1)^N].$$
Consequently,  for every point $x$ in $\Delta$, the expression \eqref{geommult} for the rational number $\alpha_x$ introduced in  Theorem \ref{GK in terms of classes in D non-reduced} reduces to:
\begin{align*}
    \alpha_x &=  \frac{N-1}{4} \sum_{P \in \Sigma_x} \chi_{\mathrm{top}}(E_P - E_P \cap W) + \frac{1}{12} \sum_{P \in \Sigma_x} \frac{1}{2} \chi_{\mathrm{top}}(E_P \cap W), \\
    &= \frac{N-1}{8} [1 - (-1)^N] |\Sigma_x| + \frac{1}{48} [(-1)^N + 2N - 1] |\Sigma_x|, \\
    &= \frac{1}{48} [6 (N-1) (1 - (-1)^N) + (-1)^N + 2N - 1] |\Sigma_x|, \\
    &= \frac{1}{48} [(6 N - 7) (1 - (-1)^N) + 2N] |\Sigma_x|.
\end{align*}
Hence replacing in Theorem \ref{GK in terms of classes in D non-reduced} and using Lemma \ref{nu_ast c_1 c_N-1 omega points doubles}, we have the equality in $\CH_0(C)_\Q$:
\begin{align*}
\sum_{n=0}^{2(N-1)} (-1)^{n-1} c_1(\GK_{C,-}(\H^n(H_\eta/C_\eta))) &= \frac{1}{12} g_\ast (c_1 c_{N-1})(\omega^{1 \vee}_{Y/C}) + \sum_{x \in \Delta} \alpha_x [x], \\
&= \frac{1}{12} f_\ast (c_1 c_{N-1})([T_f]) -  \frac{1}{12} (N-2) \frac{1 - (-1)^N}{2} f_\ast [\Sigma] \\
&+ \frac{1}{48} [(6 N - 7) (1 - (-1)^N) + 2N] \sum_{x \in \Delta}  |\Sigma_x| [x], \\
&= \frac{1}{12} f_\ast (c_1 c_{N-1})([T_f]) + u_N^- f_\ast [\Sigma], 
\end{align*}
where $u_N^-$ is the rational number given by:
\begin{align*}
u_N^- &= - \frac{1}{24} (N-2) (1 - (-1)^N) + \frac{1}{48} [(6 N - 7) (1 - (-1)^N) + 2N], \\
&= \frac{1}{48} [- 2 (N-2) (1 - (-1)^N) + (6 N - 7) (1 - (-1)^N) + 2N], \\
&= \frac{1}{48} [(4N - 3) (1 - (-1)^N) + 2 N], \\
&= \begin{cases}
 (5N-3)/24 & \text{if $N$ is odd} \\
 N/24 & \text{if $N$ is even},
\end{cases}
\end{align*}
which shows equality \eqref{pas intro GK- points doubles}.

Furthermore, for every integer $n$, equality \eqref{c1 GK + non-degenerate 1} from Corollary \ref{GK bundles with elementary exponents for non-degenerate singularites} gives the equality in $\CH_0(C)_\Q$:
$$
c_1(\GK_{C,+}(\H^n(H_\eta / C_\eta))) = c_1(\GK_{C,-}(\H^n(H_\eta / C_\eta))) + \delta^{n,N-1} \eta_N \frac{N-1}{2} f_\ast [\Sigma], 
$$
where $\eta_N$ is $1$ if $N$ is odd and $0$ if $N$ is even.

Taking the alternating sum over $n$ yields the equality in $\CH_0(C)_\Q$:
\begin{align*}
\sum_{n=0}^{2(N-1)} (-1)^{n-1} c_1(\GK_{C,+}(\H^n(H_\eta/C_\eta))) &= \sum_{n=0}^{2(N-1)} (-1)^{n-1} c_1(\GK_{C,-}(\H^n(H_\eta/C_\eta))) \\
&+ (-1)^N \eta_N \frac{N-1}{2} f_\ast [\Sigma], \\
&= \frac{1}{12} f_\ast (c_1 c_{N-1})([T_f]) + u_N^- f_\ast [\Sigma] - \eta_N \frac{N-1}{2} f_\ast [\Sigma], \\
&= \frac{1}{12} f_\ast (c_1 c_{N-1})([T_f]) + u_N^+ f_\ast [\Sigma],
\end{align*}
where $u_N^+$ is the rational number given by:
$$u_N^+ = u_N^- - \eta_N \frac{N-1}{2}
    = \begin{cases}
 -(7N-9)/24 & \text{if $N$ is odd} \\
 N/24 & \text{if $N$ is even},
\end{cases}
$$
which shows equality \eqref{pas intro GK+ points doubles} and completes the proof of Theorem \ref{GK critical points}.

The rest of this chapter shall be devoted to proving Lemma \ref{nu_ast c_1 c_N-1 omega points doubles}.

\section{Blowing up Chern classes of relative tangent classes over a curve}

We shall use the following notation. Let $k$ be a field, $H$ a smooth $k$-scheme of pure dimension $N \geq 1$ and $\Sigma$ a finite subset of $H$, and let:
$$
\nu : \tilde{H} \lra H
$$
be the blow-up of $H$ at $\Sigma$, and  $E$ its  exceptional divisor.

For every point $P$ in $\Sigma$, let us define a subscheme in $\tilde{H}$ by:
$$E_P := \nu^{-1}(\{P\}),$$
so that the $(E_P)_{P \in \Sigma}$ are the connected components of the exceptional divisor $E$. In particular, they are disjoint divisors.

For every point $P$ in $\Sigma$, let us denote the class of the divisor $E_P$ by:
$$
\eta_P := [E_P] = c_1(\cO_{\tilde{H}}(E_P)) \in \CH^1(\tilde{H}).
$$

Observe that for every point $P$ in $\Sigma$, the restriction $\nu_{\mid E_P}: E_P \ra H$ factors through the zero-dimensional scheme $P$. Therefore, for every cycle $\alpha$ in $H$ of positive codimension, the following equality holds in $\CH^\ast(\tilde{H})$:
\begin{equation}\label{pullback restricted to E_P}
\eta_P \cdot \nu^\ast \alpha = 0. 
\end{equation}

Furthermore, for every pair $(P,P')$ in $\Sigma^2$ such that $P \neq P'$, the divisors $E_P$ and $E_{P'}$ in $\tilde{H}$ are disjoint, so we obtain the equality in $\CH^2(\tilde{H})$:
\begin{equation}\label{product eta_P eta_P'}
\eta_P \cdot \eta_{P'} = 0. 
\end{equation}

\begin{proposition}\label{nu_* eta_p^N}
Let $P$ be a point in $\Sigma$. The scheme $E_P$ is isomorphic to a projective space of dimension $N-1$.

We have the equality in $\CH^N(H)$:
\begin{equation}
\nu_\ast \eta_P^N = (-1)^{N-1} [P]. \label{eq nu_* eta_p^N}
\end{equation}
Moreover, for every non-negative integer $r$, we have the equality in $\CH^r(E_P)$:
\begin{equation}
c_r(T_{E_P}) = (-1)^r \binom{N}{r} \eta_{P | E_P}^r. \label{eq c_r tangent exceptional divisor}
\end{equation}

Finally, let $m_P$ be a positive integer, and let $Q_P$ be a smooth hypersurface of degree $m_P$ in $E_P$ and let $i_{Q_P}: Q_P \ra E_P$ be the inclusion morphism.
For every non-negative integer $r$, we have the equality in $\CH^{r+1}(E_P)$:
\begin{equation}
i_{Q_P \ast} c_r(T_{Q_P}) = (-1)^{r+1} m_P \bigg [\frac{(1 + y)^N}{1 + m_P y} \bigg ]^{[r]} \eta_{P | E_P}^{r+1},\label{eq c_r tangent quadric}
\end{equation}
where $f(y)^{[r]}$ denotes the coefficient of $y^r$ in some formal series $f(y) \in \C[[y]]$.
\end{proposition}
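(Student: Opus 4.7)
The plan is to exploit the standard structure theory of blow-ups at smooth points together with the Euler exact sequence on projective space. Since $H$ is smooth at $P$ and $P$ has codimension $N$, the exceptional divisor $E_P$ is canonically identified with the projective space $\PP(T_P H^\vee) \simeq \PP^{N-1}$ associated to the cotangent space $\mathfrak{m}_P/\mathfrak{m}_P^2$. Moreover, under this identification, the normal bundle $\cN_{E_P}\tilde{H}$ is the tautological subbundle $\cO_{E_P}(-1)$, so if we set $h_P := c_1(\cO_{E_P}(1)) \in \CH^1(E_P)$, then the key relation
\[
\eta_{P \mid E_P} = c_1(\cN_{E_P}\tilde{H}) = -h_P
\]
holds in $\CH^1(E_P)$.

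For equality \eqref{eq nu_* eta_p^N}, I would apply the self-intersection formula to the inclusion $i_{E_P}$ to rewrite
\[
\eta_P^N = i_{E_P\ast}\bigl(\eta_{P \mid E_P}^{N-1}\bigr) = (-1)^{N-1}\, i_{E_P\ast}(h_P^{N-1}).
\]
Since the composition $\nu \circ i_{E_P}$ is the constant map to $P$ and $\int_{E_P} h_P^{N-1} = 1$ because $E_P \simeq \PP^{N-1}$, pushing forward by $\nu$ yields $\nu_\ast \eta_P^N = (-1)^{N-1}[P]$.

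For \eqref{eq c_r tangent exceptional divisor}, I would invoke the Euler exact sequence on $E_P \simeq \PP^{N-1}$:
\[
0 \lra \cO_{E_P} \lra \cO_{E_P}(1)^{\oplus N} \lra T_{E_P} \lra 0,
\]
which gives $c(T_{E_P}) = (1+h_P)^N$, hence $c_r(T_{E_P}) = \binom{N}{r} h_P^r = (-1)^r \binom{N}{r} \eta_{P \mid E_P}^r$.

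For \eqref{eq c_r tangent quadric}, I would combine the normal exact sequence of the inclusion $i_{Q_P} : Q_P \hookrightarrow E_P$,
\[
0 \lra T_{Q_P} \lra T_{E_P \mid Q_P} \lra \cN_{Q_P} E_P \lra 0,
\]
with the identification $\cN_{Q_P} E_P \simeq \cO_{E_P}(m_P)_{\mid Q_P}$ (because $Q_P$ has degree $m_P$ in $E_P \simeq \PP^{N-1}$), yielding the identity of total Chern classes
\[
c(T_{Q_P}) = i_{Q_P}^\ast \!\left[\frac{(1+h_P)^N}{1 + m_P h_P}\right]
\]
in $\CH^\ast(Q_P)$. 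Projecting forward via $i_{Q_P\ast}$ and using the projection formula together with $[Q_P] = m_P h_P$ and $h_P = -\eta_{P \mid E_P}$ produces the claimed identity. The calculation is essentially formal; the only point requiring care is bookkeeping of signs in passing between $h_P$ and $\eta_{P \mid E_P}$, and this is where one expects no substantial obstacle since every step reduces to a standard fact about the geometry of $\PP^{N-1}$.
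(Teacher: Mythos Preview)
Your proof is correct and follows essentially the same approach as the paper: both identify $E_P$ with $\PP^{N-1}$ via the standard description of the exceptional divisor of a blow-up at a smooth point, use the relation $\eta_{P\mid E_P} = -h_P$ coming from $\cN_{E_P}\tilde{H}\simeq\cO_{E_P}(-1)$, and then invoke the Euler sequence for $c(T_{E_P})$ and the normal sequence of $Q_P\hookrightarrow E_P$ for the hypersurface case. The only cosmetic difference is that the paper writes $E_P\simeq\PP(T_PH)$ (lines in the tangent space, in its convention $\PP(E)=\Proj S^\bullet E^\vee$) rather than the projectivization of the cotangent space, but this has no effect on the argument.
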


\begin{proof}
Since $E_P$ is a connected component of the exceptional divisor $E$, there is a canonical isomorphism of $k$-schemes:
$$
\phi : E_P \lra \PP_k(T_P H),
$$
and a canonical isomorphism of line bundles on $E_P$:
$$
\phi^\ast \cO_{T_P H}(-1) \simeq \cO_{\tilde{H}}(E_P)_{| E_P}.
$$
Consequently, we have the equality in $\CH^1(E_P)$:
\begin{equation} \label{eq eta_P c_1(O(1))}
\eta_{P | E_P} = \phi^\ast c_1(\cO_{T_P H}(-1)).
\end{equation}

Now, let us compute in $\CH^N(\tilde{H})$:
\begin{align*}
\eta_P^N &= \eta_P^{N-1} [E_P],\\
&= i_{E_P \ast} \eta_{P | E_P}^{N-1} \textrm{ by the projection formula,}\\
&= (-1)^{N-1} \, i_{E_P \ast} \phi^\ast c_1(\cO_{T_P H}(1))^{N-1}.
\end{align*}
Hence pushing forward, we have the equality in $\CH^N(H)$:
\begin{align*}
\nu_\ast \eta_P^N &= (-1)^{N-1}\, \nu_\ast i_{E_P \ast} \phi^\ast c_1(\cO_{T_P H}(1))^{N-1}, \\
&= (-1)^{N-1} \,\Big [\int_{\PP(T_P H)} c_1(\cO_{T_P H}(1))^{N-1} \Big ] [P],\\
&= (-1)^{N-1}\,  [P],
\end{align*}
which shows equality \eqref{eq nu_* eta_p^N}.

Let $r$ be a non-negative integer. Since $\phi$ is an isomorphism, we have the equality in $\CH^r(E_P)$:
$$
c_r(T_{E_P}) = \phi^\ast c_r(T_{\PP(T_P H)}).
$$
Using the exact sequence of vector bundles on $\PP(T_P H)$ from \cite[B.5.8]{Fulton98}:
$$
0 \lra \cO_{\PP(T_P H)} \lra T_P H \otimes_k \cO_{T_P H}(1) \lra T_{\PP(T_P H)} \lra 0,
$$
we can replace:
$$
c_r(T_{E_P}) = \phi^\ast [(1 + c_1(\cO_{T_P H}(1) ) )^N ]^{(r)}
= \binom{N}{r} \phi^\ast c_1(\cO_{T_P H}(1))^{r}
= (-1)^r \binom{N}{r} \eta_{P | E_P}^r \textrm{ using \eqref{eq eta_P c_1(O(1))}, }
$$
which shows equality \eqref{eq c_r tangent exceptional divisor}.

Now, let $m_P$ be a positive integer and $Q_P$ be an hypersurface of degree $m_P$ in $E_P$.

We have the following exact sequence of vector bundles on $Q_P$:
$$
0 \lra T_{Q_P} \lra T_{E_P | Q_P} \lra \cN_{Q_P} E_P \lra 0,
$$
hence the following equality in $\CH^\ast(Q_P)$:
\begin{align*}
c(T_{Q_P}) &= \frac{c(T_{E_P | Q_P})}{c(\cN_{Q_P} E_P)},\\
&= \frac{\phi_{| Q_P}^\ast (1 + c_1(\cO_{T_P H}(1) ))^N}{1 + c_1(\cO_{E_P}(Q_P)_{| Q_P})} ,\\
&= \frac{\phi_{| Q_P}^\ast (1 + c_1(\cO_{T_P H}(1) ))^N}{1 + m_P \phi_{| Q_P}^\ast c_1(\cO_{T_P H}(1))} \textrm{ because $Q_P$ is of degree $m_P$,}\\
&= \phi_{|Q_P}^\ast \bigg[\frac{(1 + c_1(\cO_{T_P H}(1) ))^N}{1 + m_P c_1(\cO_{T_P H}(1) )} \bigg].
\end{align*}
Hence, for every integer $r$, we have the equality in $\CH^r(Q_P)$:
\begin{equation}
c_r(T_{Q_P}) = \bigg[\frac{(1 + y)^N}{1 + m_P y} \bigg]^{[r]} \phi_{| Q_P}^\ast c_1(\cO_{T_P H}(1))^r
= (-1)^r \bigg[\frac{(1 + y)^N}{1 + m_P y} \bigg]^{[r]} \eta_{P | Q_P}^r \textrm{ using \eqref{eq eta_P c_1(O(1))}.} \label{eq c_r T Q_p}
\end{equation}

Pushing forward \eqref{eq c_r T Q_p} by the inclusion map of $Q_P$ in $E_P$, and using the projection formula and the equality in $\CH^1(E_P)$:
$$
[Q_P] = \phi^\ast c_1(\cO_{T_P H}(m_P)) = - m_P \eta_{P | E_P},
$$
yields equality \eqref{eq c_r tangent quadric}.
\end{proof}

  Recall the following consequence of the ``Grothendieck-Riemann-Roch without denominators'' formula:
\begin{proposition}[{\cite[Example 15.4.2, (c)]{Fulton98}}]
\label{comparison Omega^1 and nu^ast Omega^1}
We have the equality in $\CH^\ast(\tilde{H})$:
\begin{equation} \label{eq blowing up total Chern class}
c(T_{\tilde{H}}) = \nu^\ast c(T_H) + \sum_{P \in \Sigma} \big[ (1 + \eta_P) (1-\eta_P)^N - 1\big].
\end{equation}
\end{proposition}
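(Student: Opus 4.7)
My plan is to derive this as a special case of the ``Riemann-Roch without denominators'' formula of Fulton \cite{[Fulton98]} (Example 15.4.2 (c)), applied to the regular closed immersion $\Sigma \hookrightarrow H$ of codimension $N$ and the blow-up $\nu$. For a self-contained derivation, I would proceed in three steps.

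First, since the components $E_P$ of the exceptional divisor are pairwise disjoint and the asserted formula is a sum of contributions localized at each $E_P$, it suffices to treat one point $P \in \Sigma$ at a time. The class $c(T_{\tilde H}) - \nu^\ast c(T_H)$ vanishes over the open subscheme $\tilde H \setminus E$ on which $\nu$ is an isomorphism, and hence lies in the ``exceptional part'' of the Chow ring decomposition of the blow-up along a smooth center (Fulton, Proposition 6.7):
$$
\CH^\ast(\tilde H) \simeq \nu^\ast \CH^\ast(H) \oplus \bigoplus_{P \in \Sigma} \bigoplus_{k=1}^{N-1} \Z \cdot i_{E_P \ast}(\zeta^k),
$$
where $\zeta := c_1(\cO_{E_P}(1))$. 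This allows me to write $c(T_{\tilde H}) - \nu^\ast c(T_H) = \sum_{P \in \Sigma} i_{E_P \ast}(\delta_P)$ for uniquely determined classes $\delta_P \in \CH^\ast(E_P)$.

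Second, I would identify $\delta_P$ by restriction to $E_P$. The normal bundle exact sequence $0 \to T_{E_P} \to T_{\tilde H}|_{E_P} \to N_{E_P/\tilde H} \to 0$, combined with the canonical identification $N_{E_P/\tilde H} \simeq \cO_{E_P}(-1)$ and the Euler sequence on $E_P \simeq \PP^{N-1}$ giving $c(T_{E_P}) = (1+\zeta)^N$, yields $c(T_{\tilde H})|_{E_P} = (1+\zeta)^N(1-\zeta)$. On the other hand, $\nu^\ast c(T_H)|_{E_P} = 1$ since the fiber $T_{H,P}$ is a trivial vector space at the point $P$. Applying the self-intersection formula $i_{E_P}^\ast i_{E_P \ast}(\delta_P) = c_1(N_{E_P/\tilde H}) \cdot \delta_P = -\zeta \cdot \delta_P$ then determines the product $\zeta \cdot \delta_P = 1 - (1+\zeta)^N(1-\zeta)$, and hence $\delta_P$ itself upon solving degree by degree in $\zeta$.

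Third, I would verify that $\sum_P i_{E_P \ast}(\delta_P)$ coincides with $\sum_P [(1+\eta_P)(1-\eta_P)^N - 1]$. The right-hand side expands as an integer polynomial in $\eta_P$ with no constant term, so each summand is supported on $E_P$; using $\eta_P|_{E_P} = c_1(N_{E_P/\tilde H}) = -\zeta$ and the projection formula to rewrite each $\eta_P^k$ as $(-1)^{k-1} i_{E_P \ast}(\zeta^{k-1})$, one checks that the expansion lands in the exceptional summand of the Chow-ring decomposition and matches $i_{E_P \ast}(\delta_P)$ term by term. I expect the main obstacle to be this final bookkeeping step: promoting a restriction-level identity on $E_P$ into a genuine identity on $\tilde H$ requires carefully exploiting the uniqueness in the Chow-ring decomposition, since the map $i_{E_P \ast}$ alone is not injective (its kernel consists of classes annihilated by $\zeta$), and it is the explicit form of the decomposition that inverts it on the relevant subspace.
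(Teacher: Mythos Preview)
The paper does not prove this statement; it simply cites Fulton's Example 15.4.2(c) as a known consequence of Riemann--Roch without denominators. Your proposal does the same and then goes further by sketching a direct argument, which is more than the paper offers.

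Your sketch is essentially sound. Steps 1 and 2 are correct: the difference $c(T_{\tilde H})-\nu^\ast c(T_H)$ is supported on $E$, and the restriction to each $E_P$ is computed correctly via the normal and Euler sequences. Your candid remark about Step 3 is well placed: the self-intersection formula gives you only $\zeta\cdot\delta_P$, which determines $\delta_P$ modulo $\Z\cdot\zeta^{N-1}$, so the restriction argument alone leaves a possible discrepancy in $\CH^N(\tilde H)$ (a $0$-cycle supported on $E$). To close this, you can either invoke the precise form of Fulton's blow-up decomposition (Prop.~6.7 and Thm.~6.7), where the relation $i_{E_P\ast}(\zeta^{N-1})=(-1)^{N-1}\nu^\ast[P]$ identifies the ``extra'' top-degree piece with the pullback summand, or more directly push forward the putative difference by $\nu$ and use $\nu_\ast\eta_P^k=0$ for $1\le k\le N-1$ and $\nu_\ast\eta_P^N=(-1)^{N-1}[P]$ together with $\nu_\ast c(T_{\tilde H})=c(T_H)+\text{(known correction)}$ to see that the $0$-cycle ambiguity vanishes. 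Either route completes the bookkeeping you flagged.
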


\begin{corollary}\label{comparison T rel blowup}
With the above notation, let $C$ be a smooth projective $k$-curve, let
$$f : H \lra C$$
be a flat morphism, and let
$$g := f \circ \nu : \tilde{H} \lra  C$$
be its composition with the blow-down morphism $\nu$. The following equality holds in $\CH^\ast(\tilde{H})$:
$$c([T_g]) = \nu^\ast c([T_f]) + \sum_{P \in \Sigma} \big[ (1 + \eta_P) (1-\eta_P)^N - 1\big].$$
 \end{corollary} 

\begin{proof}
By definition of the relative tangent class $[T_g]$ in $K^0(\tilde{H})$, the following equality holds in $\CH^\ast(\tilde{H})$:
\begin{align}
c([T_g]) &= c(T_{\tilde{H}}). g^* c(T_C)^{-1} \nonumber \\
& = \big(\nu^* c(T_H) + \sum_{P \in \Sigma} \big[ (1 + \eta_P) (1-\eta_P)^N - 1\big] \big) . g^* c(T_C)^{-1} \label{first eq T rel blow} \\
&= \nu^* (c(T_H) . f^* c(T_C)^{-1} ) + \sum_{P \in \Sigma} \big[ (1 + \eta_P) (1-\eta_P)^N - 1\big] . (1 - g^* c_1(T_C) ) \nonumber \\
&= \nu^* (c(T_H) . f^* c(T_C)^{-1} ) + \sum_{P \in \Sigma} \big[ (1 + \eta_P) (1-\eta_P)^N - 1\big] \label{second eq T rel blow} \\
&= \nu^* c([T_f]) + \sum_{P \in \Sigma} \big[ (1 + \eta_P) (1-\eta_P)^N - 1\big], \label{third eq T rel blow}
\end{align}
where in \eqref{first eq T rel blow} we used Proposition \ref{comparison Omega^1 and nu^ast Omega^1}, in \eqref{second eq T rel blow} we applied equality \eqref{pullback restricted to E_P} to the cycle of codimension one $\alpha := f^* c_1(T_C)$ and used that the polynomial $(1 + x) (1-x)^N - 1$ is a multiple of $x,$ and in \eqref{third eq T rel blow} we used the definition of the relative tangent class $[T_f]$ in $K^0(H).$
\end{proof}

The following corollary is a reformulation of Corollary \ref{comparison T rel blowup}, and we leave its derivation from Corollary \ref{comparison T rel blowup} to the reader.

\begin{corollary}\label{comparison cr T rel blowup} With the same notation as in Corollary \ref{comparison T rel blowup}, for every positive integer $r,$ the following equality holds in $\CH^r(\tilde{H})$:
$$c_r([T_g]) = \nu^\ast c_r([T_f]) + (-1)^r \Bigg [\binom{N}{r} - \binom{N}{r-1} \Bigg ] \sum_{P \in \Sigma} \eta_P^r.$$
\end{corollary}

 \section{The Chern classes $c_r(\omega^{1\vee}_{\widetilde{H}/C})$ in terms of~$\nu^\ast c_r([T_f])$}
In order to prove Lemma \ref{nu_ast c_1 c_N-1 omega points doubles}, we want to compare the cycle classes $\nu_\ast (c_1 c_{N-1})(\omega^{1 \vee}_{\tilde{H}/C})$ and $(c_1 c_{N-1})([T_f])$ on~$H$.

Using Corollary \ref{comparison T rel blowup},
we can compare the cycle classes $c_r([T_g])$ and $\nu^\ast c_r([T_f])$ on $\tilde{H}$, where $r$ is any positive integer.
In this Section, we shall apply equality \eqref{eq c_r omega and T_g when D^3 empty} from Proposition \ref{comparison c_r when D^3 empty non-reduced} to compare the cycle classes $c_r(\omega^{1 \vee} _{\tilde{H}/C})$ and $c_r([T_g])$ on $\tilde{H}$ (this will be the object of Proposition \ref{c_r omega for blow-up}), then we shall combine this with Corollary \ref{comparison T rel blowup} to compare the cycle classes $c_r(\omega^{1 \vee}_{\tilde{H}/C})$ and  $\nu^\ast c_r([T_f])$ (this will be the object of Corollary \ref{c_r omega for blow-up and T}). Next, we shall apply these expressions with the integers $r := 1$ and $r := N-1$ to compare the cycle classes $(c_1 c_{N-1})(\omega^{1 \vee}_{\tilde{H}/C})$ and $\nu^\ast (c_1 c_{N-1})([T_f])$ (this will be the object of Corollary \ref{c_1 c_N-1 omega T rel}). After pushing forward by $\nu$, this will establish Lemma \ref{nu_ast c_1 c_N-1 omega points doubles} and complete the proof of Theorem \ref{GK critical points}.

If $y$ denotes an indeterminate and $f(y)$ a formal series in $\C[[y]]$, for every integer $p$, we shall denote by $f(y)^{[p]}$ its coefficient of degree $p$. It vanishes if $p$ is negative.

\begin{proposition}
\label{formal identities}
For every $a$ in $\C^\ast$, and every $n$ and $r$ in $\N$, we have the formulas:
\begin{align}
\left[ \frac{(1 + y)^n}{1 + a y} \right]^{[n-r]} & = (-1)^{r} \sum_{r \leq k \leq n} \binom{n}{k} (-1)^k a^{k-r} = \frac{(-1)^{n+r}}{a^r} \big [(a-1)^n - \sum_{0 \leq k \leq r-1} \binom{n}{k} (-1)^{n-k} a^k \big ],
\label{non square denominator} \\
\intertext{and:}
\label{square denom lourd} \left[ \frac{(1 + y)^{n+1}}{(1 + a y)^2} \right ]^{[n-r]} &= (-1)^{r+1} \sum_{r+1 \leq k \leq n+1} (k-r) \binom{n+1}{k} (-1)^k a^{k-r-1} \\
\label{square denominator} &= \frac{(-1)^{n+r}}{a^{r+1}} \Big [\big(r + (n+1-r) a\big) (a-1)^n - \sum_{0 \leq k \leq r} (k - r) \binom{n+1}{k} (-1)^{n+1-k} a^k \Big ].
\end{align}
\end{proposition}

\begin{proof}
The equality between the left-hand side and the middle side in \eqref{non square denominator} follows from the computation:
\begin{align}
\left [\frac{(1+y)^n}{1+a y} \right ]^{[n-r]} &= \sum_{0 \leq i \leq n-r} [(1+y)^n]^{[i]} \Big [\frac{1}{1+ a y} \Big]^{[n-r-i]} \nonumber \\
&= \sum_{0 \leq i \leq n-r} \binom{n}{i} (-a)^{n-r-i} \nonumber \\
\label{first eq non sq den} &= \sum_{r \leq k \leq n} \binom{n}{n-k} (-a)^{k-r} \\
&= (-1)^r \sum_{r \leq k \leq n} \binom{n}{k} (-1)^k a^{k-r}, \nonumber
\end{align}
where in \eqref{first eq non sq den} we have introduced $k := n-i$.

The equality between the middle side and the right-hand side in \eqref{non square denominator} follows from the binomial formula applied to $(a-1)^n$.

Equality \eqref{square denom lourd} follows from the equality of formal series in $\C[[a,y]]$:
$$
\frac{\mathrm{d}}{\mathrm{d} a} \left [\frac{(1+y)^{n+1}}{1+a y} \right ]
= - y \frac{(1+y)^{n+1}}{(1+a y)^2},
$$
which implies the equality of complex numbers:
\begin{align}
\left [\frac{(1+y)^{n+1}}{(1+a y)^2} \right ]^{[n-r]}
&= - \bigg [y^{-1} \frac{\mathrm{d}}{\mathrm{d} a} \left (\frac{(1+y)^{n+1}}{1 + a y} \right ) \bigg ]^{[n-r]} \nonumber \\
&= - \frac{\mathrm{d}}{\mathrm{d} a} \left [\frac{(1 + y)^{n+1}}{1 + a y} \right ]^{[n+1-r]} \nonumber \\
\label{non sq for sq} &= - \frac{\mathrm{d}}{\mathrm{d} a} \big [(-1)^r \sum_{r \leq k \leq n+1} \binom{n+1}{k} (-1)^k a^{k-r} \big ] \\
&= (-1)^{r+1} \sum_{r+1 \leq k \leq n+1} (k-r) \binom{n+1}{k} (-1)^k a^{k-r-1}, \nonumber
\end{align}
where in \eqref{non sq for sq}, we have used \eqref{non square denominator} applied to $n' := n+1$.

Equality \eqref{square denominator} follows from the  expansion:
\begin{align}
\big(r + (n+1-r) a\big) (a-1)^n
&= (n+1) a (a-1)^n - r (a-1)^{n+1} \nonumber \\
&= (n+1) \sum_{1 \leq k \leq n+1} \binom{n}{k-1} (-1)^{n-k+1} a^k \nonumber \\
&- r \sum_{0 \leq k \leq n+1} \binom{n+1}{k} (-1)^{n+1-k} a^k \nonumber \\
&= \sum_{0 \leq k \leq n+1} \bigg ( (n+1) \binom{n}{k-1} - r \binom{n+1}{k} \bigg ) (-1)^{n+1-k} a^k \nonumber \\
\label{binom mult} &= \sum_{0 \leq k \leq n+1} (k-r) \binom{n+1}{k} (-1)^{n+1-k} a^k,
\end{align}
where in \eqref{binom mult}, we have used the classical identity, for every integer $k$:
$$
(n+1) \binom{n}{k-1} = k \binom{n+1}{k}. \qedhere
$$
\end{proof}

 Let us adopt the notation of Section \ref{Reduction of the proof}.

For every point $P$ in $\Sigma$, let us denote the class of the divisor $E_P$ by:
$$
\eta_P := [E_P] = c_1(\cO_{\tilde{H}}(E_P)) \in \CH^1(\tilde{H}).
$$

Observe that equalities \eqref{pullback restricted to E_P} and \eqref{product eta_P eta_P'} still hold in this setting.

\begin{proposition}
\label{c_r omega for blow-up}
With the above notation, for every non-negative integer $r$, we have the equality in $\CH^r(\tilde{H})$:
$$
c_r(\omega^{1 \vee}_{\tilde{H}/C}) = c_r([T_g]) + \alpha(N,r) \sum_{P \in \Sigma} \eta_P^r,
$$
where $\alpha(N,r)$ is the integer given by:
$$
\alpha(N,r) := (-1)^{r-1} \Bigg(\binom{N}{r-1} - 4 \Big[\frac{(1 + y)^N}{1 + 2 y} \Big]^{[r-2]} \Bigg).
$$
\end{proposition}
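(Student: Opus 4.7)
The plan is to apply the general formula \eqref{eq c_r omega and T_g when D^3 empty} from Proposition \ref{comparison c_r when D^3 empty non-reduced} directly to the DNC decomposition \eqref{decomposition Htilde_Delta} of $\widetilde{H}_\Delta$. First I would verify the hypothesis $D^3 = \emptyset$: the irreducible components of $\widetilde{H}_\Delta$ are the exceptional divisors $(E_P)_{P \in \Sigma}$ (each with multiplicity $m_{E_P} = 2$) and the proper transform $W$ (with $m_W = 1$); since the $(E_P)_{P \in \Sigma}$ are pairwise disjoint and $W$ is smooth, any triple intersection must involve two distinct $E_P$'s, hence is empty. The only non-empty pairwise intersections are $E_P \cap W = Q_P$, the smooth quadric of degree $2$ in $E_P \simeq \PP^{N-1}$.

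Applying \eqref{eq c_r omega and T_g when D^3 empty} then reduces, since $m_W - 1 = 0$, to
\begin{equation*}
c_r(\omega^{1 \vee}_{\widetilde{H}/C}) = c_r([T_g]) + \sum_{P \in \Sigma} i_{E_P \ast} c_{r-1}(T_{E_P}) - 2 \sum_{P \in \Sigma} i_{Q_P \ast} c_{r-2}(T_{Q_P}),
\end{equation*}
where in the last sum $i_{Q_P}$ denotes the inclusion of $Q_P$ into $\widetilde{H}$, which factors through $i_{E_P}$. Next I would plug in equalities \eqref{eq c_r tangent exceptional divisor} and \eqref{eq c_r tangent quadric} from Proposition \ref{nu_* eta_p^N} (with $m_P = 2$ in the quadric case), obtaining inside $E_P$:
\begin{equation*}
c_{r-1}(T_{E_P}) = (-1)^{r-1} \binom{N}{r-1} \eta_{P | E_P}^{r-1}, \qquad i_{Q_P/E_P \ast} c_{r-2}(T_{Q_P}) = (-1)^{r-1}\, 2 \left[\frac{(1+y)^N}{1+2y}\right]^{[r-2]} \eta_{P | E_P}^{r-1}.
\end{equation*}

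Finally, pushing forward to $\widetilde{H}$ by $i_{E_P \ast}$ and invoking the projection formula together with $i_{E_P \ast} i_{E_P}^\ast = \eta_P \cdot$, each class $\eta_{P|E_P}^{r-1}$ lifts to $\eta_P^r$ in $\CH^r(\widetilde{H})$. Collecting coefficients gives
\begin{equation*}
c_r(\omega^{1 \vee}_{\widetilde{H}/C}) = c_r([T_g]) + (-1)^{r-1}\left(\binom{N}{r-1} - 4 \left[\frac{(1+y)^N}{1+2y}\right]^{[r-2]}\right) \sum_{P \in \Sigma} \eta_P^r,
\end{equation*}
which is exactly $\alpha(N,r)$. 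There is no real obstacle here: all the hard work lies in the input results (Propositions \ref{comparison c_r when D^3 empty non-reduced} and \ref{nu_* eta_p^N}); the only care needed is the bookkeeping of multiplicities $m_i + m_j - 1 = 2$ for the pair $(E_P, W)$ and the correct placement of the exponent shift in $\left[(1+y)^N/(1+2y)\right]^{[r-2]}$ when substituting $r-2$ for the ``$r$'' in formula \eqref{eq c_r tangent quadric}.
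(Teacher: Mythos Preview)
Your proposal is correct and follows essentially the same approach as the paper's proof: both apply equation \eqref{eq c_r omega and T_g when D^3 empty} from Proposition \ref{comparison c_r when D^3 empty non-reduced} to the decomposition \eqref{decomposition Htilde_Delta}, then substitute the formulas \eqref{eq c_r tangent exceptional divisor} and \eqref{eq c_r tangent quadric} from Proposition \ref{nu_* eta_p^N} (with $m_P=2$) and push forward via the projection formula. The paper simply asserts that $D^3$ is empty without justification, whereas you supply the argument; note only that $W$ need not be irreducible, but since it is non-singular its components are pairwise disjoint, so your conclusion still holds.
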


Observe that the integer $\alpha(N,1)$ is simply 1.
\begin{proof}
Let us apply equality \eqref{eq c_r omega and T_g when D^3 empty} from Proposition \ref{comparison c_r when D^3 empty non-reduced} to the scheme
$
Y := \tilde{H},
$
and to the morphism $g$, whose divisor of singular fibers $\tilde{H}_\Delta$ is of the form given by \eqref{decomposition Htilde_Delta}. Observe that with the notation of that proposition, the subscheme $D^3$ is empty.

We obtain, for every integer $r$, the equality in $\CH^r(\tilde{H})$:
\begin{equation}
c_r(\omega^{1 \vee}_{\tilde{H}/C}) = c_r([T_g]) + \sum_{P \in \Sigma} i_{E_P \ast} c_{r-1}(T_{E_P}) - 2 \sum_{P \in \Sigma} i_{E_P \cap W \ast} c_{r-2}(T_{E_P \cap W}). \label{first equality c_r omega blow-up}
\end{equation}

Using \eqref{eq c_r tangent exceptional divisor} from Proposition \ref{nu_* eta_p^N}, we have the equality in $\CH^{r-1}(E_P)$:
$$
c_{r-1}(T_{E_P}) = (-1)^{r-1} \binom{N}{r-1} \eta_{P | E_P}^{r-1},
$$
hence pushing forward by the inclusion in $\tilde{H}$ and using the projection formula, we have the equality in $\CH^r(\tilde{H})$:
$$
i_{E_P \ast} c_{r-1}(T_{E_P}) = (-1)^{r-1} \binom{N}{r-1} \eta_P^r.
$$

On the other hand, the intersection $Q_P := E_P \cap W$ is a smooth hypersurface of degree $m_P := 2$ in the projective space $E_P$, hence using equality \eqref{eq c_r tangent quadric} from Proposition \ref{nu_* eta_p^N}, we obtain the equality in $\CH^{r-1}(E_P)$:
$$
i_{E_P \cap W \ast} c_{r-2}(T_{E_P \cap W}) = 2 (-1)^{r-1} \Big[\frac{(1 + y)^N}{1 + 2 y} \Big]^{[r-2]} \eta_{P | E_P}^{r-1},
$$
hence pushing forward by the inclusion in $\tilde{H}$ and using the projection formula, we have the equality in $\CH^r(\tilde{H})$:
$$
i_{E_P \cap W \ast} c_{r-2}(T_{E_P \cap W}) = 2 (-1)^{r-1} \Big[\frac{(1 + y)^N}{1 + 2 y} \Big]^{[r-2]} \eta_P^r.
$$

Replacing in \eqref{first equality c_r omega blow-up} yields the result.
\end{proof}

 \begin{corollary}\label{c_r omega for blow-up and T}
With the above notation, for every positive integer $r$, the following equality holds in $\CH^r(\tilde{H})$:
$$
c_r(\omega^{1 \vee}_{\tilde{H}/C}) = \nu^\ast c_r([T_f]) + \beta(N,r) \sum_{P \in \Sigma} \eta_P^r,
$$
where $\beta(N,r)$ is the integer given by:
$$\beta(N,r) := (-1)^r \bigg [\binom{N}{r} - 2 \binom{N}{r-1} + 4 \Big [\frac{(1 + y)^N}{1 + 2 y} \Big]^{[r-2]} \bigg ].$$
\end{corollary}

  Observe that the integer $\beta(N,1)$ is equal to $-N+2.$

 \begin{proof} For every positive integer $r$, combining Corollary \ref{comparison cr T rel blowup} and Proposition \ref{c_r omega for blow-up} yields the equality in $\CH^r(\tilde{H})$:
$$c_r(\omega^{1 \vee}_{\tilde{H}/C}) = \nu^\ast c_r([T_f]) + \gamma(N,r) \sum_{P \in \Sigma} \eta_P^r,$$
where $\gamma(N,r)$ is the integer given by:
\begin{align*}
\gamma(N,r) &= \alpha(N,r) + (-1)^r \Bigg [\binom{N}{r} - \binom{N}{r-1} \Bigg ] \\
& = (-1)^{r-1} \Bigg[\binom{N}{r-1} - 4 \Big[\frac{(1 + y)^N}{1 + 2 y} \Big]^{[r-2]} \Bigg] 
+ (-1)^r \Bigg [\binom{N}{r} - \binom{N}{r-1} \Bigg ] \\
&= \beta(N,r). \qedhere
\end{align*}
\end{proof}

 \begin{corollary}\label{c_1 c_N-1 omega T rel}
With the above notation, the following equality holds in $\CH^N(\tilde{H)}$:
\begin{equation}\label{eq c_1 c_N-1 omega T rel} 
(c_1 c_{N-1})(\omega^{1 \vee}_{\tilde{H} / C})
= \nu^* (c_1 c_{N-1})([T_f])
- (N-2) \frac{1 - (-1)^N}{2} \sum_{P\in \Sigma} \eta_P^N.
\end{equation}
\end{corollary}

\begin{proof}
If the integer $N$ is 1, this follows directly from Corollary \ref{c_r omega for blow-up and T} applied to the integer $r := 1$ (observe that the second term in the right-hand side of \eqref{eq c_1 c_N-1 omega T rel} is $\sum_{P\in \Sigma} \eta_P$).

  Let us assume that the integer $N$ is at least 2.
Applying Corollary \ref{c_r omega for blow-up and T} to the positive integers $r = 1$ and $r= N-1,$ then multiplying, yields the equality in $\CH^N(\tilde{H})$:
\begin{align}
(c_1 c_{N-1})(\omega^{1 \vee}_{\tilde{H} / C})
&= (\nu^* c_1([T_f]) + \beta(N,1) \sum_{P \in \Sigma}\eta_P) . (\nu^* c_{N-1}([T_f]) + \beta(N, N-1) \sum_{P\in \Sigma} \eta_P^{N-1} ) \nonumber \\
&= \nu^* (c_1 c_{N-1})([T_f])
+ \beta(N, N-1) \sum_{P \in \Sigma} \eta_P^{N-1} \nu^* c_1([T_f]) \nonumber \\
&\quad + \beta(N,1) \sum_{P\in \Sigma} \eta_P \nu^* c_{N-1}([T_f])
+ \beta(N,1) \beta(N, N-1) \sum_{(P, P') \in \Sigma^2}\eta_P \eta_{P'}^{N-1}. \label{first eq c_1 c_N-1 omega T rel}
\end{align}

Since we have assumed that $N$ was at least $2$, we can apply equality \eqref{pullback restricted to E_P} to the cycle $\alpha := c_{N-1}([T_f])$ of positive codimension to obtain, for every point $P$ in $\Sigma$, the equality in $\CH^N(\tilde{H})$:
$$
\eta_P \, \nu^\ast c_{N-1}([T_f]) = 0.
$$

Furthermore, we can apply this same equality to the cycle $\alpha := c_1([T_f])$ to obtain, for every point $P$ in $\Sigma$, the equality in $\CH^N(\tilde{H})$:
$$
\eta_P^{N-1} \, \nu^\ast c_1([T_f]) = \eta_P^{N-2} \big(\eta_P \, \nu^\ast c_1([T_f])\big) = 0.
$$

Finally, applying equality \eqref{product eta_P eta_P'}, we obtain the equality in $\CH^N(\tilde{H})$:
$$
\sum_{P, P' \in \Sigma} \eta_P\, \eta_{P'}^{N-1} = \sum_{P \in \Sigma} \eta_P^N.
$$

Hence replacing in \eqref{first eq c_1 c_N-1 omega T rel}:
\begin{equation}
(c_1 c_{N-1})(\omega^{1 \vee}_{\tilde{H} / C})
= \nu^* (c_1 c_{N-1})([T_f])
+ \beta(N,1) \beta(N, N-1) \sum_{P \in \Sigma}\eta_P^N. \label{second eq c_1 c_N-1 omega T rel}
\end{equation}

 Now let us compute the combinatorial coefficient $\beta(N,1) \beta(N,N-1).$ Applying equality \eqref{non square denominator} from Proposition \ref{formal identities} to the integers $r = 3$ and $n = N$ and to the complex number $a = 2$ yields the equality:
\begin{align}
\left[ \frac{(1 + y)^N}{1 + 2 y} \right]^{[N-3]} & = \frac{(-1)^{N+3}}{2^3} \Big[(2-1)^N - \sum_{0 \leq k \leq 2} \binom{N}{k} (-1)^{N-k} 2^k\Big] \nonumber \\
&= \frac{(-1)^{N+1}}{8}  \big[1 + (-1)^{N+1} + 2 N (-1)^N + 4 N (N-1)/2 (-1)^{N+1} \big] \nonumber \\
&= \frac{(-1)^{N+1}}{8}\big[1 + (-1)^{N+1} (1 - 2 N + 2 N (N-1) ) \big] \nonumber \\
&= \frac{1}{8} \big[(-1)^{N+1} + 2 N^2 - 4 N + 1\big]. \nonumber 
\end{align}

Consequently, the combinatorial coefficient $\beta(N,1) \beta(N, N-1)$ is given by:
\begin{align*}
\beta(N,1) \beta(N, N-1)
&= (-N+2)  (-1)^{N-1} \bigg[\binom{N}{N-1} - 2 \binom{N}{N-2} + 4 \Big[\frac{(1 + y)^N}{1 + 2 y} \Big]^{[N-3]} \bigg] \\
&= (-1)^N (N-2) \big[N - 2 N (N-1)/2 + 4/8 [(-1)^{N+1} + 2 N^2 - 4 N + 1] \big] \\
&= (-1)^N (N-2) \big[N - N (N-1) + (1 + (-1)^{N+1})/2 + N^2 - 2 N\big] \\
&= (-1)^N (N-2) (1 + (-1)^{N+1})/2 \\
&= - (N-2) (1 - (-1)^N)/2.
\end{align*}
  
  Replacing this expression in \eqref{second eq c_1 c_N-1 omega T rel} yields the result.
\end{proof}

\begin{proof}[Proof of Lemma \ref{nu_ast c_1 c_N-1 omega points doubles}]
By Corollary \ref{c_1 c_N-1 omega T rel}, we have the equality in $\CH^N(\tilde{H})$:
$$
(c_1 c_{N-1})(\omega^{1 \vee}_{\tilde{H}/C})
= \nu^\ast (c_1 c_{N-1})([T_f]) - (N-2) \frac{1 - (-1)^{N}}{2} \sum_{P \in \Sigma} \eta_P^N,
$$
hence pushing forward by the morphism $\nu$, whose generic degree is one, and using equality \eqref{eq nu_* eta_p^N} from Proposition \ref{nu_* eta_p^N}, we have the equality in $\CH^N(H)$:
\begin{align*}
\nu_\ast (c_1 c_{N-1})(\omega^{1 \vee}_{\tilde{H}/C}) &= (c_1 c_{N-1})([T_f]) - (N-2) \frac{1 - (-1)^N}{2} (-1)^{N-1} [\Sigma] \\
&= (c_1 c_{N-1})([T_f]) - (N-2) \frac{1 - (-1)^N}{2} [\Sigma],
\end{align*} 
as wanted.
\end{proof}

\chapter[The Griffiths height of a pencil of hypersurfaces]{The Griffiths height of the middle-dimensional cohomology of a pencil of hypersurfaces}

\section{Families of ample divisors in a smooth pencil}
\label{Ample hyp}

The objective of this section is to prove the following proposition.

\begin{proposition}
\label{technical result}
Let $C$ be a connected smooth projective complex curve with generic point $\eta$, $X$ be a smooth projective complex scheme of pure dimension $N+1$, and let
$$\pi : X \lra C$$
be a smooth surjective morphism of complex schemes. Let $H$ be a non-singular hypersurface in $X$ such that the morphism
$$\pi_{| H} : H \lra C$$
is flat\footnote{or equivalently, is non-constant on every connected component of $H$, or has fibers of pure dimension $N-1$.} and has a finite set $\Sigma$ of critical points, all of which are non-degenerate.

If we denote by $L$ the line bundle $\cO_X(H)$ on $X$,  then the following equality holds in $\CH_0(X)$:
\begin{equation}\label{pas intro SigmaLX}
[\Sigma]= \big( (1-c_1(L))^{-1} c(\Omega^1_{X/C}) \big)^{(N+1)},
\end{equation}

If moreover the line bundle $L$ is ample relatively to~$\pi$, 
then the following equalities hold in the Chow group with rational coefficients $\CH_0(C)_\Q$:
\begin{multline}\label{pas intro GK+XL}
c_1(\GK_{C,+}(\H^{N-1}(H_\eta / C_\eta)))  \\ = c_1(\GK_C(\H^{N-1}(X/C))) + c_1(\GK_C(\H^{N+1}(X/C))) - c_1(\GK_C(\H^N(X/C))) \\
+ \frac{1}{12} \pi_\ast \Big [ \big( (1-c_1(L))^{-1}c_1(\Omega^1_{X/C}) c(\Omega^1_{X/C})\big)^{(N+1)} \Big ]
-\frac{1}{12} \pi_\ast \big( c_1(L)c_N(\Omega^1_{X/C})\big) + v^+_N\,  \pi_\ast [\Sigma],
\end{multline}
and:
\begin{multline}\label{pas intro GK-XL}
c_1(\GK_{C,-}(\H^{N-1}(H_\eta / C_\eta))) \\ = c_1(\GK_C(\H^{N-1}(X/C))) + c_1(\GK_C(\H^{N+1}(X/C))) - c_1(\GK_C(\H^N(X/C)))  \\
+ \frac{1}{12} \pi_\ast \Big [ \big( (1-c_1(L))^{-1}c_1(\Omega^1_{X/C}) c(\Omega^1_{X/C})\big)^{(N+1)} \Big ]
-\frac{1}{12} \pi_\ast \big( c_1(L)c_N(\Omega^1_{X/C})\big) + v^-_N\,  \pi_\ast [\Sigma],
\end{multline}
where:
\begin{equation*}
v^+_N := 
\begin{cases}
 7(N-1)/24 & \text{if $N$ is odd} \\
 (N+2)/24 & \text{if $N$ is even}.
\end{cases}
\end{equation*}
and:
\begin{equation*}
v^-_N := 
\begin{cases}
 - 5(N-1)/24 & \text{if $N$ is odd} \\
 (N+2)/24 & \text{if $N$ is even}.
\end{cases}
\end{equation*}
\end{proposition}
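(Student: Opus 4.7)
The plan is to combine Theorem \ref{intro GK critical points} applied to $\pi_{|H} : H\to C$, the smooth case \eqref{main formula smooth} of Theorem \ref{intro GK DNC} applied to the smooth $\pi : X\to C$, and the weak Lefschetz theorem to eliminate the non-middle-degree cohomology of $H$ in favour of that of $X$.

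\emph{Step 1: the critical-point formula \eqref{SigmaLX}.} Let $s\in H^0(X,L)$ be a section defining $H$. Its relative differential $d_{X/C}s\in H^0(X,\Omega^1_{X/C}\otimes L)$, restricted to $H$, is a section of the rank-$N$ vector bundle $(\Omega^1_{X/C}\otimes L)_{|H}$ on the $N$-dimensional scheme $H$. Unwinding the definitions, a point $P\in H$ is critical for $\pi_{|H}$ precisely when $(d_{X/C}s)(P)=0$, the non-degeneracy hypothesis being equivalent to the transversality of this section at each of its zeros. Hence $[\Sigma] = c_N((\Omega^1_{X/C}\otimes L)_{|H})$ in $\CH_0(H)$; pushing forward by $i_H: H\hookrightarrow X$, using $[H]=c_1(L)$ and the standard expansion $c_N(\Omega^1_{X/C}\otimes L) = \sum_{k=0}^N c_k(\Omega^1_{X/C})\, c_1(L)^{N-k}$, yields \eqref{SigmaLX}.

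\emph{Step 2: reduction to cohomology of $X$ for $n\neq N-1$.} Under the relative ampleness of $L$, the weak Lefschetz theorem applied fibrewise to $H_t\subset X_t$ for $t\in\mathring C:=C\setminus\pi(\Sigma)$ gives, as VHS on $\mathring C$, an isomorphism $\H^n(X_{\mathring C}/\mathring C)\xrightarrow\sim \H^n(H_{\mathring C}/\mathring C)$ for $n<N-1$ and a Gysin isomorphism $\H^n(H_{\mathring C}/\mathring C)\xrightarrow\sim \H^{n+2}(X_{\mathring C}/\mathring C)(1)$ for $n>N-1$. Since $\pi$ is smooth, the $\H^k(X/C)$ have trivial local monodromy at every point of $C$; the same therefore holds for $\H^n(H_{\mathring C}/\mathring C)$ when $n\neq N-1$, so its upper and lower Deligne extensions agree with the extension coming from $X/C$. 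A direct computation shows $\GK_C(\V(1))\simeq \GK_C(\V)\otimes(\det\V)^{\otimes -1}$ for any VHS $\V$ over $C$, and the flat bundle $\det \H^{n+2}(X/C)$ has vanishing first Chern class in $\CH^1(C)_\Q$. Consequently, in $\CH_0(C)_\Q$,
\begin{equation*}
c_1\GK_{C,\pm}(\H^n(H_\eta/C_\eta)) = \begin{cases} c_1\GK_C(\H^n(X/C)), & n<N-1,\\ c_1\GK_C(\H^{n+2}(X/C)), & n>N-1. \end{cases}
\end{equation*}

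\emph{Step 3: assembly and algebraic identity.} Let $S_H^{\pm}$ and $S_X$ be the alternating sums $\sum_n(-1)^{n-1} c_1\GK(\H^n)$ for $H$ (resp. $X$). Reindexing via Step 2 and isolating the three ``missing'' $X$-terms $\H^{N-1}(X)$, $\H^N(X)$, $\H^{N+1}(X)$ yields
\begin{equation*}
c_1\GK_{C,\pm}(\H^{N-1}(H_\eta/C_\eta)) = c_1\GK_C(\H^{N-1}(X/C)) - c_1\GK_C(\H^N(X/C)) + c_1\GK_C(\H^{N+1}(X/C)) + (-1)^N(S_H^{\pm}-S_X).
\end{equation*}
Theorem \ref{intro GK critical points} applied to $\pi_{|H}$ evaluates $S_H^{\pm}$ and formula \eqref{main formula smooth} applied to the smooth $\pi$ evaluates $S_X$. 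To rewrite $\int_H c_1c_{N-1}([\Omega^1_{H/C}]^\vee)$ in terms of classes on $X$, I would use the conormal identity $[\Omega^1_{H/C}] = [\Omega^1_{X/C}|_H] - [L^{-1}|_H]$ in $K^0(H)$, whence $c([\Omega^1_{H/C}]^\vee) = c(\Omega^{1\vee}_{X/C})_{|H}\cdot(1+c_1(L)_{|H})^{-1}$; expanding and pushing forward via the projection formula with $[H]=c_1(L)$ converts the integral into a polynomial in $c_1(L)$ and $c_k(\Omega^1_{X/C})$ on $X$.

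\emph{Main obstacle.} The key technical point is the final bookkeeping: after substitution, the expressions $(-1)^N\int_H c_1c_{N-1}([\Omega^1_{H/C}]^\vee)$, $(-1)^N\int_X c_1c_N(\Omega^{1\vee}_{X/C})$, and the target $\int_X[(1-c_1(L))^{-1}c_1(\Omega^1_{X/C})c(\Omega^1_{X/C}) - c_1(L)c_N(\Omega^1_{X/C})]$ must all combine, with the leftover precisely matching the sum $\sum_{k=0}^N c_k(\Omega^1_{X/C})\, c_1(L)^{N+1-k}$ which equals $|\Sigma|$ by Step 1. The coefficient bookkeeping is then reduced to the parity-by-parity check of the identity $v_N^{\pm} = (-1)^N u_N^{\pm} + 1/12$, which holds for both signs and both parities of $N$.
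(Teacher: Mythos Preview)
Your proposal is correct and follows essentially the same approach as the paper: Step~1 matches Proposition~\ref{Sigma for hyp}, Step~3 matches the final assembly via equations~\eqref{first eq technical result}--\eqref{third eq technical result}, and your coefficient identity $v_N^{\pm}=(-1)^N u_N^{\pm}+1/12$ is exactly what drops out of the paper's computation. The one small difference is in Step~2 for $n>N-1$: you invoke the Gysin isomorphism $\H^n(H_{\mathring C}/\mathring C)\simeq\H^{n+2}(X_{\mathring C}/\mathring C)(1)$ directly and handle the Tate twist via $\GK_C(\V(1))\simeq\GK_C(\V)\otimes(\det\V)^{-1}$, whereas the paper (Proposition~\ref{GK Lefschetz}) reaches the same conclusion by applying Poincar\'e duality twice---once on the blow-up of $H$ at $\Sigma$ via Proposition~\ref{GK Poincare duality}, and once on $X$---combined with weak Lefschetz in the complementary degree $2(N-1)-n<N-1$. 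Your route is slightly more direct; the paper's route has the advantage of reusing a general 2-torsion lemma already proved earlier, which also covers the vanishing of $c_1(\det\H^{n+2}(X/C))$ in $\CH^1(C)_\Q$ that you need.
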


\subsection{Computation of $i_\ast [\Sigma]$ and of $i_\ast (c_1 c_{N-1})([T_f])$}\label{Sigma pencil smooth}

Let $C$ be a connected smooth projective complex curve with generic point $\eta$, let $X$ be a smooth projective complex scheme of pure dimension $N+1$, and let
$$\pi : X \lra C$$
be a smooth surjective morphism of complex schemes. Let $H$ be a non-singular hypersurface in $X$ such that the morphism
$$f := \pi_{| H} : H \lra C$$
is flat and has a finite set $\Sigma$ of critical points, all of which are non-degenerate.

Let
$$
i : H \lra X
$$
be the inclusion map. Let us also denote by $L$ the line bundle $\cO_X(H)$ on $X$.

Observe that the line bundle $L^\vee_{| H}$ on $H$ can be identified with the conormal line bundle $\cN_H X^\vee$ of the hypersurface $H$ in $X$.

Let us define a map $s$ of vector bundles on $H$ by the composition:
$$
s : L^\vee _{| H} \simeq \cN_H X^\vee \lra \Omega^1_{X | H} \lra \Omega^1_{X/C | H}.
$$
We will also see $s$ as a section of the vector bundle $\Omega^1_{X/C | H} \otimes L_{| H}$.

Some of the content of the upcoming results, Lemmas \ref{subscheme s} and \ref{ex sequence hyp}, can be summarized in the commutative diagram below, where all the horizontal and vertical sequences are exact sequences of coherent sheaves on $H$:
\begin{equation} \label{diagr hyp omega}
    \xymatrix{
    & & f^\ast \Omega^1_C \ar[d]_{{}^t (D \pi)}\ar@{=}[r]  &  f^\ast \Omega^1_C \ar[d]^{{}^t (D f)} & \\
    0 \ar[r] & \cN_H X^\vee \ar@{=}[d] \ar[r]  & \Omega^1_{X \mid H} \ar[r]^{{}^t (D i)} \ar[d] & \Omega^1_H \ar[r] \ar[d] & 0 \\
       0 \ar[r] & \cN_H X^\vee \ar[r]^{s} & \Omega^1_{X/C \mid H} \ar[r]^{{}^t (D i)} \ar[d] & \Omega^1_{H/C} \ar[r] \ar[d] & 0 \\
    & & 0 & 0 &}
\end{equation}
In \eqref{diagr hyp omega}, ${}^t (D f)$ (resp. ${}^t (D \pi)$, ${}^t (D i)$) denotes the morphism between coherent sheaves of K\"ahler differentials induced by the morphism of schemes $f$ (resp. $\pi$, $i$).

\begin{lemma}
\label{subscheme s}
The section $s$ does not vanish on the open dense subset $H - \Sigma$ in $H$, and for every point $P$ in $\Sigma$, there exists a  system $(z_1,...,z_N)$ of local analytic coordinates on $H$ near $P$ such that, in some local frame of the vector bundle $\Omega^1_{X/C | H} \otimes L_{| H}$, the section $s$ is given by:
$$
s = (2 z_i)_{1 \leq i \leq N}.
$$
\end{lemma}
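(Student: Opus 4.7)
The plan is to work locally on $X$ near a point of $H$, identify the section $s$ explicitly in terms of a local defining equation of $H$, and then apply a parametric version of the complex Morse lemma.

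First I would fix a point $P\in H$ with $x:=\pi(P)\in C$, choose a local coordinate $t_C$ on $C$ centered at $x$, set $t:=\pi^\ast t_C$, and complete it into local analytic coordinates $(y_1,\dots,y_N,t)$ on $X$ near $P$ (which is possible since $\pi$ is smooth of relative dimension $N$). In this chart a local generator $h$ of the ideal sheaf $\mathcal{I}_H=L^{-1}$ is a local section of $L^{-1}$ cutting out $H$, and under the identification $\mathcal{N}_HX^\vee\simeq L^{-1}_{|H}$ the element $[h]$ maps in $\Omega^1_{X|H}$ to $dh_{|H}$, and then in $\Omega^1_{X/C|H}$ to $d_{X/C}h_{|H}=\sum_i (\partial h/\partial y_i)_{|H}\,dy_i$. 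Hence, in the local frame $(dy_1\otimes h^{-1},\dots,dy_N\otimes h^{-1})$ of $\Omega^1_{X/C|H}\otimes L_{|H}$, the section $s$ is given by the vector $\bigl((\partial h/\partial y_i)_{|H}\bigr)_{1\le i\le N}$.

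From this description, $s(P)=0$ if and only if all partials $\partial h/\partial y_i$ vanish at $P$, which, combined with $dh_P\neq 0$ (smoothness of $H$) forces $dh_P$ to be a nonzero multiple of $dt_P$. A direct verification (writing $f=\pi_{|H}$ in the local coordinates $(y_1,\dots,y_N)$ on $H$ obtained by solving $h=0$ for $t$ via the implicit function theorem) shows that this condition is precisely the vanishing of $df_P$, i.e. $P\in\Sigma$. This gives the first assertion. The same computation also identifies the Hessian of $f$ at a critical point with the $y$-Hessian of $h$ up to a nonzero factor, so non-degeneracy of $P\in\Sigma$ as a critical point of $f$ is equivalent to non-degeneracy of the $y$-Hessian of $h$ at $P$.

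For the local model at $P\in\Sigma$, I would invoke the complex Morse lemma with parameter $t$: since $h(\cdot,0)$ has a non-degenerate critical point at the origin, there exist local analytic functions $z_1(y,t),\dots,z_N(y,t)$ with $z_i(0,0)=0$, forming together with $t$ a local coordinate system on $X$, such that
\begin{equation*}
h(y,t)=h_0(t)+z_1^2+\dots+z_N^2,
\end{equation*}
where $h_0(t)=h$ evaluated at the ($t$-family of) critical point(s). Since $\partial h/\partial t(P)\neq 0$ (equivalently $h_0'(0)\neq 0$), the function $h_0$ is a local biholomorphism at $0$, so replacing $t_C$ by $t_C':=h_0\circ t_C$ on $C$ yields a new coordinate $t'=\pi^\ast t_C'$ on $X$ with $h=t'+\sum_i z_i^2$. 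The equation of $H$ now reads $t'=-\sum_i z_i^2$, so $(z_1,\dots,z_N)$ restrict to local analytic coordinates on $H$ near $P$, and
\begin{equation*}
s=d_{X/C}h_{|H}\otimes h^{-1}_{|H}=\sum_{i=1}^N 2z_i\,(dz_i)_{|H}\otimes h^{-1}_{|H},
\end{equation*}
so $s=(2z_i)_{1\le i\le N}$ in the frame $\bigl((dz_i)_{|H}\otimes h^{-1}_{|H}\bigr)_{1\le i\le N}$.

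The only non-routine step is the invocation of the parametric Morse lemma; everything else is a bookkeeping exercise with local frames of $L$ and $L^{-1}$, and with the canonical identifications $\mathcal{N}_HX^\vee\simeq L^{-1}_{|H}$ and $\mathcal{I}_H/\mathcal{I}_H^2\simeq L^{-1}_{|H}$. The parametric Morse lemma in the holomorphic setting is classical (it follows from the usual Morse lemma applied fiberwise combined with the holomorphic implicit function theorem to ensure analytic dependence on the parameter $t$), so the main care is just to cite it correctly.
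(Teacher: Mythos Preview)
Your proof is correct. The approach differs from the paper's in one tactical respect worth noting. The paper first applies the implicit function theorem to write $H$ locally as the graph $\pi^\ast t = u(z_1,\dots,z_N)$ of a function $u$ on the fiber $X_y$ (after choosing coordinates $(\pi^\ast t, z_1,\dots,z_N)$ on $X$), observes that the non-degeneracy of the critical point of $f$ translates into non-degeneracy of the Hessian of $u$ at $0$, and then applies the \emph{ordinary} complex Morse lemma to $u$ to obtain $u = z_1^2 + \dots + z_N^2$ in suitable coordinates. You instead work directly on $X$ with the defining equation $h$ and invoke the \emph{parametric} Morse lemma (with parameter $t$), followed by a change of the base coordinate to absorb $h_0(t)$. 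Your route is a bit more streamlined (one normal form step instead of implicit function theorem plus Morse), while the paper's route has the minor advantage of only needing the non-parametric Morse lemma, which is the more commonly cited form. Both arrive at the same local model and the same identification of $s$ in the frame $((dz_i)_{|H}\otimes h^{-1}_{|H})_i$.
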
 
\begin{proof}
By definition of the subset $\Sigma$, the morphism of non-singular complex schemes $f$ is smooth on the open dense subset $H - \Sigma$, so that by elementary properties of the sheaf of K\"ahler differentials, the section $s$ does not vanish on $H - \Sigma$.

Now, let $P$ be a point in $\Sigma$, and let $y$ in $C$ be its image by $f$. The hypersurfaces $H$ and $X_y$ in $X$ are both non-singular and meet in $P$. By definition of $\Sigma$, the morphism $f$ has a critical point in $P$, so we have the equality of hyperplanes in $T_{X,P}$:
\begin{equation}
T_{H,P} = T_{X_y, P} \label{eq tangent two hyp}
\end{equation}
Let $t$ be a local coordinate of $C$ in some analytic neighborhood of $y$. Since the morphism $\pi$ is smooth, we can extend the function $\pi^\ast t$ into a local coordinate system $(\pi^\ast t, z_1,...,z_N)$ in some analytic neighborhood $V$ of $P$.

Using \eqref{eq tangent two hyp} and  the implicit function theorem, after possibly shrinking $V$, there exists a diffeomorphism:
$$
\phi : X_y \cap V \lra H \cap V
$$
which sends $P$ to $P$ and whose differential at $P$ is the identity map 
$$
\Id : T_{X_y,P} \lra T_{H, P}.
$$
In other terms, there exists an analytic neighborhood $U$ of $0$ in $\C^N$, and a function: 
$$
u : U \lra \C,
$$
vanishing in the origin, and whose derivative vanishes in the origin, and such that $H \cap V$ is defined by the equation:
\begin{equation}
\pi^\ast t = u(z_1,...,z_N). \label{eq hyp implicit theorem}
\end{equation}
The morphism $\phi$ and the function $u$ are related by the equality of morphisms from $X_y$ to $\C^{N+1}$:
\begin{equation}
   (\pi^\ast t, z_1,...,z_N) \circ \phi = (u(z_1,...,z_N),z_1,...,z_N). \label{phi and u in coordinates}
\end{equation}
Observe that $H$ admits a local coordinate system given by $(z_{1 | H},...,z_{N | H})$.

By hypothesis, $y$ is a non-degenerate critical point of the morphism $f = \pi_{| H}$ on $H$, hence it is also a non-degenerate critical point of the morphism:
$$
\pi_{| H \cap V} \circ \phi : X_y \cap V \lra C.
$$
Using \eqref{phi and u in coordinates}, this morphism is given in local coordinates by:
$$
(\pi^\ast t)_{| H \cap V} \circ \phi = u(z_1,...,z_N),
$$
so we obtain that the Hessian at $0$ of the function $u$ is non-degenerate. Consequently, after possibly shrinking the neighborhood $V$ and changing the coordinates $z_1,...,z_N$, we can assume that the function $u$ is given by:
\begin{equation}
u(z_1,...,z_N) = z_1^2 + ... + z_N^2. \label{u non-degenerate}
\end{equation}
The vector bundle $\Omega^1_X$ admits a local frame given by $(\pi^\ast \mathrm{d} t, \mathrm{d} z_1,..., \mathrm{d} z_N)$. Using that the hypersurface $H$ is defined by the equation \eqref{eq hyp implicit theorem}, the conormal line bundle $L_{| H}^\vee \simeq \cN_H X^\vee$, as embedded in the vector bundle $\Omega^1_{X | H}$, is generated by the non-vanishing section:
\begin{equation*}
s_0 := \frac{\partial u }{\partial z_1}  \mathrm{d} z_1 + ... + \frac{\partial u }{\partial z_N} \mathrm{d} z_N - \pi^\ast \mathrm{d} t 
= 2 z_{1 | H} \mathrm{d} z_1 + ... + 2 z_{N | H} \mathrm{d} z_N - \pi^\ast \mathrm{d} t \textrm{ using \eqref{u non-degenerate}}.
\end{equation*}
The vector bundle $\Omega^1_{X/C}$ admits a local frame given by $([\mathrm{d} z_1],...,[\mathrm{d} z_N])$. In this frame, the image by the projection
$$
\Omega^1_{X | H} \lra \Omega^1_{X/C | H}
$$
of the section $s_0$ is given by:
$$
2 z_{1 | H} [\mathrm{d} z_1] + ... + 2 z_{N | H} [\mathrm{d} z_N].
$$
This image is precisely the section $s$, which proves the lemma.
\end{proof}

\begin{lemma}
\label{ex sequence hyp} 
We have an exact sequence of coherent sheaves on $H$:
\begin{equation}
0 \lra L^\vee _{| H} \overset{s}{\lra} \Omega^1_{X/C | H} \overset{{}^t (D i)}{\lra} \Omega^1_{H/C} \lra 0, \label{eq ex sequence hyp}
\end{equation}
where ${}^t (D i)$ is the map between sheaves of K\"{a}hler differentials associated with the morphism of $C$-schemes $i$.

Furthermore, the subscheme of $H$ defined by the vanishing of $s$ is precisely the reduced subscheme defined by the finite subset $\Sigma$.
\end{lemma}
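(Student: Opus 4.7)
The plan is to derive the exact sequence~\eqref{eq ex sequence hyp} from the standard conormal exact sequence of the regular embedding $i : H \hra X$ of smooth schemes, namely
$$0 \lra L^\vee_{|H} \lra i^\ast \Omega^1_X \lra \Omega^1_H \lra 0,$$
by quotienting by the subsheaf $f^\ast \Omega^1_C \hra i^\ast \Omega^1_X$. Since $\pi$ is smooth, $\pi^\ast \Omega^1_C$ is a line subbundle of $\Omega^1_X$, and its restriction $f^\ast \Omega^1_C$ injects as a line subbundle of $i^\ast \Omega^1_X$ with quotient $i^\ast \Omega^1_{X/C} = \Omega^1_{X/C|H}$. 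The induced map $f^\ast \Omega^1_C \to \Omega^1_H$ is, by construction, the natural map whose cokernel defines $\Omega^1_{H/C}$. Passing to the quotient of the conormal sequence by $f^\ast \Omega^1_C$ therefore yields a right-exact sequence
$$L^\vee_{|H} \overset{s}{\lra} \Omega^1_{X/C|H} \overset{{}^t(Di)}{\lra} \Omega^1_{H/C} \lra 0,$$
in which $s$ coincides with the composition defined in the text.

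It then remains to verify that $s$ is injective. The key observation is that $s$, being a morphism of $\cO_H$-modules with source a line bundle on the smooth (hence locally integral) scheme $H$, is automatically injective as soon as it does not vanish identically on any connected component of $H$; this is a consequence of the torsion-freeness of line bundles on smooth schemes. The required non-vanishing follows directly from the first assertion of Lemma~\ref{subscheme s}, according to which $s$ does not vanish on the open dense subset $H - \Sigma$. We anticipate no serious obstacle in this step; the only subtlety is that $s$ need not be injective as a morphism of vector bundles fibrewise --- indeed, it drops rank precisely on $\Sigma$ --- but this causes no issue at the level of coherent sheaves.

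For the second assertion, we will work locally around each point $P$ of $\Sigma$ and exploit the explicit form of $s$ furnished by Lemma~\ref{subscheme s}: in suitable local coordinates $(z_1,\dots,z_N)$ on $H$ and in a local frame of $\Omega^1_{X/C|H} \otimes L_{|H}$, the section $s$ writes as $(2z_1,\dots,2z_N)$. The ideal sheaf generated by its components is thus the maximal ideal $(z_1,\dots,z_N)$ of $P$, so the vanishing subscheme of $s$ near $P$ is reduced and equal to the single point $\{P\}$. Combined with the non-vanishing of $s$ on $H - \Sigma$ provided by Lemma~\ref{subscheme s}, this identifies the vanishing subscheme of $s$ on $H$ with the reduced subscheme associated to the finite set $\Sigma$, as required.
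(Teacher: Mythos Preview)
Your proof is correct and follows essentially the same approach as the paper: the right-exactness of~\eqref{eq ex sequence hyp} is obtained from the conormal sequence of $i$ (which the paper simply calls a ``standard property of sheaves of relative K\"ahler differentials''), injectivity of $s$ is deduced from its generic non-vanishing via Lemma~\ref{subscheme s}, and the identification of the vanishing subscheme uses the explicit local form of $s$ from that same lemma. Your write-up is a touch more explicit about the derivation of right-exactness, but there is no substantive difference.
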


\begin{proof}
For the exactness of \eqref{eq ex sequence hyp}, everything but the injectivity of the morphism $s$ is a standard property of sheaves of relative K\"{a}hler differentials. By Lemma \ref{subscheme s}, the morphism $s$ does not vanish on the open dense subset $H - \Sigma$ in $H$. Since both coherent sheaves $L^\vee _{| H}$ and $\Omega^1_{X/C | H}$ are locally free on $H$, the morphism of coherent sheaves $s$ is therefore injective, which completes the proof of the exactness of \eqref{eq ex sequence hyp}.

Furthermore, by Lemma \ref{subscheme s}, the morphism $s$ vanishes with order $1$ in every point of $\Sigma$. Consequently, the subscheme defined by the vanishing of the morphism $s$ is precisely the reduced subscheme defined by $\Sigma$, as wanted.
\end{proof}

\begin{proposition}
\label{Sigma for hyp}
We have the following equalities in $\CH_0(X)$:
\begin{equation}
i_\ast [\Sigma] = [(1 - c_1(L))^{-1} c(\Omega^1_{X/C}) ]^{(N+1)}, \label{eq Sigma for hyp}
\end{equation}
and:
\begin{multline}
i_\ast (c_1 c_{N-1})([T_f]) = (c_1 c_N)([T_\pi]) \\+ (-1)^N \big([(1 - c_1(L))^{-1} c_1(\Omega^1_{X/C})
c(\Omega^1_{X/C}) ]^{(N+1)} + i_\ast [\Sigma] - c_1(L) c_N(\Omega^1_{X/C}) \big). \label{eq class tangent hyp}
\end{multline}
\end{proposition}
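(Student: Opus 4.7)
The plan is to establish the two equalities separately, both by reducing to computations on $X$ via the projection formula applied to the inclusion $i : H \hookrightarrow X$.

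For the first equality \eqref{eq Sigma for hyp}, the key is to interpret $[\Sigma]$ as the top Chern class of a rank-$N$ vector bundle on $H$. Viewing $s$ as a section of $\Omega^1_{X/C|H} \otimes L_{|H}$, Lemma \ref{subscheme s} implies that $s$ vanishes transversally along the reduced subscheme $\Sigma$, so I would write
\[
[\Sigma] = c_N(\Omega^1_{X/C|H} \otimes L_{|H}) \quad \text{in } \CH^N(H).
\]
Applying the standard formula $c_N(E \otimes L) = \sum_{j=0}^{N} c_j(E) c_1(L)^{N-j}$ valid for any rank-$N$ bundle $E$, and pushing forward via $i_\ast$ using the projection formula together with $i_\ast i^\ast \alpha = c_1(L) \cdot \alpha$, yields
\[
i_\ast [\Sigma] = \sum_{j=0}^{N} c_1(L)^{N+1-j} c_j(\Omega^1_{X/C}).
\]
The final step is to recognize this as the degree-$(N+1)$ component of the formal series $(1-c_1(L))^{-1} c(\Omega^1_{X/C})$, which requires only the vanishing $c_{N+1}(\Omega^1_{X/C}) = 0$ (as $\Omega^1_{X/C}$ has rank $N$).

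For the second equality \eqref{eq class tangent hyp}, the starting point is again the exact sequence of Lemma \ref{ex sequence hyp}, which gives in $K^0(H)$ the identity
\[
[T_f] = [T_{\pi|H}] - [L_{|H}] = i^\ast([T_\pi] - [L]).
\]
Hence $c_k([T_f]) = i^\ast c_k([T_\pi] - [L])$ for every $k$, and the projection formula yields
\[
i_\ast (c_1 c_{N-1})([T_f]) = c_1(L) \cdot c_1([T_\pi] - [L]) \cdot c_{N-1}([T_\pi] - [L]) \quad \text{in } \CH^{N+1}(X).
\]
Using the multiplicativity of the total Chern class, $c([T_\pi] - [L]) = c(T_\pi) \cdot (1+c_1(L))^{-1}$, I would then expand $c_{N-1}([T_\pi] - [L]) = \sum_{j=0}^{N-1} (-c_1(L))^{N-1-j} c_j(T_\pi)$ to obtain an explicit polynomial expression for the left-hand side in $c_1(L)$ and the $c_j(T_\pi)$.

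The main obstacle will then be the combinatorial matching of this polynomial with the claimed right-hand side. This is a bookkeeping issue: using the sign-flip identities $c_j(\Omega^1_{X/C}) = (-1)^j c_j(T_\pi)$ to rewrite each term on the right-hand side, and carefully re-indexing the resulting sums, I expect both sides to collapse, after subtracting the common term $(c_1 c_N)([T_\pi])$, to the same expression
\[
-\sum_{k=1}^{N} (-c_1(L))^k c_1(T_\pi) c_{N-k}(T_\pi) - \sum_{k=2}^{N+1} (-c_1(L))^k c_{N+1-k}(T_\pi)
\]
in $\CH^{N+1}(X)$. The global prefactor $(-1)^N$ in the statement is precisely what absorbs the sign discrepancy between the Chern classes of $T_\pi$ and those of $\Omega^1_{X/C}$ in the top-degree terms, while the separate term $-c_1(L) c_N(\Omega^1_{X/C})$ compensates for the fact that the expansion of $(1-c_1(L))^{-1} c(\Omega^1_{X/C})$ contains one extra $k=1$ contribution that is absent from the left-hand side.
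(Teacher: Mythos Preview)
Your proposal is correct and follows essentially the same approach as the paper: both use the exact sequence of Lemma \ref{ex sequence hyp} to express the relevant Chern classes on $H$ as pullbacks from $X$, then push forward by $i_\ast$ via the projection formula and match the resulting polynomials in $c_1(L)$ and the $c_j$. The only cosmetic difference is that the paper carries out the computation in terms of $\Omega^1_{X/C}$ and converts to $[T_f]$ by the sign flip $(-1)^N$ at the very end, whereas you work with $[T_\pi]-[L]$ directly from the start; the combinatorial bookkeeping is otherwise identical.
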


\begin{proof}
Using Lemma \ref{ex sequence hyp}, the subscheme defined by the vanishing of the section $s$ of the vector bundle $\Omega^1_{X/C | H} \otimes L_{| H}$ on $H$ is exactly the reduced subscheme defined by $\Sigma$, in particular, it is of dimension $0$, hence of codimension $N$ in $H$. The rank of the vector bundle $\Omega^1_{X/C | H} \otimes L_{| H}$ on $H$ is exactly $N$, so that $s$ is a regular section in the sense of \cite[B.3.4]{Fulton98}.

Consequently, using \cite[Example 3.2.16, (ii)]{Fulton98}, we have the equality in $\CH_0(H)$:
\begin{align*}
[\Sigma] &= c_N(\Omega^1_{X/C | H} \otimes L_{| H}),\\
&= i^\ast c_N(\Omega^1_{X/C} \otimes L),\\
&= i^\ast \sum_{0 \leq k \leq N} c_1(L)^{N-k} c_k(\Omega^1_{X/C}),
\end{align*}
where the last expression follows from the formula for the top Chern class of the tensor product of a vector bundle by a line bundle (\cite[Remark 3.2.3, (b)]{Fulton98}).

Pushing forward by the inclusion $i$ and applying the projection formula, we obtain the equality in $\CH_0(X)$:
\begin{align}
i_\ast [\Sigma] &= [H] \cdot \sum_{0 \leq k \leq N} c_1(L)^{N-k} c_k(\Omega^1_{X/C}), \nonumber \\
&= c_1(L) \sum_{0 \leq k \leq N} c_1(L)^{N-k} c_k(\Omega^1_{X/C}), \nonumber\\
&= \sum_{0 \leq k \leq N} c_1(L)^{N+1-k} c_k(\Omega^1_{X/C}),\label{isig}\\
&= \sum_{0 \leq k \leq N+1} c_1(L)^{N+1-k} c_k(\Omega^1_{X/C}) \textrm{ because the vector bundle $\Omega^1_{X/C}$ is of rank $N$,} \nonumber \\
&= [(1 - c_1(L))^{-1} c(\Omega^1_{X/C}) ]^{(N+1)}, \nonumber
\end{align}
which shows equality \eqref{eq Sigma for hyp}.

Now let us show equality \eqref{eq class tangent hyp}. Using exact sequence \eqref{eq ex sequence hyp} and the multiplicativity of total Chern classes, we have the equality in $\CH^\ast(H)$:
$$
c([\Omega^1_{H/C}]) = c(\Omega^1_{X/C | H}) c(L^\vee_{| H})^{-1}
= i^\ast [(1 - c_1(L))^{-1} c(\Omega^1_{X/C})]. 
$$
Taking the terms of codimension $1$, we obtain the equality in $\CH^1(H)$:
\begin{equation}
c_1([\Omega^1_{H/C}]) = i^\ast [c_1(\Omega^1_{X/C}) + c_1(L)], \label{c1 Omega hypersurface}
\end{equation}
and taking the terms of codimension $N-1$, we obtain the equality in $\CH^{N-1}(H)$:
\begin{equation}
c_{N-1}([\Omega^1_{H/C}]) = i^\ast \sum_{0 \leq k \leq N-1} c_1(L)^{N-1-k} c_k(\Omega^1_{X/C}). \label{cN-1 Omega hypersurface}
\end{equation}
Multiplying \eqref{c1 Omega hypersurface} and \eqref{cN-1 Omega hypersurface}, we obtain the equality in $\CH^N(H) \simeq \CH_0(H)$:
\begin{align*}
(c_1 c_{N-1})([\Omega^1_{H/C}]) 
&= i^\ast \Big[\sum_{0 \leq k \leq N-1} c_1(L)^{N-k} c_k(\Omega^1_{X/C})+ \sum_{0 \leq k \leq N-1} c_1(L)^{N-1-k} (c_1 c_k)(\Omega^1_{X/C})\Big].
\end{align*}
Pushing forward by $i$ and applying the projection formula, we obtain the equality in $\CH_0(X)$:
\begin{align}
i_\ast (c_1 c_{N-1})([\Omega^1_{H/C}]) &= c_1(L) . \Big[\sum_{0 \leq k \leq N-1} c_1(L)^{N-k} c_k(\Omega^1_{X/C}) + \sum_{0 \leq k \leq N-1} c_1(L)^{N-1-k} (c_1 c_k)(\Omega^1_{X/C})\Big],\nonumber \\
&= \sum_{0 \leq k \leq N-1} c_1(L)^{N+1-k} c_k(\Omega^1_{X/C}) + \sum_{0 \leq k \leq N-1} c_1(L)^{N-k} (c_1 c_k)(\Omega^1_{X/C}). \label{first eq computation c tangent hyp}
\end{align}
The first sum in \eqref{first eq computation c tangent hyp} is given by:
\begin{align*}
\sum_{0 \leq k \leq N-1} c_1(L)^{N+1-k} c_k(\Omega^1_{X/C}) 
&= \sum_{0 \leq k \leq N} c_1(L)^{N+1-k} c_k(\Omega^1_{X/C}) - c_1(L) c_N(\Omega^1_{X/C}),\\
&= i_\ast [\Sigma] - c_1(L) c_N(\Omega^1_{X/C}) \textrm{ using equality \eqref{isig} 
}.
\end{align*}
Similarly, the second sum in \eqref{first eq computation c tangent hyp} is given by:
\begin{align*}
\sum_{0 \leq k \leq N-1} c_1(L)^{N-k} (c_1 c_k)(\Omega^1_{X/C})
&= \sum_{0 \leq k \leq N} c_1(L)^{N-k} (c_1 c_k)(\Omega^1_{X/C}) - (c_1 c_N)(\Omega^1_{X/C}),\\
&= [(1 - c_1(L))^{-1} c_1(\Omega^1_{X/C}) c(\Omega^1_{X/C}) ]^{(N+1)} - (c_1 c_N)(\Omega^1_{X/C}).
\end{align*}
Hence replacing in \eqref{first eq computation c tangent hyp}, we obtain the equality in $\CH_0(X)$:
$$
i_\ast (c_1 c_{N-1})([\Omega^1_{H/C}]) = i_\ast [\Sigma] - c_1(L) c_N(\Omega^1_{X/C}) + [(1 - c_1(L))^{-1} c_1(\Omega^1_{X/C}) c(\Omega^1_{X/C}) ]^{(N+1)} - (c_1 c_N)(\Omega^1_{X/C}).
$$
Since the morphism of non-singular complex schemes $f$ is smooth on an open dense subset of $H$, similarly to Subsection \ref{Comparing the characteristic classes of T and omega}, we have the following equality in $K^0(H) \simeq K_0(H)$:
$$
[T_f] = [\Omega^1_{H/C}] ^\vee,
$$
where $.^\vee$ denotes the duality involution on $K^0(H)$.

Consequently, we have the equality in $\CH_0(X)$:
\begin{align*}
i_\ast (c_1 c_{N-1})([T_f]) &= (-1)^N i_\ast (c_1 c_{N-1})([\Omega^1_{H/C}]),\\
&= (c_1 c_N)([T_\pi]) \\
&+ (-1)^N \big([(1 - c_1(L))^{-1} c_1(\Omega^1_{X/C}) c(\Omega^1_{X/C})]^{(N+1)} + i_\ast [\Sigma] - c_1(L) c_N(\Omega^1_{X/C})\big),
\end{align*}
which shows equality \eqref{eq class tangent hyp}.
\end{proof}

\subsection{An application of Lefschetz's weak theorem}

We can also prove the following consequence of Lefschetz's weak theorem and of Poincar\'{e} duality.

\begin{proposition} 
\label{GK Lefschetz}
If the line bundle $L$ is ample relatively to the morphism $\pi$, then for every integer $n$ such that $n < N-1$, the local monodromy of $\H^n(H_\eta / C_\eta)$ is unipotent and we have the equality in $\CH_0(C)$:
\begin{equation}
c_1(\GK_C(\H^n(H_\eta / C_\eta))) = c_1(\GK_C(\H^n(X/C))), \label{eq GK Lefschetz small n}
\end{equation}
and for every integer $n$ such that $n > N-1$, the local monodromy of $\H^n(H_\eta / C_\eta)$ is unipotent and we have the equality in $\CH_0(C)_\Q$:
\begin{equation}
c_1(\GK_C(\H^n(H_\eta / C_\eta))) = c_1(\GK_C(\H^{n+2}(X/C))). \label{eq GK Lefschetz large n}
\end{equation}
\end{proposition}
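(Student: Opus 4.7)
The plan is to establish both equalities by identifying the VHS $\H^n(H_\eta / C_\eta)$ with a VHS coming from the smooth pencil $\pi: X \to C$, using the weak Lefschetz theorem directly for $n < N-1$ and Poincar\'e--Serre duality to reduce to that case for $n > N-1$. Let $\Delta \subset C$ denote the finite set of critical values of $f := \pi_{\mid H}$ and let $\Cc := C - \Delta$. A useful initial observation is that, since $\pi$ is smooth, for every integer $n$ the VHS $\H^n(X/C)$ extends over all of $C$ with trivial --- hence unipotent --- local monodromy at every point of $\Delta$; its upper and lower Deligne extensions therefore coincide.

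For $n < N-1$, I would apply the fiberwise weak Lefschetz theorem. For each $x \in \Cc$, the $\pi$-ample divisor $H_x \subset X_x$ induces an isomorphism of pure Hodge structures $H^n(X_x, \Q) \lrasim H^n(H_x, \Q)$, and these assemble into an isomorphism of VHS over $\Cc$:
$$i_\Cc^\ast: \H^n(X_\Cc / \Cc) \lrasim \H^n(H_\Cc / \Cc).$$
In particular, the local monodromy of $\H^n(H_\eta / C_\eta)$ at every point of $\Delta$ is trivial, hence unipotent. By functoriality of the Deligne extension and of the Hodge filtration under isomorphisms of VHS with unipotent monodromy, this isomorphism extends uniquely to an isomorphism of filtered vector bundles on all of $C$, which by definition induces a canonical isomorphism of Griffiths line bundles and yields \eqref{eq GK Lefschetz small n} at the level of divisor classes.

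For $n > N-1$, I would first apply relative Poincar\'e--Serre duality to the smooth families $H_\Cc/\Cc$ (of relative dimension $N-1$) and $X_\Cc/\Cc$ (of relative dimension $N$) to obtain isomorphisms of VHS
$$\H^n(H_\Cc/\Cc) \simeq \H^{2(N-1)-n}(H_\Cc/\Cc)^\vee(-(N-1)), \quad \H^{n+2}(X_\Cc/\Cc) \simeq \H^{2(N-1)-n}(X_\Cc/\Cc)^\vee(-N).$$
Since $2(N-1)-n < N-1$, the first case applies and yields $\H^{2(N-1)-n}(H_\Cc/\Cc) \simeq \H^{2(N-1)-n}(X_\Cc/\Cc)$; combining these three relations gives an isomorphism of VHS over $\Cc$,
$$\H^n(H_\Cc/\Cc) \simeq \H^{n+2}(X_\Cc/\Cc)(1),$$
whose right-hand side has trivial monodromy around $\Delta$, proving unipotence on the left. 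A direct computation on graded pieces shows that for any VHS $V$ with unipotent monodromy,
$$\GK_C(V(1)) \simeq \GK_C(V) \otimes (\det V)^{\otimes -1},$$
since $\gr^p_F V(1) = \gr^{p+1}_F V$ and the substitution $q = p+1$ produces the extra factor $(\det V)^{-1}$. Applying this to $V = \H^{n+2}(X/C)$ and invoking Lemma \ref{det H^n 2-torsion} (in the trivial case $A^n = 0$, where the integral structure alone gives that the determinant line bundle is of $2$-torsion), we obtain \eqref{eq GK Lefschetz large n} after passing to $\CH_0(C)_\Q$.

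The main technical point, and the one worth verifying with some care, is the bookkeeping of Tate twists in the Poincar\'e--Serre step and the resulting shift in the indexing of the Griffiths line bundle: the shift produces exactly one extra determinant factor, which must be absorbed into the $2$-torsion of $\det \overline{\H^{n+2}(X/C)}$ to land in $\CH_0(C)_\Q$. Everything else is functoriality of weak Lefschetz in smooth families and of the Deligne extension in the unipotent setting.
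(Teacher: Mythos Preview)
Your proposal is correct and follows essentially the same approach as the paper. For $n < N-1$ the arguments are identical: weak Lefschetz gives an isomorphism of VHS over $\Cc$, the right-hand side extends smoothly over $C$ forcing trivial monodromy, and functoriality of the Deligne extension carries the isomorphism to the Griffiths line bundles.

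For $n > N-1$ there is only a cosmetic difference in packaging. The paper applies its Poincar\'e duality proposition (Proposition~\ref{GK Poincare duality}) twice --- once to (the blow-up of) $H$ to pass from $\H^n$ to $\H^{2(N-1)-n}$, then weak Lefschetz, then once to $X$ to pass from $\H^{2(N-1)-n}$ to $\H^{n+2}$ --- each application absorbing a $2$-torsion determinant via Lemma~\ref{det H^n 2-torsion}. You instead compose the two dualities explicitly into a single isomorphism $\H^n(H_\Cc/\Cc) \simeq \H^{n+2}(X_\Cc/\Cc)(1)$, compute the effect of the Tate twist on the Griffiths bundle directly, and invoke Lemma~\ref{det H^n 2-torsion} once. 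The underlying content is the same; your version has the mild advantage of making the Tate twist visible and avoiding the detour through the blow-up $Y$ (which the paper uses only because Proposition~\ref{GK Poincare duality} is stated in that generality).
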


A minor variant of the proof below would   actually show that the local monodromy of $\H^n(H_\eta / C_\eta)$ is trivial when $n \neq N-1$.  

\begin{proof}

Let us assume that the line bundle $L$ is ample relatively to the morphism $\pi$. Let $\Delta$ be the set-theoretic image:
$$
\Delta := f(\Sigma),
$$
and let: 
$$\Cc:= C - \Delta$$
be its complement in $C$. 

Let $n$ be an integer such that $n < N-1$.

Since the line bundle $L = \cO_X(H)$ is ample relatively to the morphism $\pi$, using Lefschetz's weak theorem (see for instance \cite[Theorem 1.29]{Voisin03}), we obtain that the pullback morphism of variations of Hodge structures on $\Cc$:
$$
\H^n(i^\ast) : \H^n(X - X_\Delta / \Cc) \lra \H^n(H - H_\Delta / \Cc)
$$
is an isomorphism.

Since $X$ is smooth over $C$, the VHS $\H^n(X - X_\Delta /\Cc)$ on $\Cc$ can be extended into the VHS $\H^n(X/C)$ on $C$, hence its local monodromy at every point of $\Delta$ is trivial.

Consequently, the local monodromy of the VHS $\H^n(H - H_\Delta / \Cc)$ at every point of $\Delta$ is trivial, in particular unipotent, and the isomorphism of VHS $\H^n(i^\ast)$ extends into an isomorphism on the (upper or lower) Deligne extension.

Consequently, it induces an isomorphism of line bundles on $C$:
$$
\GK_C(\H^n(i^\ast)) : \GK_C(\H^n(X/C)) \lra \GK_C(\H^n(H_\eta / C_\eta)).
$$
Taking Chern classes, we obtain immediately equality \eqref{eq GK Lefschetz small n}.

Now, let $n$ be an integer such that $n > N-1$. Applying Proposition \ref{GK Poincare duality} to the blow-up $Y$ of $H$ at every point of $\Sigma$, satisfying that the divisor $Y_\Delta$ is a divisor with strict normal crossings, and to the integer $$n' := 2(N-1)-n < N -1,$$ and using that the local monodromy at every point of $\Delta$ of the VHS 
$\H^{n'}(Y - Y_\Delta / \Cc)$ is unipotent (which we showed above), we obtain that the local monodromy at every point of $\Delta$ of the VHS $\H^n(Y - Y_\Delta / \Cc)$, hence of the VHS $\H^n(H_\eta / C_\eta)$, is unipotent and that the line bundle on $C$:
\begin{multline*}
\GK_C(\H^n(Y - Y_\Delta / \Cc)) \otimes \GK_C(\H^{2(N-1)-n}(Y - Y_\Delta / \Cc))^\vee \\
\simeq \GK_C(\H^n(H_\eta / C_\eta)) \otimes \GK_C(\H^{2(N-1)-n}(H_\eta / C_\eta))
\end{multline*}
is of $2$-torsion. Consequently, we have the equality in $\CH_0(C)_\Q$,
$$
c_1(\GK_C(\H^n(H_\eta / C_\eta)))
= c_1(\GK_C(\H^{2(N-1)-n}(H_\eta / C_\eta))).
$$
Applying equality \eqref{eq GK Lefschetz small n} to the integer $n' := 2(N-1)-n < N-1$, we obtain:
$$
c_1(\GK_C(\H^n(H_\eta / C_\eta))) = c_1(\GK_C(\H^{2(N-1)-n}(X/C))).
$$
Applying Proposition \ref{GK Poincare duality} to $X$, with the set of critical values being empty, we obtain:
$$
c_1(\GK_C(\H^n(H_\eta / C_\eta))) = c_1(\GK_C(\H^{2N - (2(N-1)-n)}(X/C))) = c_1(\GK_C(\H^{n+2}(X/C))),
$$
which shows equality \eqref{eq GK Lefschetz large n}.
\end{proof}

Now we can conclude the proof of Proposition \ref{technical result}. Let us assume that the line bundle $L$ is ample relatively to the morphism $\pi$.

Applying equality \eqref{pas intro GK- points doubles} from Theorem \ref{GK critical points} to the $C$-scheme $H$, then applying equality \eqref{eq class tangent hyp} from Proposition \ref{Sigma for hyp}, we have the equalities in $\CH_0(C)_\Q$:
\begin{align}
&\sum_{n=0}^{2(N-1)} (-1)^{n-1} c_1\big(\GK_{C, -}(\H^n(H_\eta / C_\eta))\big) \nonumber \\
&= \frac{1}{12} f_\ast \big((c_1 c_{N-1})([\Omega^1_{H/C}]^\vee)\big) + u_N^- f_\ast [\Sigma],\nonumber \\
&= \frac{1}{12} \pi_\ast (c_1 c_N)([T_\pi]) + (u_N^- + \frac{(-1)^N}{12}) f_\ast [\Sigma] \nonumber \\
&+ \frac{(-1)^N}{12} \pi_\ast \big(
[(1 - c_1(L))^{-1} c_1(\Omega^1_{X/C}) c(\Omega^1_{X/C}) ]^{(N+1)} - c_1(L) c_N(\Omega^1_{X/C}) \big), \label{first eq technical result}
\end{align}
where $u_N^-$ is the rational number defined by: 
\begin{equation*}
u_N^- := 
\begin{cases}
 (5N-3)/24 & \text{if $N$ is odd} \\
 N/24 & \text{if $N$ is even}.
\end{cases}
\end{equation*}

Applying Theorem \ref{GK critical points} to the smooth $C$-scheme $X$ yields the equality in $\CH_0(C)_\Q$:
\begin{equation}
\sum_{n=0}^{2N} (-1)^{n-1} c_1(\GK_C(\H^n(X/C)))
= \frac{1}{12} \pi_\ast (c_1 c_N)([T_\pi]). \label{second eq technical result}
\end{equation}
Finally, it follows from Proposition \ref{GK Lefschetz} that we have the equality in $\CH_0(C)_\Q$:
\begin{multline}
\sum_{n=0}^{2(N-1)} (-1)^{n-1} c_1\big(\GK_{C, -}(\H^n(H_\eta / C_\eta))\big) - \sum_{n=0}^{2N} (-1)^{n-1} c_1(\GK_C(\H^n(X/C))) \\= (-1)^N [c_1(\GK_{C,-}(\H^{N-1}(H_\eta / C_\eta))) - c_1(\GK_C(\H^{N-1}(X/C))) \\
+ c_1(\GK_C(\H^N(X/C))) - c_1(\GK_C(\H^{N+1}(X/C)))]. \label{third eq technical result}
\end{multline}
Equality \eqref{pas intro GK-XL} follows by combining equalities \eqref{first eq technical result}, \eqref{second eq technical result} and \eqref{third eq technical result}. Equality \eqref{pas intro GK+XL} is proved in the same way, using equality \eqref{pas intro GK+ points doubles} from Theorem \ref{GK critical points} instead of equality \eqref{pas intro GK- points doubles}.

This concludes the proof of Proposition \ref{technical result}.

\section{Pencils of hypersurfaces in the projective space}

In this section, we shall use  Proposition \ref{technical result} to compute the Griffiths height of the middle -dimensional cohomology  of pencils of projective hypersurfaces. Before stating our results, we introduce some notation and recall some basic facts concerning projective bundles over a projective curve, their Chow groups, and their hypersurfaces.

\subsection{Preliminary: projective bundles over a curve and horizontal hypersurfaces}
\label{Hypersurface in a projective bundle over a projective curve: notation and definitions}

Let $C$ be a connected smooth projective complex curve with generic point $\eta$, let $E$ be a vector bundle of rank $N+ 1\geq 1$ over $C$, and let:
$$
\pi: \PP(E) := \Proj S^\bullet E^\vee \lra C,
$$
be the associated projective bundle over $C$.

We denote by $\cO_E(1)$ the tautological quotient line bundle over $\PP(E)$; it is the dual of the tautological subbundle of rank $1$ of $\pi^\ast E$.

We shall use the fact (see for instance \cite[B.5.8]{Fulton98}) that the relative tangent bundle $$T_{\PP(E)/C}:= T_\pi$$ fits into a short exact sequence:
\begin{equation}
0 \lra  \cO_{\PP(E)} \lra \pi^\ast E \otimes \cO_E(1) \lra T_{\PP(E)/C} \lra 0. \label{tangentbundleexactsequence}
\end{equation}

Let $H$ be an hypersurface, namely an effective Cartier divisor, in $\PP(E)$. It is a projective complex scheme of dimension $N$. Its generic fiber $H_\eta$ is an hypersurface in the projective space $\PP(E)_\eta$ of dimension~$N$ over $\C(C)$. We shall denote its degree by $d$.

We shall also denote by:
$$
i : H \lra \PP(E)
$$ 
the inclusion morphism of  $H$ in the projective bundle $\PP(E)$, and by: 
$$
f:=\pi \circ i : H \lra C
$$
its projection to the curve $C$.
 
As is well-known, the hypersurface $H_\eta$ in the projective space $\PP(E)_\eta$ may be defined, as a subscheme, by the vanishing of an homogeneous polynomial of degree $d$ --- namely by a non-zero element of $S^d E^\vee_\eta$ --- unique up to the multiplication by an element of $\C(C)^\ast.$
 
This description of $H_\eta$ extends to a description of the  ``relative hypersurface'' $H$ over $C$ as follows.
  
Since $\PP(E)$ is a regular scheme, its Picard group is isomorphic to its Chow group of codimension one $\CH^1(\PP(E))$. According to the structure of the Chow groups of a projective bundle (see \cite[Th. 3.3 (b)]{Fulton98}, with $k = \dim(\PP(E)) - 1$), there is an isomorphism:
$$\Z \oplus \CH^1(C) \overset{\sim}{\lra} \CH^1(\PP(E)), \quad (\delta, \alpha) \longmapsto \delta c_1(\cO_E(1)) + \pi ^\ast \alpha.$$
Consequently the line bundle $\cO_{\PP(E)}(H)$ is isomorphic to $\cO_E(\delta) \otimes \pi^\ast M$ for some integer $\delta$ and some line bundle $M$ over $C$. By considering the restriction of this isomorphism to $\PP(E)_\eta$, we see that $\delta$ coincides with the generic degree $d$ of $H$. Moreover the line bundle $M$ is unique up to isomorphism. 

We will denote by:
\begin{equation}\label{defsigma}
\sigma: \cO_{\PP(E)}(H) \lrasim \cO_E(d) \otimes \pi^\ast M
\end{equation}
the isomorphism obtained by this construction. This isomorphism may be seen as a morphism of line bundles over $\PP(E)$:
\begin{equation}\label{defsigmabis}
\sigma: \pi^\ast M^\vee \lra \cO_E(d),
\end{equation}
the vanishing of which defines $H$ scheme-theoretically. In turn, the data of the morphism \eqref{defsigmabis} is equivalent to the data of a morphism of sheaves of $\cO_C$-modules:
\begin{equation}\label{deftau}
\tau : M^\vee \lra \pi_\ast \cO_{E}(d) \simeq S^d E^\vee,
\end{equation}
which is clearly injective.

For every point $x$ of $C$, the image $\tau_x(y)$ of any non-zero element $y$ of $M^\vee_x$ is an element of $S^d E^\vee_{x}$, which is a (scheme-theoretic) equation for the fiber $H_x$ of $H$ in the projective space $\PP(E)_x$ over $\C$.
This shows that the hypersurface $H$ is horizontal --- namely, that no fiber of $\pi$ is a component of $H$, or equivalently that the morphism $f : H \ra C$ is flat --- if and only if $\tau_x$ is non-zero for every point $x$ of $C$, or equivalently if and only if $\tau$ is an isomorphism onto a locally direct summand of $S^d E^\vee$.

In the sequel, we shall use the following notation for the first Chern classes of $E$ and $M$,  and of~$\cO_E(1)$, in $\CH^1(C)$ and $\CH^1(\PP(E))$ respectively:
$$e := c_1(E), \quad m := c_1(M),  \quad \mbox{and} \quad h := c_1(\cO_E(1)).$$ 
Using the isomorphism \eqref{defsigma}, we obtain the equality in $\CH^1(\PP(E))$:
\begin{equation}\label{H as a cycle}
[H] = d \,h + \pi^\ast m.
\end{equation}

Using  the definition of Segre classes and their relation to Chern classes (\cite[3.1, 3.2]{Fulton98}), and the fact that $C$ is one-dimensional, we obtain the following equality, for every $i \in \N$:
\begin{equation}\label{Segre class of E}
\pi_\ast h^i =: s_{i-N}(E)=
\begin{cases}
 0  &\quad \mbox{if } i < N, \\ 
  [C] &\quad \mbox{if } i = N, \\ 
  - e &\quad \mbox{if } i = N+1, \\
 0 &\quad  \mbox{if } i> N+1. 
\end{cases}
\end{equation}

From the relations above and the projection formula, we deduce:
\begin{equation}\label{HhN-1}
\pi_\ast (h^{N-1} \cap [H]) = d [C],
\end{equation}
and 
\begin{equation}\label{HhN}
\pi_\ast (h^{N} \cap [H]) = -d e + m.
\end{equation}

In computations, we will  use that $\CH^i(C)$ vanishes for $i >1$; in particular, we have:
\begin{equation}
m \cdot m = e \cdot e = m \cdot e =0. \label{product classes on C}
\end{equation}

\begin{remark} 
\label{invariance_produittensoriel} 
The classes $m$ and $e$ depend on the choice of the bundle $E$, and not only on the $C$-scheme~$\PP{} (E)$.  

More specifically, if $L$ is a line bundle on $C$, and $E$ is replaced by $E' := E \otimes L$, then we may identify the $C$-schemes $\PP(E')$ and $\PP(E)$, but $e$ is replaced by:
$$
e' := c_1(E') = c_1(E \otimes L) = e + (N+1) \textrm{ } c_1(L).
$$
The line bundle $\cO_E(1)$ is replaced by:
$$
\cO_{E'}(1) \simeq \cO_E(1) \otimes \pi^\ast L^\vee,
$$
and the class $h$ is replaced by:
$$
h' := c_1(\cO_{E'}(1)) = h - \pi^\ast c_1(L).
$$
Moreover, the bundle $S^d E'^\vee$ may be identified with $(S^d E^\vee) \otimes L^{-d}$. The morphism of vector bundles: 
$$
\tau' := \tau \otimes \Id_{L^{-d}} : M^\vee \otimes L^{-d} \hookrightarrow S^d E'^\vee
$$ 
clearly represents the same equation as $\tau$, hence it also defines the hypersurface $H$ in $\PP(E')$. 

Finally the line bundle $M$ is replaced by $M' := M \otimes L^d$, and consequently the class $m$ is replaced by:
$$
m ' := c_1(M') = m + d \textrm{ }c_1(L).
$$

Observe that this implies the equality of classes in $\CH^1(C)_\Q$:
$$ (N+1) m' - de' = (N+1) m - d e.$$ 
\end{remark}

We shall denote the relative canonical sheaf of the smooth morphism 
$$\pi : \PP(E) \lra C$$
by:
\begin{equation}\label{def canonical sheaf}
\omega_{\PP(E)/C} \simeq \Omega^N_{\PP(E)/C} \simeq \det T_{\PP(E)/C} ^\vee.
\end{equation}

\begin{defpropositionEng} 
    \label{ht_int}
Let $h^{\ast} \in \CH^1(\PP(E))_\Q$ be the class defined by:
$$h^{\ast} := - \frac{1}{N+1} c_1(\omega_{\PP(E)/C}),$$
and let $\hgt_{int}(H/C)$ be the rational number defined by:
$$\hgt_{int}(H/C) := \int_{\PP(E)} h^{\ast N} \cap [H].$$
They satisfy the following relations:
\begin{equation}\label{h*}
h^\ast = h +  \frac{1}{N+1} \pi^{\ast} e,
\end{equation}
\begin{align}
   \hgt_{int}(H/C) & = \int_{\PP(E)} h ^N \cap [H] + \frac{dN}{N+1} \deg E, \label{htinteth} \\
& = \deg M - \frac{d}{N+1} \deg E . \label{htintsurC}
\end{align}
In particular, the rational number $(N+1) \hgt_{int}(H/C)$ is an integer.
\end{defpropositionEng}  

\begin{proof}
 The equality \eqref{h*} comes from taking the determinant of the exact sequence (\ref{tangentbundleexactsequence}). This gives isomorphisms:
$$\det T _{\PP(E)/C}   \simeq  \det\big(\pi^{\ast} E \otimes \cO_E(1)\big) \simeq \cO_E(N+1) \otimes \pi^{\ast} \det E.$$
Together with \eqref{def canonical sheaf}, this establishes \eqref{h*}. 

By using successively \eqref{product classes on C}, \eqref{H as a cycle} and \eqref{product classes on C} again, we have:
\begin{align}
h^{\ast  N} \cap [H] & = \left(h + \frac{1}{N+1} \pi^{\ast} e\right)^N  \cap [H], \notag \\
& = \left(h^N + \frac{N}{N+1} h^{N-1} \pi^{\ast} e \right)  \cap [H], \notag \\
& = h^N  \cap [H] + \frac{N}{N+1} h^{N-1} \pi^{\ast} e \textrm{ } . \textrm{ } (d h + \pi^{\ast} m), \notag\\
& = h^N  \cap [H] + \frac{dN}{N+1} h^N \pi^{\ast} e. \label{h*^N cap H}
\end{align}
After pushing forward by $\pi$ and taking the degree, using \eqref{Segre class of E}, this becomes equality \eqref{htinteth}. 

From \eqref{h*^N cap H} and \eqref{H as a cycle}, we obtain:
\begin{align*}
 h^{\ast N} \cap [H] & =  h^N . \textrm{ }(d h + \pi^{\ast} m) + \frac{dN}{N+1} h^N \pi^{\ast} e, \\
& = d h^{N+1} + h^N \pi^{\ast} m + \frac{dN}{N+1} h^N \pi^{\ast} e, \\
\end{align*}
so pushing it forward by $\pi$ and using \eqref{Segre class of E}, we get:
$$\pi_{\ast} (h^{\ast N} \cap [H]) = - d e + m + \frac{dN}{N+1} e = m - \frac{d}{N+1} e.$$
Consequently, we have:
$$\int_{\PP(E)} h^{\ast N} \cap [H] = \deg (\pi_{\ast} (h^{\ast N} \cap [H]) ) = \deg M  - \frac{d}{N+1} \deg E.$$ 
This shows equality \eqref{htintsurC}.
\end{proof}

\begin{remark}\label{invariance htint}

 The subscript stands for ``intersection-theoretic." The height $\hgt_{int}(H/C)$ clearly only depends on $\PP(E)$ as a scheme over $C$ and on its subscheme $H$. We can also check this property on the expression \eqref{htintsurC}: according to Remark \ref{invariance_produittensoriel}, 
 if $E$ is replaced by $E' := E \otimes L$, with $L$ a line bundle, the class $m - \frac{d}{N+1} e$ is unchanged. 
 \end{remark}
We can also express this height in terms of slopes. The slope of the line bundle $M$ is  $\mu(M) = \deg M$, the slope of $E$ is: $$\mu(E) = \frac{\deg E }{N+1},$$ and the one of $S^d E^\vee$ is: $$\mu(S^d E^\vee) = - d \, \mu(E) = - \frac{d \deg E }{N+1},$$ which gives the following expression:
\begin{equation}\label{htintslope}
\hgt_{int}(H/C) = \mu(S^d E^{\vee}) - \mu(M^\vee).
\end{equation}  

The existence of an effective section of the line bundle $M^{\otimes (N+1) (d-1)^N} \otimes (\det E)^{\otimes -d (d-1)^N}$ on $C$ defined by the discriminant of the horizontal hypersurface $H$ (see \eqref{discriminant section H} below), along with equality \eqref{htintsurC}, will prove that the rational number $\hgt_{int}(H/C)$ is non-negative when $d \geq 2$.

When the vector bundle $S^d E^\vee$ is semistable, the non-negativity of $\hgt_{int}(H/C)$ would follow from equation \eqref{htintslope}. Indeed, since $M^\vee$ is naturally embedded as a coherent subsheaf of $S^d E^\vee$, using the expression \eqref{htintslope} for $\hgt_{int}(H/C)$, if the vector bundle $S^d E^\vee$ is stable (resp. semistable), then $\hgt_{int}(H/C)$ is positive (resp. non-negative). 

Combining this result with the compatibility of vector bundle stability with tensor operations in characteristic 0 (see for instance \cite{Narasimhan-Seshadri65}), we immediately obtain that if the vector bundle $E$ is semistable, then $\hgt_{int}(H/C)$ is non-negative. 

The non-negativity of $\hgt_{int}(H/C)$ when the vector bundle $E$ is semistable would also follow from \cite[Th.~3.1]{Miyaoka87}: the vector bundle $E$ is semistable if and only if the class $h^\ast$ (which Miyaoka denotes by $\lambda_{\mathcal{E}}$) is nef, and when this holds,   the rational number 
$$\hgt_{int}(H/C) := \int_{\PP(E)} h^{\ast N} \cap [H]$$
is non-negative.

\subsection{The Griffiths height of the middle-dimensional cohomology of a pencil of hypersurfaces in projective spaces}

The objective of this subsection is to show the following theorem, which immediately implies Theorem \ref{intro GK hyp P(E)} using equality \eqref{htintsurC}.
As observed after the statement of Theorem \ref{intro GK hyp P(E)}, this theorem implies the validity of the same formulas for the Griffiths heights of the primitive part of the middle-dimensional cohomology~$\H^{N-1}(H_\eta/C_\eta)$. 

\begin{theorem}
\label{GK hyp P(E)} 
Let $C$ be a connected smooth projective complex  curve with generic point $\eta$, $E$ a vector bundle of rank~$N+1$ over $C$, and $H \subset \PP(E) $ an horizontal hypersurface of relative degree $d$, smooth over $\C.$  

If $\pi_{\mid H}$ has only a finite number of critical points, all of which are non-degenerate, then the reduced $0$-cycle in $H$ defined by the set of critical points $\Sigma$ satisfies the equality in $\CH_0(C)$:
\begin{equation} \label{f(sigma) P(E)}
f_\ast [\Sigma] = (d-1)^N ((N+1) m - d e).
\end{equation}

Moreover, we have the following equalities in $\CH_0(C)_\Q$:
\begin{equation} \label{pas intro GK+XL P(E)}
c_1(\GK_{C,+}( \H^{N-1}(H_\eta/C_\eta))) = F_+(d,N) (m - \frac{d}{N+1} e),
\end{equation}
and:
\begin{equation} \label{pas intro GK-XL P(E)}
c_1(\GK_{C,-}( \H^{N-1}(H_\eta/C_\eta))) = F_-(d,N) (m - \frac{d}{N+1} e),
\end{equation}
and the following equality of rational numbers:
\begin{equation} \label{pas intro GKstab P(E)}
\mathrm{ht}_{GK,stab}( \H^{N-1}(H_\eta/C_\eta)) = F_{stab}(d,N) \, \mathrm{ht}_{int}(H/C),
\end{equation}
where $F_+(d,N)$, $F_-(d,N)$ and $F_{stab}(d,N)$ are the elements of $(1/12) \Z$ given when  $N$ is odd by:
$$
F_+(d,N) := \frac{N+1}{24 d^2} \left[ (d-1)^N  (7 d^2 N - 7 d^2 - 2 d N - 2 ) + 2 (d^2 - 1) \right] ,
$$
$$F_-(d,N) := \frac{N+1}{24 d^2} \left[ (d-1)^N (- 5 d^2 N + 5 d^2 - 2 d N - 2 ) + 2 (d^2 - 1) \right] ,$$
and:
$$
F_{stab}(d,N) := \frac{N+1}{24 d^2} \left[ (d-1)^N (d^2 N - d^2 - 2 d N - 2 ) + 2 (d^2-1) \right] ,
$$
and, when $N$ is even, by:
$$
F_+(d,N) = F_-(d,N) = F_{stab}(d,N) := \frac{N+1}{24 d^2} \left [ (d-1)^N  (d^2 N + 2 d^2 - 2 d N - 2) - 2 (d^2-1) \right ]. 
$$
\end{theorem}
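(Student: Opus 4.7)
The plan is to apply Proposition \ref{technical result} to the smooth pencil $\pi : \PP(E) \to C$ and the horizontal hypersurface $H \subset X := \PP(E)$, with line bundle $L := \cO_{\PP(E)}(H) \simeq \cO_E(d) \otimes \pi^\ast M$ from \eqref{defsigma}; the relative ampleness of $L$ holds since $d \geq 1$. A crucial first observation is that the Griffiths line bundles of $X/C$ are trivial in every degree. Indeed, the relative cohomology $\H^n(\PP(E)/C)$ vanishes for odd $n$, while for $n = 2k$ the local system $R^{2k}\pi_\ast \Z$ is canonically constant (generated by $h^k = c_1(\cO_E(1))^k$) and the Hodge filtration on each fiber is concentrated in the single bidegree $(k,k)$; hence $\GK_C(\H^{2k}(X/C)) \simeq \cO_C$. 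Consequently, the three contributions $c_1(\GK_C(\H^n(X/C)))$ for $n \in \{N-1, N, N+1\}$ appearing on the right-hand sides of \eqref{pas intro GK+XL} and \eqref{pas intro GK-XL} all vanish.

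Next, all the required characteristic-class computations reduce to polynomial manipulations in $\CH^\ast(\PP(E))$. Using the presentation recalled in \ref{Hypersurface in a projective bundle over a projective curve: notation and definitions} together with the relative Euler sequence \eqref{tangentbundleexactsequence}, and exploiting that $C$ is a curve (so $\CH^{\geq 2}(C) = 0$), the Chern classes of $T_{\PP(E)/C}$ simplify to $c_k(T_{\PP(E)/C}) = \binom{N+1}{k} h^k + \binom{N}{k-1} h^{k-1} \pi^\ast e$, and accordingly $c_k(\Omega^1_{X/C}) = (-1)^k c_k(T_{X/C})$. We also have $c_1(L) = d h + \pi^\ast m$ and the vanishing relations $(\pi^\ast e)^2 = (\pi^\ast m)^2 = \pi^\ast e \cdot \pi^\ast m = 0$ from \eqref{product classes on C}, together with the push-forward formulas $\pi_\ast h^N = [C]$ and $\pi_\ast h^{N+1} = -e$ from \eqref{Segre class of E}. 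Plugging these into formula \eqref{pas intro SigmaLX}, expanding, and collecting coefficients via the binomial identities $\sum_k \binom{N}{k}(-1)^k d^{N-k} = (d-1)^N$ and $\sum_k (N+1-k)\binom{N+1}{k}(-1)^k d^{N-k} = (N+1)(d-1)^N$, one obtains the cardinality formula $f_\ast[\Sigma] = (d-1)^N ((N+1) m - d e)$, whence \eqref{f(sigma) P(E)}.

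The same mechanism applied to the remaining two push-forwards in \eqref{pas intro GK+XL} and \eqref{pas intro GK-XL}, namely $\pi_\ast\bigl((1 - c_1(L))^{-1} c_1(\Omega^1_{X/C}) c(\Omega^1_{X/C})\bigr)^{(N+1)}$ and $\pi_\ast\bigl(c_1(L) c_N(\Omega^1_{X/C})\bigr)$, yields analogous closed expressions as linear combinations of $e$ and $m$; the needed evaluations, notably $\sum_{k=0}^{N+1} \binom{N+1}{k}(-1)^k d^{N-k-1} = \bigl((d-1)^{N+1} + (-1)^N\bigr)/d^2$ and its variants, may be derived directly or extracted from Proposition \ref{formal identities}. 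Assembling the pieces and using the identification $\hgt_{int}(H/C) = \deg(m - \tfrac{d}{N+1} e)$ of \eqref{htintsurC}, the coefficient of $m$ collapses to $F_\pm(d,N)$ and the coefficient of $e$ to $-\tfrac{d}{N+1} F_\pm(d,N)$, establishing \eqref{pas intro GK+XL P(E)} and \eqref{pas intro GK-XL P(E)}. The stable height formula \eqref{pas intro GKstab P(E)} follows from Corollary \ref{GK heights, logarithmic cohomology and elementary exponents}: by Proposition \ref{elementary exponents non-degenerate singularities}, when $N$ is odd the only non-vanishing elementary exponent divisor is $A^{(N-1)/2, (N-1)/2} = \tfrac{1}{2} f_\ast[\Sigma]$, producing a correction $\tfrac{(N-1)(N+1)}{4}(d-1)^N \hgt_{int}(H/C)$ that exactly matches $(F_{stab}-F_-)(d,N) \cdot \hgt_{int}(H/C)$, while for $N$ even there is no correction and $F_{stab} = F_+ = F_-$. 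The principal obstacle is the lengthy bookkeeping of nested binomial sums in the final simplification; each individual step is elementary, but the coefficients must be tracked with care to produce the announced closed forms.
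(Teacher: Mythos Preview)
Your proposal is correct and follows essentially the same approach as the paper: apply Proposition \ref{technical result} to $X=\PP(E)$, observe that the Griffiths line bundles of the projective bundle are trivial, compute the three relevant characteristic-class expressions via the relative Euler sequence \eqref{tangentbundleexactsequence} together with \eqref{product classes on C}, \eqref{Segre class of E}, and the combinatorial identities of Proposition \ref{formal identities}, then assemble the pieces and invoke Corollary \ref{GK bundles with elementary exponents for non-degenerate singularites} (equivalently, your combination of Corollary \ref{GK heights, logarithmic cohomology and elementary exponents} with Proposition \ref{elementary exponents non-degenerate singularities}) for the stable height. The paper packages the intermediate cycle computations into a separate proposition (Proposition \ref{cycles Proj E}) before pushing forward, but the underlying argument and all ingredients are the same.
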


Observe that the morphism of complex schemes:
$$
\pi : X := \PP(E) \lra C
$$
is smooth and surjective, and using the isomorphism \ref{defsigma}, the line bundle $L$ is relatively ample. Consequently, we are in the situation of Section \ref{Ample hyp}. 
\begin{proposition}
\label{cycles Proj E} 
With the above notation, and denoting $L$ the line bundle $\cO_{\PP(E)}(H)$ on $\PP(E)$, we have the equalities in $\CH_0(\PP(E))$:
\begin{equation}
i_\ast [\Sigma] = (d-1)^N h^N [(d-1) h + (N+1) \pi^\ast m - \pi^\ast e], \label{sigma in PE}
\end{equation}
\begin{equation}
c_1(L) c_N(\Omega^1_{\PP(E)/C}) = (-1)^N h^N [d (N+1) h + (N+1) \pi^\ast m + d N \pi^\ast e], \label{c1 cN Omega in PE}
\end{equation}
and:
\begin{equation}
    [(1 - c_1(L))^{-1} c_1(\Omega^1_{\PP(E)/C}) c(\Omega^1_{\PP(E)/C}) ]^{(N+1)} = h^N (a_{N,d} h + b_{N,d} \pi^\ast m + c_{N,d} \pi^\ast e), \label{quotient in PE}
\end{equation}
with the rational numbers $a_{N,d}$, $b_{N,d}$ and $c_{N,d}$ given by:
$$a_{N,d} := \frac{N+1}{d} \big (- (d-1)^{N+1} + (-1)^{N+1} \big ),$$
$$b_{N,d} := \frac{N+1}{d^2} \big (- (d-1)^N (d N + 1) + (-1)^N \big ),$$
$$c_{N,d} := \frac{1}{d}  \big ( - (d-1)^N (d-N-2) + (-1)^{N+1} (N+2) \big ).  $$
\end{proposition}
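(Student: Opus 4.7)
The key tool is the Euler exact sequence \eqref{tangentbundleexactsequence}, which upon dualizing yields the short exact sequence of vector bundles on $\PP(E)$
\begin{equation*}
0 \lra \Omega^1_{\PP(E)/C} \lra \pi^\ast E^\vee \otimes \cO_E(-1) \lra \cO_{\PP(E)} \lra 0.
\end{equation*}
Consequently $c(\Omega^1_{\PP(E)/C}) = c(\pi^\ast E^\vee \otimes \cO_E(-1))$. Denoting the Chern roots of $\pi^\ast E$ by $\pi^\ast \alpha_1,\dots,\pi^\ast\alpha_{N+1}$ and expanding $\prod_{i=1}^{N+1}(1 - h - \pi^\ast\alpha_i)$, the fact that $c_k(E) = 0$ for $k \geq 2$ (since $C$ is one-dimensional) forces all elementary symmetric functions of the $\alpha_i$ of degree at least $2$ to vanish, and we obtain the compact identity
\begin{equation*}
c(\Omega^1_{\PP(E)/C}) = (1-h)^{N+1} - (1-h)^N \pi^\ast e
\end{equation*}
in $\CH^\ast(\PP(E))$. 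Combined with $c_1(L) = dh + \pi^\ast m$ (arising from the isomorphism \eqref{defsigma}) and the vanishing relations $(\pi^\ast m)^2 = (\pi^\ast e)^2 = (\pi^\ast m)(\pi^\ast e) = 0$, this is essentially all the input needed.

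The proof of \eqref{c1 cN Omega in PE} is direct: extract the degree $N$ part of the above formula to get
$c_N(\Omega^1_{\PP(E)/C}) = (-1)^N[(N+1) h^N + N h^{N-1}\pi^\ast e]$, multiply by $c_1(L) = dh + \pi^\ast m$, and use the vanishing $(\pi^\ast m)(\pi^\ast e)=0$ to collect the four surviving monomials, which reassemble as $(-1)^N h^N[d(N+1)h + (N+1)\pi^\ast m + dN \pi^\ast e]$. For \eqref{sigma in PE}, apply equation \eqref{eq Sigma for hyp} of Proposition~\ref{Sigma for hyp}, which identifies $i_\ast[\Sigma]$ with the codimension $N+1$ part of $(1-c_1(L))^{-1} c(\Omega^1_{\PP(E)/C})$. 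Expand
\begin{equation*}
(1 - dh - \pi^\ast m)^{-1} = (1-dh)^{-1} + (1-dh)^{-2}\,\pi^\ast m
\end{equation*}
using $(\pi^\ast m)^2 = 0$, multiply by $(1-h)^{N+1} - (1-h)^N \pi^\ast e$, and discard the term $(\pi^\ast m)(\pi^\ast e)$. The degree $N+1$ part then decomposes as
\begin{equation*}
[(1-dh)^{-1}(1-h)^{N+1}]_{h^{N+1}}\, h^{N+1} - [(1-dh)^{-1}(1-h)^N]_{h^N}\, h^N\pi^\ast e + [(1-dh)^{-2}(1-h)^{N+1}]_{h^N}\, h^N\pi^\ast m.
\end{equation*}
Each bracket is a coefficient of a rational formal series in a single variable, and a straightforward binomial/residue computation gives
$[(1-dh)^{-1}(1-h)^{N+1}]_{h^{N+1}} = (d-1)^{N+1}$, $[(1-dh)^{-1}(1-h)^{N}]_{h^{N}} = (d-1)^{N}$, and $[(1-dh)^{-2}(1-h)^{N+1}]_{h^N} = (N+1)(d-1)^N$ (the last after rewriting $(N-m+1) = (N+1) - m$ and using $m\binom{N+1}{m} = (N+1)\binom{N}{m-1}$). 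Substituting yields \eqref{sigma in PE}.

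The formula \eqref{quotient in PE} is proved by the same mechanism, only with more bookkeeping. Starting from $c_1(\Omega^1_{\PP(E)/C}) = -(N+1)h - \pi^\ast e$ and the expression for $c(\Omega^1_{\PP(E)/C})$ above, one computes the product $c_1(\Omega^1_{\PP(E)/C}) c(\Omega^1_{\PP(E)/C})$, annihilating the $(\pi^\ast e)^2$ term, then multiplies by $(1-dh)^{-1} + (1-dh)^{-2}\pi^\ast m$ and kills $(\pi^\ast m)(\pi^\ast e)$. Extracting the degree $N+1$ component produces three scalar coefficients $a_{N,d}$, $b_{N,d}$, $c_{N,d}$, each of which is a linear combination of the power series coefficients
$[(1-dh)^{-r}(1-h)^s]_{h^k}$ for $r \in \{1,2\}$ and $(s,k)$ belonging to a small finite set. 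All such coefficients admit closed forms in terms of $(d-1)^N$ and $(-1)^N$ obtained by evaluating the residue $\mathrm{Res}_{h=0} h^{-k-1}(1-dh)^{-r}(1-h)^s\, dh$ via the change of variable $u = h/(1-dh)$, in the spirit of the proof of Proposition~\ref{formal identities}. The hardest step, and the one requiring the most care, is the term $[(1-dh)^{-2}(1-h)^{N+1}]_{h^{N-1}}$ appearing in $b_{N,d}$: it combines two binomial sums via $(N-m) = (N+1) - (m+1)$, each of which evaluates to an expression of the shape $\bigl((d-1)^{N+1}-(-1)^{N+1}\bigr)/d^2$ or $\bigl((d-1)^N - (-1)^N\bigr)/d^2$ with an extra factor of $(N+1)$ from $m\binom{N+1}{m}$. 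Reassembling these yields exactly the stated value of $b_{N,d}$, and completes the proof.
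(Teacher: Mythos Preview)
Your proof is correct and follows essentially the same route as the paper. The only cosmetic difference is that the paper first flips the sign via $(-1)^{N+1}$ to work with $c(T_{\PP(E)/C}) = (1+h)^{N+1} + (1+h)^N\pi^\ast e$ (from the Euler sequence directly) rather than with $c(\Omega^1_{\PP(E)/C}) = (1-h)^{N+1} - (1-h)^N\pi^\ast e$ as you do; both then expand $(1 \mp dh \mp \pi^\ast m)^{-1}$ to first order in $\pi^\ast m$, multiply, kill the $(\pi^\ast m)(\pi^\ast e)$ terms, and evaluate the resulting single-variable series coefficients via Proposition~\ref{formal identities}.
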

\begin{proof}
Applying equality \eqref{eq Sigma for hyp} from Proposition \ref{Sigma for hyp} with the above notation yields the equality in $\CH_0(\PP(E))$:
\begin{align}
i_\ast [\Sigma] &= \big [(1 - c_1(L))^{-1} c(\Omega^1_{\PP(E)/C} ) \big ]^{(N+1)} \nonumber \\
&= (-1)^{N+1} \big [(1 + c_1(L))^{-1} c(T_{\PP(E)/C}) \big ]^{(N+1)} \nonumber \\
&= (-1)^{N+1} \big [(1 + d h + \pi^\ast m)^{-1} c(\pi^\ast E \otimes \cO(1)) \big ]^{(N+1)} \label{first eq sigma in PE} \\
&= (-1)^{N+1} \Big [ \big ((1 + d h)^{-1} - \pi^\ast m \, (1 + d h)^{-2} \big ) \big ((1 + h)^{N+1} + \pi^\ast e\,  (1 + h)^N \big ) \Big ]^{(N+1)} \label{second eq sigma in PE} \\
&= (-1)^{N+1} [(1 + d h)^{-1} (1 + h)^{N+1} ]^{(N+1)} 
-(-1)^{N+1} \pi^\ast m \, [(1 + d h)^{-2} (1 + h)^{N+1} ]^{(N)} \nonumber \\
&+ (-1)^{N+1} \pi^\ast e \,  [(1 + d h)^{-1} (1 + h)^N ]^{(N)}  \label{third eq sigma in PE} \\
&= (-1)^{N+1} [ (-1)^{N+1} (d-1)^{N+1}] h^{N+1}
- (-1)^{N+1} \pi^\ast m \, [(-1)^N (N+1) (d-1)^N] h^N \nonumber \\
 &+ (-1)^{N+1} \pi^\ast e  \, [(-1)^N (d-1)^N] h^N \label{fourth eq sigma in PE} \\
&= (d-1)^N h^N [(d-1) h + (N+1) \pi^\ast m - \pi^\ast e], \nonumber
\end{align}
where in \eqref{first eq sigma in PE}, we used exact sequence \eqref{tangentbundleexactsequence} and equality \eqref{H as a cycle}; in \eqref{second eq sigma in PE}, we used \eqref{product classes on C} and \cite[Ex. 3.2.2]{Fulton98} ; in \eqref{third eq sigma in PE}, we used again \eqref{product classes on C}; and in \eqref{fourth eq sigma in PE}, we used Proposition \ref{formal identities}. 

This shows \eqref{sigma in PE}. The other equalities are proved similarly with straightforward computations.
\end{proof}

\begin{proof}[Proof of Theorem \ref{GK hyp P(E)}]

Pushing equality \eqref{sigma in PE} forward by the morphism $\pi$ yields the equality in $\CH^1(C)$:
\begin{align*}
f_\ast [\Sigma] &= (d-1)^N [(d-1) \pi_\ast h^{N+1} + (N+1) \pi_\ast (h^N \pi^\ast m) - \pi_\ast (h^N \pi^\ast e) ], \\
&= (d-1)^N [- (d-1) e + (N+1) m - e] \textrm{ using the projection formula and \eqref{Segre class of E},} \\
&= (d-1)^N [(N+1) m - d e],
\end{align*}
which shows equality \eqref{f(sigma) P(E)}.

Let us show equality \eqref{pas intro GK+XL P(E)}. Since $X= \PP(E)$ is a projective bundle over $C$, all its relative Hodge vector bundles on $C$ are trivial, and for every integer $n$, the Griffiths line bundle $\GK_{C}(\H^n(X / C))$ is trivial. Since the line bundle $L$ is ample relatively to the morphism $\pi$, we may apply the equality \eqref{pas intro GK+XL}  established in Proposition \ref{technical result}. With the above notation, this 
yields the equality in $\CH_0(C)_\Q$:
\begin{multline}
c_1(\GK_{C,+}(\H^{N-1}(H_\eta / C_\eta))) = \frac{1}{12} \pi_\ast \Big [ \big ((1 - c_1(L))^{-1} c_1(\Omega^1_{\PP(E)/C}) c(\Omega^1_{\PP(E)/C}) \big )^{(N+1)} \Big ] \\
- \frac{1}{12} \pi_\ast \big (c_1(L) c_N(\Omega^1_{\PP(E)/C}) \big )
 + v_N^+ f_\ast [\Sigma]. \label{first eq GK pencil P(E)}
\end{multline}
Using \eqref{quotient in PE}, the first term in \eqref{first eq GK pencil P(E)} is given by:
\begin{align}
&\frac{1}{12} \pi_\ast \Big [ \big ((1 - c_1(L))^{-1} c_1(\Omega^1_{\PP(E)/C}) c(\Omega^1_{\PP(E)/C}) \big )^{(N+1)} \Big ] \nonumber \\
&= \frac{1}{12} \pi_\ast ( a_{N,d} h^{N+1} + b_{N,d} h^N \pi^\ast m + c_{N,d} h^N \pi^\ast e), \nonumber\\
&= \frac{1}{12} b_{N,d} m + \frac{1}{12} (c_{N,d} - a_{N,d}) e,\label{second eq GK pencil P(E)} \\
&= \frac{N+1}{12 d^2} \big [ - (d-1)^N (d N + 1) + (-1)^N \big ] m \nonumber \\
&+ \frac{1}{12 d} \big [- (d-1)^N (d-N-2) + (-1)^{N+1} (N+2) + (N+1) (d-1)^{N+1} - (N+1) (-1)^{N+1} \big ] e,\nonumber\\
&= \frac{N+1}{12 d^2} \big [ - (d-1)^N (d N + 1) + (-1)^N \big ] m - \frac{1}{12 d} \big [ - (d-1)^N (d N + 1) + (-1)^N \big ] e,\nonumber\\
&= \frac{N+1}{12 d^2} \big ( - (d-1)^N (d N + 1) + (-1)^N \big ) \big (m - \frac{d}{N+1} e \big ), \label{third eq GK pencil P(E)}
\end{align}
where in \eqref{second eq GK pencil P(E)}, we have used the projection formula and \eqref{Segre class of E}.

Reasoning similarly using \eqref{c1 cN Omega in PE}, the second term in \eqref{first eq GK pencil P(E)} is given by:
\begin{align}
\frac{1}{12} \pi_\ast \big (c_1(L) c_N(\Omega^1_{\PP(E)/C}) \big )
&= \frac{(-1)^N}{12} \pi_\ast (d (N+1) h^{N+1} + (N+1) h^N \pi^\ast m + d N h^N \pi^\ast e),\nonumber\\
&= \frac{(-1)^N}{12} (- d (N+1) e + (N+1) m + d N e),\nonumber\\
&= \frac{(-1)^N (N+1)}{12} \big  (m - \frac{d}{N+1} e \big ). \label{fourth eq GK pencil P(E)}
\end{align}
Replacing \eqref{f(sigma) P(E)}, \eqref{third eq GK pencil P(E)} and \eqref{fourth eq GK pencil P(E)} in \eqref{first eq GK pencil P(E)} yields the equality:
$$
c_1(\GK_{C,+}(\H^{N-1}(H_\eta / C_\eta))) = F_+(d, N) \big (m - \frac{d}{N+1} e \big ),
$$
where $F_+(d,N)$ is the rational number given by:
$$
F_+(d,N) = \frac{N+1}{12 d^2} \big ( - (d-1)^N ( d N + 1) + (-1)^N \big ) - \frac{(-1)^N (N+1)}{12} + v_N^+ (N+1) (d-1)^N.
$$
If the integer $N$ is odd, $F_+(d,N)$ is given by:
\begin{align*}
F_+(d,N) &= - \frac{N+1}{12 d^2} \big ((d-1)^N (d N + 1) + 1 \big ) + \frac{N+1}{12} + \frac{7 (N-1)}{24} (N+1) (d-1)^N, \\
&= \frac{N+1}{24 d^2} \Big [ - 2 \big ((d-1)^N (d N + 1) + 1 \big ) + 2 d^2 + 7 d^2 (N-1) (d-1)^N \Big ],\\
&= \frac{N+1}{24 d^2} \Big [ (d-1)^N (7 d^2 N - 7 d^2 - 2 d N - 2) + 2 (d^2 - 1) \Big ],
\end{align*}
as wanted, and if the integer $N$ is even, it is given by:
\begin{align*}
F_+(d,N) &= \frac{N+1}{12 d^2} \big (- (d-1)^N (d N + 1) + 1 \big ) - \frac{N+1}{12} + \frac{N+2}{24} (N+1) (d-1)^N,\\
&= \frac{N+1}{24 d^2} \Big [2 \big (- (d-1)^N (d N + 1) + 1 \big ) - 2 d^2 + d^2 (N+2) (d-1)^N \Big ],\\
&= \frac{N+1}{24 d^2} \Big [ (d-1)^N (d^2 N + 2 d^2 - 2 d N - 2) - 2 (d^2 - 1) \Big ],
\end{align*}
as wanted. This shows equality \eqref{pas intro GK+XL P(E)}.

For equality \eqref{pas intro GK-XL P(E)}, we reason similarly using \eqref{pas intro GK-XL}.

For equality \eqref{pas intro GKstab P(E)}, it follows from equality \eqref{f(sigma) P(E)} and the definition of the height $\hgt_{int}(H/C)$ that we have the following equality of integers:
$$
\vert \Sigma \vert = (d-1)^N [(N+1) \deg(m) - d \deg(e)] = (N+1) (d-1)^N \hgt_{int}(H/C).
$$
Equality \eqref{pas intro GKstab P(E)} is a simple consequence of this equality combined with equality \eqref{ht stab non-degenerate 1} from Corollary \ref{GK bundles with elementary exponents for non-degenerate singularites} and with equality \eqref{pas intro GK-XL P(E)}.
\end{proof}

\subsection{Complements: horizontal hypersurfaces in a projective bundle and discriminant}
\label{Disc hyp P(E)}

Equality \eqref{f(sigma) P(E)} from Theorem \ref{GK hyp P(E)} can also be deduced from the classical theory of the discriminant, which we shall now recall.

Recall the definition (\cite{Demazure12}) of the discriminant of a homogeneous polynomial in an arbitrary number of indeterminates.

Let us denote by $n \geq 1$  the number of indeterminates and by $d$  the degree of the homogeneous polynomial. Let:
$$I_{n,d} := \left\{\alpha = (\alpha_1,...,\alpha_n) \in \N ^n | \sum_{i=1}^n \alpha_i = d \right\}$$
be the set of indexes, let:
$$U_{n,d} := \Z \left[(T_\alpha)_{\alpha \in I_{n,d}} \right] $$
be the universal polynomial ring, and let:
$$P_{n,d} := \sum \limits_{\alpha \in I_{n,d}} X_1^{\alpha_1} ... X_n^{\alpha_n}  \, T_\alpha \in U_{n,d} [X_1,...,X_n]$$
be the \emph{universal homogeneous polynomial} of degree $d$ in $n$ indeterminates. 
It satisfies the following tautological property. 

For $R$ a ring, let us denote by $R[X_1,...,X_n]_d$ the submodule of $R[X_1,...,X_n]$ of homogeneous polynomials of degree $d$. For every polynomial $F \in R[X_1,...,X_n]_d$, there is a unique morphism of rings:
$$h_F : U_{n,d} \lra R,$$
such that the polynomial $h_F(P_{n,d})$ deduced from $P_{n,d}$ by replacing its coefficients in $U_{n,d}$ by their images by $h_F$ --- in other terms, its base change  by $h_F$ --- is $F$ itself.

\begin{defpropositionEng}[{\cite[5]{Demazure12}; see also \cite[Chapter 13]{GKZ08}}]
\label{discriminant}

The \emph{universal discriminant} $\disc_{n,d}(P_{n,d})$ is an element of $U_{n,d}$ given by:
$$\disc_{n,d}(P_{n,d}) := d^{-a(n,d)} \Res \left(\frac{\partial P_{n,d}}{\partial X_1}, \cdots, \frac{\partial P_{n,d}}{\partial_{X_n}} \right),$$
where $\Res$ denotes the resultant of $n$ polynomials in $n$ indeterminates, and the integer~$a(n,d)$ is defined by:
$$a(n,d) := \frac{(d-1)^n - (-1)^n}{d}.$$

For every ring $R$, if $F$ is a polynomial in $R[X_1,...,X_n]_d$, if $h_F : U_{n,d} \ra R$ denotes the morphism of rings such that  $h_F(P_{n,d})=F$,  then: $$\disc_{n,d}(F) := h_F(\disc_{n,d}(P_{n,d})) \in R$$ defines the \emph{discriminant} of F.

The universal discriminant $\disc_{n,d}(P_{n,d})$ is characterized (up to a sign) by the following two properties:
\begin{enumerate}
\item It is a prime element of $U_{n,d}$.\footnote{see \cite[6, Cor. 1]{Demazure12}.} \label{discriminant is prime}
\item  If $k$ is a field, and $F$ is a polynomial in $k[X_1,..., X_n]_d$, then $\disc_{n,d}(F) = 0$ if and only if the subscheme $(F = 0)$ in $\PP^{n-1} _k$ is not smooth.\footnote{see \cite[Prop. 12]{Demazure12}.} 
\label{discriminant and smooth subschemes}
\end{enumerate}

Moreover, for every ring $R$ and for every homogeneous polynomial $F$ in $R[X_1,...,X_n]_d$, 
the following equalities hold in $U_{n,d}:$\footnote{see  \cite[ Prop. 11, c) and e)]{Demazure12}.}
\begin{enumerate} [resume]
\item If $\gamma$ is a scalar in $R$, then we have:
\begin{equation}\disc_{n,d}(\gamma F) = \gamma^{n (d-1)^{n-1}} \disc_{n,d}(F). \label{homogeneity discriminant} \end{equation} 
\item  If $A = (a_{i,j})_{1 \leq i,j \leq n}$ is a matrix with entries in $R$, then the discriminant of the polynomial $F' := F \left(\sum \limits_j a_{1,j} X_j, \sum \limits_j a_{2,j} X_j,... \right)$ is given by:
\begin{equation} \disc_{n,d}(F') = (\det A)^{d (d-1)^{n-1}} \disc_{n,d}(F). \label{discriminant variable change} \end{equation} 
\end{enumerate}

\end{defpropositionEng}

We can extend this definition to families of polynomials parametrised by a scheme in the following way.

\begin{defpropositionEng} \label{family discriminant}
Let $X$ be a scheme, $M$ be a line bundle over $X$, $E$ be a vector bundle of rank $n$ on $X$, and let $\tau$ be a section over $X$ of the vector bundle $S^d E^\vee \otimes M$.

There exists a unique  section $\disc_{n,d}(\tau)$ over $X$ of the line bundle: $$M^{ \otimes n (d-1)^{n-1}} \otimes (\det E)^{ \otimes - d (d-1)^{n-1}}$$ satisfying the following property:
\begin{enumerate}
    \item If $V = \Spec R$ is an affine open subset of $X$, if
    $$\phi : \cO_V \overset{\sim}{\lra} M_V, \quad \psi : \cO_V ^{\oplus n} \overset{\sim}{\lra} E_V$$
    are local trivializations of the line bundle $M$ and the vector bundle $E$, then the scalar
    $$\left ((\phi^{-1})^{ \otimes n (d-1)^{n-1}} \otimes (\det \psi^{-1}) ^{ \otimes -d (d-1)^{n-1}} \right ) (\disc_{n,d}(\tau)) \in \Gamma(V, \cO_V) \simeq R,$$
    is precisely the discriminant $\disc_{n,d}(F)$ of the polynomial
    $$F := \left ( (S^d \ {}^t \psi) \otimes \phi^{-1} \right ) (\tau_{| V}) \in \Gamma(V, S^d \cO_V ^{\oplus n}) \simeq R[X_1,...,X_n]_d.$$
\end{enumerate}
This section also satisfies the following property:
\begin{enumerate}[resume]
\item If $x$ is a  point in $X$ with residue field $\kappa(x)$, then the section $\disc_{n,d}(\tau)$ vanishes at $x$ if and only if the hypersurface in $\PP(E_{\kappa(x)})$ defined by the vanishing of the polynomial $\tau(x) \in S^d E_{\kappa(x)}^\vee \otimes M_{\kappa(x)}$ is not smooth over $\kappa(x)$. \label{family discriminant and smooth subschemes}
\end{enumerate}
\end{defpropositionEng}

The proof is straightforward.

Consider now   a connected smooth projective complex curve $C$ with generic point $\eta$, a vector bundle $E$ of rank $N+1$ over $C,$ and  a horizontal hypersurface $H \subset \PP(E)$ such that $H_\eta$ is smooth over $\eta$, and let us adopt the notation of the previous subsections. 

In particular, we denote by  $\tau$ a section of $S^d E^\vee \otimes M$ representing the ``equation" of $H$.

By applying to $\tau$ the construction in the previous proposition, we get a section:
\begin{equation} \label{discriminant section H}
\disc_{N+1,d}(\tau) \in \Gamma \left(C, M^{\otimes (N+1) (d-1)^N} \otimes (\det E)^{\otimes -d (d-1)^N} \right).
\end{equation}
The property \ref{family discriminant and smooth subschemes} of Definition-Proposition \ref{family discriminant} implies that for every scheme point~$x$ in $C$, the section $\disc_{N+1,d}(\tau)$ vanishes  at $x$ if and only if the hypersurface $H_{\kappa(x)} \subset \PP(E_{\kappa(x)})$ is singular. In particular, $ \disc_{N+1,d}(\tau)$ is not the zero section of~$M^{\otimes (N+1) (d-1)^N} \otimes (\det E)^{\otimes -d (d-1)^N}$.

Let $\Delta$ be the finite subset of critical values in $C$ of the morphism
$$f : H \lra C.$$
Thanks to the work of Eriksson (\cite[Th. 1.2]{Eriksson16}), one may express the order of vanishing of the section $\disc_{N+1,d}(\tau)$ at some point $x$ in $\Delta$ in terms of the localized $N$-th Chern class of the coherent sheaf $\Omega^1_{H/C}$, which is supported on the singular fibers of $H$.

In the special case where $H$ is smooth and the morphism $f$ has only non-degenerate critical points, his result easily implies the equality, for every $x$ in $\Delta$:
$$\mathrm{ord}_x \big(\disc_{N+1,d}(\tau)\big) = \vert \Sigma_x \vert,$$
where $\Sigma$ denotes the set of critical points of $f$ and:
$$\Sigma_x := \Sigma \cap f^{-1}(x).$$
This implies the equality in $\CH_0(C)$:
\begin{align*}
f_\ast [\Sigma] := \sum_{x \in \Delta} \vert \Sigma_x \vert\,  x &= \sum_{x \in \Delta} \mathrm{ord}_x (\disc_{N+1,d}(\tau))\, x, \\
&= \mathrm{div}\big(\disc_{N+1,d}(\tau)\big), \\
&= c_1\big(M^{\otimes (N+1) (d-1)^N} \otimes (\det E)^{\otimes -d (d-1)^N}\big), \\
&= (d-1)^N \big( (N+1) m - d e\big),
\end{align*}
and provides an alternative proof of \eqref{f(sigma) P(E)}.

\section{Linear pencils of hypersurfaces and Lefschetz pencils}

Another setting where   Proposition \ref{technical result} applies is provided by linear pencils of hypersurfaces.

Let $V$ be a connected smooth projective complex scheme of pure dimension $N\geq 1$, and let $H$ be a non-singular hypersurface in $V \times \PP^1$ such that the morphism 
$$
\mathrm{pr}_{1 | H} : H \lra V
$$
is dominant, or equivalently surjective, and let $\delta$ be its degree.
The line bundle $\cO_{V\times \PP^1}(H)$ is isomorphic to the line bundle $$
\mathrm{pr}_1^\ast M \otimes \mathrm{pr}_2^\ast \cO_{\PP^1}(\delta),
$$
where $M$ denotes some line bundle over $V$, which is unique up to isomorphism.

\begin{proposition}
\label{linear pencils of hypersurfaces}
With the above notation, 
let us assume that the morphism
$$
\mathrm{pr}_{2 | H} : H \lra \PP^1
$$
is surjective, and has a finite set $\Sigma$ of critical points, all of which are non-degenerate.

The following equality holds in $\CH_0(V \times \PP^1)$:
\begin{equation} \label{sigma linear pencil}
[\Sigma] = \delta \, \mathrm{pr}_1^\ast [(1 - c_1(M))^{-2} c(\Omega^1_V) ]^{(N)} \, \mathrm{pr}_2^\ast c_1(\cO_{\PP^1}(1)).
\end{equation}
In particular, the cardinality of $\Sigma$ satisfies:
$$
\vert \Sigma \vert = \delta \int_V (1 - c_1(M))^{-2} c(\Omega^1_V).
$$

Furthermore, if the line bundle $M$ on $V$ is ample, the following equalities of integers hold:
\begin{multline} \label{pas intro GK+XL lineaire}
\deg_{\PP^1}(\GK_{\PP^1,+}(\H^{N-1}(H_\eta / \PP^1_\eta) ))\\
= \frac{\delta}{12} \int_V (1 - c_1(M))^{-2} c_1(\Omega^1_V) c(\Omega^1_V) 
 +  \frac{(-1)^{N+1}\delta}{12} \chi_{\mathrm{top}}(V)
 +  v_N^+ \vert\Sigma\vert,
 \end{multline}
 and:
 \begin{multline} \label{pas intro GK-XL lineaire}
\deg_{\PP^1}(\GK_{\PP^1,-}(\H^{N-1}(H_\eta / \PP^1_\eta) ))\\
= \frac{\delta}{12} \int_V (1 - c_1(M))^{-2} c_1(\Omega^1_V) c(\Omega^1_V)  
 +  \frac{(-1)^{N+1}\delta}{12} \chi_{\mathrm{top}}(V)
 + v_N^-  \vert\Sigma\vert,
\end{multline}
where: $$\chi_{\mathrm{top}}(V) = (-1)^N \int_V c_N (\Omega^1_V)$$ denotes the topological Euler characteristic of $V$, and where $v_N^+$ and $v_N^-$ are the rational numbers defined in Proposition \ref{technical result}.
\end{proposition}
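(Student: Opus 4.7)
The plan is to apply Proposition \ref{technical result} directly with $X := V \times \PP^1$, $C := \PP^1$, $\pi := \mathrm{pr}_2$, and the given hypersurface $H$. First I would verify the hypotheses of that proposition: the morphism $\pi$ is visibly smooth and surjective; the morphism $\pi_{|H}$ is surjective by assumption and its fibers are the intersections $H \cap (V \times \{x\})$, which must be proper hypersurfaces in $V \times \{x\} \simeq V$ (if some fiber coincided with $V \times \{x\}$, then $H$ would contain this copy of $V$ and the degree of $\mathrm{pr}_{1|H}$ would be undefined or infinite, contradicting that $\mathrm{pr}_{1|H}$ is dominant of finite degree~$\delta$); hence $\pi_{|H}$ is flat with fibers of pure dimension $N-1$. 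The critical points form a finite non-degenerate set by hypothesis. Finally, since $L := \cO_{V\times\PP^1}(H) \simeq \mathrm{pr}_1^\ast M \otimes \mathrm{pr}_2^\ast \cO_{\PP^1}(\delta)$, its restriction to a fiber $V \times \{x\}$ of $\pi$ is isomorphic to $M$, so relative ampleness of $L$ with respect to $\pi$ is equivalent to ampleness of $M$.

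Next I would carry out the Chow-theoretic computations. Introducing $m := \mathrm{pr}_1^\ast c_1(M)$ and $h := \mathrm{pr}_2^\ast c_1(\cO_{\PP^1}(1))$, we have $c_1(L) = m + \delta h$, $h^2 = 0$, and $\Omega^1_{X/C} \simeq \mathrm{pr}_1^\ast \Omega^1_V$, which gives the splitting
$$(1 - c_1(L))^{-1} = (1-m)^{-1} + \delta h \, (1-m)^{-2} \in \CH^\ast(X)_\Q.$$
Any class of the form $\mathrm{pr}_1^\ast \alpha$ has codimension at most $N = \dim V$ in $X$, so the component of codimension $N+1$ of any product of $(1-c_1(L))^{-1}$ with a class pulled back from $V$ necessarily picks up the $\delta h$ factor. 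Substituting into formula \eqref{pas intro SigmaLX} yields \eqref{sigma linear pencil}, and taking degrees on $\PP^1$ (using $\int_{\PP^1} h = 1$ and the projection formula) yields the formula for $|\Sigma|$.

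For the Griffiths heights, the key observation is that since $X = V \times \PP^1$ is a product with $\pi = \mathrm{pr}_2$, the variations $\H^n(X/C)$ on $\PP^1$ are constant (trivial monodromy and constant Hodge bundles), so their Griffiths line bundles $\GK_{\PP^1}(\H^n(X/C))$ are trivial, contributing $0$ to the right-hand sides of \eqref{pas intro GK+XL} and \eqref{pas intro GK-XL}. It remains to evaluate the two characteristic-class terms. Using the splitting of $(1 - c_1(L))^{-1}$ one finds
$$\big[(1 - c_1(L))^{-1} c_1(\Omega^1_{X/C}) c(\Omega^1_{X/C})\big]^{(N+1)} = \delta \, h \, \mathrm{pr}_1^\ast \big[(1 - c_1(M))^{-2} c_1(\Omega^1_V) c(\Omega^1_V)\big]^{(N)},$$
while, since $c_1(M) c_N(\Omega^1_V) \in \CH^{N+1}(V) = 0$, we have $c_1(L) c_N(\Omega^1_{X/C}) = \delta h \, \mathrm{pr}_1^\ast c_N(\Omega^1_V)$. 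Pushing both forward by $\pi$, taking degrees on $\PP^1$, and using the identity $\int_V c_N(\Omega^1_V) = (-1)^N \chi_{top}(V)$ converts the top Chern class term into the Euler characteristic contribution $\tfrac{(-1)^{N+1}\delta}{12}\chi_{top}(V)$, and the equalities \eqref{pas intro GK+XL lineaire} and \eqref{pas intro GK-XL lineaire} follow.

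I do not anticipate a substantive obstacle: once Proposition \ref{technical result} is in hand, the argument is purely mechanical. The only steps requiring care are the verification that $\pi_{|H}$ is flat and that relative ampleness of $L$ reduces to ampleness of $M$, and the careful bookkeeping of the codimension-$(N+1)$ components when expanding $(1-c_1(L))^{-1}$ in the product setting; the latter is where the vanishing of $h^2$ and the dimension bound on pullbacks from $V$ do all the work.
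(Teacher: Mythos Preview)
Your proposal is correct and follows essentially the same route as the paper: apply Proposition \ref{technical result} to $X = V \times \PP^1$ with $\pi = \mathrm{pr}_2$, use the triviality of the Griffiths bundles of the constant family $X/\PP^1$, and expand $(1 - c_1(L))^{-1}$ using $h^2 = 0$ together with the vanishing of $\CH^{N+1}(V)$. One small fix: your flatness argument is not quite right---a vertical component $V \times \{x\} \subset H$ would contribute $1$ to the degree of $\mathrm{pr}_{1|H}$, not make it infinite---but flatness follows immediately from the finiteness of $\Sigma$, since every point of such a component would be a critical point of $\mathrm{pr}_{2|H}$.
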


We could have formulated \eqref{pas intro GK+XL lineaire} and \eqref{pas intro GK-XL lineaire} as equalities in the Chow group $\CH^1(\PP^1)_\Q$ of the base $\PP^1$ of the pencil, as in Proposition \ref{technical result}. However the degree map establishes an isomorphism $\CH^1(\PP^1)_\Q \simeq \Q$, and the present formulation is actually equivalent to this \emph{a priori} more precise one.

\subsection{Proof of Proposition \ref{linear pencils of hypersurfaces}}

For simplicity, let us denote $h$ the class $c_1(\cO_{\PP^1}(1))$ in $\CH^1(\PP^1)$ and $m$ the class $c_1(M)$ in $\CH^1(V)$.

We can apply equality \eqref{pas intro SigmaLX} from Proposition \ref{technical result} to the smooth projective complex scheme $X := V \times \PP^1$; the smooth surjective morphism of complex schemes
$$
\pi := \mathrm{pr}_2 : X \lra \PP^1
$$
and the line bundle on $X$:
$$
L := \cO_{V \times \PP^1}(H) \simeq  \mathrm{pr}_1^\ast M \otimes \mathrm{pr}_2^\ast \cO_{\PP^1}(\delta).
$$
We obtain the equality in $\CH_0(V \times \PP^1)$:
\begin{align}
[\Sigma] &= \big[ (1 - c_1(L) )^{-1} c(\Omega^1_{V \times \PP^1 / \PP^1}) \big]^{(N+1)} \nonumber \\ 
&= \big[ (1 - \mathrm{pr}_1^\ast m - \delta \, \mathrm{pr}_2^\ast h)^{-1}  \mathrm{pr}_1^\ast c(\Omega^1_V) \big]^{(N+1)}\nonumber \\ 
&= \mathrm{pr}_1^\ast \big[ (1 - m)^{-1} c(\Omega^1_V) \big]^{(N+1)} 
+ \delta \, \mathrm{pr}_2^\ast h  \, \mathrm{pr}_1^\ast \big[(1 - m)^{-2}  c(\Omega^1_V) \big]^{(N)}\label{first eq linear pencils}\\
&= \delta \, \mathrm{pr}_2^\ast h  \, \mathrm{pr}_1^\ast\big[ (1 - m)^{-2} . c(\Omega^1_V) \big]^{(N)}, \label{second eq linear pencils}
\end{align}
where in \eqref{first eq linear pencils}, we have used that $\CH^2(\PP^1)$ vanishes, and in \eqref{second eq linear pencils}, we have used that $\CH^{N+1}(V)$ vanishes.

This shows \eqref{sigma linear pencil}.

Now, let us assume that the line bundle $M$ on $V$ is ample. This implies that the line bundle $L$ on $X$ is ample relatively to the morphism $\pi$, so that the hypothesis of \eqref{pas intro GK+XL} and \eqref{pas intro GK-XL} is satisfied.

Since $X$ is the product of $\PP^1$ by $V$, all its relative Hodge vector bundles over $\PP^1$ are trivial, and for every integer $n$, the Griffiths line bundle $\GK_{\PP^1}(\H^n(X / \PP^1))$ is trivial. 

Consequently, taking degrees in equality \eqref{pas intro GK+XL} yields the following equality of rational numbers:
\begin{multline}
\deg_{\PP^1}(\GK_{\PP^1, +}(\H^{N-1}(H_\eta / \PP^1_\eta))) = \frac{1}{12} \int_{V \times \PP^1} (1 - c_1(L) )^{-1} c_1(\Omega^1_{V \times \PP^1/\PP^1}) c(\Omega^1_{V \times \PP^1/\PP^1})  \\
 - \frac{1}{12} \int_{V \times \PP^1} (c_1(L) c_N(\Omega^1_{V \times \PP^1/\PP^1}) ) 
+ v_N^+ |\Sigma|. \label{first eq GK linear pencils}
\end{multline}

Reasoning as above, the first term in \eqref{first eq GK linear pencils} can be rewritten:
\begin{align}
&\frac{1}{12} \int_{V \times \PP^1}  (1 - c_1(L) )^{-1} c_1(\Omega^1_{V \times \PP^1/\PP^1}) c(\Omega^1_{V \times \PP^1/\PP^1})  \nonumber \\
&= \frac{1}{12} \int_{V \times \PP^1}  (1 - \mathrm{pr}_1^\ast m - \delta \mathrm{pr}_2^\ast h)^{-1} \mathrm{pr}_1^\ast \big(c_1(\Omega^1_V) c(\Omega^1_V) \big)  \nonumber \\
&= \frac{1}{12} \int_{V \times \PP^1} \mathrm{pr}_1^\ast (1 - m)^{-1} \mathrm{pr}_1^\ast \big(c_1(\Omega^1_V) c(\Omega^1_V) \big)  \nonumber \\
&+ \frac{\delta}{12} \int_{V \times \PP^1}  \mathrm{pr}_2^\ast h \, \mathrm{pr}_1^\ast (1 - m)^{-2} \mathrm{pr}_1^\ast (c_1(\Omega^1_V) c(\Omega^1_V) ), \nonumber\\
&= \frac{\delta}{12} \int_V  (1 - m)^{-2} c_1(\Omega^1_V) c(\Omega^1_V) , \label{6561}
\end{align}
where we have used the fact that the group $\CH^{N+1}(V)$ vanishes, as well as the classical equality, for $\alpha$ a $0$-cycle in $V$:
$$
\int_{V \times \PP^1} \mathrm{pr}_1^\ast \alpha  \, \mathrm{pr}_2^\ast h = \int_V \alpha.
$$
Similarly, the second term in \eqref{first eq GK linear pencils} can be rewritten:
\begin{align}
\frac{1}{12} \int_{V \times \PP^1} c_1(L) c_N(\Omega^1_{V \times \PP^1/\PP^1}) &= \frac{1}{12} \int_{V \times \PP^1} (\mathrm{pr}_1^\ast m + \delta \mathrm{pr}_2^\ast h) \mathrm{pr}_1^\ast c_N(\Omega^1_{V}) , \nonumber\\
&= \frac{1}{12} \int_{V \times \PP^1} \mathrm{pr}_1^\ast (m \,  c_N(\Omega^1_V))   
+ \frac{\delta}{12} \int_{V \times \PP^1} \mathrm{pr}_2^\ast h \,  \mathrm{pr}_1^\ast c_N(\Omega^1_V) ,\nonumber \\
&= \frac{\delta}{12} \int_V c_N(\Omega^1_V), \nonumber\\
&= \frac{(-1)^N \delta}{12} \chi_{\mathrm{top}}(V). \label{6562}
\end{align}

Replacing \eqref{6561} and \eqref{6562} in \eqref{first eq GK linear pencils} yields the equality:
$$
\deg_{\PP^1}(\GK_{\PP^1, +}(\H^{N-1}(H_\eta / \PP^1_\eta)))
= \frac{\delta}{12} \int_V (1 - m)^{-2} c_1(\Omega^1_V) c(\Omega^1_V)  
 + \frac{(-1)^{N+1} \delta}{12} \chi_{\mathrm{top}}(V)
+ v_N^+ |\Sigma|,
$$
which shows equality \eqref{pas intro GK+XL lineaire}. Equality \eqref{pas intro GK-XL lineaire} follows similarly from equality \eqref{pas intro GK-XL}.

\subsection{Application to Lefschetz pencils}

Proposition \ref{linear pencils of hypersurfaces} applies notably to Lefschetz pencils. 

Let $V$ be a connected smooth projective complex scheme of pure dimension $N\geq 1$, embedded into some projective space $\PP^r$, of dimension $r\geq \max(N,2).$

Let $ \Lambda$ be a projective subspace of dimension $r-2$ in $\PP^r$ that intersects $V$ transversally, and let $P \subset \PP^{r\vee}$ the projective line in the dual projective space $\PP^{r\vee}$ corresponding to $\Lambda$ by projective duality.

Let us denote by: 
$$\nu: \widetilde{\PP}^r_\Lambda \lra \PP^r$$
the blow-up of $\Lambda$ in $\PP^r$. If $\widetilde{V}$ denote the proper transform of $V$ by $\nu$, the restriction:
$$\nu_{\mid \widetilde{V}} : \widetilde{V} \lra V$$
may be identified with the blow-up in $V$ of $\Lambda \cap V$, which is smooth of dimension $r-2$.

Let $I$ be the incidence subscheme in $\PP^r \times \PP^{r \vee}$. It is an hypersurface in $\PP^r \times \PP^{r \vee}$, and both projections:
$$\mathrm{pr}_{1 \mid I} : I \lra \PP^r \quad \mbox{and} \quad \mathrm{pr}_{2 \mid I} : I \lra \PP^{r\vee}$$
are smooth morphisms. Moreover the line bundle $\cO(I)$ over $\PP^r \times {\PP}^{r\vee}$ is isomorphic to $\mathrm{pr}_1^\ast \cO_{\PP^r}(1) \otimes \mathrm{pr}_2^\ast \cO_{\PP^{r\vee}}(1).$

The projection of center $\Lambda$:
$$\PP^r - \Lambda \lra P$$
extends to a smooth morphism of complex schemes:
$$p: \widetilde{\PP}^r_\Lambda \lra P,$$
and the pair of morphisms $(\nu, p)$ establishes an isomorphism:
$$(\nu, p): \widetilde{\PP}^r_\Lambda \lrasim I \cap (\PP^r \times P)$$
between the scheme $\widetilde{\PP}^r_\Lambda$ and the smooth hypersurface $I \cap (\PP^r \times P)$ of $\PP^r \times P$.

Consequently, by restriction, the pair of morphisms $(\nu_{\mid \widetilde{V}}, p_{\mid \widetilde{V}})$ defines an isomorphism:
$$(\nu_{\mid \widetilde{V}}, p_{\mid \widetilde{V}}): \widetilde{V} \lrasim I \cap (V \times P)$$
between $\widetilde{V}$ and the smooth hypersurface $I \cap (V\times P)$ of $V \times P$. Moreover the line bundle $\cO\big(I \cap (V \times P)\big)$ over $V\times P$  is isomorphic to $\mathrm{pr}_1^\ast \cO_{\PP^r}(1)_{\mid V} \otimes \mathrm{pr}_2^\ast \cO_P(1).$ 

Recall that the pencil of hyperplanes in $\PP^r$ containing $\Lambda$ is said to be a Lefschetz pencil with respect to the subvariety $V$ of $\PP^r$ when the morphism:
$$p_{\mid \widetilde{V}} : \widetilde{V} \lra P$$
 has a finite set $\Sigma$ of critical points, all of which are non-degenerate, and when the restriction
$$
p_{| \Sigma} : \Sigma \lra P
$$
is an injective map.

We may apply Proposition \ref{linear pencils of hypersurfaces} to the hypersurface $I \cap (V \times P)$ in $V\times P$. With the notation of this proposition in this situation, the line bundle $M$ is (isomorphic to) the restriction $\cO_V(1)$ of $\cO_{\PP^r}(1)$ to $V$, and $\delta = 1$. Accordingly, we obtain the following result, which notably applies to Lefschetz pencils:
 
\begin{corollary}
\label{Lefschetz pencils}
With the above notation, let us assume that the morphism:
$$p_{\mid \widetilde{V}} : \widetilde{V} \lra P$$
 has a finite set $\Sigma$ of critical points, all of which are non-degenerate. 

The following equality holds in $\CH_0(V \times P)$:
$$
(\nu, p)_\ast [\Sigma] = \mathrm{pr}_1^\ast [(1 - c_1(\cO_V(1)))^{-2} c(\Omega^1_V) ]^{(N)} \,  \mathrm{pr}_2^\ast c_1(\cO_P(1)).
$$
In particular, the cardinality of $\Sigma$ satisfies:
\begin{equation}\label{pas intro KatzSGA}
\vert\Sigma\vert = \int_V (1 - c_1(\cO_V(1)))^{-2} c(\Omega^1_V).
\end{equation}

Furthermore, the following equalities of integers hold:
\begin{multline*}
\deg_P(\GK_{P,+}(\H^{N-1}(\widetilde{V}_\eta / P_\eta) )) \\
= \frac{1}{12} \int_V (1 - c_1(\cO_V(1)))^{-2} c_1(\Omega^1_V)c(\Omega^1_V) 
+  \frac{(-1)^{N+1}}{12} \chi_{\mathrm{top}}(V) 
+ v_N^+\vert\Sigma\vert,
\end{multline*}
and:
\begin{multline*}
\deg_P(\GK_{P,-}(\H^{N-1}(\widetilde{V}_\eta / P_\eta) )) \\
= \frac{1}{12} \int_V (1 - c_1(\cO_V(1)))^{-2} c_1(\Omega^1_V)c(\Omega^1_V)  
+  \frac{(-1)^{N+1}}{12} \chi_{\mathrm{top}}(V) 
+ v_N^- \vert\Sigma\vert.
\end{multline*}
\end{corollary}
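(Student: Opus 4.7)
The plan is to deduce Corollary \ref{Lefschetz pencils} from Proposition \ref{linear pencils of hypersurfaces} by means of the identification of $\widetilde{V}$ with the incidence hypersurface in $V \times P$ that is explained just before the statement. Concretely, I set
$$H := I \cap (V \times P) \subset V \times P,$$
and recall that the pair $(\nu_{\mid \widetilde{V}}, p_{\mid \widetilde{V}})$ defines an isomorphism $\widetilde{V} \overset{\sim}{\lra} H$ of $P$-schemes, under which $p_{\mid \widetilde{V}}$ corresponds to $\mathrm{pr}_{2\mid H}$ and $\nu_{\mid \widetilde{V}}$ to $\mathrm{pr}_{1 \mid H}$.

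First I would verify that $H$ satisfies the hypotheses of Proposition \ref{linear pencils of hypersurfaces} with $C := P$. The hypersurface $H$ is smooth, being isomorphic to $\widetilde{V}$; the morphism $\mathrm{pr}_{1\mid H}$ is surjective of degree $\delta = 1$, since under the identification it becomes the blow-up $\nu_{\mid \widetilde{V}} : \widetilde{V} \to V$; and the line bundle $\cO_{V \times P}(H)$ is isomorphic to $\mathrm{pr}_1^\ast \cO_V(1) \otimes \mathrm{pr}_2^\ast \cO_P(1)$ as recalled in the excerpt. Consequently, in the notation of Proposition \ref{linear pencils of hypersurfaces}, we have $M = \cO_V(1)$ and $\delta = 1$. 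The line bundle $M = \cO_V(1)$ is ample on $V$ (since $V \hookrightarrow \PP^r$), and the critical points of $\mathrm{pr}_{2 \mid H}$ correspond under the above isomorphism exactly to the critical points of $p_{\mid \widetilde{V}}$, which by assumption form a finite set $\Sigma$ of non-degenerate critical points. So all hypotheses of Proposition \ref{linear pencils of hypersurfaces} are met.

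Next I would substitute $M = \cO_V(1)$ and $\delta = 1$ into the three equalities \eqref{sigma linear pencil}, \eqref{pas intro GK+XL lineaire} and \eqref{pas intro GK-XL lineaire} of Proposition \ref{linear pencils of hypersurfaces}. Specialized to this case, \eqref{sigma linear pencil} becomes
$$[\Sigma] = \mathrm{pr}_1^\ast \big[(1 - c_1(\cO_V(1)))^{-2} c(\Omega^1_V)\big]^{(N)} \, \mathrm{pr}_2^\ast c_1(\cO_P(1)),$$
and taking degrees over $P$ (equivalently, integrating against $[V \times P]$) yields \eqref{pas intro KatzSGA}. The formulas \eqref{pas intro GK+XL lineaire} and \eqref{pas intro GK-XL lineaire} specialize with $\delta = 1$ to the two displayed equalities of the corollary, once the variation of Hodge structures $\H^{N-1}(\widetilde{V}_\eta / P_\eta)$ is identified with $\H^{N-1}(H_\eta / P_\eta)$ using the isomorphism $\widetilde{V} \simeq H$ of $P$-schemes.

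The only point that requires any thought is checking the compatibility between $\Sigma$ viewed as a subscheme of $\widetilde{V}$ and as a subscheme of $H \subset V \times P$: under the isomorphism $\widetilde{V} \overset{\sim}{\lra} H$ of $P$-schemes, a non-degenerate critical point of $p_{\mid \widetilde{V}}$ maps to a non-degenerate critical point of $\mathrm{pr}_{2 \mid H}$, so the sets of critical points and their cardinalities agree. With this in hand, Proposition \ref{linear pencils of hypersurfaces} yields the stated formulas directly, and no further argument is needed; hence the proof will essentially consist of a careful bookkeeping of the identifications rather than any genuinely new computation.
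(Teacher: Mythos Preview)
Your proposal is correct and follows exactly the approach taken in the paper: identify $\widetilde{V}$ with the incidence hypersurface $H = I \cap (V \times P)$, observe that in the notation of Proposition~\ref{linear pencils of hypersurfaces} one has $M = \cO_V(1)$ and $\delta = 1$, and then simply specialize the three equalities of that proposition. The paper carries this out in a single sentence just before the statement of the corollary, so your more detailed verification of the hypotheses is a faithful expansion of the same argument.
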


As already mentioned in \ref{Lefschetz pencils Intro}, the expression \eqref{pas intro KatzSGA} for the number of critical points in a Lefschetz pencil is established by Katz in \cite[Expos\'{e} XVII, cor. 5.6]{SGA7II}.

\end{document}